\title{Accessibility, planar graphs, and quasi-isometries}
\author{Joseph Paul MacManus}
\date{First draft: 23rd October, 2023. This version: 12th May, 2026}
\address{School of Mathematics,  University of Bristol, Bristol, BS8 1UG, UK}
\email{joseph.macmanus@bristol.ac.uk}
\subjclass{20F65; 05C10, 20E08, 51F30}
\DeclareMathOperator{\Stab}{Stab}
\DeclareMathOperator{\diam}{diam}
\DeclareMathOperator{\QI}{QI}
\newcommand{\dHaus}[1][]{\operatorname{Haus}_{#1}}
\DeclareMathOperator{\dist}{d}
\DeclareMathOperator{\vs}{vs}
\DeclareMathOperator{\es}{es}
\newcommand{\R}{\mathbf{R}}
\newcommand{\bbS}{\mathbf{S}}
\newcommand{\HH}{\mathbf{H}}
\newcommand{\Z}{\mathbf{Z}}
\newcommand{\Qp}{\mathfrak{Q}}
\newcommand{\Pp}{\mathfrak{P}}
\newcommand{\br}{{\mathscr{B}}}
\newcommand{\E}{{\mathcal{E}}}
\newcommand{\finfaces}{\mathcal{F}^{\mathrm{c}}}
\newcommand{\inffaces}{\mathcal{F}^\infty}
\newcommand{\facedisks}{\mathcal{D}}
\newcommand{\facepaths}{\mathcal{F}}
\newcommand{\per}{{\mathcal{H}}}
\newcommand{\cV}{\mathcal V}
\newcommand{\cC}{\mathscr C}
\newcommand{\into}{\hookrightarrow}
\newcommand{\onto}{\twoheadrightarrow}
\newcommand{\actson}{\curvearrowright}
\newcommand{\pone}{\Lambda}
\newcommand{\ptwo}{\Pi}
\newcommand{\yone}{Z}
\newcommand{\ytwo}{Y}
\newcommand{\incut}[1]{\|#1 \|_{\mathrm{in}}}
\newcommand{\outcut}[1]{\|#1 \|_{\mathrm{out}}}
\newtheorem{theorem}{Theorem}[section]
\newtheorem*{theorem*}{Theorem}
\newtheorem{proposition}[theorem]{Proposition}
\newtheorem{lemma}[theorem]{Lemma}
\newtheorem{corollary}[theorem]{Corollary}
\newtheorem{claim}[theorem]{Claim}
\theoremstyle{definition}
\newtheorem{definition}[theorem]{Definition}
\newtheorem{example}[theorem]{Example}
\newtheorem{remark}[theorem]{Remark}
\newcommand{\myitem}[1]{%
\item[#1]\protected@edef\@currentlabel{#1}%
}
\begin{document}

\begin{abstract}
    We prove that a connected, locally finite, quasi-transitive graph which is quasi-isometric to a planar graph is necessarily accessible. 
    This leads to a complete classification of the finitely generated groups which are quasi-isometric to planar graphs. In particular, such a group is virtually a free product of free and surface groups, and thus virtually admits a planar Cayley graph. 
\end{abstract}

\maketitle




\section*{Introduction}

The activity of seeking to deduce algebraic information about the structure of a finitely generated group from the geometric properties of its Cayley graphs is a classic and common pastime of geometric group theorists. Amongst the oldest examples of this is a paper of Maschke from 1896 \cite{maschke1896representation}. This work features a complete classification of those finite groups admitting a Cayley graph which is \textit{planar}. That is, it admits some embedding into the plane. 
In particular, a finite group has a planar Cayley graph if and only if it is of the form $A \times B$, where
$$
A \in \{1, \  \Z_2\} \ \ \ \text{and} \ \ \ B \in \{\Z_n, \ D_{2n}, \ A_4, \ S_4, \   A_5\}.
$$
In other words, they are precisely the finite subgroups of the group of homeomorphisms of the 2-sphere $\bbS^2$. For more discussion of this result, see \cite[Thm.~13.1.6]{knauer2019algebraic}.
Of course, the aforementioned classification is less about these groups themselves, and more a statement about finite, connected, planar, transitive graphs. Such graphs often form the skeleta of certain types of uniform polyhedra, which are famously well studied. A complete classification of those finite, connected, planar, transitive graphs can be found in \cite{fleischner1979transitive}. 

The study of infinite planar Cayley graphs began much later. Results due to Wilkie \cite{wilkie1966non} and Zieschang--Vogt--Coldewey \cite{zieschang1970flachen} classify those groups with Cayley graphs that can be embedded in the plane with no accumulation points of vertices. In particular, these results show that a one-ended group admits a planar Cayley graph only if it is either a wallpaper group or a non–Euclidean crystallographic group. See \cite[\S3]{lyndon1977combinatorial} for a good discussion. In order to study those infinite-ended planar groups, the first barrier one needs to cross is that of \textit{accessibility}. 

A celebrated theorem of Stallings \cites{stallings1968torsion, stallings1971group} says that a finitely generated group has more than one end if and only if it splits over a finite subgroup. 
The definition of an accessible group is due to Wall \cite{wall1971pairs} and says that a group is  \textit{accessible} if it splits as a graph of groups with finite edge groups, where each vertex group has at most one end. In other words, if the process of iteratively taking a finite splitting and passing to a vertex group necessarily terminates. It is a consequence of the Grushko--Neumann theorem \cites{grushko1940bases, neumann1943number} that all finitely generated torsion-free groups are accessible. It was conjectured by Wall in 1971 that all finitely generated groups are accessible. In 1985, Dunwoody demonstrated that every finitely presented group is accessible \cite{dunwoody1985accessibility}. Eight years later, Dunwoody presented the first known example of an inaccessible finitely generated group \cite{dunwoody1993inaccessible}. 
Returning to the plane, it was noted by Levinson--Maskit \cite{levinson1975special} that a consequence of Maskit's planarity theorem \cite{maskit1965theorem} is that if a Cayley graph of a given group admits a `point-symmetric embedding' in the plane then the aforementioned group is finitely presented (and thus accessible). This was later extended to all planar Cayley graphs by Droms in \cite{droms2006infinite}. A recent account of Maskit's theorem has been given by Bowditch in \cite{bowditch2022notes}. It is shown in \cite{arzhantseva2004cayley} by Arzhantseva--Cherix that `most' Cayley graphs of finitely presented groups are non-planar, in a certain statistical sense. They also prove that the admission of a planar Cayley graph is a property preserved by free products. 
An enumeration of planar Cayley graphs is given by Georgakopoulos--Hamann in \cites{georgakopoulos2019planari, georgakopoulos2019planarii}. Several other results relating to planar Cayley graphs are given in \cite{georgakopoulos2020planar}, including the observation that any group acting properly discontinuously on a planar manifold admits a planar Cayley graph. 

The concept of accessibility was later reframed by Thomassen--Woess as a graph theoretical property. They say that a connected, locally finite graph is accessible if there exists some positive integer $k \geq 1$ such that any pair of distinct ends can be separated by the removal of at most $k$ vertices. It is shown in \cite{thomassen1993vertex} that a finitely generated group is accessible if and only if its Cayley graphs are accessible as graphs, and thus the two definitions coincide. 
When studying infinite connected graphs, it is common to assume that our graphs are \textit{quasi-transitive}. That is, that the automorphism group acts with finitely many orbits on the vertex set. Dunwoody showed in \cite{dunwoody2007planar} that any connected, locally finite, quasi-transitive, planar graph is accessible. Another proof of this fact was later given by Hamann in \cite{hamann2018planar} by studying the cycle space of such a graph (see also \cite{hamann2018accessibility}). 
Recently, this result was strengthened by Esperet--Giocanti--Legrand-Duchesne in \cite{esperet2023structure}, where it is proven that any connected, locally finite, quasi-transitive, minor-excluded graph is accessible. 

It is interesting to note that accessibility is, in some sense, a `coarse' property. More precisely, after Gromov we say that two metric spaces $X$ and $Y$ are \textit{quasi-isometric} if there exists maps $f : X \to Y$, $g : Y \to X$ such that both $f$ and $g$ preserve distances up to some additive and multiplicative error, and $f \circ g$ lies a bounded distance from the identity map under the sup-norm (see \S\ref{sec:qi}). 
Accessibility in the above sense is readily seen to be a quasi-isometry invariant amongst locally finite graphs, and so we would expect coarse assumptions to have an effect on accessibility. For example, Dunwoody's proof in \cite{dunwoody1985accessibility} actually shows that 'coarsely simply connected' quasi-transitive graphs are accessible. Also, M\"oller \cite{moller1996accessibility} gives a characterisation of inaccessible quasi-transitive graphs as those with uncountably many `thick' ends, also a coarse property. 
Little else is known about how coarse hypotheses affect accessibility. To this end, we now state our main theorem.

\begin{restatable}{alphtheorem}{acc}\label{thm:acc-intro}
Let $X$ be a connected, locally finite, quasi-transitive  graph, and suppose $X$ is quasi-isometric to a planar graph.  Then $X$ is accessible. 
\end{restatable}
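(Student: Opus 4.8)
The plan is to argue by contradiction, using the fact recorded above that accessibility is a quasi-isometry invariant among connected locally finite graphs: assuming $X$ is inaccessible, I would extract from the hypotheses a configuration that cannot occur in a planar graph. I begin with some harmless reductions on the model graph. Since $X$ is connected, locally finite and quasi-transitive, one may replace the given planar graph by a plane graph $Y$, still quasi-isometric to $X$, that is as tame as possible (collapsing bounded sets, subdividing, and dealing with vertices of infinite degree where one can), and fix a quasi-isometry $f\colon X\to Y$ with quasi-inverse $g\colon Y\to X$ together with a plane embedding of $Y$. The only role of $Y$ is to import \emph{planarity}; the homogeneity we intend to exploit lives on the $X$ side.

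Next I would turn on the structure-tree machinery for quasi-transitive graphs (Dunwoody; Thomassen--Woess; see also M\"oller's description of inaccessible quasi-transitive graphs via uncountably many thick ends). Inaccessibility of $X$ yields an $\Aut(X)$-invariant family of finite vertex cuts whose orders are unbounded and which nest coherently into an infinite structure tree $T$ carrying an action $\Aut(X)\actson T$; in particular, for each $n$ there is a pair of ends of $X$ not separated by fewer than $n$ vertices, witnessed by a nested system of finite cuts. I would then push this system forward through $f$: a cut of order $m$ in $X$ becomes, in $Y$, a \emph{coarse separator} --- at most $m$ sets of uniformly bounded diameter which together coarsely disconnect $Y$ --- and nested cuts in $X$ become coarsely nested coarse separators in $Y$. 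Crucially, quasi-transitivity of $X$ forces the coarse separators at a given level of $T$ to be `the same up to bounded error', so one obtains in $Y$ an unbounded, self-similar, coarsely nested family of coarse separators.

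The heart of the matter --- and the step I expect to be the main obstacle --- is to show that such a family cannot live in a planar graph. Here I would exploit the tight link between finite cuts in a plane graph and the cycle/face structure of its embedding (the mechanism behind Hamann's proof of Dunwoody's planar accessibility theorem): via the Jordan curve theorem each coarse separator sits in a bounded neighbourhood of a simple closed curve, and the coarsely nested, self-similar family behaves like a lamination of circles of growing combinatorial thickness. Combining this planar rigidity on the $Y$ side with the $\Aut(X)$-homogeneity of the cut system on the $X$ side, one should reach a contradiction in one of two ways: either by assembling from an unbounded nested cut system a fat $K_5$- or $K_{3,3}$-minor of arbitrarily large fatness inside $Y$, contradicting planarity via Wagner's theorem; or by showing that the planar constraint forces the separator orders in $X$ to be bounded after all. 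A complementary route, developed in parallel, is to prove that a quasi-transitive graph quasi-isometric to a planar graph must uniformly exclude some fat minor, and then to upgrade this --- via a transfer argument --- to the statement that $X$ is quasi-isometric to a quasi-transitive \emph{minor-excluded} graph, at which point accessibility follows from the theorem of Esperet--Giocanti--Legrand-Duchesne together with the quasi-isometry invariance of accessibility. In every approach the crux is the same: converting planarity of the inhomogeneous model $Y$ into a workable constraint on the homogeneous graph $X$.
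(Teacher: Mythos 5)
Your proposal correctly identifies the central tension---planarity lives on the inhomogeneous model $Y$, while homogeneity lives on $X$---but the two routes you sketch both have genuine gaps, and neither matches the paper's actual mechanism.

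On the ``fat minor / Wagner'' route: quasi-isometry does not preserve minors in any usable way, so a large nested cut system in $X$ does not obviously assemble into a fat $K_5$ or $K_{3,3}$ minor in $Y$. Indeed, $Y$ is planar and hence already excludes these as (exact) minors, so the contradiction would have to come from \emph{fat} minors (asymptotic minors in the Georgakopoulos--Papasoglu sense), and showing that a quasi-transitive graph quasi-isometric to a planar graph excludes some fixed fat minor is itself an open problem; the paper explicitly records the relevant implication (Conjecture~\ref{con:qi-planar}) as open. On the Esperet--Giocanti--Legrand-Duchesne route: their theorem applies to quasi-transitive graphs which are themselves minor-excluded, not to graphs merely quasi-isometric to a minor-excluded graph. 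Your proposed ``transfer argument''---producing a quasi-transitive minor-excluded graph quasi-isometric to $X$---is precisely the missing step, and there is no known way to perform this transfer; again, this is essentially the content of the paper's open conjectures. So this route is circular.

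The paper's actual argument does not proceed by contradiction from an unbounded cut system. Instead it directly constructs a canonical tree decomposition of $X$ (via Theorem~\ref{thm:2-conn-to-wb}) so that each part is either one-ended or forms part of a ``well-behaved quasi-planar tuple.'' The two decisive technical inputs, which your sketch does not anticipate, are: (i) a coarse analogue of Whitney's unique embedding theorem (property~\ref{itm:gb1}, Lemma~\ref{lem:translate-big-faces}), showing that after passing to a highly connected part, the induced quasi-action on the planar model coarsely preserves facial subgraphs; and (ii) the use of Dunwoody tracks on a $2$-dimensional polyhedral complex, via the ``relatively cohomologically planar'' (CHomP) property and the coned-off complex $K_\per$, to produce nested, $G$-finite families of cuts (Theorems~\ref{thm:access-chomp}, \ref{thm:rel-acc}, \ref{thm:planar-CHomP}). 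It is only after building the right $2$-complex and establishing face-preservation that one can read off a nested cut system witnessing accessibility. Your sketch stops at the level of ``coarsely nested coarse separators in $Y$,'' which by itself is not enough: the hard part is exactly to convert this into a $G$-invariant nested family of cuts on the $X$ side, and that is what the track machinery and coarse Whitney theorem deliver.
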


By \textit{planar graph} we mean a connected, 1-dimensional cell complex which admits a topological embedding into the plane. All of our graphs are unweighted. 

In the presence of accessibility, most questions relating to the structure of a particular class of graphs can be essentially reduced to understanding those graphs with exactly one end. In our case, we have the following statement relating to one-ended graphs. 

\begin{restatable}{alphtheorem}{oneend}\label{thm:one-ended-intro}
    Let $X$ be a connected, locally finite, quasi-transitive, one-ended graph, and suppose $X$ is quasi-isometric to a planar graph. Then $X$ is also quasi-isometric to some complete Riemannian plane. 
\end{restatable}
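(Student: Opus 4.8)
The plan is to pass from the quasi-isometry hypothesis to an honest action on a surface, and then thicken that surface to a Riemannian plane. First I would use Theorem~\ref{thm:acc-intro}: since $X$ is one-ended and quasi-transitive, accessibility is automatic, but the real content is that a one-ended accessible quasi-transitive graph quasi-isometric to a planar graph should itself admit a $\Homeo^+(\R^2)$-cocompact model. Concretely, I expect the paper to have already established (in the course of proving Theorem~\ref{thm:acc-intro}, via Dunwoody tracks on a suitable simplicial $2$-complex) that $X$ is quasi-isometric to a simply connected simplicial $2$-complex $K$ carrying a cocompact, properly discontinuous action of a group $G \leq \Aut(X)$, and that planarity of the quasi-isometry type forces $K$ to be planar — in fact, being simply connected and $2$-dimensional, $K$ is homeomorphic to $\R^2$ (a simply connected planar $2$-complex without boundary is a plane; if there is boundary, double it or cone it off equivariantly). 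The one-endedness of $X$ guarantees $K$ is non-compact, so it is genuinely the plane rather than the sphere.

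Next I would put a $G$-invariant piecewise-Euclidean (or piecewise-hyperbolic) metric on $K$: give each $2$-simplex a fixed Euclidean triangle shape, chosen consistently on the finitely many $G$-orbits of simplices. By Bridson--Haefliger this makes $K$ a complete geodesic space, $G$ acts by isometries cocompactly and properly discontinuously, and the Švarc--Milnor lemma gives $K \overset{\QI}{\sim} G \overset{\QI}{\sim} X$. This $K$ is a complete length metric on a space homeomorphic to $\R^2$, but it is only piecewise-smooth, not Riemannian. To upgrade it, I would smooth the metric in a $G$-equivariant way: work on the (compact) quotient orbifold $K/G$, smooth the piecewise-Euclidean metric there away from and across the cone points — or, to avoid orbifold subtleties, first replace $G$ by a finite-index torsion-free subgroup if one exists, and otherwise smooth directly on the quotient $2$-orbifold and lift. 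Smoothing changes the metric by a bounded bilipschitz factor on the compact quotient, hence by a bounded factor upstairs, so the quasi-isometry type is unchanged and we obtain a complete Riemannian metric on $\R^2$ quasi-isometric to $X$.

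The main obstacle, and where I would spend the most care, is the very first step: extracting from the quasi-isometry to a planar graph a genuine \emph{planar} (indeed $\R^2$-homeomorphic) $2$-complex on which $\Aut(X)$ — or a large subgroup — acts cocompactly. A quasi-isometry does not a priori respect planarity of any auxiliary complex one builds, so one must argue that the Dunwoody-track construction underlying Theorem~\ref{thm:acc-intro} can be carried out inside a surface: roughly, the system of tracks separating the ends, together with the complementary pieces (which are one-ended and quasi-isometric to pieces of the original planar graph), should be reassembled into a simply connected surface. Handling torsion in $\Aut(X)$ — ensuring the smoothing and the surface structure survive passage to the quotient orbifold — is the secondary technical point. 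Once the $G$-cocompact plane is in hand, the remaining steps (PE metric, Švarc--Milnor, equivariant smoothing) are routine.
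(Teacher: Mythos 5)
Your proposal has a genuine gap at exactly the step you flag as ``the main obstacle,'' and the gap is not merely technical --- it is the whole content of the theorem. You want to first produce a simply connected planar $2$-complex $K$ carrying a cocompact, properly discontinuous $G$-action, and you expect this to fall out of the Dunwoody-track machinery behind Theorem~\ref{thm:acc-intro}. It does not. That machinery (Section~\ref{sec:tracks} and Appendix~\ref{app:tracks}) produces a \emph{coned-off} complex $K_\per$ which is not planar, and the CHomP property yields a nested family of cuts, i.e.\ a tree decomposition --- not a surface structure. Worse, for a one-ended graph the track machinery is essentially vacuous (there are no pairs of ends to separate, and accessibility is automatic), so invoking Theorem~\ref{thm:acc-intro} buys you nothing here. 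Most importantly, a quasi-isometry $X \to \Gamma$ only induces a \emph{quasi-action} of $G$ on $\Gamma$, never an honest action, and there is no mechanism in the paper (or in general) that upgrades a quasi-action on a planar graph to a proper cocompact action on a plane without already knowing the conclusion. Indeed, a proper cocompact $G$-action on $\R^2$ is strictly stronger than what the theorem asserts, and the paper never proves it directly; in the group case one recovers it only afterwards via the Mess--Tukia--Gabai--Casson--Jungreis theorem, which takes the quasi-isometry to a Riemannian plane as \emph{input}. So your first step presupposes the result, and the remaining steps (PE metric, \v{S}varc--Milnor, equivariant smoothing), while routine, are conditioned on a hypothesis you cannot establish.

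The paper's actual route avoids group actions on surfaces entirely. After normalising so that $\Gamma$ is bounded-valence, $2$-connected, and the quasi-isometries are continuous (Lemma~\ref{lem:2-conn-subgraph}, Propositions~\ref{prop:qi-wlog}, \ref{prop:cts-inverse}), it fixes a VAP-free drawing of $\Gamma$ (Corollary~\ref{cor:vap-free}) and works directly with the induced \emph{quasi-action} of $G$ on $\Gamma$. One pushes three pairwise-diverging geodesic rays from $X$ through the quasi-isometry (Lemmas~\ref{lem:geodesic-rays}, \ref{lem:disjoint-paths-one-end}), and then a Jordan-curve argument, translated around by the quasi-action and combined with Lemma~\ref{lem:bounded-finite-pieces}, rules out the two obstructions sketched in the Introduction: finite faces of $\Gamma$ have uniformly bounded length (Lemma~\ref{lem:uniform-faces}) and $\Gamma$ has no infinite facial subgraph (Lemma~\ref{lem:noinffaces}). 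With those in hand, one simply glues a $2$-cell along every facial cycle; because the cycles are uniformly small, the inclusion of $\Gamma$ into the (triangulated) $1$-skeleton is a quasi-isometry, and the resulting complex is a piecewise-linear complete plane quasi-isometric to $\Gamma$, hence to $X$. No honest action on the resulting plane is produced or needed --- the Riemannian plane is obtained as a geometric object quasi-isometric to $X$, not as a $G$-space. If you want to salvage your outline, you should drop the ambition of a $G$-cocompact surface model and instead aim to control the faces of $\Gamma$ itself via the quasi-action, as the paper does.
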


Note that while many one-ended planar graphs arise as the 1-skeleta of complete Riemannian planes, it is not true that every such graph is quasi-isometric to such a plane. The above result relies on the presence of an appropriate quasi-action in order to tame the structure of our planar graph. 

We note that Theorems~\ref{thm:acc-intro} and \ref{thm:one-ended-intro} allow us to deduce the following structural characterisation of connected, locally finite, quasi-transitive graphs which are quasi-isometric to planar graphs.

\begin{restatable}{alphcor}{graph}\label{thm:graph-intro}
    Let $X$ be a connected, locally finite, quasi-transitive graph. Then $X$ is quasi-isometric to a planar graph if and only if $X$ admits a canonical connected tree decomposition $(T, \cV)$ with bounded adhesion and $T/G$ compact, where each part is either finite or quasi-isometric to a complete Riemannian plane. 
\end{restatable}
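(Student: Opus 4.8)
The plan is to obtain this as a fairly direct consequence of Theorems~\ref{thm:acc-intro} and \ref{thm:one-ended-intro} together with the standard theory of canonical tree decompositions of quasi-transitive graphs (in the style of Dunwoody--Krön, or the ``tree-amalgamation'' / Stallings-type decomposition machinery).

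\medskip

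\noindent\emph{The ``only if'' direction.} Suppose $X$ is quasi-isometric to a planar graph. By Theorem~\ref{thm:acc-intro}, $X$ is accessible. Now invoke the known fact that an accessible, connected, locally finite, quasi-transitive graph admits a canonical (i.e.\ $\Aut(X)$-invariant) tree decomposition of finite adhesion whose parts/torsos are either finite or one-ended, and in which the separators can be taken tight (this is the graph-theoretic counterpart of the accessible splitting of a group over finite subgroups; tightness and boundedness of adhesion come from working with an appropriate nested $\Aut(X)$-invariant set of separations of bounded order). So $X$ has such a decomposition; it remains to identify the one-ended torsos. Each one-ended torso $T$ is again connected, locally finite, and quasi-transitive (the torsos of a canonical tree decomposition of a quasi-transitive graph inherit quasi-transitivity, since $\Aut(X)$ acts on the decomposition tree with finitely many orbits and the torso stabilisers act cofinitely on the torsos). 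A key point is that each torso $T$ is itself quasi-isometric to a planar graph: a torso embeds quasi-isometrically into $X$ — more precisely, the subgraph of $X$ spanned by a part, together with the bounded-size ``virtual'' edges across separators, is quasi-isometrically embedded because the adhesion is bounded — and a quasi-isometrically embedded subspace of a space quasi-isometric to a planar graph need not itself be planar, so here one instead argues that $T$ inherits the property of being quasi-isometric to \emph{a} planar graph. (Concretely: being quasi-isometric to \emph{some} planar graph is preserved under passing to the factors in a finite-adhesion tree decomposition, because one can planarly glue back the bounded-size torso pieces — or, more cleanly, one observes the torsos are quasi-transitive graphs that coarsely embed as ``ends pieces'' of $X$ and runs the argument of Theorem~\ref{thm:acc-intro}'s proof localised to $T$.) Granting that, Theorem~\ref{thm:one-ended-intro} applies to each one-ended torso and yields that it is quasi-isometric to a complete Riemannian plane. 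This establishes the forward implication.

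\medskip

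\noindent\emph{The ``if'' direction.} Conversely, suppose $X$ admits a canonical tree decomposition with bounded adhesion and tight separators whose torsos are each finite or quasi-isometric to a complete Riemannian plane. A complete Riemannian plane is homeomorphic to $\R^2$ and so is itself planar in a coarse sense; more usefully, a graph quasi-isometric to a complete Riemannian plane is quasi-isometric to a one-ended planar graph (indeed a triangulation of the plane approximating the Riemannian metric). Then one builds a planar model of $X$ by a ``blow-up'' construction along the decomposition tree: replace each torso by a planar graph quasi-isometric to it, embedded in a disc, and glue these discs together along the (bounded-size) adhesion sets following the tree structure, nesting the discs of children inside small sub-discs of the parent near the corresponding separator. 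Because the adhesion is bounded and the separators are tight, each gluing involves only boundedly many identifications and can be carried out inside the plane without crossings (this is where the tree structure is essential — the dual tree of a planar decomposition can always be realised planarly). The result is a planar graph, and a routine check that the natural map $X \to$ (this planar graph) is a quasi-isometry (using boundedness of adhesion to control the distortion at the gluings, and quasi-transitivity / the canonicity of the decomposition to get uniform constants) completes the argument.

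\medskip

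\noindent\emph{Main obstacle.} The delicate point is the ``only if'' direction's claim that each one-ended torso is itself quasi-isometric to a planar graph, so that Theorem~\ref{thm:one-ended-intro} can be fed in; a torso is obtained from a subgraph of $X$ by adding virtual edges, and while bounded adhesion makes it quasi-isometric to a subspace of $X$, one must argue that this subspace's coarse geometry is still modelled on a planar graph rather than merely sitting inside one. I expect this to be handled either by a direct inspection of the tracks/structure produced in the proof of Theorem~\ref{thm:acc-intro} (which should already exhibit the torsos as coarsely planar) or by a gluing/surgery argument showing coarse planarity is inherited by factors of finite-adhesion tree decompositions; making that precise, with uniform constants across the (finitely many orbits of) torsos, is the technical heart of the corollary.
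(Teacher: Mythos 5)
Your overall architecture matches the paper's: for ``only if'' you use Theorem~\ref{thm:acc-intro} to get accessibility, produce a canonical tree decomposition with bounded adhesion and tight separators whose torsos have at most one end, show the torsos quasi-isometrically embed into $X$, and then feed the one-ended ones into Theorem~\ref{thm:one-ended-intro}; for ``if'' you glue planar models of the torsos along the decomposition tree (the paper does this too, citing the free-products-preserve-planar-Cayley-graphs argument of Arzhantseva--Cherix). However, there is a gap in the ``only if'' direction exactly where you flag the ``main obstacle,'' and your two proposed resolutions are both unnecessary and not clearly workable.

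The resolution you are missing is much simpler, and it is already in the paper's toolkit. You correctly note that each torso $X_u$ quasi-isometrically embeds into $X$ (via bounded adhesion / uniform coboundary, cf.\ Remark~\ref{rmk:torso-qi-parts}), hence quasi-isometrically embeds into the planar graph $\Gamma$. But then Proposition~\ref{prop:qi-wlog} converts that quasi-isometric embedding $X_u \to \Gamma$ into a surjective continuous quasi-isometry $X_u \onto \Lambda_u$ onto a \emph{subgraph} $\Lambda_u \subset \Gamma$. A subgraph of a planar graph is planar, so $X_u$ is quasi-isometric to a planar graph with no further work, and Theorem~\ref{thm:one-ended-intro} applies directly. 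Your parenthetical remark that ``a quasi-isometrically embedded subspace of a space quasi-isometric to a planar graph need not itself be planar'' is true but a red herring: you do not need the subspace itself to be planar, only to be quasi-isometric to a planar graph, and Proposition~\ref{prop:qi-wlog} gives precisely that. The alternatives you suggest --- ``planarly gluing back the pieces'' (which argues in the wrong direction: it would show the assembled graph is coarsely planar, not the individual factors) or ``re-running the proof of Theorem~\ref{thm:acc-intro} localised to $T$'' --- are both detours around a one-line observation. Once you replace the hand-waving with the Proposition~\ref{prop:qi-wlog} step, the forward direction is complete and coincides with the paper's proof.
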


Canonical tree decompositions are defined and discussed in \S\ref{sec:trees}, but should be interpreted as a graph-theoretical analogue to the toolbox of Bass--Serre theory. We remark that---with a little bit of extra work and a few heavy sledgehammer results---one can upgrade the complete Riemannian planes in Theorem~\ref{thm:one-ended-intro} and Corollary~\ref{thm:graph-intro} to be either the Euclidean or hyperbolic planes; see \cite{macmanus2024note} for details on this upgrade. 

\medskip

We now discuss the group-theoretical implications of Theorems~\ref{thm:acc-intro} and \ref{thm:one-ended-intro}.  
Recall that `the' Cayley graph of a finitely generated group is only well-defined up to quasi-isometry, and as such there is a rich literature on how coarse metric assumptions affect the structure of a group. For example, an important theorem of Gromov states that a Cayley graph of a finitely generated group  has polynomial growth if and only if said group is virtually nilpotent \cite{gromov1981groups}. Another key example, which will play a central role in this paper,  is a deep theorem originating in the work of Mess \cite{mess1988seifert} which provides a strong characterisation of those groups quasi-isometric to planes. The proof of this result spans several papers, with contributions from Casson--Jungreis \cite{casson1994convergence}, Gabai \cite{gabai1992convergence}, and Tukia \cite{tukia1988homeomorphic}. 
Summarising, we state the following. 

\begin{theorem*}[\cites{mess1988seifert, tukia1988homeomorphic, gabai1992convergence, casson1994convergence}]
    Let $G$ be a finitely generated group. Then $G$ is quasi-isometric to a complete Riemannian plane if and only if $G$ is a virtual surface group. 
\end{theorem*}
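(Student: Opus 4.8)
The plan is to prove the two implications separately, with essentially all of the difficulty concentrated in the `only if' direction; the `if' direction and the two terminal steps of the `only if' direction are applications of standard results (the Milnor--\v{S}varc lemma, Gromov's polynomial growth theorem, and the convergence group theorem), which I would quote as black boxes.

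For the `if' direction, suppose $G$ is a virtual surface group, and choose a finite-index subgroup $H \le G$ isomorphic to the fundamental group of a closed surface $\Sigma$ with $\chi(\Sigma) \le 0$ (the cases $\chi(\Sigma) > 0$ are excluded, since they force $H$, hence $G$, to be finite, and surfaces with boundary are excluded since they would make $G$ virtually free, and thus quasi-isometric to a tree rather than a plane). Equip $\Sigma$ with a Riemannian metric of constant curvature --- flat if $\Sigma$ is the torus or the Klein bottle, hyperbolic otherwise --- and let $\widetilde{\Sigma}$ be its universal cover with the lifted metric. Then $\widetilde{\Sigma}$ is a complete Riemannian plane, isometric to $\mathbb{R}^2$ or to $\mathbb{H}^2$, on which $H$ acts properly discontinuously and cocompactly by isometries; by the Milnor--\v{S}varc lemma, $H$ is quasi-isometric to $\widetilde{\Sigma}$. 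Since a finite-index subgroup is quasi-isometric to the ambient group, $G$ is quasi-isometric to $\widetilde{\Sigma}$, as required.

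For the `only if' direction, suppose $G$ is quasi-isometric to a complete Riemannian plane $P$. Such a plane is one-ended, and the number of ends is a quasi-isometry invariant, so $G$ is one-ended, and in particular infinite. The crucial step --- which I expect to be the main obstacle --- is a rigidity statement essentially due to Mess \cite{mess1988seifert}: because $P$ is homeomorphic to $\mathbb{R}^2$ and carries a cocompact quasi-action induced by $G$, the space $P$ is quasi-isometric \emph{either} to the Euclidean plane $\mathbb{R}^2$ \emph{or} to the hyperbolic plane $\mathbb{H}^2$. The content here is to rule out every `intermediate' coarse geometry, and this is exactly where planarity enters: one analyses how curves coarsely separate $P$ --- using that $P$ is coarsely simply connected and behaves like a plane in a suitable coarse-homological sense --- to obtain a zero--one law for Gromov hyperbolicity, and then identifies the hyperbolic alternative with $\mathbb{H}^2$ and the non-hyperbolic alternative with $\mathbb{R}^2$.

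Granting this dichotomy, the proof finishes quickly. If $P$, and hence $G$, is quasi-isometric to $\mathbb{R}^2$, then $G$ has polynomial growth, so $G$ is virtually nilpotent by Gromov's theorem \cite{gromov1981groups}; since $\mathbb{R}^2$ has asymptotic dimension $2$ and asymptotic dimension is a quasi-isometry invariant, $G$ has Hirsch length $2$ and is therefore virtually $\mathbb{Z}^2$, the fundamental group of the $2$-torus, which is a virtual surface group. If instead $P$ is quasi-isometric to $\mathbb{H}^2$, then $G$ is word-hyperbolic with Gromov boundary $\partial G$ homeomorphic to $\partial\mathbb{H}^2 = S^1$, and the action of $G$ on $\partial G \cong S^1$ by homeomorphisms is a uniform convergence action. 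By the convergence group theorem of Tukia, Gabai, and Casson--Jungreis \cite{tukia1988homeomorphic, gabai1992convergence, casson1994convergence}, after topological conjugation $G$ acts properly discontinuously and cocompactly by isometries on $\mathbb{H}^2$, i.e.\ $G$ is a cocompact Fuchsian group; by Selberg's lemma it has a torsion-free finite-index subgroup, which is then the fundamental group of a closed hyperbolic surface. In either case $G$ is a virtual surface group, completing the proof.
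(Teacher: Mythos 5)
The paper does not prove this statement: it is cited as a black box from the literature (Mess, Tukia, Gabai, Casson--Jungreis), and the surrounding text reads ``Summarising, we state the following.'' There is no proof in the paper to compare your sketch against; the result is used only as an input, restated in the form of Theorem~\ref{thm:mess} in Section~\ref{sec:accessibility} and invoked there directly.

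That said, your outline does track the shape of the literature argument: Milnor--\v{S}varc for the `if' direction, and for the `only if' direction a reduction to the dichotomy ``$P$ is quasi-isometric to $\mathbb{R}^2$ or to $\mathbb{H}^2$'' followed by Gromov's polynomial-growth theorem in the flat case and the convergence group theorem in the hyperbolic case. The genuine gap, which you yourself flag, is the dichotomy. This is not a lightweight reduction; it is essentially the content of Mess's theorem, established by putting a quasiconformal structure on $P$ compatible with the quasi-action and then uniformizing, not by the informal ``coarse separation of curves'' heuristic you gesture at. As written, the sketch replaces the hard step with a promissory note, so it does not yet constitute a proof of the `only if' direction. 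A secondary wrinkle: with the paper's definition of ``virtual surface group'' (finite-index subgroup isomorphic to $\pi_1$ of a closed surface), the trivial group qualifies via $S^2$, so the `if' direction fails for finite groups as literally stated. Your argument silently repairs this by restricting to $\chi(\Sigma)\le 0$, but the mismatch with the stated definition is worth noting, since it means you cannot always ``choose'' such a $\Sigma$ as your proof begins by doing.
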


In the above, a \textit{virtual surface group} is a group containing a finite index subgroup isomorphic to the fundamental group of a closed orientable surface of positive genus.
Alternative proofs and extensions of this result have been given by Bowditch \cite{bowditch2004planar} and Maillot \cite{maillot2001quasi}. In particular, Maillot shows the following. 

\begin{theorem*}[\cite{maillot2001quasi}]
    Let $G$ be a finitely generated group. Then $G$ is quasi-isometric to a complete, simply connected, planar Riemannian surface with non-empty geodesic boundary if and only if $G$ is virtually free. 
\end{theorem*}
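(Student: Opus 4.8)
The statement is an equivalence, and I would argue the two directions separately.

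For the \emph{if} direction, the plan is to exhibit, for every finitely generated virtually free group $G$, an explicit surface of the stated type to which $G$ is quasi-isometric. Such a $G$ contains a finite-index free subgroup $F\cong F_n$ and hence is quasi-isometric to $F_n$. If $n=0$ (so $G$ is finite), take $S$ to be a round closed hemisphere: a compact, simply connected, planar Riemannian surface whose boundary is a closed geodesic, and which is quasi-isometric to a point. If $n=1$ (so $G$ is virtually cyclic), take $S=\R\times[0,1]$ with the flat metric: its boundary is two geodesic lines, and $S$ is quasi-isometric to $\R$, hence to $\Z$, hence to $G$. If $n\geq 2$, write $F_n=\pi_1(\Sigma)$ where $\Sigma$ is the sphere with $n+1$ open disks removed, a planar compact surface with $\chi(\Sigma)=1-n<0$; equip $\Sigma$ with a hyperbolic metric with totally geodesic boundary, and let $S=\widetilde\Sigma$ be its universal cover, which develops onto a closed convex subsurface of $\HH^2$ bounded by complete geodesics. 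Then $S$ is complete, simply connected, planar (being a subsurface of $\HH^2\cong\R^2$), has non-empty totally geodesic boundary, and carries a geometric action of $F_n$, so by the Milnor--Švarc lemma it is quasi-isometric to $F_n$, and hence to $G$. The only routine points are that these surfaces $\Sigma$ are genuinely planar and that the developing map embeds $\widetilde\Sigma$ onto a convex region of $\HH^2$.

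For the \emph{only if} direction, suppose $G$ is quasi-isometric to a surface $S$ of the stated type. Since $S$ is a planar Riemannian surface, it is quasi-isometric to a planar graph, and hence so is $\Cay(G)$; being connected, locally finite and vertex-transitive, $\Cay(G)$ then satisfies the hypotheses of Theorem~\ref{thm:acc-intro}, so $G$ is accessible. Hence $G$ is the fundamental group of a finite graph of groups with finite edge groups whose vertex groups are finitely generated and have at most one end. If every vertex group is finite, then $G$ is the fundamental group of a finite graph of finite groups, hence virtually free, which is the desired conclusion; so it suffices to show that no vertex group is one-ended. Suppose $H$ is such a vertex group. Since the edge groups are finite, $H$ is undistorted in $G$, so composing $\Cay(H)\hookrightarrow\Cay(G)$ with a quasi-isometry $\Cay(G)\to S$ yields a quasi-isometric embedding of $\Cay(H)$ into $S$; passing to a bounded neighbourhood of its image inside a planar graph quasi-isometric to $S$, and using that subgraphs of planar graphs are planar, $\Cay(H)$ is again quasi-isometric to a planar graph. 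Being connected, locally finite, vertex-transitive and one-ended, $\Cay(H)$ then satisfies the hypotheses of Theorem~\ref{thm:one-ended-intro}, so it is quasi-isometric to a complete Riemannian plane $P$. Composing $P\to\Cay(H)\to S$ (a quasi-isometry followed by a quasi-isometric embedding) shows that $S$ contains a quasi-isometrically embedded complete Riemannian plane.

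It remains to rule this out, and this is the main obstacle: a complete, simply connected Riemannian surface $S$ with non-empty totally geodesic boundary that is quasi-isometric to a finitely generated group contains no quasi-isometrically embedded complete Riemannian plane; equivalently, such an $S$ is itself quasi-isometric to a tree. The heuristic is that $S$, being simply connected with non-empty boundary, deformation retracts onto a tree $\Gamma\subseteq S$ and so is ``coarsely $1$-dimensional'', whereas a quasi-isometrically embedded Riemannian plane would witness ``coarse $2$-dimensionality'' inside $S$. To make this precise I would use the quasi-action of $G$ on $S$ afforded by the quasi-isometry $G\to S$, together with simple connectivity and non-emptiness of $\partial S$, to show that $\Gamma$ --- equivalently, $\partial S$ together with a controlled family of geodesic cross-arcs --- is coarsely dense in $S$; the spine retraction $S\to\Gamma$ is then a quasi-isometry, so $G$ is quasi-isometric to a tree, contradicting that the one-ended subgroup $H\leq G$ is quasi-isometric to a plane. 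One could instead reach the contradiction through asymptotic cones (every asymptotic cone of such an $S$ is an $\R$-tree, a quasi-isometric embedding induces a bi-Lipschitz embedding of cones, and no asymptotic cone of a complete Riemannian plane is a tree), or through coarse separation and coarse Poincar\'e--Lefschetz duality (a quasi-isometrically embedded plane carries a nontrivial coarse top-dimensional homology class, which must vanish for a simply connected surface with non-empty boundary). Granting this, every vertex group in the decomposition of $G$ is finite, and $G$ is virtually free. I expect the passage from ``coarse $1$-dimensionality near $\partial S$'' to a global statement, using only the quasi-action, to be the genuinely delicate step; everything else is essentially formal given Theorems~\ref{thm:acc-intro} and~\ref{thm:one-ended-intro}.
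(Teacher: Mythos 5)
The theorem is quoted from Maillot's paper and is not proved anywhere in the present paper, so there is no internal proof to compare against; I will therefore only assess whether your derivation from the paper's results is complete. Your \emph{if} direction is correct and standard: the universal cover of a hyperbolic planar surface with totally geodesic boundary, together with the degenerate cases $\R\times[0,1]$ and a round hemisphere, does exactly what is claimed, by the Milnor--\v{S}varc lemma.

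For the \emph{only if} direction, the reduction you set up is right: Theorem~\ref{thm:acc-intro} gives accessibility, so $G$ decomposes as a finite graph of groups over finite edge groups, and the problem becomes ruling out a one-ended vertex group $H$. Your application of Theorem~\ref{thm:one-ended-intro}, via Proposition~\ref{prop:qi-wlog}, to conclude that such an $H$ would be quasi-isometric to a complete Riemannian plane $P$ is also correct; one could equally invoke Corollary~\ref{thm:tfae-intro} to say $G$ is virtually a free product of free and surface groups and try to exclude surface factors. But the last and essential step --- that $S$ contains no quasi-isometrically embedded complete Riemannian plane, equivalently that such an $S$ admitting a cobounded quasi-action is quasi-isometric to a tree --- is left as a heuristic, and none of the sketches you give closes it. The asymptotic-cone sketch presupposes that $S$ is Gromov hyperbolic, which is not given a priori (e.g.\ nothing yet rules out $S$ containing a quasi-flat Euclidean half-plane). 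The ``spine is coarsely dense'' sketch needs $\partial S$ to be recognised by the quasi-action, but a self-quasi-isometry of $S$ has no reason to preserve $\partial S$, so one must first exhibit a coarse, quasi-action-invariant characterisation of being near $\partial S$, and that is itself the crux. The coarse Poincar\'e--Lefschetz idea is plausibly the right one, but it requires a body of coarse duality machinery lying entirely outside this paper. This missing step is precisely the technical content of Maillot's theorem; it is not something routine arguments close. In short: you have correctly reduced the statement to a nontrivial rigidity fact about quasi-actions on bordered planar surfaces, but you have not proved that fact, so the derivation is incomplete.
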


These results illustrate the philosophy that `planarity' appears to be an incredibly rigid property amongst finitely generated groups. That is, weak assumptions relating to planarity tend to imply rather strong properties. We compare the above theorems with our next application, which extends the above and further supports this philosophy. 

\begin{restatable}{alphcor}{group}\label{thm:tfae-intro}
    Let $G$ be a finitely generated group. Then the following are equivalent. 
    \begin{enumerate}
        \item $G$ is quasi-isometric to a planar graph, 

        \item $G$ is quasi-isometric to a planar \textbf{Cayley} graph, 

        \item Some finite index subgroup of $G$ admits a planar Cayley graph, 
    
        \item $G$ is virtually a free product of free and surface groups. 
    \end{enumerate}
\end{restatable}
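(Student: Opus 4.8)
The plan is to prove the implications $(2)\Rightarrow(1)$, $(3)\Rightarrow(1)$, $(1)\Rightarrow(4)$, $(4)\Rightarrow(2)$ and $(4)\Rightarrow(3)$, which together yield the equivalence. The first two are immediate: a planar Cayley graph is in particular a planar graph, and if a finite-index subgroup $H\leq G$ has a planar Cayley graph $Y$ then $G$ is quasi-isometric to $H$, hence to $Y$. For $(4)\Rightarrow(2)$ and $(4)\Rightarrow(3)$, suppose $H\leq G$ has finite index with $H\cong F\ast S_1\ast\cdots\ast S_n$, where $F$ is free and the $S_i$ are closed surface groups. A free group has a planar Cayley graph (a tree), and each $S_i$ has a planar Cayley graph coming from its standard one-relator presentation, which embeds in $\R^2$ or $\HH^2$. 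By the theorem of Arzhantseva--Cherix \cite{arzhantseva2004cayley} that admitting a planar Cayley graph is preserved under free products, $H$ admits a planar Cayley graph $Y$; this gives $(3)$, and since $G$ is quasi-isometric to $H$ and hence to $Y$, it also gives $(2)$.

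The content of the corollary is $(1)\Rightarrow(4)$. Fix a finite generating set $S$ and let $X=\Cay(G,S)$, a connected, locally finite, vertex-transitive graph which by hypothesis is quasi-isometric to a planar graph. Corollary~\ref{thm:graph-intro} then provides a canonical tree decomposition $(T,\mathcal V)$ of $X$ with bounded adhesion and tight separators in which every torso is finite or quasi-isometric to a complete Riemannian plane. Canonicity makes this decomposition invariant under $\Aut(X)$, hence under $G$ acting by left translation; since $G$ acts freely and cocompactly on $X$ it is a uniform lattice in $\Aut(X)$, and using quasi-transitivity together with the tightness of the separators one checks that the induced action of $G$ on $T$ is cocompact with finite edge stabilizers (a finite separator of $X$ has trivial pointwise stabilizer, hence finite setwise stabilizer) and that each vertex stabilizer $G_t=\Stab_G(t)$ acts freely and cocompactly on the corresponding torso $X_t$. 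By the Milnor--\v{S}varc lemma $G_t$ is quasi-isometric to $X_t$, so $G_t$ is either finite or quasi-isometric to a complete Riemannian plane; in the latter case the theorem of Mess--Tukia--Gabai--Casson--Jungreis quoted above shows $G_t$ is a virtual surface group. Hence $G$ is the fundamental group of a finite graph of groups $\mathcal G$ with finite edge groups in which each vertex group is finite or a virtual surface group.

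To finish I will show that any such $G$ is virtually a free product of free and surface groups. Finite groups and virtual surface groups are residually finite, and a fundamental group of a finite graph of groups with finite edge groups over residually finite vertex groups is again residually finite; as $\mathcal G$ has only finitely many conjugacy classes of finite subgroups, a standard argument (intersect, over these classes, finite-index normal subgroups of $G$ each meeting a chosen representative trivially) produces a torsion-free finite-index subgroup $G'\leq G$. Restricting the action of $G'$ to the Bass--Serre tree of $\mathcal G$, the finite edge stabilizers become trivial, while each vertex stabilizer is a torsion-free finite-index subgroup of a conjugate of a vertex group, hence trivial or a closed surface group. Thus $G'$ is the fundamental group of a finite graph of groups with trivial edge groups and with vertex groups trivial or closed-surface, which is exactly a free product of finitely many closed surface groups and a finitely generated free group.

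I expect the main obstacle to be the passage in $(1)\Rightarrow(4)$ from the combinatorial canonical tree decomposition of $X$ to an honest graph-of-groups decomposition of $G$ with vertex groups quasi-isometric to the torsos: one must verify that $G$ itself, not merely $\Aut(X)$, acts cocompactly on the decomposition tree with finite edge stabilizers and that the vertex stabilizers act geometrically on the torsos, and this is precisely where the tightness of the separators and the description of $G$ as a uniform lattice in $\Aut(X)$ are needed. The remaining ingredients — the Milnor--\v{S}varc lemma, the Mess et al. theorem, and the standard Bass--Serre and residual-finiteness arguments — are routine by comparison.
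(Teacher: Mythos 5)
Your proposal is correct, but you take a genuinely different (and somewhat heavier) route for the key implication $(1)\Rightarrow(4)$. The paper applies Theorem~\ref{thm:acc-intro} together with \cite[Thm.~1.1]{thomassen1993vertex} to conclude directly that $G$ is accessible \emph{as a group}, so it splits as a finite graph of groups with finite edge groups and vertex groups with at most one end; since the edge groups are finite the vertex groups are quasi-isometrically embedded, hence by Proposition~\ref{prop:qi-wlog} quasi-isometric to subgraphs of $\Gamma$, hence quasi-isometric to planar graphs, and Theorem~\ref{thm:one-ended-intro} plus the Mess--Tukia--Gabai--Casson--Jungreis theorem then identifies the one-ended vertex groups as virtual surface groups (this is Proposition~\ref{thm:gsd-decom}). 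You instead invoke the graph-theoretic structure result, Corollary~\ref{thm:graph-intro}, and then convert the canonical tree decomposition of the Cayley graph into a graph-of-groups decomposition of $G$ by analysing stabilisers on the Bass--Serre tree. This works, and is a reasonable path, but it makes you re-derive by hand exactly the properties that the group-theoretic accessibility statement already packages for you: cocompactness of $G$ on $T$, finiteness of edge stabilisers, and the geometric action of vertex stabilisers on the torsos. Your self-acknowledged concern about this ``translation'' step is the right instinct; the paper simply avoids it.

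Two small corrections to the translation step as you wrote it. First, the tree decomposition of Corollary~\ref{thm:graph-intro} is $G$-canonical, not (as stated) $\Aut(X)$-canonical: canonicity in the paper is always relative to the acting group, and for $X = \Cay(G,S)$ one should simply apply Corollary~\ref{thm:graph-intro} with $G$ acting by left translation, so the detour through $\Aut(X)$ and the uniform-lattice language is unnecessary and not actually supported by the statement you are citing. Second, for Milnor--\v{S}varc on the torso $X_t$ you should record that $G_t$ preserves the virtual edges (because it preserves the adhesion sets), that the action is proper because a free action by graph automorphisms on a connected locally finite graph is automatically proper, and that $X_t$ is connected locally finite; none of this is hard but it is the kind of detail that your route forces you to supply and the paper's route does not.

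The remaining pieces agree with the paper. The chain of implications you chose is logically adequate (it yields all four equivalences). Your detailed justification of $(4)\Rightarrow(2),(3)$ via \cite{arzhantseva2004cayley} fills in what the paper dismisses as immediate. Your residual-finiteness argument for extracting a torsion-free finite-index subgroup is a correct (and more explicit) version of the ``classical fact'' the paper cites, and the observation that restricting to the Bass--Serre tree of $\mathcal G$ trivialises the finite edge groups and reduces vertex groups to trivial or closed surface groups is exactly right.
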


It is clear that (4) $\implies$ (3) $\implies$ (2) $\implies$ (1). Our contribution is showing the non-trivial implication that (1) $\implies$ (4). 
The idea is to apply Theorem~\ref{thm:acc-intro}, which allows us to restrict our view to one-ended groups only. Combining this with Theorem~\ref{thm:one-ended-intro}, the characterisation of virtual surface groups described above readily applies. 

It is interesting to note that Corollary~\ref{thm:tfae-intro} together with a result of Papasoglu--Whyte \cite{papasoglu2002quasi} imply that there are precisely eight quasi-isometry classes of finitely generated groups quasi-isometric to planar graphs, since every surface group is quasi-isometric to either the Euclidean plane $\R^2$ or the hyperbolic plane $\HH^2$. In particular, every such group is quasi-isometric to one of:
$$
1, \ \ \Z, \ F_2, \ \ \Z^2, \ \  \Sigma, \ \ \Z^2 \ast \Z^2, \ \ \Sigma \ast \Sigma, \ \ \Z^2 \ast \Sigma,
$$
where $F_2$ is the free group of rank two, and $\Sigma$ denotes the fundamental group of the closed orientable surface of genus 2. Note you cannot necessarily upgrade this to commensurability. For example, if $\Sigma_g$ denotes the fundamental group of a closed surface of genus $g > 0$, then
$$
\Sigma_2 \ast \Sigma_2 \ \ \text{and} \ \ \Sigma_3 \ast \Sigma_3
$$
are not commensurable (a straightforward exercise in the Kurosh subgroup theorem) but are quasi-isometric. 

\subsection*{Discussion of the proofs}

Before diving into the technical details of this paper, it will be helpful to motivate what is to come. 

Let us begin with Theorem~\ref{thm:one-ended-intro}. 
%
If we want to show that a (2-connected, locally finite) one-ended planar graph $\Gamma$ is quasi-isometric to a complete Riemannian plane, then our natural instinct is glue 2-cells into the faces and extend the graph metric on the graph to a Riemannian metric on the resulting plane. The following two pathologies could arise, which will halt this plan in its tracks. 
\begin{enumerate}
    \item There could be `infinite face paths', so the resulting complex is not a plane. 

    \item The finite faces of $\Gamma$ could be arbitrarily big, which will stop the inclusion of our graph into the constructed Riemannian surface from being a quasi-isometry. 
\end{enumerate}
%
Our strategy of proof for Theorem~\ref{thm:one-ended-intro} is to prove that neither of the two pathologies described above can occur in a one-ended planar graph which is quasi-isometric to a quasi-transitive graph.  We show this by studying the induced quasi-action on the planar graph, and using this to obtain some control over the local features of the graph.

We now look towards the headline result, Theorem~\ref{thm:acc-intro}. The following discussion will be mainly centered around finitely generated groups, but everything is equally applicable to quasi-transitive graphs. 

It will be instructive to first consider the Maskit planarity theorem \cite{maskit1965theorem}. 

\begin{theorem}[Maskit planarity theorem]\label{thm:maskit}
    Let $S$ be a planar surface equipped with a proper cocompact action by a group $G$, such that $\Sigma = S/G$ is a finite-type orbifold. Then $\Sigma$ contains a finite collection $\mathcal C$ of essential  simple closed curves such that
    \begin{enumerate}
        \item Given $\gamma \in \mathcal C$, every component of the lift $\tilde \gamma$ of $\gamma$ in $S$ is a simple closed curve, and

        \item The normal subgroup of $\pi_1(S)$ generated by the connected components of the lifts of curves in $\mathcal C$ is equal to $\pi_1(S)$. 
    \end{enumerate}
\end{theorem}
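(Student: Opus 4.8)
The plan is to translate the two conditions into group theory, reduce to a statement about normal generation, and only then feed in the planarity of $S$. We may assume $S$ is connected ($G$ acts transitively on the components of $S$ since $\Sigma$ is connected, so replace $G$ by a component stabiliser) and not simply connected (otherwise $\pi_1(S)=1$ and $\mathcal C=\varnothing$ works). Write $\Gamma=\pi_1^{\mathrm{orb}}(\Sigma)$, view $N:=\pi_1(S)\trianglelefteq\Gamma$ via the covering $S\to\Sigma$, and set $G=\Gamma/N$. For an essential simple closed curve $\gamma$ on $\Sigma$ avoiding the singular locus, condition (1) --- that each component of the preimage of $\gamma$ be an embedded circle rather than a line --- holds exactly when $\gamma$ lifts to loops, i.e.\ when $[\gamma]\in N$ (a conjugacy-invariant condition, as $N\trianglelefteq\Gamma$); and then each such component is automatically simple, a covering map being injective on the lift of an embedded loop. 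Moreover the components of all the lifts $\tilde\gamma$, $\gamma\in\mathcal C$, represent precisely the $\Gamma$-conjugates of the elements $[\gamma]$, so the normal subgroup of $\pi_1(S)$ they generate is the subgroup of $N$ generated by all $\Gamma$-conjugates of $\{[\gamma]:\gamma\in\mathcal C\}$ --- itself normal in $\Gamma$. Hence condition (2) says exactly that $N$ is the normal closure in $\Gamma$ of $\{[\gamma]:\gamma\in\mathcal C\}$. So the theorem reduces to producing a family of essential simple closed curves on $\Sigma$, each lying in $N$, whose classes normally generate $N$ in $\Gamma$.

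To build such a family I would use planarity of $S$ directly. Fix a topological embedding $S\hookrightarrow\bbS^2$; then $\pi_1(S)$ is free, and is generated by the simple closed curves in $S$ that separate the (totally disconnected) space of ends of $S$ into two clopen pieces. This family of curves is $G$-invariant, and each member --- being already a loop in $S$ --- projects to a loop on $\Sigma$ whose class lies in $N$; the subgroup of $N$ generated by all $\Gamma$-conjugates of these classes is normal in $\Gamma$, is contained in $N$, and contains a generating set of $N$, hence equals $N$. (Equivalently one may fix a compact subsurface $D\subset S$ with $GD=S$: then $\partial D$ is a finite union of simple closed curves, which together with a free generating system of simple closed curves for the planar surface $D$ cut $S$ into simply connected pieces and thus generate $\pi_1(S)$, all lying in $N$ and $G$-translating to a $G$-invariant family.) In either description we now have curves that do the job \emph{upstairs}; the one remaining difficulty is that their projections to $\Sigma$ need not be embedded.

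The crux, then, is to choose these curves so that they, and enough of their $G$-translates, are in $G$-equivariant general position, so that the projection of each to $\Sigma$ is an essential simple closed curve (self-crossings of a projection come from a curve meeting a translate of itself). This is exactly where planarity is essential: a planar surface contains no linked pair of disjoint simple closed curves, so curves separating clopen end-sets can be disentangled --- nested, or pushed apart towards their ends --- and there is enough room to make an equivariant choice that is free of self-crossings downstairs, at the cost of allowing the resulting family $\mathcal C$ on $\Sigma$ to be infinite and its members to cross one another, which conditions (1) and (2) permit. Granting this, $\mathcal C$ consists of essential simple closed curves, each lifts to simple closed curves by the reformulation of (1), and the classes $[\gamma]$ normally generate $N$ in $\Gamma$, which gives (2).

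I expect this last step --- the equivariant general-position argument that makes the projections embedded while preserving normal generation --- to be the genuine obstacle; it is the delicate combinatorial–topological heart of Maskit's theorem, and a careful modern treatment along these lines is given by Bowditch \cite{bowditch2022notes}.
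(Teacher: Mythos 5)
The paper does not prove this statement; it records Maskit's planarity theorem as background, citing Maskit's original article and Bowditch's modern account, and immediately draws its Corollary~\ref{cor:maskit-cor} as motivation. So there is no ``paper's proof'' to compare against.

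As for your proposal on its own merits: the group-theoretic translation in your first two paragraphs is correct and is the standard reduction. An essential simple closed curve $\gamma$ on $\Sigma$ avoiding the orbifold locus lifts to circles precisely when $[\gamma]$ lies in $N=\pi_1(S)\trianglelefteq\Gamma=\pi_1^{\mathrm{orb}}(\Sigma)$, each lift of an embedded loop under a covering is embedded, and the subgroup of $N$ generated by the components of all lifts is exactly the normal closure in $\Gamma$ of $\{[\gamma]:\gamma\in\mathcal C\}$ (conjugation by coset representatives across components combines with conjugation by $N$ to give all $\Gamma$-conjugates). So the theorem is indeed equivalent to: there is a family of essential simple closed curves on $\Sigma$, each representing an element of $N$, whose classes normally generate $N$ in $\Gamma$.

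The gap is that this equivalent statement is the whole content of Maskit's theorem, and your third paragraph does not establish it. Starting from a $G$-invariant family of separating simple closed curves in $S$ that generates $\pi_1(S)$, you need to replace it by a family whose projections to $\Sigma$ are embedded, while keeping the normal-generation property. You appeal to planarity (``curves separating clopon end-sets can be disentangled'') and then explicitly write ``Granting this \ldots''; you also concede at the end that this last step is ``the genuine obstacle'' and defer to Bowditch. That step is precisely where the argument lives. In particular, it is not a priori clear that the set of essential simple closed curves on $\Sigma$ with class in $N$ is nonempty, let alone normally generating; making a curve in $S$ project to an embedded curve downstairs requires a curve that is disjoint from all of its $G$-translates, and arranging this $G$-equivariantly for enough curves to generate $N$ is exactly the hard combinatorial--topological input. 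As written, the proposal is an outline of the standard reduction followed by a statement of the theorem's core difficulty, not a proof.
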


See \cite{bowditch2022notes} for a good discussion of the above theorem. 
An immediate observation to be made is that if we glue a disk along each of the components of the lifts of curves in $\mathcal C$ then the resulting cell complex is simply connected. In particular, we deduce the following. 

\begin{corollary}\label{cor:maskit-cor}
    Let $G$ be a finitely generated group acting properly and cocompactly on a planar surface $S$. Then $G$ is finitely presented. 
\end{corollary}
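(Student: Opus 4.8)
The plan is to promote the planar surface $S$ to a simply connected $2$-complex on which $G$ still acts properly and cocompactly, and then to invoke the classical fact that such an action forces $G$ to be finitely presented. First I would note that since $G \actson S$ is cocompact, the quotient $\Sigma = S/G$ is a compact, hence finite-type, orbifold, so the Maskit planarity theorem (Theorem~\ref{thm:maskit}) applies and yields a finite collection $\mathcal C = \{\gamma_1,\dots,\gamma_n\}$ of essential simple closed curves in $\Sigma$ (finiteness being arranged using that $\Sigma$ has finite type). Fix a $G$-invariant triangulation of $S$ adapted to the preimages $p^{-1}(\gamma_i)$; by condition~(1) of Theorem~\ref{thm:maskit} each connected component of each lift is then a subcomplex homeomorphic to a circle. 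Because $G$ acts cocompactly on $S$ and $\mathcal C$ is finite, the collection $\mathcal L$ of all such lift-components forms finitely many $G$-orbits, and the triangulation itself has finitely many orbits of simplices.

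Next I would form the complex $\hat S$ by equivariantly attaching a $2$-cell along each circle in $\mathcal L$ (choosing one disc per $G$-orbit and coning off the finite action of its stabiliser on the boundary circle). The $G$-action extends to $\hat S$, and it remains proper: the stabiliser of one of the new $2$-cells equals the stabiliser of the corresponding lift-component, which is finite since that component is compact and $G \actson S$ is proper; the stabilisers of the old cells are finite for the same reason. Since $\hat S$ has finitely many orbits of cells, the $G$-action on $\hat S$ is also cocompact. Moreover $\pi_1(\hat S)$ is the quotient of $\pi_1(S)$ by the normal subgroup generated by the curves of $\mathcal L$, which by condition~(2) of Theorem~\ref{thm:maskit} is all of $\pi_1(S)$; hence $\hat S$ is simply connected. (Here I use that $S$, being a surface, is locally nice enough for van Kampen-type computations with this locally finite family of embedded circles.)

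Finally I would appeal to the standard fact that a group acting properly and cocompactly by cellular homeomorphisms on a simply connected CW complex is finitely presented: passing to the $2$-skeleton of $\hat S$ (still simply connected, still a proper cocompact $G$-action), a finite set of orbit representatives of vertices and edges yields a finite generating set for $G$, and the finitely many orbits of $2$-cells supply a finite set of defining relations, so $G$ is finitely presented. The main obstacle I anticipate is the bookkeeping in the middle step: arranging the $G$-equivariant triangulation compatible with the lifted curves, performing the $2$-cell attachments equivariantly, and verifying that no infinite cell stabiliser is introduced, so that the promoted action on $\hat S$ is genuinely proper and cocompact; once this is secured, the simple-connectivity calculation and the finiteness criterion are routine.
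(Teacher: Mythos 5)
Your proof is correct and is precisely the argument the paper has in mind: the paper remarks immediately after stating Theorem~\ref{thm:maskit} that gluing disks along the lift-components yields a simply connected complex, and Corollary~\ref{cor:maskit-cor} is then deduced from the standard fact that a proper cocompact action on a simply connected $2$-complex implies finite presentation. You have simply filled in the equivariance, properness, and cocompactness bookkeeping that the paper leaves implicit.
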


Given a proper and cocompact action on a planar graph, one can often extend this to a suitable action on a planar surface using the following.

\begin{theorem}[{Whitney planar embedding theorem}]\label{thm:whitney}
    Let $\Gamma$ be a 3-connected, locally finite, planar graph. Then $\Gamma$ embeds uniquely into $\bbS^2$ up to post-composition of homeomorphisms of $\bbS^2$. 
\end{theorem}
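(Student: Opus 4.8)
The plan is to separate the statement into a combinatorial rigidity part and a topological reconstruction part: (i) the collection of cycles of $\Gamma$ which bound a face is the same for \emph{every} embedding $\Gamma \hookrightarrow \bbS^2$; and (ii) this collection of face boundaries determines the embedding up to a homeomorphism of $\bbS^2$. Granting both, if $\varphi_1,\varphi_2 \colon \Gamma \hookrightarrow \bbS^2$ are two embeddings then they have the same face-boundary set by (i), hence differ by a homeomorphism of $\bbS^2$ by (ii), which is the assertion. (Existence of an embedding into $\bbS^2$ is immediate from planarity; uniqueness is the content.)

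For (i) I would prove Tutte's characterisation of face boundaries: in any embedding of $\Gamma$ into $\bbS^2$, a cycle $C$ of $\Gamma$ bounds a face if and only if $C$ is an induced cycle and $\Gamma - V(C)$ is connected. (Recall that every face of a $2$-connected plane graph is bounded by a cycle.) In the forward direction, a chord $xy$ of a face-bounding cycle $C$ would be drawn in the complementary Jordan region of $C$ and, together with the two $x$--$y$ arcs of $C$, would display $\{x,y\}$ as a $2$-separator of $\Gamma$; that $\Gamma$ has no vertex of degree at most $2$ rules out the degenerate configurations. If instead $\Gamma - V(C)$ were disconnected, then analysing the bridges of $C$ lying on the non-face side, using the cyclic order on $C$ and again $3$-connectivity, produces a $2$-separator of $\Gamma$. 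The converse runs the same arguments in reverse: an induced, non-separating cycle $C$ bounds two Jordan regions in a given embedding, and if neither were a face then each would contain a vertex of $\Gamma$, which together with the two hypotheses on $C$ forces a $2$-cut. Since ``$C$ induced'' and ``$\Gamma - V(C)$ connected'' are properties of the abstract graph $\Gamma$ alone, the face-boundary set is independent of the embedding. When $\Gamma$ is infinite and locally finite the same connectivity bookkeeping applies verbatim to finite cycles $C$, with ``face'' taken in $\bbS^2$; the bridges of $C$ may now be infinite, but this changes nothing.

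For (ii), one recovers a rotation system from the face-boundary set $\mathcal B$: two edges at a vertex $v$ are declared consecutive in the cyclic order at $v$ precisely when some member of $\mathcal B$ passes through them consecutively at $v$. Using that each edge of a $2$-connected plane graph lies on exactly two faces, this is a genuine cyclic order at each vertex, the only ambiguity being the simultaneous reversal of all of them, that is, a global orientation of $\bbS^2$. Hence two embeddings with the same $\mathcal B$ induce the same rotation system up to global reversal; replacing $\varphi_2$ by its composition with an orientation-reversing homeomorphism of $\bbS^2$ if necessary, they induce the same rotation system, and one then assembles a homeomorphism $h$ of $\bbS^2$ with $h\circ\varphi_1=\varphi_2$ in stages. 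First take a homeomorphism $\varphi_1(\Gamma)\to\varphi_2(\Gamma)$ sending vertices to vertices and edges to edges; since the rotation systems agree it matches each face boundary of $\varphi_1$ with one of $\varphi_2$, and because each face, being a Jordan domain bounded by a cycle, is an open disk, the Schoenflies theorem extends the map across each corresponding pair of faces. For finite $\Gamma$ this exhausts $\bbS^2$. For infinite locally finite $\Gamma$ one must work harder: the frontier of a face may contain accumulation points of vertices, so one first verifies that the faces are still disks, and then that the assembled map extends continuously over the closed, nowhere dense set of accumulation points. This last step, together with the two $2$-separator arguments of (i), is what I expect to be the main obstacle; the remainder is bookkeeping.
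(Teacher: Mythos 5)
The paper offers no proof of this theorem: it cites Whitney \cite{whitney19332} for finite graphs and Richter--Thomassen \cite{richter20023} for the locally finite infinite extension, and moves on. Your proposal is therefore supplying a proof that the paper itself does not, and I can only assess it on its own merits. For finite $\Gamma$, steps (i) and (ii) together constitute the standard modern account of Whitney's theorem: Tutte's peripheral-cycle characterisation of face boundaries, followed by reconstruction of the rotation system and assembly of a sphere homeomorphism disc by disc via Schoenflies. Nothing to complain about there beyond a cosmetic slip (in the converse of (i), two regions each containing a vertex contradicts $\Gamma - V(C)$ connected directly; no separate $2$-cut argument is needed).

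The infinite case, however, has a gap you do not name. Your step (i) characterises the cycles of $\Gamma$ that bound faces. But in a locally finite planar graph, faces need not be bounded by cycles: as the paper itself records in its discussion around Proposition~\ref{prop:simple-face}, a facial subgraph of a $2$-connected locally finite planar graph is either a simple cycle or a disjoint union of bi-infinite paths, the latter occurring precisely when the face boundary passes through ends of $\Gamma$. Three-connectivity does not rule this out. So Tutte's criterion applied to cycles alone does not determine the full face-boundary set; it says nothing about which families of bi-infinite paths together bound a face, and it does not determine the rotation system at a vertex all of whose incident faces are infinite. You wave this away with ``the bridges of $C$ may now be infinite, but this changes nothing,'' but that remark is about a different issue (infinite bridges of a finite cycle), not about the existence of infinite face boundaries with no finite $C$ to apply the criterion to. You do flag the accumulation-point extension problem in step (ii) as ``the main obstacle,'' which is fair, but the step (i) gap logically precedes it: without a purely combinatorial description of all face boundaries, finite and infinite, you cannot even begin to argue that two embeddings have the same faces. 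Closing both gaps is essentially the content of Richter--Thomassen \cite{richter20023}, which works with the Freudenthal compactification $\overline\Gamma$ from the start; if you want a self-contained proof you should probably do the same, rather than attempting to bolt the ends on afterward.
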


This result is due to Whitney \cite{whitney19332} in the case of finite graphs, and was later extended to locally finite infinite graphs by Richter--Thomassen  \cite{richter20023}. It has the following consequence, which was noticed by Dunwoody in \cite{dunwoody2007planar}.

\begin{theorem}
    Let $G$ be a finitely generated group acting properly and cocompactly on a connected, locally finite, planar graph $\Gamma$. Then $G$ is finitely presented (and thus accessible). 
\end{theorem}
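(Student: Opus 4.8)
The plan is to upgrade the action on the planar graph $\Gamma$ to an action on a simply connected $2$-complex, and then invoke the standard fact that a group acting cocompactly on a simply connected complex with finitely presented (here, finite) cell stabilisers is finitely presented. The key observation, as Dunwoody noted, is that $3$-connectedness is what makes the Whitney embedding \textbf{canonical}, hence $\Aut(\Gamma)$-equivariant: if $\Gamma$ is $3$-connected, locally finite and planar, then by Theorem~\ref{thm:whitney} it has an essentially unique embedding $\iota : \Gamma \into \bbS^2$, and any graph automorphism $g \in \Aut(\Gamma)$ can be realised by a homeomorphism $\bar g$ of $\bbS^2$ carrying $\iota(\Gamma)$ to itself (the face structure is combinatorially determined). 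So I would first reduce to the $3$-connected case: given a general connected, locally finite, planar $\Gamma$ with a proper cocompact $G$-action, pass to an appropriate $G$-invariant quotient / block-cut structure so that one may assume $\Gamma$ is $3$-connected — or, alternatively, just handle the reduction at the end via Dunwoody's accessibility of planar quasi-transitive graphs (Theorem in \cite{dunwoody2007planar}) plus Stallings and Dunwoody's theorem that finitely presented is preserved by finite graph-of-groups constructions with finitely presented vertex groups. For the core of the argument I will assume $\Gamma$ is $3$-connected.

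Next I would build the complex. Embed $\Gamma$ in $\bbS^2$ and consider the induced cellulation: vertices, edges, and the $2$-dimensional faces (the closures of complementary regions). Since $\Gamma$ is locally finite and $3$-connected, each face is bounded by a finite cycle (there are no "infinite face paths" — this uses that $\Gamma$ is locally finite and, crucially, one-ended-type considerations are bypassed because $\bbS^2$ is compact and the embedding is unique). Glue a $2$-cell into each face to obtain a $2$-complex $X$; topologically $X$ is (a quotient of) $\bbS^2$ if $\Gamma$ is connected with all faces disks, so $X$ is simply connected — indeed it is homeomorphic to $\bbS^2$ when $\Gamma$ is $3$-connected. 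Because the embedding is canonical, $G$ acts on $X$ by cellular homeomorphisms: an automorphism permuting vertices and edges of $\Gamma$ also permutes the faces (faces being recoverable from the cyclic edge-orderings at vertices, which are preserved up to the unique embedding), and we extend the action over the glued $2$-cells. The action is cocompact on $X$ since it was cocompact on $\Gamma$ and there are finitely many $G$-orbits of faces. Cell stabilisers are finite: vertex and edge stabilisers are finite because the $G$-action on the locally finite graph $\Gamma$ is proper, and a face stabiliser embeds into the (finite) symmetric group on its boundary cycle times the stabiliser of a boundary vertex.

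Finally, I would apply the classical presentation lemma: if a group $G$ acts cellularly and cocompactly on a simply connected $2$-complex $X$ with finitely generated (resp. finitely presented) vertex stabilisers and finitely generated edge stabilisers, then $G$ is finitely presented. Since here all cell stabilisers are finite, this applies verbatim, and we conclude $G$ is finitely presented; accessibility then follows from Dunwoody's theorem \cite{dunwoody1985accessibility} that finitely presented groups are accessible.

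The main obstacle is the reduction to the $3$-connected case: a general locally finite planar $\Gamma$ need not have a canonical planar embedding (Whitney uniqueness genuinely requires $3$-connectivity), so $\Aut(\Gamma)$ need not act on any natural sphere cellulation. One must either (i) use the theory of $3$-blocks / the Tutte decomposition to produce a $G$-invariant tree-of-$3$-connected-pieces, apply the above to each $3$-connected torso, and recombine via Bass--Serre theory using that finite presentability is stable under graphs of groups with finitely presented vertex and finitely generated edge groups; or (ii) sidestep it by invoking Dunwoody's prior result that such $\Gamma$ is accessible (so $G$ splits as a finite graph of groups with one-ended or finite vertex groups and finite edge groups) and then handle the one-ended $3$-connected pieces. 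Either route requires care that the decomposition is $\Aut(\Gamma)$-equivariant and has the right finiteness properties; this bookkeeping, rather than any deep new idea, is where the real work lies.
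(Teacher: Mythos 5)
Your claim that a $3$-connected locally finite planar graph has only finite face cycles, and that the complex obtained by gluing in faces is homeomorphic to $\bbS^2$, is false, and this is where your argument breaks down. A $3$-connected locally finite planar graph with more than one end necessarily has infinite face paths: a face $U \in \facedisks\Gamma$ can have $\partial U$ passing through ends of $\Gamma$, and the argument ``$\bbS^2$ is compact'' does not prevent this — the face boundary simply accumulates at points of $\vartheta(\Omega\Gamma)$. (The paper's own construction explicitly ``attach[es] copies of $\R \times [0,1]$ along infinite faces'' precisely because such faces occur.) Moreover, even after dealing with the infinite faces, the resulting object is a \emph{planar surface} — an open subsurface of $\bbS^2$ — which is \textbf{not} in general simply connected. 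It cannot be, whenever $\Gamma$ has more than one end: a multi-ended surface with nontrivial fundamental group cannot support your appeal to the classical presentation lemma for groups acting on simply connected $2$-complexes.

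The ingredient you are missing is Maskit's planarity theorem (Theorem~\ref{thm:maskit} and Corollary~\ref{cor:maskit-cor}): a finitely generated group acting properly and cocompactly on a planar surface is finitely presented, even when that surface is not simply connected — one uses the Maskit system of curves to cone off the right loops equivariantly and only \emph{then} land in a simply connected complex. That is what the paper's sketch invokes, and it is the genuinely nontrivial step in the $3$-connected one-ended-or-multi-ended case. Your route (i) for the reduction to $3$-connectedness via $3$-blocks / Tutte decomposition is in the same spirit as the paper's appeal to Droms--Servatius--Servatius \cite{droms1995structure} and is fine; your route (ii), invoking the accessibility of quasi-transitive planar graphs from \cite{dunwoody2007planar}, is circular here, since the theorem under discussion is exactly the observation made in that paper.
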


\begin{proof}[Sketch]
    First, assume that $\Gamma$ is 3-connected. 
    In this case, we can glue 2-cells along all finite faces of $\Gamma$, and attach copies of $\R \times [0,1]$ along infinite faces. Since the drawing of $\Gamma$ is unique by Theorem~\ref{thm:whitney}, this construction is equivariant, and the resulting complex is a planar surface with a proper cocompact $G$-action. By Corollary~\ref{cor:maskit-cor} we have that $G$ is finitely presented. 

    If $\Gamma$ is not 3-connected then a result of Droms--Servatius--Servatius \cite{droms1995structure} implies that $G$ splits as a graph of groups with finite edge groups, such that for every vertex group $G_v$ there exists a $G_v$-invariant 3-connected subgraph $\Gamma_v \subset \Gamma$, such that $\Gamma_v / G_v$ is compact. In particular, each $G_v$ is finitely presented by the previous case, and thus so is $G$. 
\end{proof}

We can now set out a rough plan of attack, modelled on the above sketch. Let $G$ be a finitely generated group which is quasi-isometric to a planar graph $\Gamma$.

\begin{enumerate}
    \myitem{(I)} Pass to a finite splitting of $G$ where each infinite-ended vertex group $G_v$ is quasi-isometric to some `highly connected' planar graph $\Gamma_v$. 

    \myitem{(II)} Abuse this high connectivity, and show that the quasi-action of $G_v$ upon $\Gamma_v$ `coarsely preserves' the faces of $\Gamma_v$. This should been viewed as a coarse-ification of Whitney's unique planar embedding theorem. 

    \myitem{(III)} Make use of this control of the quasi-action to learn something about the cycle space of our graph. Once we know a little about the cycle space of our graph, it's feasible that we might be able to say something about accessibility, e.g. via Hamann's theorem \cite{hamann2018accessibility}. 
\end{enumerate}

Life will not quite be as simple as described above, and we will need to come face to face with some delicate and subtle technicalities. These will be discussed in due course, but we suggest that the reader keep this rough plan in mind while working through the proof of Theorem~\ref{thm:acc-intro}.

\subsection*{Organisation of this paper}

We now walk through an outline of this paper. 

\begin{itemize}
    \item \S\ref{sec:prelims} introduces the basic tools and definitions required for the rest of the paper, and standardises our terminology and notation. 

    \item Next, \S\ref{sec:cuts} studies the geometry of cuts in graphs. This includes a short introduction to the Boolean ring of cuts, and also sets up the notation and basic results relating to canonical tree decompositions. 

    \item The following section, $\S\ref{sec:cuts-qi}$, continues to study cuts in graphs, but with a particular focus on proving some small technical results which describe how cuts and quasi-isometries interact. 

    \item After this, we proceed with proving Theorem~\ref{thm:one-ended-intro} in \S\ref{sec:one-end}. The style of argument employed here serves as a good warm-up for the main event later on, studying cobounded quasi-actions on one-ended planar graphs and how they interact with the faces.

    \item Up next, we begin our march towards Theorem~\ref{thm:acc-intro} by studying the cycle space of a graph in \S\ref{sec:cycles}. Particular focus is paid to the relationships that the cycle space enjoys with accessibility, planar graphs, and quasi-isometries. 

    \item \S\ref{sec:coboundaries} introduces a new tool, which we call the \emph{coboundary diameters} of a subgraph. We study how quasi-isometries affect these diameters. 

    \item The two sections after this, \S\ref{sec:friendly-faces} and \S\ref{sec:quasi-act-planar} are together the most technical part of this paper. Here we provide a detailed study of how quasi-actions affect planar graphs and their faces. The main result of these sections together is that if we begin `cutting-up' our planar graph along small cuts then eventually the dynamics of this quasi-action become quite controlled and predictable.

    \item Finally, in \S\ref{sec:accessibility} we piece all of the above together and prove Theorem~\ref{thm:acc-intro}. Following this, we deduce Corollaries~\ref{thm:graph-intro} and \ref{thm:tfae-intro}.

\end{itemize}

\subsection*{Acknowledgements}
I am grateful to Panos Papasoglu for bringing this problem to my attention and for his continuous support during this project. I also thank Martin Dunwoody, Agelos Georgakopoulos, and Bruce Richter for helpful correspondence, as well as Davide Spriano and Dawid Kielak for feedback.  



\section{Preliminaries}\label{sec:prelims}

We begin by setting up some standard notation and terminology, which will follow us throughout.

\subsection{Graphs}

By a \textit{graph}, we mean a 1-dimensional CW-complex.  
We call a $0$-cell a \textit{vertex} and a 1-cell an \textit{edge}. If $X$ is a graph, we denote by $V(X)$ the set of vertices of $X$, and $E(X)$ its set of undirected, closed edges. The vertices which $e \in E(X)$ abuts are called the \textit{endpoints of $e$}. If the endpoints of $e$ are $u, v \in V(X)$, then we may write $e = uv = vu$. 
It will also be convenient to work with \emph{oriented edges}, which is a choice of ordering of the endpoints of a given edge. Write $\vec E(X)$ for the set of oriented edges. We may notate such an edge $e \in \vec E(X)$ as an ordered pair $e = (u,v)$, where $u$ is the \emph{initial vertex} and $v$ is the \emph{terminal vertex}.
Given $e \in \vec E(X)$, we denote by $e^{-1}$ the opposite orientation of $e$.  
Note that there is a natural two-to-one map $\vec E(X) \to E(X)$ obtained by `forgetting' orientations. That is, the map $(u,v) \mapsto uv$. 

Given $v, u \in V(X)$, a \textit{combinatorial path}, or just \textit{path} between $v$ and $u$ is a sequence $e_1, \ldots , e_n$ of $e_i \in \vec E (X)$ such that $v$ is the initial vertex of $e_1$, $u$ is the terminal vertex of $e_n$, and for every $1 \leq i < n$ we have that the terminal vertex of $e_i$ is equal to the initial vertex of $e_{i+1}$. The \textit{length} of this path is $n$. We say that a path is \textit{simple} if any given vertex is visited at most once. If $p$ is a path from $u$ to $v$, and $q$ is a path from $v$ to $w$, then the concatenation $pq$ is a path from $u$ to $w$. Denote by $p^{-1}$ the reversal of $p$. A \textit{(closed) loop} is a path as above starting and ending at the same vertex. We say that a loop is \textit{simple} if every vertex is visited at most once, except the common initial/terminal vertex, which is visited exactly twice. We may parametrise loops with continuous maps $S^1 \to X$. 
The graph $X$ is said to be \textit{connected} if there exists a path connecting any two points.
If $X$ is connected, we equip $V(X)$ with a metric by defining $\dist_X(u, v)$ as the length of shortest path connecting $u, v \in V(X)$. We may extend this metric to all of $X$ by identifying each $e \in E(X)$ with a copy of the unit interval $[0,1]$. Note that throughout this paper, a connected subgraph will almost always be considered together with its own intrinsic metric, rather than the metric induced by the ambient graph. Given $r \geq 0$ and $S \subset \Gamma$, let $B_\Gamma(S;r)$ denote the closed $r$-neighbourhood of $S$ in $\Gamma$. 

The \textit{degree} or \textit{valence} of a vertex $v \in V(X)$ is the number of $e \in E X$ which abut $v$. We call $X$ \textit{locally finite} if every vertex has finite degree. We call $X$ \textit{bounded-degree } if there exists $N \geq 0$ such that every vertex has degree at most $N$. 

Given a subset $F \subset E(X)$, we denote by $X\setminus F$ the subgraph of $X$ obtained by removing the edges in $F$, but no vertices. Given a subgraph $Y \subset X$, we denote by $X\setminus Y$ the subgraph of $X$ obtained by removing all vertices of $X$ contained in $Y$, as well as all incident edges. Given a subset $U \subset V(X)$, we denote by $X[U]$ the \textit{subgraph induced by $U$}. That is, the subgraph of $X$ with vertex set precisely $U$, where we include $e \in E(X)$ if and only if both endpoints of $e$ lie in $U$. An \textit{induced subgraph} is a subgraph which is equal to the subgraph induced by its vertex set. 

If a group $G$ acts on a set with finitely many orbits, we say the action is \emph{$G$-finite}. 
When we say a group $G$ acts on a connected graph $X$, we mean by isometries with respect to the metric $\dist_X$ defined above. We may refer to a graph equipped with a $G$-action as a $G$-graph. Such an action is called \textit{quasi-transitive} if the action on $V(X)$ is $G$-finite, in which case we may call our graph a \emph{quasi-transitive $G$-graph}. Note that if $X$ is also locally finite, then there are only finitely many orbits of edges too, and $X$ is necessarily bounded degree.

We also introduce the following terminology, which is non-standard but very convenient. 

\begin{definition}[Quasi-transitively stabilised subgraph]\label{def:cocompact}
    Let $\Gamma$ be a connected $G$-graph, and $\Lambda \subset \Gamma$ a connected subgraph. We say that $\Lambda$ is \textit{quasi-transitively stabilised} if the set-wise stabiliser $\Stab(\Lambda) \leq G$ acts quasi-transitively on $\Lambda$. 
\end{definition}

\subsection{Ends and accessibility}

In what follows, let $X$ be a connected, locally finite graph.
A \textit{ray} $r$ in $X$ is an infinite sequence of directed edges $e_1, e_2, \ldots $ such that for any $r \geq 1$ the subsequence $e_1, \ldots, e_r$ is a path. A ray is \textit{simple} if every finite sub-path is simple. An infinite subpath of a simple ray $r$ is called a \textit{tail}. We call the initial vertex of $r$ the \textit{basepoint}. We say that two simple rays $r_1$, $r_2$ are \textit{end-equivalent} if for all compact subgraphs $K \subset X$, we have that there are tails of $r_1$ and $r_2$ which lie in the same connected component of $X \setminus K$. This is clearly an equivalence relation on the set of simple rays. An equivalence class is called an \textit{end} of $X$. The set of ends of $X$ is denoted by $\Omega (X)$. 
A \textit{bi-infinite ray $r$} is the union of two rays $r_1$, $r_2$ which share a common basepoint. If $r_i$ approaches $\omega_i \in \Omega (X)$, we may simply say that $r$ is a path between $\omega_1$ and $\omega_2$. 

It is helpful to sometimes consider a graph together with its ends as a single topological object. For this purpose, we have the following definition. 

\begin{definition}[Freudenthal compactification]
    Let $\Gamma$ be a connected, locally finite graph. Then, the \textit{Freudenthal compactification} of $\Gamma$ is defined to be the compact topological space $\overline \Gamma$ with underlying set $\Gamma \sqcup \Omega (\Gamma)$, whose topology is given by the open sets
$$
V_K(x) = \{y \in \Gamma \sqcup \Omega(\Gamma) : \textrm{there exists a path from $x$ to $y$ in $X \setminus K$}\},
$$
where $K$ is any compact subgraph of $\Gamma$. 
\end{definition}

It is easy to check that $\overline \Gamma$ is compact, Hausdorff,  and locally path connected \cite[\S 8.6]{diestelgraphs} and that the natural inclusion $\Gamma \into \overline \Gamma$ is a topological embedding.
We quickly record the following important property of the Freudenthal compactification, which sets it apart from other compactifications. 

\begin{proposition}[{\cite[Thm.~1.5(f)]{raymond1960end}}]\label{prop:freud-prop}
    Let $X$ be a connected, locally finite graph. Let $U \subset \overline X$ be a connected open set. Then $U \setminus \Omega (X)$ is also connected. 
\end{proposition}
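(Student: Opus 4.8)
The plan is to argue by contradiction, the crux being the elementary observation that a basic open neighbourhood of an end meets $X$ in a single connected component of the complement of a finite subgraph. First I would dispose of preliminaries: we may assume $U \neq \emptyset$, and recall from \cite[\S 8.6]{diestelgraphs} that $X$ is open and dense in $\overline X$, that $\Omega X = \overline X \setminus X$ is closed, and that $X$ (being a $1$-dimensional CW-complex) is locally connected. It follows that $U - \Omega X = U \cap X$ is a nonempty (as $U$ is open and $X$ is dense) open subset of $\overline X$ which is locally connected, so its connected components $\{D_i\}_{i \in I}$ are themselves open in $\overline X$. The goal is then to show $|I| = 1$.

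The key step I would prove is the following local fact. Let $\omega \in \Omega X$ and let $K \subset X$ be a compact, hence finite, subgraph. Then $X - K$ has only finitely many components; any two rays representing $\omega$ are end-equivalent and so eventually lie in one common component $C = C_K(\omega)$ of $X - K$; and, by the definition of the Freudenthal topology, $V_K(\omega) \cap X = C$. Thus $V_K(\omega) \cap X$ is a single nonempty component of $X - K$. Since $X$ is dense in $\overline X$, the set $V_K(\omega) \cap X$ is dense in the open set $V_K(\omega)$, whence also $V_K(\omega) \subseteq \overline C$, where the closure is taken in $\overline X$.

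With this in hand, I would suppose for contradiction that $|I| \geq 2$ and, writing $\overline{D_i}$ for the closure of $D_i$ in $\overline X$, define $\hat D_i := D_i \cup \bigl(U \cap \Omega X \cap \overline{D_i}\bigr)$. The claim is that $\{\hat D_i\}_{i \in I}$ is a partition of $U$ into nonempty open sets, which contradicts the connectedness of $U$. To verify this, fix an end $\omega \in U$ and use openness of $U$ to pick a finite subgraph $K$ with $V_K(\omega) \subseteq U$; by the local fact, $C := V_K(\omega) \cap X$ is a nonempty connected subset of $U \cap X$, hence lies in a unique component $D_i$, and $V_K(\omega) \subseteq \overline C \subseteq \overline{D_i}$ while also $V_K(\omega) \subseteq U$. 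This yields at once: (a) $\omega \in \hat D_i$, so the $\hat D_i$ cover $U$ (they cover $U \cap X = \bigsqcup_i D_i$ trivially); (b) $V_K(\omega) \subseteq \hat D_i$, so each $\hat D_i$ is a neighbourhood of each of its ends and hence --- together with openness of $D_i$ --- is open; and (c) if moreover $\omega \in \overline{D_j}$, then $\emptyset \neq V_K(\omega) \cap D_j \subseteq C \cap D_j$, and since $C$ is connected and contained in $U \cap X$ this forces $D_j = D_i$, so the $\hat D_i$ are pairwise disjoint.

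The main obstacle is the local fact, and in particular its use in (c): one needs that a ``boundary end'' of $U$ cannot be approached from two distinct components of $U \cap X$, which is exactly the content of the single-component description $V_K(\omega) \cap X = C_K(\omega)$, and this is where local finiteness of $X$ is genuinely used. Everything else is routine point-set topology once the Freudenthal topology has been unwound.
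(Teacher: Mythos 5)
The paper does not actually prove this proposition: it is stated with a bare citation to Raymond~\cite[Thm.~1.5(f)]{raymond1960end}, which establishes the analogous fact in the more general setting of Freudenthal end-point compactifications of suitable topological spaces. Your proposal is a correct, self-contained point-set argument specialised to the graph case, so there is no in-paper proof to compare against.

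Your argument is sound as written. The one step worth scrutinising, and which you correctly flag, is the local fact that $V_K(\omega)\cap X$ equals a \emph{single} component of $X-K$: this is where the structure of the basic neighbourhoods in the Freudenthal topology is used, and it is what drives both the openness of each $\hat D_i$ (every end of $\hat D_i$ has its entire basic neighbourhood $V_K(\omega)$ inside $\hat D_i$, since $V_K(\omega)\cap X = C\subseteq D_i$ and $V_K(\omega)\subseteq\overline C\subseteq\overline{D_i}$) and the disjointness (a boundary end cannot see two distinct components $D_i$, $D_j$, since $V_K(\omega)\cap D_i$ and $V_K(\omega)\cap D_j$ would both lie in the single connected set $C\subseteq U\cap X$, forcing $D_i = D_j$). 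The reduction to local connectedness of $U\cap X$ to make the $D_i$ open, and the density of $X$ to make $U\cap X$ nonempty, are both routine and handled correctly. In short: a valid elementary proof, more concrete than Raymond's general treatment, which the paper itself elides.
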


Given $\omega_1, \omega_2 \in \Omega (X)$ and a finite subset $F \subset E(X)$, we say that $F$ \textit{separates} $\omega_1$ and $\omega_2$ if any bi-infinite path between these ends must cross some $e \in F$. If $K \subset \Gamma$ is a compact subgraph and $U$ is a connected component of $\Gamma \setminus K$, we say that an end $\omega \in \Omega (\Gamma)$ \textit{lies in $U$} if every simple ray in $\omega$ has infinite intersection with $U$. 
We now have the following key definition. 

\begin{definition}[Accessibility for graphs]
    Let $X$ be a connected, locally finite graph. We say that $X$ is \textit{accessible}\footnote{Really, this defines what it means for $X$ to be \emph{edge-accessible}. One can define vertex-accessibility similarly. For bounded-degree graphs, it is clear that these notions are equivalent.} if there exists $k \geq 1$ such that for any pair of distinct ends $\omega_1, \omega_2 \in \Omega (X)$, we have that $\omega_1$, $\omega_2$ can be separated by the removal of at most $k$ edges. 
\end{definition}

Accessibility was originally a purely group theoretical idea, first considered by Wall \cite{wall1971pairs}. A finitely generated group is said to be \textit{accessible} if it is isomorphic to the fundamental group of a finite graph of groups with finite edge groups, where each vertex group has at most one end. It was shown by Thomassen--Woess that a finitely generated group is accessible if and only if its Cayley graphs are accessible \cite[Thm.~1.1]{thomassen1993vertex}.

\subsection{Menger's theorem}

We next state a classical result which will play a key role throughout this paper, known as Menger's theorem. 
We first need to introduce some notation. 

\begin{definition}
    Let $\Gamma$ be a connected graph, and let $x, y \in V(\Gamma) \sqcup \Omega (\Gamma)$ be distinct.  Define the \textit{vertex separation} and \emph{edge separation} of $x$ and $y$, denoted $\vs(x,y)$ and $\es(x,y)$ respectively, as
    \begin{align*}
        \vs(x,y) &= \inf\{|S| : S \subset V(\Gamma), \ \text{$x$, $y$ lie in distinct components of $\Gamma \setminus S$}\}, \\
        \es(x,y) &= \inf\{|F| : F \subset E(\Gamma), \ \text{$x$, $y$ lie in distinct components of $\Gamma \setminus F$}\}. 
    \end{align*}
    We now define the \textit{minimal vertex/edge end-cut size of $\Gamma$} as 
    \begin{align*}
        \vs(\Gamma) &= \min\{\vs(\omega_1, \omega_2) : \omega_1, \omega_2 \in \Omega (\Gamma), \ \omega_1 \neq \omega_2\},\\
        \es(\Gamma) &= \min\{\es(\omega_1, \omega_2) : \omega_1, \omega_2 \in \Omega (\Gamma), \ \omega_1 \neq \omega_2\}.
    \end{align*}
    If $\Gamma$ has at most one end, we adopt the convention that $\es(\Gamma) = \vs(\Gamma) = \infty$. 
\end{definition}

The following is immediate, but useful to note. 

\begin{proposition}\label{prop:vs-es-coincide}
    Let $\Gamma$ be a connected, bounded-degree graph where every vertex has degree at most $d \geq 0$. Then
    $
    \vs(\Gamma) \leq \es(\Gamma) \leq d \cdot \vs(\Gamma)
    $. 
\end{proposition}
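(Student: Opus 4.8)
The statement asserts two inequalities for a connected, bounded-valence graph $\Gamma$ with all degrees at most $d$: first $\vs(\Gamma) \le \es(\Gamma)$, and second $\es(\Gamma) \le d \cdot \vs(\Gamma)$. Since both quantities are defined as minima over pairs of distinct ends (with the convention that everything is $\infty$ when there are at most one end), it suffices to establish the corresponding pointwise inequalities $\vs(\omega_1,\omega_2) \le \es(\omega_1,\omega_2)$ and $\es(\omega_1,\omega_2) \le d \cdot \vs(\omega_1,\omega_2)$ for any fixed pair of distinct ends $\omega_1 \neq \omega_2$, and then take the minimum over all such pairs.

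For the left inequality, I would take a finite edge set $F \subset E\Gamma$ separating $\omega_1$ from $\omega_2$ and produce a vertex set of size at most $|F|$ that also separates them. The natural candidate is to pick, for each edge $e \in F$, one endpoint of $e$ — specifically the endpoint lying on the `$\omega_1$-side', or an arbitrary endpoint — giving a set $S$ with $|S| \le |F|$. The key point is that any simple bi-infinite ray from $\omega_1$ to $\omega_2$ must cross some edge $e \in F$, and hence passes through an endpoint of $e$; one needs to argue that the chosen endpoints genuinely block all such rays. A clean way is: delete $F$, look at the component $C$ of $\Gamma - F$ containing a tail of $\omega_1$, and let $S$ be the set of endpoints of edges in $F$ that lie in $C$. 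Then $S$ has size at most $|F|$ and removing $S$ from $\Gamma$ disconnects the $\omega_1$-tail from everything reached only by crossing $F$; a short argument via Proposition~\ref{prop:freud-prop} (or directly, by tracking rays) shows $\omega_1$ and $\omega_2$ lie in distinct components of $\Gamma - S$. Hence $\vs(\omega_1,\omega_2) \le |F|$, and taking the infimum over $F$ gives $\vs(\omega_1,\omega_2) \le \es(\omega_1,\omega_2)$.

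For the right inequality, I would take a finite vertex set $S \subset V\Gamma$ separating $\omega_1$ from $\omega_2$ and replace each vertex $v \in S$ by the set of edges incident to $v$; this has size at most $d \cdot |S|$ since every vertex has degree at most $d$. Call this edge set $F$. Any bi-infinite simple path from $\omega_1$ to $\omega_2$ passes through some $v \in S$ (as $S$ separates the ends), and therefore uses an edge incident to $v$, i.e.\ an edge of $F$ — actually it uses two such edges, but one suffices. Thus $F$ separates $\omega_1$ from $\omega_2$, giving $\es(\omega_1,\omega_2) \le d \cdot |S|$, and taking the infimum over $S$ yields $\es(\omega_1,\omega_2) \le d \cdot \vs(\omega_1,\omega_2)$.

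The only genuinely delicate point — the ``main obstacle'' such as it is — is the bookkeeping in the first inequality: one must be careful that removing a cleverly chosen set of endpoints (rather than all endpoints of $F$, which could be up to $2|F|$) still separates the two ends, and that the ends-versus-rays definitions of separation match up. This is where invoking Proposition~\ref{prop:freud-prop} on the Freudenthal compactification, or equivalently a direct argument that a ray escaping to $\omega_2$ must leave the $\omega_1$-component of $\Gamma - F$ and hence cross through a chosen endpoint, makes the argument rigorous. Everything else is immediate from the definitions, consistent with the paper's remark that the proposition ``is immediate''.
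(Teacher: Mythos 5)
Your proof is correct; the paper offers no argument of its own, simply asserting the proposition is ``immediate,'' so there is nothing to compare against. Both directions are handled with the right ideas: pass from an edge cut to a vertex cut by selecting one endpoint per cut edge, and pass from a vertex cut to an edge cut by taking all incident edges, using the degree bound. One small point worth tightening in the ``clean'' variant of the first inequality: the claim that the set $S$ of endpoints of $F$-edges lying in the component $C$ has size at most $|F|$ tacitly assumes each $e \in F$ has at most one endpoint in $C$. This is automatic once you pass to a minimal edge cut realising $\es(\omega_1,\omega_2)$ — any edge with both endpoints inside $C$, or both outside $C$, can be discarded without losing separation — or you can sidestep it entirely by sticking with your first formulation of choosing one arbitrary endpoint per edge, which gives $|S|\le|F|$ trivially, and still separates the ends because every bi-infinite path from $\omega_1$ to $\omega_2$ must traverse some $e\in F$ and hence pass through its chosen endpoint.
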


We may now state Menger's theorem as follows:

\begin{theorem}[Menger]\label{thm:menger}
    Let $\Gamma$ be a connected, locally finite graph, and let $x,y \in V(\Gamma) \sqcup \Omega (\Gamma)$. Fix $N \geq 1$. Then $\es(x,y) 
    \geq N$ if and only if there exists $N$ pairwise edge-disjoint paths connecting $x$ to $y$. 
    
    Similarly, we also have that $\vs(x,y) \geq N$ if and only if there exists $N$ pairwise internally vertex-disjoint paths connecting $x$ to $y$. 
\end{theorem}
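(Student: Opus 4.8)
The plan is to reduce everything to the classical \emph{finite} Menger theorem and then bootstrap, first to arbitrary locally finite graphs with two \emph{vertices}, and finally to the case where one or both of the endpoints is an end. One direction is essentially built into the definitions: if $R_1,\dots,R_N$ are pairwise edge-disjoint paths connecting $x$ to $y$ (rays, or bi-infinite rays, when $x$ or $y$ is an end), then any finite $F\subseteq E\Gamma$ with $x,y$ in distinct components of $\Gamma-F$ must meet every $R_i$ --- otherwise $R_i\subseteq\Gamma-F$ would still join $x$ to $y$ --- so $|F|\ge N$ and $\es(x,y)\ge N$; the vertex version is identical with a finite vertex set avoiding $x,y$. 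The substance is the forward implication.

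\textbf{Finite graphs and the two-vertex case.} For $\Gamma$ finite and $x,y\in V\Gamma$ the vertex statement is the classical Menger theorem, which I would cite; the edge statement follows from it by the standard construction (split each $w\notin\{x,y\}$ into an edge $w^-$--$w^+$ absorbing the incidences at $w$) relating internally vertex-disjoint paths to edge-disjoint paths. To pass to an infinite locally finite $\Gamma$ with $x,y\in V\Gamma$ (so that $\es(x,y)\le\deg x<\infty$), suppose $\es(x,y)\ge N$ but no $N$ edge-disjoint $x$--$y$ paths exist, and fix a finite connected exhaustion $K_1\subseteq K_2\subseteq\cdots$ with $x,y\in K_1$. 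By the finite case, $\es_{K_k}(x,y)\le N-1$ for every $k$, witnessed by some $F_k\subseteq EK_k$ with $|F_k|\le N-1$. Viewing the $F_k$ as points of the compact space $\{0,1\}^{E\Gamma}$ and passing to a subsequence along which each edge's membership stabilises yields $F_\infty\subseteq E\Gamma$ with $|F_\infty|\le N-1$ which separates $x$ from $y$ in $\Gamma$ (any finite $x$--$y$ path avoiding $F_\infty$ would lie in $K_k-F_k$ for large $k$), a contradiction. The vertex version is the same with vertex sets.

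\textbf{Reducing ends to vertices by contraction.} Now let $x,y\in V\Gamma\sqcup\Omega\Gamma$, and fix an exhaustion $(K_n)$ with the vertex among $x,y$ (if any) in $K_1$ and, if both are ends, with $x,y$ in distinct components of $\Gamma-K_1$. For each $n$ let $C_n(x)$ be the component of $\Gamma-K_n$ in which the end $x$ lies, and form $\Gamma_n$ by contracting $C_n(x)$ to a single vertex $\bar x_n$ (and similarly contract $C_n(y)$ to $\bar y_n$); leave a genuine vertex untouched. By local finiteness $\Gamma-K_n$ has only finitely many components, each with finite edge-boundary, so $\bar x_n,\bar y_n$ have finite degree and $E\Gamma_n$ is in natural bijection with the edges of $\Gamma$ not interior to a contracted region. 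Under this bijection a finite $F$ separating $\bar x_n$ from $\bar y_n$ in $\Gamma_n$ corresponds to an equal-sized $\widehat F\subseteq E\Gamma$: a ray (or bi-infinite ray) in $\Gamma$ joining $x$ to $y$ has a tail inside each $C_n(\cdot)$, and truncating it at the appropriate crossing edges of $\partial C_n(x),\partial C_n(y)$ projects to an $\bar x_n$--$\bar y_n$ path in $\Gamma_n$, which must cross $F$; hence $\widehat F$ separates $x$ from $y$ in $\Gamma$, giving $\es_{\Gamma_n}(\bar x_n,\bar y_n)\ge\es_\Gamma(x,y)\ge N$ (one can check the reverse inequality for all large $n$, so this quantity in fact stabilises, though that is not needed). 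By the previous step $\Gamma_n$ carries $N$ edge-disjoint $\bar x_n$--$\bar y_n$ paths, which re-expand to $N$ edge-disjoint paths in $\Gamma$ running from a representative vertex of $x$, into $C_n(x)$ and into $C_n(y)$, to a representative vertex of $y$.

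\textbf{Passing to the limit, and the main obstacle.} It remains to extract the required rays from these families, and this is the fiddly part. A system of $N$ edge-disjoint $\bar x_n$--$\bar y_n$ paths in $\Gamma_n$ restricts, by truncation at suitable crossing edges of $K_m$, to such a system in $\Gamma_m$ for every $m\le n$; the crucial point --- and the essential use of local finiteness --- is that for each fixed $m$ only finitely many restricted systems can occur, so that K\"onig's infinity lemma produces a coherent sequence of systems over all $m$. Their union is $N$ pairwise edge-disjoint rays (resp.\ bi-infinite rays) whose tails eventually lie in every $C_m(x)$ and every $C_m(y)$, hence which converge to $x$ and $y$ respectively. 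The delicate bookkeeping lies in arranging the truncations to be coherent and, crucially, so that the limiting rays do not wander back out of the nested regions $C_m(\cdot)$ (paths may enter and leave $K_m$ repeatedly); this is the step I expect to require the most care, but it is entirely routine, and the resulting statement is the ``Menger-type'' theorem for ends of the kind recorded in \cite[\S 8.6]{diestelgraphs}. The vertex statement is proved in exactly the same way, with ``edge-disjoint'' replaced by ``internally vertex-disjoint'' throughout; and the possibility that $\es(x,y)$ or $\vs(x,y)$ is infinite --- which can happen only when both $x,y$ are ends --- causes no difficulty, since $N$ is a fixed finite integer and one only ever needs $N$ of the available paths.
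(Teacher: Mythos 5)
The paper does not supply a proof of this theorem at all; it cites Menger \cite{menger1927allgemeinen} for finite graphs and Halin \cite{halin1974note} for the extension to locally finite infinite graphs (which is where the case of ends lives). So your proposal is not comparable to an argument in the paper --- it is a stand-alone proof attempt, and must be judged on its own.

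Your treatment of the two-vertex case is correct: reducing to the finite Menger theorem via an exhaustion and then applying compactness of $\{0,1\}^{E\Gamma}$ is a clean and complete argument. The gap is in the passage to ends, specifically at the K\"onig step. You assert that ``for each fixed $m$ only finitely many restricted systems can occur,'' but this does not hold under the interpretation you give: truncating an $\bar x_n$--$\bar y_n$ path of $\Gamma_n$ ``at suitable crossing edges of $K_m$'' yields a path in the \emph{infinite} graph $\Gamma_m$, which may wander through arbitrarily many of the uncontracted components of $\Gamma-K_m$, so there are infinitely many candidate restrictions. To recover finiteness one has to pass to traces $P_i\cap K_m$ on the finite graph $K_m$; but then the coherent K\"onig limit produces only $N$ edge-disjoint subgraphs $H_1,\dots,H_N$ of $\Gamma$ with maximum degree at most $2$, and such a subgraph need not contain a double ray between $x$ and $y$ --- it may be disconnected, consist of finite path components together with unrelated rays, or carry rays converging to the wrong ends. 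Upgrading this limit to $N$ genuine edge-disjoint rays (resp.\ double rays) converging to $x$ and $y$ is not ``routine bookkeeping''; it is exactly the technical content of Halin's theorem, and it is the one step your sketch omits. (There is also circularity lurking in your ``re-expansion'' step, which extends the finite paths ``to a representative vertex of $x$'' --- an end has no representative vertex, and producing compatible disjoint extensions into $C_n(x)$ is itself a Menger-type problem.)
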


This result was first shown to hold in finite graphs by Menger \cite{menger1927allgemeinen}. It was later extended to locally finite, infinite graphs by Halin \cite{halin1974note}. 

\begin{remark}
    One cannot drop the locally finite hypothesis as counterexamples exist. Consider a bi-infinite path $L$ plus one extra vertex $v$, and add an edge between $v$ and every vertex in $L$. The resulting graph $\Gamma$ is two-ended and $\vs(\Gamma) = 2$, but $\Gamma$ does not contain two disjoint bi-infinite paths. 
\end{remark}

\begin{corollary}\label{cor:pw-disjoint-rays}
    Let $\Gamma$ be a connected, locally finite graph, and suppose that $\vs(\Gamma) \geq N$ for some $N \geq 1$. Then between any two ends $\omega_1, \omega_2 \in \Omega (\Gamma)$ there exists a collection of $N$ pairwise disjoint bi-infinite rays connecting $\omega_1$ to $\omega_2$. 
\end{corollary}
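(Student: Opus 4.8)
This is an immediate consequence of the vertex form of Menger's theorem (Theorem~\ref{thm:menger}), and I would present it as such. If $\Gamma$ has at most one end then there are no two distinct ends and the statement is vacuous, so assume $|\Omega\Gamma| \geq 2$ and fix distinct ends $\omega_1, \omega_2 \in \Omega\Gamma$. By definition of $\vs(\Gamma)$ as the minimum of $\vs(\cdot,\cdot)$ over pairs of distinct ends, we have
$$
\vs(\omega_1,\omega_2) \ \geq \ \vs(\Gamma) \ \geq \ N.
$$

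The plan is now simply to apply Theorem~\ref{thm:menger} with $x = \omega_1$ and $y = \omega_2$, regarded as elements of $V\Gamma \sqcup \Omega\Gamma$: since $\Gamma$ is locally finite and $\vs(\omega_1,\omega_2) \geq N$, there exist $N$ pairwise internally vertex-disjoint paths $p_1, \dots, p_N$ connecting $\omega_1$ to $\omega_2$. It then remains only to unpack this conclusion. By the conventions of Section~\ref{sec:prelims}, a path connecting the two distinct ends $\omega_1$ and $\omega_2$ is precisely a bi-infinite ray one of whose tails converges to $\omega_1$ and the other to $\omega_2$; and since such a ray has no endpoint vertices, for these paths "internal vertex" means "any vertex". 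Hence the $p_i$ are pairwise vertex-disjoint, equivalently pairwise disjoint as subgraphs, which is exactly the asserted collection of $N$ pairwise disjoint bi-infinite rays connecting $\omega_1$ to $\omega_2$.

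I do not expect any genuine obstacle here; the corollary is essentially a restatement of Menger's theorem. The only points worth a sentence of care are the bookkeeping observation that "internally vertex-disjoint" degenerates to "disjoint" when the two distinguished points are ends rather than vertices, and the (already flagged) fact that the use of Menger's theorem for ends really does require local finiteness, which is among our standing hypotheses.
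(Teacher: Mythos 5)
Your proof is correct and is exactly the argument the paper has in mind; the statement is placed immediately after Menger's theorem precisely so that it can be read off as you describe. The only care required is the bookkeeping you flag: passing from $\vs(\Gamma) \geq N$ to $\vs(\omega_1,\omega_2) \geq N$ for a fixed pair of distinct ends, invoking the vertex form of Theorem~\ref{thm:menger} with $x = \omega_1$, $y = \omega_2 \in \Omega\Gamma$, and noting that when the two distinguished points are ends a "path" between them is a bi-infinite ray with no endpoint vertices, so "internally vertex-disjoint" collapses to "disjoint." You also correctly read "any two ends" as "any two distinct ends," which is the sensible interpretation since the convention $\vs(\Gamma)=\infty$ for a one-ended graph would otherwise make the statement false.
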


\subsection{Coarse notions}\label{sec:qi}

We briefly introduce some terminology relating to coarse geometry, which will follow us throughout this paper.

Let $X$ be a metric space. Let $x \in X$ and $A \subset X$. We define the distance between $x$ and $A$ as 
$$
\dist_X(x,A) := \inf\{\dist_X(x,a) : a \in A\}. 
$$
If $B \subset X$ is another subset, the \textit{Hausdorff distance between $A$ and $B$} is defined as
$$
\dHaus[X](A,B) := \max\left\{\sup_{a \in A}\dist_X(a,B) \ , \ \sup_{b \in B} \dist_X(b,A) \right\}. 
$$
Note that $\dHaus[X](A,B)$ is finite if and only if $A$ is contained in a finite neighbourhood of $B$ and vice versa.

Next, we introduce one of the central objects of this paper.

\begin{definition}[Quasi-isometry]
    Let $X$, $Y$ be metric spaces, $\lambda \geq 1$, $\varepsilon \geq 0$. Then a map $\psi : X \to Y$ is a $(\lambda, \varepsilon)$-\textit{quasi-isometric embedding} if 
    $$
    \tfrac{1}{\lambda} \dist_X(x,y) - \varepsilon \leq \dist_Y(\psi(x), \psi(y)) \leq \lambda \dist_X(x,y) + \varepsilon,
    $$
    for all $x, y \in X$. We call $\psi$ a \textit{$(\lambda, \varepsilon)$-quasi-isometry} if, in addition to the above, we have that
    $$
    \dHaus[Y](Y, \psi(X)) \leq \varepsilon. 
    $$
    A map satisfying this second condition is said to be \textit{coarsely surjective}. If there exists a quasi-isometry $\psi : X \to Y$, we say that $X$ and $Y$ are \textit{quasi-isometric}. 
    
    Finally, let $\eta \geq 0$. If $\varphi : Y \to X$ is a quasi-isometry such that $\dist_X(\varphi \circ \psi(x), x) \leq \eta$ for all $x \in X$, then we call $\varphi$ a \textit{$\eta$-quasi-inverse to $\psi$}. Note that every quasi-isometry has a quasi-inverse. 
\end{definition}

The next two results demonstrate that, among over things, we may assume without loss of generality that our quasi-isometries are continuous.

\begin{proposition}\label{prop:qi-wlog}
    Let $\Gamma$, $\Pi$ be connected graphs, and $\psi : \Gamma \to \Pi$ a quasi-isometric embedding. Then there exists a subgraph $\Lambda \subset \Pi$ and a surjective, continuous quasi-isometry $\varphi : \Gamma \onto \Lambda$. The inclusion $\Lambda \into \Pi$ is also a quasi-isometric embedding. Moreover, if $\Gamma$ is bounded-degree  then so is $\Lambda$. 
\end{proposition}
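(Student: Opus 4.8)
The plan is to replace the (possibly discontinuous) quasi-isometric embedding $\psi : \Gamma \to \Pi$ by a genuine map of graphs, at the cost of passing to a subgraph and paying in the quasi-isometry constants. First I would restrict attention to $V\Gamma$; since $\Gamma$ is a connected graph, it is quasi-isometric to $V\Gamma$ with the induced path metric, and likewise $\Pi$ is quasi-isometric to $V\Pi$, so it suffices to build the continuous map after understanding what $\psi$ does on vertices. For each vertex $v \in V\Gamma$, choose a vertex $\varphi(v) \in V\Pi$ within distance $\lambda + \varepsilon$ of $\psi(v)$ (e.g.\ the nearest vertex to $\psi(v)$, or $\psi(v)$ itself if it already is a vertex); this changes the quasi-isometry constants only by a bounded amount, so $\varphi|_{V\Gamma}$ is still a quasi-isometric embedding, say with constants $(\lambda', \varepsilon')$.

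Next I would extend $\varphi$ across edges. For each edge $e = uv \in E\Gamma$, the endpoints $\varphi(u), \varphi(v)$ satisfy $\dist_\Pi(\varphi(u),\varphi(v)) \leq \lambda' + \varepsilon' =: C$, so there is a path $p_e$ in $\Pi$ of length at most $C$ joining them. Define $\varphi$ on $e$ by mapping $e$ (a copy of $[0,1]$) onto $p_e$, parametrised so that it is, say, $C$-Lipschitz (or better, a local isometry onto its image after subdividing); do this equivariantly is not required here, just continuously. Let $\Lambda$ be the union of all the images $\varphi(e)$, together with the vertices $\varphi(v)$; this is a subgraph of $\Pi$ after possibly subdividing edges of $\Pi$ (or one simply takes $\Lambda$ to be the union of the image edges, passing to the relevant subdivision of $\Pi$ — since subdivision does not change the graph up to quasi-isometry with controlled constants this is harmless, and I would remark on it). By construction $\varphi : \Gamma \to \Lambda$ is continuous and surjective onto $\Lambda$.

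It then remains to check three things: (i) $\varphi$ is a quasi-isometry $\Gamma \to \Lambda$; (ii) the inclusion $\Lambda \hookrightarrow \Pi$ is a quasi-isometric embedding; and (iii) if $\Gamma$ is bounded valence then so is $\Lambda$. For (i), the upper Lipschitz bound on $\varphi$ follows because adjacent vertices of $\Gamma$ map within distance $C$ and interiors of edges are mapped with bounded stretch; the lower bound, computed inside $\Lambda$, follows because distances in $\Lambda$ are at least distances in $\Pi$ restricted to... more precisely, $\dist_\Lambda \geq \dist_\Pi$ on $V\Lambda$ is false in general, so the genuine content is the reverse: one uses that $\dist_\Pi \leq \dist_\Lambda$, together with the lower bound $\dist_\Pi(\varphi(u),\varphi(v)) \geq \tfrac{1}{\lambda'}\dist_\Gamma(u,v) - \varepsilon'$, plus the fact that any point of $\Lambda$ is within $C/2$ (in $\Lambda$) of a vertex $\varphi(v)$; surjectivity of $\varphi$ onto $\Lambda$ gives coarse surjectivity for free. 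For (ii), the inclusion is $1$-Lipschitz, and the lower bound $\dist_\Pi \geq \tfrac{1}{\lambda''}\dist_\Lambda - \varepsilon''$ comes by composing: $\dist_\Lambda(\varphi(u),\varphi(v)) \leq \lambda'' \dist_\Gamma(u,v) + \varepsilon''$ (the upper Lipschitz bound of $\varphi$ viewed into $\Lambda$) while $\dist_\Pi(\varphi(u),\varphi(v)) \geq \tfrac{1}{\lambda'}\dist_\Gamma(u,v) - \varepsilon'$, and then one eliminates $\dist_\Gamma$. For (iii), each vertex of $\Lambda$ is either some $\varphi(v)$ or an interior subdivision point of some $p_e$; the latter have degree $2$, and the degree of $\varphi(v)$ in $\Lambda$ is bounded by the number of paths $p_e$ for $e$ incident to $v$ together with paths $p_{e'}$ passing through $\varphi(v)$ — but since each $p_e$ has length $\leq C$ and there are boundedly many edges incident to vertices within distance $C$ of $\varphi(v)$ (here I would need $\Pi$, or at least $\Lambda$, to have some local finiteness; actually bounded valence of $\Lambda$ follows directly: a vertex of $\Lambda$ lies on at most finitely many paths $p_e$ because $\Gamma$ is bounded valence and the paths are short, and one should verify this counting carefully).

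The main obstacle I expect is the bookkeeping in step (iii): one must rule out a single vertex of $\Pi$ being hit by unboundedly many of the paths $p_e$. This is where bounded valence of $\Gamma$ is genuinely used — one argues that if $\varphi(v) \in \Lambda$ lies on $p_e$ then the endpoints of $e$ are mapped within distance $C$ of $\varphi(v)$, hence (by the quasi-isometric embedding property) $e$'s endpoints lie in a bounded ball in $\Gamma$, of which there are boundedly many since $\Gamma$ is bounded valence, hence boundedly many such edges $e$. The subdivision vertices contribute degree $2$, so $\Lambda$ is bounded valence. The other subtlety, more cosmetic, is being careful that $\Lambda$ is literally a subgraph of $\Pi$ rather than of a subdivision of $\Pi$; I would handle this by either allowing subdivision from the outset or by choosing the $p_e$ to be simple paths and noting the union of their edges is honestly a subgraph. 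Everything else is routine constant-chasing.
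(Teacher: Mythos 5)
Your construction is essentially the paper's: map each vertex to (a vertex near) $\psi(v)$, send each edge $uv$ along a short path between the images, and take $\Lambda$ to be the union. The small points where you are more careful are actually improvements in rigour: the paper silently uses $\psi(v)$ even though it may be an interior edge-point (a subdivision issue you flag explicitly), and the paper's claimed equality $\dist_\Lambda(\varphi(x),\psi(y)) = \dist_\Pi(\varphi(x),\psi(y))$ is not quite right as stated, whereas you correctly use only the inequality $\dist_\Lambda \geq \dist_\Pi$ together with the upper Lipschitz bound coming from concatenating short edge-images along a geodesic of $\Gamma$; your bounded-valence count is a slight rephrasing of theirs and equally valid.
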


\begin{proof}
    We construct $\varphi $ as follows. For every $v \in V(\Gamma)$, define $\varphi (v) = \psi(v)$. For every edge $e \in E(\Gamma)$, with endpoints $a$, $b$, set $\varphi (e)$ to a geodesic path between $\psi(a)$ and $\psi(b)$. Let $\Lambda = \varphi (\Gamma)$. We have that $\varphi $ is thus a continuous, surjective map $ \varphi  : \Gamma \onto \Lambda$. We claim that this is a quasi-isometry. 
    Indeed, note that for all $x, y \in V(\Gamma)$ we have by construction that
    $$
    \dist_{\Lambda}(\varphi (x), \varphi(y)) = \dist_{\Pi}(\psi (x), \psi(y)).
    $$
    It follows that $\varphi  : \Gamma \onto \Lambda$ is a quasi-isometric embedding since $\psi$ is, and thus $\varphi $ is a quasi-isometry. It is also clear from this construction that the inclusion $\Lambda \into \Pi$ is also a quasi-isometric embedding. 

    Suppose now that $\Gamma$ is bounded-degree. We claim that $\Lambda$ is also bounded-degree. Let $d \geq 0$ be such that every vertex in $\Gamma$ has degree at most $d$. Fix $\lambda \geq 1$ such that $\varphi $ is a $(\lambda, \lambda)$-quasi-isometry.
    Fix $v \in V\Lambda$, and let $u \in \varphi ^-1(v)$. Let $v_1, v_2 \ldots$ be the neighbours of $v$ in $\Lambda$, and for each $v_i$ choose some point $u_i \in \varphi^{-1}(v_i)$. Note that each $u_i$ lies inside a bounded neighbourhood $N$ of $u$. Since $\Gamma$ is bounded-degree, $N$ intersects a bounded number of edges, say $M$. 
    Recall that $\varphi $ maps edges to geodesics between the images of their endpoints. This implies that if $e$ is a (closed) edge in $\Gamma$, then $\varphi(e)$ intersects at most $2\lambda+1$ distinct vertices in $\Lambda$. Thus, the image $N$ in $\Lambda$ can contain at most $k = M(2\lambda+1)$ distinct vertices. Thus, $v$ has at most $k$ neighbours in $\Lambda$. Since $v$ was arbitrary and all our bounds were uniform, it follows that $\Lambda$ is bounded-degree. 
\end{proof}

A similar construction to that given in Proposition~\ref{prop:qi-wlog} also gives the following. 

\begin{proposition}\label{prop:cts-inverse}
     Let $\Gamma$, $\Pi$ be connected graphs and $\psi : \Gamma \to \Pi$ a quasi-isometry. Then there exists a continuous quasi-inverse $\varphi : \Pi \to \Gamma$. 
\end{proposition}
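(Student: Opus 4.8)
The plan is to start with \emph{any} quasi-inverse $\varphi_0 : \Pi \to \Gamma$ of $\psi$ --- one exists because every quasi-isometry admits a quasi-inverse --- and then to ``straighten it out'' along the edges of $\Pi$ exactly as in the proof of Proposition~\ref{prop:qi-wlog}. First I would fix constants $\lambda \geq 1$ and $\varepsilon \geq 0$ large enough that $\psi$ and $\varphi_0$ are both $(\lambda,\varepsilon)$-quasi-isometries and that $\dist_\Gamma(\varphi_0 \circ \psi(x), x) \leq \varepsilon$ for all $x \in \Gamma$.

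Next I would define $\varphi : \Pi \to \Gamma$ by setting $\varphi(v) = \varphi_0(v)$ for every $v \in V\Pi$, and, for each edge $e = uv \in E\Pi$, letting $\varphi$ map $e$ onto a geodesic in $\Gamma$ from $\varphi_0(u)$ to $\varphi_0(v)$; such a geodesic exists since a connected graph with its path metric is a geodesic space. Because adjacent vertices of $\Pi$ are at distance $1$, the quasi-isometric embedding inequality for $\varphi_0$ gives $\dist_\Gamma(\varphi_0(u),\varphi_0(v)) \leq \lambda + \varepsilon$, so every edge of $\Pi$ is sent to a path of length at most $\lambda+\varepsilon$. Hence $\varphi$ is $(\lambda+\varepsilon)$-Lipschitz, and in particular continuous.

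It then remains to check that $\varphi$ is a quasi-inverse to $\psi$ in the sense of the definition. Since $\varphi$ agrees with $\varphi_0$ on $V\Pi$ and is Lipschitz, and every point of $\Pi$ lies within distance $1$ of a vertex, the quasi-isometric embedding inequalities for $\varphi_0$ transfer to $\varphi$ with slightly worse constants; likewise $\varphi(\Pi) \supseteq \varphi_0(V\Pi)$, which is coarsely dense in $\Gamma$ because $\varphi_0$ is coarsely surjective (a given point of $\Gamma$ is near $\varphi_0(w)$ for some $w \in \Pi$, and $\varphi_0(w)$ is within $\lambda+\varepsilon$ of $\varphi_0$ of a vertex of $\Pi$), so $\varphi$ is a quasi-isometry. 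Finally, for $x \in \Gamma$ I would pick a vertex $v$ of $\Pi$ with $\dist_\Pi(\psi(x),v) \leq 1$ and estimate
$$
\dist_\Gamma(\varphi(\psi(x)),x) \leq \dist_\Gamma(\varphi(\psi(x)),\varphi(v)) + \dist_\Gamma(\varphi_0(v),\varphi_0(\psi(x))) + \dist_\Gamma(\varphi_0(\psi(x)),x),
$$
using $\varphi(v) = \varphi_0(v)$; the three terms are bounded by $\lambda+\varepsilon$ (Lipschitz), by $\lambda+\varepsilon$ (quasi-isometric embedding for $\varphi_0$), and by $\varepsilon$ respectively, so $\dist_\Gamma(\varphi \circ \psi(x), x)$ is uniformly bounded. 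Thus $\varphi$ is a continuous quasi-inverse. I do not expect any genuine obstacle here: the geodesic-straightening trick of Proposition~\ref{prop:qi-wlog} does the real work, and the only mild care needed is to keep all constants uniform and to propagate estimates from vertices to interior points of edges, which costs only an additive error of order $\lambda+\varepsilon$ each time.
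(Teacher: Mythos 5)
Your proposal is correct and matches the paper's intent exactly: the paper simply notes that ``a similar construction to that given in Proposition~\ref{prop:qi-wlog}'' yields the result, and you have carried out precisely that geodesic-straightening of an arbitrary quasi-inverse $\varphi_0$, with the routine verification that the straightened map remains a quasi-isometry and quasi-inverse. Nothing to flag.
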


Continuity is important for us, for a few reasons. For one, we will be, at times, quite interested in applying the Jordan curve theorem, and so having continuous curves will be imperative. Continuity is also important as it ensures that suitable restrictions of quasi-isometries are also quasi-isometries. 

\begin{proposition}\label{prop:restriction-cts}
    Let $\Gamma$, $\Pi$ be bounded-degree, connected graphs. Let $\psi : \Pi \to \Gamma$ be a continuous quasi-isometry. Let $\Lambda \subset \Pi$ be a connected subgraph such that the inclusion map $\iota : \Lambda \into \Pi$ is a quasi-isometric embedding. Then the restriction $\psi|_\Lambda : \Lambda \onto \psi(\Lambda)$ is a quasi-isometry. 
\end{proposition}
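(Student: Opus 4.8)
The plan is to show that $\psi|_\Lambda$ is both a quasi-isometric embedding and coarsely surjective onto $\psi(\Lambda)$, using continuity of $\psi$ to control the image of the edges of $\Lambda$ inside $\psi(\Lambda)$.

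First I would fix constants: let $\iota : \Lambda \into \Pi$ be a $(\mu, \mu)$-quasi-isometric embedding, and let $\psi$ be a $(\lambda, \lambda)$-quasi-isometry; let $d$ be a common valence bound for $\Gamma$ and $\Pi$. The composite $\psi \circ \iota : \Lambda \to \Gamma$ is then a quasi-isometric embedding with controlled constants, so the only real content is the upper distance bound for $\dist_{\psi(\Lambda)}$ (distances measured \emph{intrinsically} in $\psi(\Lambda)$) and coarse surjectivity. For the upper bound: given $x, y \in V\Lambda$, take a geodesic path $p$ in $\Lambda$ from $x$ to $y$; since $\psi$ is continuous, $\psi(p)$ is a path in $\Pi$ contained in $\psi(\Lambda)$, and since $\iota$ is a quasi-isometric embedding, $\psi$ moves endpoints of each edge of $\Lambda$ a bounded distance (at most $\lambda\mu + \lambda$) in $\Pi$; combined with the valence bound on $\Pi$, each edge of $\Lambda$ maps under $\psi$ to a path hitting a uniformly bounded number of vertices of $\psi(\Lambda)$. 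Hence $\dist_{\psi(\Lambda)}(\psi(x), \psi(y)) \leq C \cdot \length_\Lambda(p) = C\,\dist_\Lambda(x,y)$ for a uniform constant $C$, which is exactly the needed upper bound (and it propagates to all points of $\Lambda$, not just vertices, at the cost of an additive constant). The lower bound $\dist_{\psi(\Lambda)}(\psi(x),\psi(y)) \geq \dist_\Pi(\psi(x),\psi(y)) \geq \tfrac{1}{\lambda}\dist_\Pi(x,y) \geq \tfrac{1}{\lambda\mu}\dist_\Lambda(x,y) - (\text{const})$ is immediate since intrinsic distance in a subgraph dominates ambient distance and $\iota$ is a quasi-isometric embedding.

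For coarse surjectivity of $\psi|_\Lambda : \Lambda \to \psi(\Lambda)$: every point of $\psi(\Lambda)$ lies on $\psi(e)$ for some edge $e$ of $\Lambda$, and by the edge-image bound above it is within a uniformly bounded distance (in $\psi(\Lambda)$) of $\psi(v)$ for an endpoint $v$ of $e$; hence $\psi(V\Lambda)$ is coarsely dense in $\psi(\Lambda)$, so $\psi|_\Lambda$ is coarsely surjective. Assembling the two distance estimates and coarse surjectivity gives that $\psi|_\Lambda : \Lambda \onto \psi(\Lambda)$ is a quasi-isometry, with constants depending only on $\lambda$, $\mu$, and $d$.

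The main obstacle I expect is bookkeeping the passage between the \emph{intrinsic} metric on $\psi(\Lambda)$ and the ambient metric on $\Pi$: a priori $\psi(\Lambda)$ might be a badly embedded subgraph of $\Pi$, so $\dist_{\psi(\Lambda)}$ could be much larger than $\dist_\Pi$. The fix is precisely that we never need a lower bound on $\dist_\Pi$ in terms of $\dist_{\psi(\Lambda)}$ — we only need the (easy) inequality $\dist_{\psi(\Lambda)} \geq \dist_\Pi$ for the lower estimate, and for the upper estimate we build an explicit path inside $\psi(\Lambda)$ by pushing a geodesic of $\Lambda$ forward through the continuous map $\psi$, which keeps everything inside $\psi(\Lambda)$ by construction. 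Continuity of $\psi$ is what makes this work and is why it was emphasised just before the statement.
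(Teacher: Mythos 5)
Your argument matches the paper's: push a $\Lambda$-geodesic through the continuous map to obtain a connected subset of $\psi(\Lambda)$ of controlled vertex count for the upper bound, and use $\dist_{\psi(\Lambda)} \geq \dist_\Gamma$ together with the composite quasi-isometric embedding $\psi \circ \iota$ for the lower bound. One caveat: you repeatedly write $\Pi$ where $\Gamma$ should appear — since $\psi : \Pi \to \Gamma$, the image $\psi(p)$ lives in $\Gamma$, the ambient metric against which $\dist_{\psi(\Lambda)}$ is compared is $\dist_\Gamma$, and it is the valence bound of $\Gamma$ that controls how many vertices $\psi(e)$ can hit (the image of an edge lies in a ball of radius roughly $2\lambda$ around the image of either endpoint, by the quasi-isometry inequality applied at every point of $e$); also, the coarse-surjectivity paragraph is superfluous since $\psi|_\Lambda$ is by definition onto $\psi(\Lambda)$.
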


\begin{proof}
    Let $x, y \in \Lambda$, and let $p$ be a geodesic in $\Lambda$ connecting $x$ to $y$ of length $n$. Then $\psi(p)$ contains at most at $cn$ vertices, where $c > 0$ is some constant depending only on the quasi-isometry constants of $\psi$ and the maximum degree of $\Pi$. As $\psi$ is continuous, we deduce that $\psi(p)$ contains a path of between $\psi(x)$ and $\psi(y)$ of bounded length. This gives the upper bound.
    For the lower bound, simply note that 
    $
    \dist_{\psi(\Lambda)}(\psi(x), \psi(y)) \geq \dist_{\Gamma}(\psi(x), \psi(y))$, and apply the fact that $\iota$ and $\psi$ are quasi-isometric embeddings. 
\end{proof}

\begin{remark}
    In the statement of Proposition~\ref{prop:restriction-cts} it is important to note that we consider $\psi(\Lambda)$ with its own intrinsic path metric, and not the metric induced by the ambient graph $\Pi$. This is why we require continuity of $\psi$, lest $\psi(\Lambda)$ may not even be connected. 
\end{remark}

Finally, we introduce the notion of a quasi-action. 

\begin{definition}[Quasi-actions]\label{def:quasi-action}
    Let $\Gamma$ be a connected graph, and let $\QI(\Gamma)$ denote the set of quasi-isometries $\Gamma \to \Gamma$. Let $G$ be a group, then a \textit{$\lambda$-quasi-action} of $G$ on $\Gamma$ is a map
    $$
    G \to \QI(\Gamma), \ \ g \mapsto \varphi_g,
    $$
    such that the following hold:
    \begin{enumerate}
        \item Each $\varphi_g$ is a $(\lambda, \lambda)$-quasi-isometry, 
        \item For every $g, h \in G$, $x \in \Gamma$, we have that 
        $$
        \dist_\Gamma(\varphi_{gh}(x), \varphi_{g}\circ \varphi_h (x))  \leq \lambda, 
        $$
        \item For every $g \in G$, $\varphi_g$ and $\varphi_{g^{-1}}$ are $\lambda$-quasi-inverses. 
    \end{enumerate}
We say that a quasi-action as above is $B$-cobounded, where $B \geq 0$ is some constant, if for all $x,y \in \Gamma$ there exists some $g \in G$ such that 
    $
    \dist_\Gamma(x, \varphi_g(y)) \leq B
    $. 
\end{definition}

Let $X$ be a connected graph equipped with an action $G \actson X$, and let $\Gamma$ be another connected graph. If $\varphi : X \to \Gamma$ is a quasi-isometry with quasi-inverse $\psi : \Gamma \to X$, then this induces a quasi-action of $G$ on $\Gamma$ via
$
\varphi_g := \varphi \circ g \circ \psi
$. 
It is easy to check that if the original action is cobounded, then the resulting quasi-action is cobounded. Also, if both $\varphi$ and $\psi$ are continuous then clearly so is every $\varphi_g$.

\subsection{Planar graphs}

Recall that a \textit{topological embedding} between topological spaces $X$, $Y$ is a continuous injection $f : X \into Y$ which restricts to a homeomorphism between $X$ and $f(X)$ with the subspace topology. We adopt the following convention. 

\begin{definition}[Planar graph]
    Let $\Gamma$ be a graph. We say that $\Gamma$ is \textit{planar} if there exists a topological embedding $\Gamma \into \bbS^2$. Such an embedding is called a \textit{drawing}. 
\end{definition}

The above leaves room for strange embeddings. 
The closure of the image of a planar graph in $\bbS^2$ can be viewed as an embedding of a particular {compactification} of $\Gamma$. In this paper, we will generally restrict ourselves to the case where this closure is precisely the Freudenthal compactification of $\Gamma$. 
This can generally be ensured thanks to the following,  due to Richter--Thomassen \cite{richter20023}. 

\begin{proposition}[{\cite[Lem.~12]{richter20023}}]\label{prop:2c-well-embed}
    Let $\Gamma$ be a 2-connected, locally finite, planar graph. Then the Freudenthal compactification $\overline \Gamma$ embeds into $\bbS^2$.
\end{proposition}

This result can be pushed further, and in particular the 2-connected assumption may be dropped. The following argument was suggested to me by Bruce Richter. 

\begin{proposition}\label{prop:well-embed}
    Let $\Gamma$ be a connected, locally finite, planar graph. Then the Freudenthal compactification $\overline \Gamma$ embeds into $\bbS^2$.
\end{proposition}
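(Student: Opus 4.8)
The idea is to reduce the general case to the $2$-connected case already handled by Proposition~\ref{prop:2c-well-embed}. Recall that $\Gamma$ decomposes into its blocks (maximal $2$-connected subgraphs, plus single edges that are bridges), which are glued together along cut vertices in a tree-like fashion: the block-cut tree $T$ of $\Gamma$ has one node for each block and one node for each cut vertex, with a cut vertex adjacent to each block containing it. Since $\Gamma$ is connected and locally finite, $T$ is a locally finite tree, and the ends of $\Gamma$ correspond to the ends of $T$ together with ends living inside a single block. The plan is to embed each block $B$ together with its Freudenthal compactification $\overline{B}$ into a small closed disk in $\bbS^2$ using Proposition~\ref{prop:2c-well-embed} (a single edge embeds trivially), and then to glue these disks together along the block-cut tree, assigning to each block a disjoint disk and arranging the disks so that a cut vertex shared by infinitely many blocks has its copies accumulating only at that cut vertex.

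Concretely, first I would fix a topological embedding $\overline{B} \into \bbS^2$ for each block $B$; after an ambient homeomorphism we may take its image to lie in an arbitrarily small disk and may prescribe where any one chosen boundary cut vertex of $B$ sits. Next, build the embedding of $\Gamma$ by induction along $T$, using a shrinking scheme: process the blocks in order of their distance in $T$ from a fixed base block $B_0$, and when attaching a block $B$ at a cut vertex $v$ already placed, glue the copy of $\overline{B}$ inside a disk of radius $\le 2^{-n}$ (where $n$ is the $T$-distance) positioned so that it touches the existing figure exactly at the point representing $v$, and so that distinct blocks at $v$ are placed around $v$ in disjoint disks, one for each of the (finitely or countably many) blocks at $v$, with radii tending to $0$. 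Because $\Gamma$ is locally finite each cut vertex lies in only finitely many blocks, so this is even easier: the disks around $v$ can be chosen pairwise disjoint except at $v$ itself. The resulting map is injective by construction, continuous on $\Gamma$, and the shrinking guarantees that a ray in $\Gamma$ leaving every finite subgraph has image converging to a single point of $\bbS^2$ — either the point of an end of $T$ (a nested intersection of shrinking disks) or a point of $\overline{B}$ for the block $B$ in which the ray eventually lives. One then checks that this assignment of limit points to ends is injective and that the extension of the map to $\Gamma \sqcup \Omega\Gamma$ is a homeomorphism onto its image, i.e. that the closure of the image is exactly the embedded copy of $\overline{\Gamma}$.

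I would organize the verification as follows: (i) the map $\Gamma \into \bbS^2$ is a topological embedding — injectivity is clear from disjointness of the disks, and properness/continuity of the inverse follows because any point of $\Gamma$ has a neighbourhood meeting only finitely many blocks; (ii) every end of $\Gamma$ has a well-defined limit point in $\bbS^2$, using the completeness of $\bbS^2$ and the summability of the radii along any branch of $T$; (iii) distinct ends get distinct limit points, distinguishing the case of two ends separated already in $T$ (their tails eventually lie in disjoint subtrees, hence in disjoint sub-disks) from two ends living in the same block $B$ (separated by the embedding $\overline{B}\into\bbS^2$) or one in a block and one not; and (iv) the resulting bijection $\overline{\Gamma}\to \overline{\Gamma}\text{'s image}$ is a homeomorphism, which holds because $\overline{\Gamma}$ is compact and the map is a continuous bijection onto a Hausdorff space.

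\textbf{Main obstacle.} The routine parts are the block-cut tree bookkeeping and the shrinking-disk construction; the genuinely delicate point is (iii)–(iv), namely verifying that the topology of $\overline{\Gamma}$ — whose basic opens $V_K(x)$ are indexed by \emph{all} compact subgraphs $K$ — matches the subspace topology of the closure of the image, in particular that no two ends are identified and that a basis of neighbourhoods of an end-point in $\bbS^2$ pulls back to a basis of the $V_K$'s. This requires knowing that a compact subgraph $K$ of $\Gamma$ meets only finitely many blocks and hence that $\Gamma - K$'s infinite components correspond to the obvious subtrees/sub-blocks, so that separation of ends in $\overline{\Gamma}$ is faithfully reflected by separation of the corresponding shrinking disks in $\bbS^2$; Proposition~\ref{prop:freud-prop} and local finiteness are the tools that make this go through.
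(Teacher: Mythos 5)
Your route is genuinely different from the paper's. You go \emph{downward}: decompose $\Gamma$ into its blocks, embed each block's Freudenthal compactification separately via Proposition~\ref{prop:2c-well-embed}, and glue along the block-cut tree with shrinking disks. The paper goes \emph{upward}: it builds a $2$-connected, locally finite, planar super-graph $\Pi$ of a subdivision of $\Gamma$ (by putting a small cycle around each cut vertex and, separately, doubling cut edges), realizes $\Pi$ as an ascending union of finite planar graphs so that Kuratowski gives planarity for free, applies Proposition~\ref{prop:2c-well-embed} once to $\overline{\Pi}$, and restricts. The paper's route sidesteps all gluing and limit-point bookkeeping, which is exactly where your proposal is thin.

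Two specific problems. First, a small one: the block-cut tree $T$ of a locally finite graph need \emph{not} be locally finite. A single infinite $2$-connected block $B$ can have infinitely many cut vertices on it, so the node of $T$ representing $B$ has infinite degree. (Local finiteness of $\Gamma$ does give that each cut \emph{vertex} lies in finitely many blocks, which you use, but the block nodes are the issue.) Second, and more seriously, the stated shrinking rule (radius $\leq 2^{-n}$ where $n$ is $T$-distance from $B_0$) does not control the image near ends of $\Gamma$ that live inside a single block and arise as accumulation points of cut vertices. Concretely, let $B_0$ be a one-ended $2$-connected block with end $\omega$, and attach an infinite one-way ray $R_i$ (a chain of edge-blocks) at each of a sequence of cut vertices $v_1, v_2, \ldots$ of $B_0$ with $v_i \to \omega$ in $\overline{B_0}$. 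Every $R_i$ then consists of blocks at $T$-distances $2, 4, 6, \ldots$ from $B_0$, so the $2^{-n}$ rule assigns each $R_i$ the \emph{same} sequence of disk radii regardless of $i$; the embedded image of $R_i$ therefore has diameter bounded below by a fixed constant. As $v_i \to \omega$ the chains collide with each other and with the image of $\overline{B_0}$, and continuity of the extended map at $\omega$ fails: the basic open set $V_K(\omega)$ eventually contains $R_i$ for all large $i$, but the images of those $R_i$ do not shrink. Writing ``$\leq 2^{-n}$'' leaves room to shrink further, but you never say \emph{when}; the rule must be adaptive, e.g.\ the disk for the first block of $R_i$ must have radius much smaller than the distance in $\bbS^2$ from the image of $v_i$ to the image of the part of $\overline{B_0}$ not adjacent to $v_i$. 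This is exactly the delicacy you flag at the end, but Proposition~\ref{prop:freud-prop} does not supply it: that proposition is about connectivity inside $\overline{\Gamma}$, not about controlling the geometry of the gluing in $\bbS^2$. Your end-correspondence for the block-cut tree is essentially right, and the overall strategy is salvageable, but as written the construction is not pinned down enough to yield a continuous injection on $\overline{\Gamma}$.
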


\begin{proof} 
    Fix an embedding $\vartheta : \Gamma \into \bbS^2$. 
    We will find a 2-connected, locally finite planar $\Pi$ such that (some subdivision of) $\Gamma$ is a subgraph of $\Pi$ and this inclusion is a quasi-isometry. From there, the embedding of $\overline \Pi$ into $\bbS^2$ given by Proposition~\ref{prop:2c-well-embed} will induce an embedding of $\overline \Gamma \into \bbS^2$.
    Intuitively, our plan is to add a small cycle surrounding each cut vertex. 
    In order to make this sketch formal, we will construct $\Pi$ as an ascending union of finite planar graphs, which is necessarily planar. Indeed, if an ascending union of finite planar graphs contained some subdivision of $K_5$ or $K_{3,3}$ then certainly some finite graph in this chain would have to, which would contradict Kuratowski's theorem. 

    We first assume that $\Gamma$ contains no cut edges. That is, for every $e \in E(\Gamma)$ we have that $\Gamma \setminus e$ is connected. 
    Choose some root $v_0 \in V(\Gamma)$. 
    We call $u \in V(\Gamma)$ a \textit{marked vertex} if it is a cut vertex of $\Gamma$. Given $r \geq 0$ we construct $\Pi_r$ as follows. Consider the closed $r$-neighbourhood of $v_0$ in $\Gamma$, which we will denote $N_r = B_\Gamma(v_0;r)$. This is a finite subgraph of $\Gamma$. Let $N_r'$ denote the subdivision of $N_r$, where we subdivide every edge into three edges. Suppose $u \in V(N_r)$ is a marked vertex of $\Gamma$, and $\dist_\Gamma(v_0, u) < r$. Let $u'$ be the image of $u$ in $N_{r}'$. Let $u_1, \ldots, u_n$ be the neighbours of $u'$ in $V(N_r')$. Since $\Gamma$ is locally finite, $n$ is finite. Let $e_i$ be the (necessarily unique) edge of $N_r'$ with endpoints $u'$, $u_i$. The drawing $\vartheta$ restricts to a drawing of $N_r$, and thus of $N_{r}'$ since these graphs are homeomorphic. This induces a cyclic ordering to $e_1, \ldots , e_n$, particularly their clockwise ordering about $u'$. We assume this is exactly how they are ordered. We add a path of length 2 joining every $u_i$ to $u_{i+1}$, with indices taken modulo $n$. We repeat this for every such $u'$, and call the resulting graph $\Pi_r$. By construction $\Pi_r$ is a finite planar graph. 
    Repeating this construction for every $r$, it is clear that we have an ascending chain 
    $$
    \Pi_0 \subset \Pi_1 \subset \Pi_2 \subset \cdots. 
    $$
    Let $\Pi = \bigcup_r \Pi_r$. 
    Let $\Gamma'$ denote the subdivision of $\Gamma$, where each edge is divided into three edges. Clearly $\Gamma$ is homeomorphic to $\Gamma'$, and the natural map $\Gamma \to \Gamma'$ is a $(3,0)$-quasi-isometry. We have a natural inclusion $\Gamma' \into \Pi$ which is an isometry onto its image. Indeed, the new paths added to $\Pi$ create no new shortcuts. Every vertex of $\Pi$ is adjacent to a vertex of $\Gamma'$, so this map is a quasi-isometry. 
    Finally, note that $\Pi$ is certainly 2-connected. Indeed, suppose $\Pi$ contained a cut vertex $w$. This vertex certainly must lie in $\Gamma'$, as every $u \in V(\Pi) \setminus V(\Gamma')$ has degree 2 and lies on a simple cycle so cannot be a cut vertex. Since $\Gamma$ contains no cut edges this implies that $w$ is induced by a cut vertex of $\Gamma$. But then by construction if $u_1$, $u_2$ are neighbours of $w$ in $\Pi$ then there is a path connecting them which avoids $w$. Thus $\Pi$ is 2-connected. Applying Proposition~\ref{prop:2c-well-embed} we conclude that the Freudenthal compactification of any connected, locally finite, planar graph \textit{without cut edges} embeds into $\bbS^2$. 

    Finally, we deal with the case where $\Gamma$ has cut edges. We replace $\Gamma$ with $\Gamma'$ where $\Gamma'$ is obtained from $\Gamma$ by `doubling' every edge. That is, $V(\Gamma') = V(\Gamma)$, and if $u, v \in V(\Gamma)$  are connected by $k$ edges in $\Gamma$ then they are connected by $2k$ edges in $\Gamma$. 
    The inclusion $\Gamma \into \Gamma'$ is certainly a quasi-isometry, and $\Gamma'$ is clearly a locally finite. To see that $\Gamma'$ is planar, note that doubling edges clearly preserves planarity in finite graphs. We now repeat a similar construction to above, and write $\Gamma'$ as an ascending union of finite planar graphs, where the $r$-th element of this chain is obtained by doubling the edges in the $r$-ball about some root vertex $v_0$ in $\Gamma$. By the previous case, the Freudenthal compactification of $\Gamma'$ is planar, and so the same can be said about $\Gamma$ since $\overline \Gamma$ is clearly homeomorphic to a subspace of $\overline {\Gamma'}$. A cartoon of this full construction can be found in Figure~\ref{fig:2-con-sup-graph}. 
\end{proof}

\begin{figure}
    \centering
    
\tikzset{every picture/.style={line width=0.75pt}} 

\begin{tikzpicture}[x=0.75pt,y=0.75pt,yscale=-1,xscale=1]

\draw [color={rgb, 255:red, 165; green, 188; blue, 216 }  ,draw opacity=1 ]   (424.75,117.54) -- (403.75,113.85) -- (391.9,135.8) -- (422.58,151.37) -- (434.02,141.74) -- (433.36,129.35) -- cycle ;
\draw [shift={(414.25,115.69)}, rotate = 189.95] [color={rgb, 255:red, 165; green, 188; blue, 216 }  ,draw opacity=1 ][fill={rgb, 255:red, 165; green, 188; blue, 216 }  ,fill opacity=1 ][line width=0.75]      (0, 0) circle [x radius= 1.34, y radius= 1.34]   ;
\draw [shift={(397.83,124.83)}, rotate = 118.36] [color={rgb, 255:red, 165; green, 188; blue, 216 }  ,draw opacity=1 ][fill={rgb, 255:red, 165; green, 188; blue, 216 }  ,fill opacity=1 ][line width=0.75]      (0, 0) circle [x radius= 1.34, y radius= 1.34]   ;
\draw [shift={(407.24,143.59)}, rotate = 26.9] [color={rgb, 255:red, 165; green, 188; blue, 216 }  ,draw opacity=1 ][fill={rgb, 255:red, 165; green, 188; blue, 216 }  ,fill opacity=1 ][line width=0.75]      (0, 0) circle [x radius= 1.34, y radius= 1.34]   ;
\draw [shift={(428.3,146.56)}, rotate = 319.91] [color={rgb, 255:red, 165; green, 188; blue, 216 }  ,draw opacity=1 ][fill={rgb, 255:red, 165; green, 188; blue, 216 }  ,fill opacity=1 ][line width=0.75]      (0, 0) circle [x radius= 1.34, y radius= 1.34]   ;
\draw [shift={(433.69,135.55)}, rotate = 266.97] [color={rgb, 255:red, 165; green, 188; blue, 216 }  ,draw opacity=1 ][fill={rgb, 255:red, 165; green, 188; blue, 216 }  ,fill opacity=1 ][line width=0.75]      (0, 0) circle [x radius= 1.34, y radius= 1.34]   ;
\draw [shift={(429.06,123.44)}, rotate = 233.91] [color={rgb, 255:red, 165; green, 188; blue, 216 }  ,draw opacity=1 ][fill={rgb, 255:red, 165; green, 188; blue, 216 }  ,fill opacity=1 ][line width=0.75]      (0, 0) circle [x radius= 1.34, y radius= 1.34]   ;
\draw [color={rgb, 255:red, 165; green, 188; blue, 216 }  ,draw opacity=1 ]   (326.94,72.04) -- (317.69,83.64) -- (314.12,101.8) -- (321.86,115.33) -- (332.56,128.38) -- (366.78,121.8) -- (378.63,99.84) -- (378.16,80.48) -- (364.9,71.53) -- (341.44,70.35) -- cycle ;
\draw [shift={(322.31,77.84)}, rotate = 128.57] [color={rgb, 255:red, 165; green, 188; blue, 216 }  ,draw opacity=1 ][fill={rgb, 255:red, 165; green, 188; blue, 216 }  ,fill opacity=1 ][line width=0.75]      (0, 0) circle [x radius= 1.34, y radius= 1.34]   ;
\draw [shift={(315.9,92.72)}, rotate = 101.12] [color={rgb, 255:red, 165; green, 188; blue, 216 }  ,draw opacity=1 ][fill={rgb, 255:red, 165; green, 188; blue, 216 }  ,fill opacity=1 ][line width=0.75]      (0, 0) circle [x radius= 1.34, y radius= 1.34]   ;
\draw [shift={(317.99,108.56)}, rotate = 60.23] [color={rgb, 255:red, 165; green, 188; blue, 216 }  ,draw opacity=1 ][fill={rgb, 255:red, 165; green, 188; blue, 216 }  ,fill opacity=1 ][line width=0.75]      (0, 0) circle [x radius= 1.34, y radius= 1.34]   ;
\draw [shift={(327.21,121.85)}, rotate = 50.65] [color={rgb, 255:red, 165; green, 188; blue, 216 }  ,draw opacity=1 ][fill={rgb, 255:red, 165; green, 188; blue, 216 }  ,fill opacity=1 ][line width=0.75]      (0, 0) circle [x radius= 1.34, y radius= 1.34]   ;
\draw [shift={(349.67,125.09)}, rotate = 349.11] [color={rgb, 255:red, 165; green, 188; blue, 216 }  ,draw opacity=1 ][fill={rgb, 255:red, 165; green, 188; blue, 216 }  ,fill opacity=1 ][line width=0.75]      (0, 0) circle [x radius= 1.34, y radius= 1.34]   ;
\draw [shift={(372.7,110.82)}, rotate = 298.36] [color={rgb, 255:red, 165; green, 188; blue, 216 }  ,draw opacity=1 ][fill={rgb, 255:red, 165; green, 188; blue, 216 }  ,fill opacity=1 ][line width=0.75]      (0, 0) circle [x radius= 1.34, y radius= 1.34]   ;
\draw [shift={(378.39,90.16)}, rotate = 268.61] [color={rgb, 255:red, 165; green, 188; blue, 216 }  ,draw opacity=1 ][fill={rgb, 255:red, 165; green, 188; blue, 216 }  ,fill opacity=1 ][line width=0.75]      (0, 0) circle [x radius= 1.34, y radius= 1.34]   ;
\draw [shift={(371.53,76)}, rotate = 214.01] [color={rgb, 255:red, 165; green, 188; blue, 216 }  ,draw opacity=1 ][fill={rgb, 255:red, 165; green, 188; blue, 216 }  ,fill opacity=1 ][line width=0.75]      (0, 0) circle [x radius= 1.34, y radius= 1.34]   ;
\draw [shift={(353.17,70.94)}, rotate = 182.88] [color={rgb, 255:red, 165; green, 188; blue, 216 }  ,draw opacity=1 ][fill={rgb, 255:red, 165; green, 188; blue, 216 }  ,fill opacity=1 ][line width=0.75]      (0, 0) circle [x radius= 1.34, y radius= 1.34]   ;
\draw [shift={(334.19,71.19)}, rotate = 173.36] [color={rgb, 255:red, 165; green, 188; blue, 216 }  ,draw opacity=1 ][fill={rgb, 255:red, 165; green, 188; blue, 216 }  ,fill opacity=1 ][line width=0.75]      (0, 0) circle [x radius= 1.34, y radius= 1.34]   ;
\draw    (43,88.2) -- (55.8,129.8) ;
\draw [shift={(55.8,129.8)}, rotate = 72.9] [color={rgb, 255:red, 0; green, 0; blue, 0 }  ][fill={rgb, 255:red, 0; green, 0; blue, 0 }  ][line width=0.75]      (0, 0) circle [x radius= 2.34, y radius= 2.34]   ;
\draw [shift={(43,88.2)}, rotate = 72.9] [color={rgb, 255:red, 0; green, 0; blue, 0 }  ][fill={rgb, 255:red, 0; green, 0; blue, 0 }  ][line width=0.75]      (0, 0) circle [x radius= 2.34, y radius= 2.34]   ;
\draw    (43,88.2) -- (105.8,96.6) ;
\draw [shift={(105.8,96.6)}, rotate = 7.62] [color={rgb, 255:red, 0; green, 0; blue, 0 }  ][fill={rgb, 255:red, 0; green, 0; blue, 0 }  ][line width=0.75]      (0, 0) circle [x radius= 2.34, y radius= 2.34]   ;
\draw [shift={(43,88.2)}, rotate = 7.62] [color={rgb, 255:red, 0; green, 0; blue, 0 }  ][fill={rgb, 255:red, 0; green, 0; blue, 0 }  ][line width=0.75]      (0, 0) circle [x radius= 2.34, y radius= 2.34]   ;
\draw    (55.8,129.8) -- (105.8,96.6) ;
\draw [shift={(105.8,96.6)}, rotate = 326.42] [color={rgb, 255:red, 0; green, 0; blue, 0 }  ][fill={rgb, 255:red, 0; green, 0; blue, 0 }  ][line width=0.75]      (0, 0) circle [x radius= 2.34, y radius= 2.34]   ;
\draw [shift={(55.8,129.8)}, rotate = 326.42] [color={rgb, 255:red, 0; green, 0; blue, 0 }  ][fill={rgb, 255:red, 0; green, 0; blue, 0 }  ][line width=0.75]      (0, 0) circle [x radius= 2.34, y radius= 2.34]   ;
\draw [color={rgb, 255:red, 0; green, 0; blue, 0 }  ,draw opacity=1 ]   (128.8,122.4) -- (105.8,96.6) ;
\draw [shift={(105.8,96.6)}, rotate = 228.28] [color={rgb, 255:red, 0; green, 0; blue, 0 }  ,draw opacity=1 ][fill={rgb, 255:red, 0; green, 0; blue, 0 }  ,fill opacity=1 ][line width=0.75]      (0, 0) circle [x radius= 2.34, y radius= 2.34]   ;
\draw [shift={(128.8,122.4)}, rotate = 228.28] [color={rgb, 255:red, 0; green, 0; blue, 0 }  ,draw opacity=1 ][fill={rgb, 255:red, 0; green, 0; blue, 0 }  ,fill opacity=1 ][line width=0.75]      (0, 0) circle [x radius= 2.34, y radius= 2.34]   ;
\draw [color={rgb, 255:red, 0; green, 0; blue, 0 }  ,draw opacity=1 ]   (160.2,111.8) -- (128.8,122.4) ;
\draw [shift={(128.8,122.4)}, rotate = 161.35] [color={rgb, 255:red, 0; green, 0; blue, 0 }  ,draw opacity=1 ][fill={rgb, 255:red, 0; green, 0; blue, 0 }  ,fill opacity=1 ][line width=0.75]      (0, 0) circle [x radius= 2.34, y radius= 2.34]   ;
\draw [shift={(160.2,111.8)}, rotate = 161.35] [color={rgb, 255:red, 0; green, 0; blue, 0 }  ,draw opacity=1 ][fill={rgb, 255:red, 0; green, 0; blue, 0 }  ,fill opacity=1 ][line width=0.75]      (0, 0) circle [x radius= 2.34, y radius= 2.34]   ;
\draw [color={rgb, 255:red, 0; green, 0; blue, 0 }  ,draw opacity=1 ]   (159,152.6) -- (128.8,122.4) ;
\draw [shift={(128.8,122.4)}, rotate = 225] [color={rgb, 255:red, 0; green, 0; blue, 0 }  ,draw opacity=1 ][fill={rgb, 255:red, 0; green, 0; blue, 0 }  ,fill opacity=1 ][line width=0.75]      (0, 0) circle [x radius= 2.34, y radius= 2.34]   ;
\draw [shift={(159,152.6)}, rotate = 225] [color={rgb, 255:red, 0; green, 0; blue, 0 }  ,draw opacity=1 ][fill={rgb, 255:red, 0; green, 0; blue, 0 }  ,fill opacity=1 ][line width=0.75]      (0, 0) circle [x radius= 2.34, y radius= 2.34]   ;
\draw    (43,88.2) -- (27.4,81.8) ;
\draw    (55.8,129.8) -- (38.6,135.4) ;
\draw    (55.8,129.8) -- (50.2,145.8) ;
\draw    (171,104.2) -- (160.2,111.8) ;
\draw    (167.8,144.2) -- (159,152.6) ;
\draw    (105.8,96.6) .. controls (117,87.4) and (123,64.2) .. (107.4,63.8) .. controls (91.8,63.4) and (86.6,79.8) .. (105.8,96.6) -- cycle ;
\draw [color={rgb, 255:red, 155; green, 155; blue, 155 }  ,draw opacity=1 ]   (255.39,89.31) -- (232.2,80.23) ;
\draw [color={rgb, 255:red, 155; green, 155; blue, 155 }  ,draw opacity=1 ]   (274.42,148.33) -- (246.47,154.57) ;
\draw [color={rgb, 255:red, 155; green, 155; blue, 155 }  ,draw opacity=1 ]   (274.42,148.33) -- (266.09,171.03) ;
\draw [color={rgb, 255:red, 155; green, 155; blue, 155 }  ,draw opacity=1 ]   (478.52,105.77) -- (462.47,116.55) ;
\draw [color={rgb, 255:red, 155; green, 155; blue, 155 }  ,draw opacity=1 ]   (478.62,173.2) -- (460.68,174.43) ;
\draw [color={rgb, 255:red, 155; green, 155; blue, 155 }  ,draw opacity=1 ]   (255.84,89.31) -- (284.98,102.93) ;
\draw [shift={(284.98,102.93)}, rotate = 0] [color={rgb, 255:red, 155; green, 155; blue, 155 }  ,draw opacity=1 ][fill={rgb, 255:red, 155; green, 155; blue, 155 }  ,fill opacity=1 ][line width=0.75]      (0, 0) circle [x radius= 1.34, y radius= 1.34]   ;
\draw [shift={(255.84,89.31)}, rotate = 25.05] [color={rgb, 255:red, 155; green, 155; blue, 155 }  ,draw opacity=1 ][fill={rgb, 255:red, 155; green, 155; blue, 155 }  ,fill opacity=1 ][line width=0.75]      (0, 0) circle [x radius= 1.34, y radius= 1.34]   ;
\draw [color={rgb, 255:red, 155; green, 155; blue, 155 }  ,draw opacity=1 ]   (284.98,102.93) -- (314.12,101.8) ;
\draw [shift={(314.12,101.8)}, rotate = 357.77] [color={rgb, 255:red, 155; green, 155; blue, 155 }  ,draw opacity=1 ][fill={rgb, 255:red, 155; green, 155; blue, 155 }  ,fill opacity=1 ][line width=0.75]      (0, 0) circle [x radius= 1.34, y radius= 1.34]   ;
\draw [shift={(284.98,102.93)}, rotate = 357.77] [color={rgb, 255:red, 155; green, 155; blue, 155 }  ,draw opacity=1 ][fill={rgb, 255:red, 155; green, 155; blue, 155 }  ,fill opacity=1 ][line width=0.75]      (0, 0) circle [x radius= 1.34, y radius= 1.34]   ;
\draw [color={rgb, 255:red, 155; green, 155; blue, 155 }  ,draw opacity=1 ]   (314.12,101.8) -- (349.2,101.23) ;
\draw [shift={(349.2,101.23)}, rotate = 359.07] [color={rgb, 255:red, 155; green, 155; blue, 155 }  ,draw opacity=1 ][fill={rgb, 255:red, 155; green, 155; blue, 155 }  ,fill opacity=1 ][line width=0.75]      (0, 0) circle [x radius= 1.34, y radius= 1.34]   ;
\draw [shift={(314.12,101.8)}, rotate = 359.07] [color={rgb, 255:red, 155; green, 155; blue, 155 }  ,draw opacity=1 ][fill={rgb, 255:red, 155; green, 155; blue, 155 }  ,fill opacity=1 ][line width=0.75]      (0, 0) circle [x radius= 1.34, y radius= 1.34]   ;
\draw [color={rgb, 255:red, 155; green, 155; blue, 155 }  ,draw opacity=1 ]   (274.87,148.33) -- (310.36,146.42) ;
\draw [shift={(310.36,146.42)}, rotate = 0] [color={rgb, 255:red, 155; green, 155; blue, 155 }  ,draw opacity=1 ][fill={rgb, 255:red, 155; green, 155; blue, 155 }  ,fill opacity=1 ][line width=0.75]      (0, 0) circle [x radius= 1.34, y radius= 1.34]   ;
\draw [shift={(274.87,148.33)}, rotate = 356.92] [color={rgb, 255:red, 155; green, 155; blue, 155 }  ,draw opacity=1 ][fill={rgb, 255:red, 155; green, 155; blue, 155 }  ,fill opacity=1 ][line width=0.75]      (0, 0) circle [x radius= 1.34, y radius= 1.34]   ;
\draw [color={rgb, 255:red, 155; green, 155; blue, 155 }  ,draw opacity=1 ]   (310.36,146.42) -- (332.56,128.38) ;
\draw [shift={(332.56,128.38)}, rotate = 320.9] [color={rgb, 255:red, 155; green, 155; blue, 155 }  ,draw opacity=1 ][fill={rgb, 255:red, 155; green, 155; blue, 155 }  ,fill opacity=1 ][line width=0.75]      (0, 0) circle [x radius= 1.34, y radius= 1.34]   ;
\draw [shift={(310.36,146.42)}, rotate = 320.9] [color={rgb, 255:red, 155; green, 155; blue, 155 }  ,draw opacity=1 ][fill={rgb, 255:red, 155; green, 155; blue, 155 }  ,fill opacity=1 ][line width=0.75]      (0, 0) circle [x radius= 1.34, y radius= 1.34]   ;
\draw [color={rgb, 255:red, 155; green, 155; blue, 155 }  ,draw opacity=1 ]   (332.56,128.38) -- (349.2,101.23) ;
\draw [shift={(349.2,101.23)}, rotate = 301.5] [color={rgb, 255:red, 155; green, 155; blue, 155 }  ,draw opacity=1 ][fill={rgb, 255:red, 155; green, 155; blue, 155 }  ,fill opacity=1 ][line width=0.75]      (0, 0) circle [x radius= 1.34, y radius= 1.34]   ;
\draw [shift={(332.56,128.38)}, rotate = 301.5] [color={rgb, 255:red, 155; green, 155; blue, 155 }  ,draw opacity=1 ][fill={rgb, 255:red, 155; green, 155; blue, 155 }  ,fill opacity=1 ][line width=0.75]      (0, 0) circle [x radius= 1.34, y radius= 1.34]   ;
\draw [color={rgb, 255:red, 155; green, 155; blue, 155 }  ,draw opacity=1 ]   (274.87,148.33) -- (299.66,133.37) ;
\draw [shift={(299.66,133.37)}, rotate = 0] [color={rgb, 255:red, 155; green, 155; blue, 155 }  ,draw opacity=1 ][fill={rgb, 255:red, 155; green, 155; blue, 155 }  ,fill opacity=1 ][line width=0.75]      (0, 0) circle [x radius= 1.34, y radius= 1.34]   ;
\draw [shift={(274.87,148.33)}, rotate = 328.89] [color={rgb, 255:red, 155; green, 155; blue, 155 }  ,draw opacity=1 ][fill={rgb, 255:red, 155; green, 155; blue, 155 }  ,fill opacity=1 ][line width=0.75]      (0, 0) circle [x radius= 1.34, y radius= 1.34]   ;
\draw [color={rgb, 255:red, 155; green, 155; blue, 155 }  ,draw opacity=1 ]   (299.66,133.37) -- (321.86,115.33) ;
\draw [shift={(321.86,115.33)}, rotate = 320.9] [color={rgb, 255:red, 155; green, 155; blue, 155 }  ,draw opacity=1 ][fill={rgb, 255:red, 155; green, 155; blue, 155 }  ,fill opacity=1 ][line width=0.75]      (0, 0) circle [x radius= 1.34, y radius= 1.34]   ;
\draw [shift={(299.66,133.37)}, rotate = 320.9] [color={rgb, 255:red, 155; green, 155; blue, 155 }  ,draw opacity=1 ][fill={rgb, 255:red, 155; green, 155; blue, 155 }  ,fill opacity=1 ][line width=0.75]      (0, 0) circle [x radius= 1.34, y radius= 1.34]   ;
\draw [color={rgb, 255:red, 155; green, 155; blue, 155 }  ,draw opacity=1 ]   (321.86,115.33) -- (349.2,101.23) ;
\draw [shift={(349.2,101.23)}, rotate = 332.72] [color={rgb, 255:red, 155; green, 155; blue, 155 }  ,draw opacity=1 ][fill={rgb, 255:red, 155; green, 155; blue, 155 }  ,fill opacity=1 ][line width=0.75]      (0, 0) circle [x radius= 1.34, y radius= 1.34]   ;
\draw [shift={(321.86,115.33)}, rotate = 332.72] [color={rgb, 255:red, 155; green, 155; blue, 155 }  ,draw opacity=1 ][fill={rgb, 255:red, 155; green, 155; blue, 155 }  ,fill opacity=1 ][line width=0.75]      (0, 0) circle [x radius= 1.34, y radius= 1.34]   ;
\draw [color={rgb, 255:red, 155; green, 155; blue, 155 }  ,draw opacity=1 ]   (255.84,89.31) -- (288.55,84.77) ;
\draw [shift={(288.55,84.77)}, rotate = 0] [color={rgb, 255:red, 155; green, 155; blue, 155 }  ,draw opacity=1 ][fill={rgb, 255:red, 155; green, 155; blue, 155 }  ,fill opacity=1 ][line width=0.75]      (0, 0) circle [x radius= 1.34, y radius= 1.34]   ;
\draw [shift={(255.84,89.31)}, rotate = 352.1] [color={rgb, 255:red, 155; green, 155; blue, 155 }  ,draw opacity=1 ][fill={rgb, 255:red, 155; green, 155; blue, 155 }  ,fill opacity=1 ][line width=0.75]      (0, 0) circle [x radius= 1.34, y radius= 1.34]   ;
\draw [color={rgb, 255:red, 155; green, 155; blue, 155 }  ,draw opacity=1 ]   (288.55,84.77) -- (317.69,83.64) ;
\draw [shift={(317.69,83.64)}, rotate = 357.77] [color={rgb, 255:red, 155; green, 155; blue, 155 }  ,draw opacity=1 ][fill={rgb, 255:red, 155; green, 155; blue, 155 }  ,fill opacity=1 ][line width=0.75]      (0, 0) circle [x radius= 1.34, y radius= 1.34]   ;
\draw [shift={(288.55,84.77)}, rotate = 357.77] [color={rgb, 255:red, 155; green, 155; blue, 155 }  ,draw opacity=1 ][fill={rgb, 255:red, 155; green, 155; blue, 155 }  ,fill opacity=1 ][line width=0.75]      (0, 0) circle [x radius= 1.34, y radius= 1.34]   ;
\draw [color={rgb, 255:red, 155; green, 155; blue, 155 }  ,draw opacity=1 ]   (317.69,83.64) -- (349.2,101.23) ;
\draw [shift={(349.2,101.23)}, rotate = 29.17] [color={rgb, 255:red, 155; green, 155; blue, 155 }  ,draw opacity=1 ][fill={rgb, 255:red, 155; green, 155; blue, 155 }  ,fill opacity=1 ][line width=0.75]      (0, 0) circle [x radius= 1.34, y radius= 1.34]   ;
\draw [shift={(317.69,83.64)}, rotate = 29.17] [color={rgb, 255:red, 155; green, 155; blue, 155 }  ,draw opacity=1 ][fill={rgb, 255:red, 155; green, 155; blue, 155 }  ,fill opacity=1 ][line width=0.75]      (0, 0) circle [x radius= 1.34, y radius= 1.34]   ;
\draw [color={rgb, 255:red, 155; green, 155; blue, 155 }  ,draw opacity=1 ]   (255.84,89.31) -- (244.38,112.01) ;
\draw [shift={(244.38,112.01)}, rotate = 0] [color={rgb, 255:red, 155; green, 155; blue, 155 }  ,draw opacity=1 ][fill={rgb, 255:red, 155; green, 155; blue, 155 }  ,fill opacity=1 ][line width=0.75]      (0, 0) circle [x radius= 1.34, y radius= 1.34]   ;
\draw [shift={(255.84,89.31)}, rotate = 116.8] [color={rgb, 255:red, 155; green, 155; blue, 155 }  ,draw opacity=1 ][fill={rgb, 255:red, 155; green, 155; blue, 155 }  ,fill opacity=1 ][line width=0.75]      (0, 0) circle [x radius= 1.34, y radius= 1.34]   ;
\draw [color={rgb, 255:red, 155; green, 155; blue, 155 }  ,draw opacity=1 ]   (244.38,112.01) -- (259.53,135.79) ;
\draw [shift={(259.53,135.79)}, rotate = 57.5] [color={rgb, 255:red, 155; green, 155; blue, 155 }  ,draw opacity=1 ][fill={rgb, 255:red, 155; green, 155; blue, 155 }  ,fill opacity=1 ][line width=0.75]      (0, 0) circle [x radius= 1.34, y radius= 1.34]   ;
\draw [shift={(244.38,112.01)}, rotate = 57.5] [color={rgb, 255:red, 155; green, 155; blue, 155 }  ,draw opacity=1 ][fill={rgb, 255:red, 155; green, 155; blue, 155 }  ,fill opacity=1 ][line width=0.75]      (0, 0) circle [x radius= 1.34, y radius= 1.34]   ;
\draw [color={rgb, 255:red, 155; green, 155; blue, 155 }  ,draw opacity=1 ]   (259.53,135.79) -- (274.87,148.33) ;
\draw [shift={(274.87,148.33)}, rotate = 39.25] [color={rgb, 255:red, 155; green, 155; blue, 155 }  ,draw opacity=1 ][fill={rgb, 255:red, 155; green, 155; blue, 155 }  ,fill opacity=1 ][line width=0.75]      (0, 0) circle [x radius= 1.34, y radius= 1.34]   ;
\draw [shift={(259.53,135.79)}, rotate = 39.25] [color={rgb, 255:red, 155; green, 155; blue, 155 }  ,draw opacity=1 ][fill={rgb, 255:red, 155; green, 155; blue, 155 }  ,fill opacity=1 ][line width=0.75]      (0, 0) circle [x radius= 1.34, y radius= 1.34]   ;
\draw [color={rgb, 255:red, 155; green, 155; blue, 155 }  ,draw opacity=1 ]   (255.84,89.31) -- (262.98,108.51) ;
\draw [shift={(262.98,108.51)}, rotate = 0] [color={rgb, 255:red, 155; green, 155; blue, 155 }  ,draw opacity=1 ][fill={rgb, 255:red, 155; green, 155; blue, 155 }  ,fill opacity=1 ][line width=0.75]      (0, 0) circle [x radius= 1.34, y radius= 1.34]   ;
\draw [shift={(255.84,89.31)}, rotate = 69.61] [color={rgb, 255:red, 155; green, 155; blue, 155 }  ,draw opacity=1 ][fill={rgb, 255:red, 155; green, 155; blue, 155 }  ,fill opacity=1 ][line width=0.75]      (0, 0) circle [x radius= 1.34, y radius= 1.34]   ;
\draw [color={rgb, 255:red, 155; green, 155; blue, 155 }  ,draw opacity=1 ]   (262.98,108.51) -- (269.32,130.94) ;
\draw [shift={(269.32,130.94)}, rotate = 74.22] [color={rgb, 255:red, 155; green, 155; blue, 155 }  ,draw opacity=1 ][fill={rgb, 255:red, 155; green, 155; blue, 155 }  ,fill opacity=1 ][line width=0.75]      (0, 0) circle [x radius= 1.34, y radius= 1.34]   ;
\draw [shift={(262.98,108.51)}, rotate = 74.22] [color={rgb, 255:red, 155; green, 155; blue, 155 }  ,draw opacity=1 ][fill={rgb, 255:red, 155; green, 155; blue, 155 }  ,fill opacity=1 ][line width=0.75]      (0, 0) circle [x radius= 1.34, y radius= 1.34]   ;
\draw [color={rgb, 255:red, 155; green, 155; blue, 155 }  ,draw opacity=1 ]   (269.32,130.94) -- (274.87,148.33) ;
\draw [shift={(274.87,148.33)}, rotate = 72.29] [color={rgb, 255:red, 155; green, 155; blue, 155 }  ,draw opacity=1 ][fill={rgb, 255:red, 155; green, 155; blue, 155 }  ,fill opacity=1 ][line width=0.75]      (0, 0) circle [x radius= 1.34, y radius= 1.34]   ;
\draw [shift={(269.32,130.94)}, rotate = 72.29] [color={rgb, 255:red, 155; green, 155; blue, 155 }  ,draw opacity=1 ][fill={rgb, 255:red, 155; green, 155; blue, 155 }  ,fill opacity=1 ][line width=0.75]      (0, 0) circle [x radius= 1.34, y radius= 1.34]   ;
\draw [color={rgb, 255:red, 155; green, 155; blue, 155 }  ,draw opacity=1 ]   (255.84,89.31) -- (239.79,70.59) ;
\draw [color={rgb, 255:red, 155; green, 155; blue, 155 }  ,draw opacity=1 ]   (273.97,148.33) -- (274.56,175) ;
\draw [color={rgb, 255:red, 155; green, 155; blue, 155 }  ,draw opacity=1 ]   (274.42,148.33) -- (254.2,162.52) ;
\draw [color={rgb, 255:red, 155; green, 155; blue, 155 }  ,draw opacity=1 ]   (349.2,101.23) -- (378.63,99.84) ;
\draw [shift={(378.63,99.84)}, rotate = 0] [color={rgb, 255:red, 155; green, 155; blue, 155 }  ,draw opacity=1 ][fill={rgb, 255:red, 155; green, 155; blue, 155 }  ,fill opacity=1 ][line width=0.75]      (0, 0) circle [x radius= 1.34, y radius= 1.34]   ;
\draw [shift={(349.2,101.23)}, rotate = 357.3] [color={rgb, 255:red, 155; green, 155; blue, 155 }  ,draw opacity=1 ][fill={rgb, 255:red, 155; green, 155; blue, 155 }  ,fill opacity=1 ][line width=0.75]      (0, 0) circle [x radius= 1.34, y radius= 1.34]   ;
\draw [color={rgb, 255:red, 155; green, 155; blue, 155 }  ,draw opacity=1 ]   (378.63,99.84) -- (403.75,113.85) ;
\draw [shift={(403.75,113.85)}, rotate = 29.14] [color={rgb, 255:red, 155; green, 155; blue, 155 }  ,draw opacity=1 ][fill={rgb, 255:red, 155; green, 155; blue, 155 }  ,fill opacity=1 ][line width=0.75]      (0, 0) circle [x radius= 1.34, y radius= 1.34]   ;
\draw [shift={(378.63,99.84)}, rotate = 29.14] [color={rgb, 255:red, 155; green, 155; blue, 155 }  ,draw opacity=1 ][fill={rgb, 255:red, 155; green, 155; blue, 155 }  ,fill opacity=1 ][line width=0.75]      (0, 0) circle [x radius= 1.34, y radius= 1.34]   ;
\draw [color={rgb, 255:red, 155; green, 155; blue, 155 }  ,draw opacity=1 ]   (403.75,113.85) -- (416.7,131.59) ;
\draw [shift={(416.7,131.59)}, rotate = 53.88] [color={rgb, 255:red, 155; green, 155; blue, 155 }  ,draw opacity=1 ][fill={rgb, 255:red, 155; green, 155; blue, 155 }  ,fill opacity=1 ][line width=0.75]      (0, 0) circle [x radius= 1.34, y radius= 1.34]   ;
\draw [shift={(403.75,113.85)}, rotate = 53.88] [color={rgb, 255:red, 155; green, 155; blue, 155 }  ,draw opacity=1 ][fill={rgb, 255:red, 155; green, 155; blue, 155 }  ,fill opacity=1 ][line width=0.75]      (0, 0) circle [x radius= 1.34, y radius= 1.34]   ;
\draw [color={rgb, 255:red, 155; green, 155; blue, 155 }  ,draw opacity=1 ]   (349.2,101.23) -- (366.78,121.8) ;
\draw [shift={(366.78,121.8)}, rotate = 0] [color={rgb, 255:red, 155; green, 155; blue, 155 }  ,draw opacity=1 ][fill={rgb, 255:red, 155; green, 155; blue, 155 }  ,fill opacity=1 ][line width=0.75]      (0, 0) circle [x radius= 1.34, y radius= 1.34]   ;
\draw [shift={(349.2,101.23)}, rotate = 49.49] [color={rgb, 255:red, 155; green, 155; blue, 155 }  ,draw opacity=1 ][fill={rgb, 255:red, 155; green, 155; blue, 155 }  ,fill opacity=1 ][line width=0.75]      (0, 0) circle [x radius= 1.34, y radius= 1.34]   ;
\draw [color={rgb, 255:red, 155; green, 155; blue, 155 }  ,draw opacity=1 ]   (366.78,121.8) -- (391.9,135.8) ;
\draw [shift={(391.9,135.8)}, rotate = 29.14] [color={rgb, 255:red, 155; green, 155; blue, 155 }  ,draw opacity=1 ][fill={rgb, 255:red, 155; green, 155; blue, 155 }  ,fill opacity=1 ][line width=0.75]      (0, 0) circle [x radius= 1.34, y radius= 1.34]   ;
\draw [shift={(366.78,121.8)}, rotate = 29.14] [color={rgb, 255:red, 155; green, 155; blue, 155 }  ,draw opacity=1 ][fill={rgb, 255:red, 155; green, 155; blue, 155 }  ,fill opacity=1 ][line width=0.75]      (0, 0) circle [x radius= 1.34, y radius= 1.34]   ;
\draw [color={rgb, 255:red, 155; green, 155; blue, 155 }  ,draw opacity=1 ]   (391.9,135.8) -- (416.7,131.59) ;
\draw [shift={(416.7,131.59)}, rotate = 350.35] [color={rgb, 255:red, 155; green, 155; blue, 155 }  ,draw opacity=1 ][fill={rgb, 255:red, 155; green, 155; blue, 155 }  ,fill opacity=1 ][line width=0.75]      (0, 0) circle [x radius= 1.34, y radius= 1.34]   ;
\draw [shift={(391.9,135.8)}, rotate = 350.35] [color={rgb, 255:red, 155; green, 155; blue, 155 }  ,draw opacity=1 ][fill={rgb, 255:red, 155; green, 155; blue, 155 }  ,fill opacity=1 ][line width=0.75]      (0, 0) circle [x radius= 1.34, y radius= 1.34]   ;
\draw    (160.2,111.8) -- (166.6,117.8) ;
\draw [color={rgb, 255:red, 155; green, 155; blue, 155 }  ,draw opacity=1 ]   (475.17,167.13) -- (460.68,174.43) ;
\draw [color={rgb, 255:red, 155; green, 155; blue, 155 }  ,draw opacity=1 ]   (462.47,116.55) -- (467.76,103.13) ;
\draw [color={rgb, 255:red, 155; green, 155; blue, 155 }  ,draw opacity=1 ]   (480.6,122.75) -- (462.47,116.55) ;
\draw [color={rgb, 255:red, 155; green, 155; blue, 155 }  ,draw opacity=1 ]   (474.67,128.36) -- (462.47,116.55) ;
\draw [color={rgb, 255:red, 155; green, 155; blue, 155 }  ,draw opacity=1 ]   (416.7,131.59) -- (424.75,117.54) ;
\draw [shift={(424.75,117.54)}, rotate = 0] [color={rgb, 255:red, 155; green, 155; blue, 155 }  ,draw opacity=1 ][fill={rgb, 255:red, 155; green, 155; blue, 155 }  ,fill opacity=1 ][line width=0.75]      (0, 0) circle [x radius= 1.34, y radius= 1.34]   ;
\draw [shift={(416.7,131.59)}, rotate = 299.82] [color={rgb, 255:red, 155; green, 155; blue, 155 }  ,draw opacity=1 ][fill={rgb, 255:red, 155; green, 155; blue, 155 }  ,fill opacity=1 ][line width=0.75]      (0, 0) circle [x radius= 1.34, y radius= 1.34]   ;
\draw [color={rgb, 255:red, 155; green, 155; blue, 155 }  ,draw opacity=1 ]   (424.75,117.54) -- (442.96,111.8) ;
\draw [shift={(442.96,111.8)}, rotate = 342.52] [color={rgb, 255:red, 155; green, 155; blue, 155 }  ,draw opacity=1 ][fill={rgb, 255:red, 155; green, 155; blue, 155 }  ,fill opacity=1 ][line width=0.75]      (0, 0) circle [x radius= 1.34, y radius= 1.34]   ;
\draw [shift={(424.75,117.54)}, rotate = 342.52] [color={rgb, 255:red, 155; green, 155; blue, 155 }  ,draw opacity=1 ][fill={rgb, 255:red, 155; green, 155; blue, 155 }  ,fill opacity=1 ][line width=0.75]      (0, 0) circle [x radius= 1.34, y radius= 1.34]   ;
\draw [color={rgb, 255:red, 155; green, 155; blue, 155 }  ,draw opacity=1 ]   (442.96,111.8) -- (463.38,116.55) ;
\draw [shift={(463.38,116.55)}, rotate = 13.1] [color={rgb, 255:red, 155; green, 155; blue, 155 }  ,draw opacity=1 ][fill={rgb, 255:red, 155; green, 155; blue, 155 }  ,fill opacity=1 ][line width=0.75]      (0, 0) circle [x radius= 1.34, y radius= 1.34]   ;
\draw [shift={(442.96,111.8)}, rotate = 13.1] [color={rgb, 255:red, 155; green, 155; blue, 155 }  ,draw opacity=1 ][fill={rgb, 255:red, 155; green, 155; blue, 155 }  ,fill opacity=1 ][line width=0.75]      (0, 0) circle [x radius= 1.34, y radius= 1.34]   ;
\draw [color={rgb, 255:red, 155; green, 155; blue, 155 }  ,draw opacity=1 ]   (416.7,131.59) -- (433.36,129.35) ;
\draw [shift={(433.36,129.35)}, rotate = 0] [color={rgb, 255:red, 155; green, 155; blue, 155 }  ,draw opacity=1 ][fill={rgb, 255:red, 155; green, 155; blue, 155 }  ,fill opacity=1 ][line width=0.75]      (0, 0) circle [x radius= 1.34, y radius= 1.34]   ;
\draw [shift={(416.7,131.59)}, rotate = 352.35] [color={rgb, 255:red, 155; green, 155; blue, 155 }  ,draw opacity=1 ][fill={rgb, 255:red, 155; green, 155; blue, 155 }  ,fill opacity=1 ][line width=0.75]      (0, 0) circle [x radius= 1.34, y radius= 1.34]   ;
\draw [color={rgb, 255:red, 155; green, 155; blue, 155 }  ,draw opacity=1 ]   (433.36,129.35) -- (451.57,123.62) ;
\draw [shift={(451.57,123.62)}, rotate = 342.52] [color={rgb, 255:red, 155; green, 155; blue, 155 }  ,draw opacity=1 ][fill={rgb, 255:red, 155; green, 155; blue, 155 }  ,fill opacity=1 ][line width=0.75]      (0, 0) circle [x radius= 1.34, y radius= 1.34]   ;
\draw [shift={(433.36,129.35)}, rotate = 342.52] [color={rgb, 255:red, 155; green, 155; blue, 155 }  ,draw opacity=1 ][fill={rgb, 255:red, 155; green, 155; blue, 155 }  ,fill opacity=1 ][line width=0.75]      (0, 0) circle [x radius= 1.34, y radius= 1.34]   ;
\draw [color={rgb, 255:red, 155; green, 155; blue, 155 }  ,draw opacity=1 ]   (451.57,123.62) -- (463.38,116.55) ;
\draw [shift={(463.38,116.55)}, rotate = 329.1] [color={rgb, 255:red, 155; green, 155; blue, 155 }  ,draw opacity=1 ][fill={rgb, 255:red, 155; green, 155; blue, 155 }  ,fill opacity=1 ][line width=0.75]      (0, 0) circle [x radius= 1.34, y radius= 1.34]   ;
\draw [shift={(451.57,123.62)}, rotate = 329.1] [color={rgb, 255:red, 155; green, 155; blue, 155 }  ,draw opacity=1 ][fill={rgb, 255:red, 155; green, 155; blue, 155 }  ,fill opacity=1 ][line width=0.75]      (0, 0) circle [x radius= 1.34, y radius= 1.34]   ;
\draw [color={rgb, 255:red, 155; green, 155; blue, 155 }  ,draw opacity=1 ]   (461.59,174.43) -- (441.22,166.55) ;
\draw [shift={(441.22,166.55)}, rotate = 0] [color={rgb, 255:red, 155; green, 155; blue, 155 }  ,draw opacity=1 ][fill={rgb, 255:red, 155; green, 155; blue, 155 }  ,fill opacity=1 ][line width=0.75]      (0, 0) circle [x radius= 1.34, y radius= 1.34]   ;
\draw [shift={(461.59,174.43)}, rotate = 201.16] [color={rgb, 255:red, 155; green, 155; blue, 155 }  ,draw opacity=1 ][fill={rgb, 255:red, 155; green, 155; blue, 155 }  ,fill opacity=1 ][line width=0.75]      (0, 0) circle [x radius= 1.34, y radius= 1.34]   ;
\draw [color={rgb, 255:red, 155; green, 155; blue, 155 }  ,draw opacity=1 ]   (441.22,166.55) -- (422.58,151.37) ;
\draw [shift={(422.58,151.37)}, rotate = 219.16] [color={rgb, 255:red, 155; green, 155; blue, 155 }  ,draw opacity=1 ][fill={rgb, 255:red, 155; green, 155; blue, 155 }  ,fill opacity=1 ][line width=0.75]      (0, 0) circle [x radius= 1.34, y radius= 1.34]   ;
\draw [shift={(441.22,166.55)}, rotate = 219.16] [color={rgb, 255:red, 155; green, 155; blue, 155 }  ,draw opacity=1 ][fill={rgb, 255:red, 155; green, 155; blue, 155 }  ,fill opacity=1 ][line width=0.75]      (0, 0) circle [x radius= 1.34, y radius= 1.34]   ;
\draw [color={rgb, 255:red, 155; green, 155; blue, 155 }  ,draw opacity=1 ]   (422.58,151.37) -- (416.7,131.59) ;
\draw [shift={(416.7,131.59)}, rotate = 253.43] [color={rgb, 255:red, 155; green, 155; blue, 155 }  ,draw opacity=1 ][fill={rgb, 255:red, 155; green, 155; blue, 155 }  ,fill opacity=1 ][line width=0.75]      (0, 0) circle [x radius= 1.34, y radius= 1.34]   ;
\draw [shift={(422.58,151.37)}, rotate = 253.43] [color={rgb, 255:red, 155; green, 155; blue, 155 }  ,draw opacity=1 ][fill={rgb, 255:red, 155; green, 155; blue, 155 }  ,fill opacity=1 ][line width=0.75]      (0, 0) circle [x radius= 1.34, y radius= 1.34]   ;
\draw [color={rgb, 255:red, 155; green, 155; blue, 155 }  ,draw opacity=1 ]   (461.59,174.43) -- (448.19,153.83) ;
\draw [shift={(448.19,153.83)}, rotate = 0] [color={rgb, 255:red, 155; green, 155; blue, 155 }  ,draw opacity=1 ][fill={rgb, 255:red, 155; green, 155; blue, 155 }  ,fill opacity=1 ][line width=0.75]      (0, 0) circle [x radius= 1.34, y radius= 1.34]   ;
\draw [shift={(461.59,174.43)}, rotate = 236.96] [color={rgb, 255:red, 155; green, 155; blue, 155 }  ,draw opacity=1 ][fill={rgb, 255:red, 155; green, 155; blue, 155 }  ,fill opacity=1 ][line width=0.75]      (0, 0) circle [x radius= 1.34, y radius= 1.34]   ;
\draw [color={rgb, 255:red, 155; green, 155; blue, 155 }  ,draw opacity=1 ]   (448.19,153.83) -- (434.02,141.74) ;
\draw [shift={(434.02,141.74)}, rotate = 220.46] [color={rgb, 255:red, 155; green, 155; blue, 155 }  ,draw opacity=1 ][fill={rgb, 255:red, 155; green, 155; blue, 155 }  ,fill opacity=1 ][line width=0.75]      (0, 0) circle [x radius= 1.34, y radius= 1.34]   ;
\draw [shift={(448.19,153.83)}, rotate = 220.46] [color={rgb, 255:red, 155; green, 155; blue, 155 }  ,draw opacity=1 ][fill={rgb, 255:red, 155; green, 155; blue, 155 }  ,fill opacity=1 ][line width=0.75]      (0, 0) circle [x radius= 1.34, y radius= 1.34]   ;
\draw [color={rgb, 255:red, 155; green, 155; blue, 155 }  ,draw opacity=1 ]   (434.02,141.74) -- (416.7,131.59) ;
\draw [shift={(416.7,131.59)}, rotate = 210.37] [color={rgb, 255:red, 155; green, 155; blue, 155 }  ,draw opacity=1 ][fill={rgb, 255:red, 155; green, 155; blue, 155 }  ,fill opacity=1 ][line width=0.75]      (0, 0) circle [x radius= 1.34, y radius= 1.34]   ;
\draw [shift={(434.02,141.74)}, rotate = 210.37] [color={rgb, 255:red, 155; green, 155; blue, 155 }  ,draw opacity=1 ][fill={rgb, 255:red, 155; green, 155; blue, 155 }  ,fill opacity=1 ][line width=0.75]      (0, 0) circle [x radius= 1.34, y radius= 1.34]   ;
\draw [color={rgb, 255:red, 155; green, 155; blue, 155 }  ,draw opacity=1 ]   (349.2,101.23) .. controls (334.19,80.9) and (331.92,88.5) .. (326.94,72.04) ;
\draw [shift={(326.94,72.04)}, rotate = 253.14] [color={rgb, 255:red, 155; green, 155; blue, 155 }  ,draw opacity=1 ][fill={rgb, 255:red, 155; green, 155; blue, 155 }  ,fill opacity=1 ][line width=0.75]      (0, 0) circle [x radius= 1.34, y radius= 1.34]   ;
\draw [shift={(349.2,101.23)}, rotate = 233.55] [color={rgb, 255:red, 155; green, 155; blue, 155 }  ,draw opacity=1 ][fill={rgb, 255:red, 155; green, 155; blue, 155 }  ,fill opacity=1 ][line width=0.75]      (0, 0) circle [x radius= 1.34, y radius= 1.34]   ;
\draw [color={rgb, 255:red, 155; green, 155; blue, 155 }  ,draw opacity=1 ]   (326.94,72.04) .. controls (337.82,31.53) and (385.86,36.59) .. (378.16,80.48) ;
\draw [shift={(378.16,80.48)}, rotate = 99.96] [color={rgb, 255:red, 155; green, 155; blue, 155 }  ,draw opacity=1 ][fill={rgb, 255:red, 155; green, 155; blue, 155 }  ,fill opacity=1 ][line width=0.75]      (0, 0) circle [x radius= 1.34, y radius= 1.34]   ;
\draw [shift={(326.94,72.04)}, rotate = 285.03] [color={rgb, 255:red, 155; green, 155; blue, 155 }  ,draw opacity=1 ][fill={rgb, 255:red, 155; green, 155; blue, 155 }  ,fill opacity=1 ][line width=0.75]      (0, 0) circle [x radius= 1.34, y radius= 1.34]   ;
\draw [color={rgb, 255:red, 155; green, 155; blue, 155 }  ,draw opacity=1 ]   (378.16,80.48) .. controls (366.83,87.23) and (368.19,93.56) .. (349.2,101.23) ;
\draw [shift={(349.2,101.23)}, rotate = 158] [color={rgb, 255:red, 155; green, 155; blue, 155 }  ,draw opacity=1 ][fill={rgb, 255:red, 155; green, 155; blue, 155 }  ,fill opacity=1 ][line width=0.75]      (0, 0) circle [x radius= 1.34, y radius= 1.34]   ;
\draw [shift={(378.16,80.48)}, rotate = 149.21] [color={rgb, 255:red, 155; green, 155; blue, 155 }  ,draw opacity=1 ][fill={rgb, 255:red, 155; green, 155; blue, 155 }  ,fill opacity=1 ][line width=0.75]      (0, 0) circle [x radius= 1.34, y radius= 1.34]   ;
\draw [color={rgb, 255:red, 155; green, 155; blue, 155 }  ,draw opacity=1 ]   (349.84,100.81) .. controls (341.37,89.89) and (339.63,84.28) .. (341.44,70.35) ;
\draw [shift={(341.44,70.35)}, rotate = 277.42] [color={rgb, 255:red, 155; green, 155; blue, 155 }  ,draw opacity=1 ][fill={rgb, 255:red, 155; green, 155; blue, 155 }  ,fill opacity=1 ][line width=0.75]      (0, 0) circle [x radius= 1.34, y radius= 1.34]   ;
\draw [shift={(349.84,100.81)}, rotate = 232.2] [color={rgb, 255:red, 155; green, 155; blue, 155 }  ,draw opacity=1 ][fill={rgb, 255:red, 155; green, 155; blue, 155 }  ,fill opacity=1 ][line width=0.75]      (0, 0) circle [x radius= 1.34, y radius= 1.34]   ;
\draw [color={rgb, 255:red, 155; green, 155; blue, 155 }  ,draw opacity=1 ]   (341.44,70.35) .. controls (353.23,46.3) and (361.39,53.89) .. (364.9,71.53) ;
\draw [shift={(364.9,71.53)}, rotate = 78.75] [color={rgb, 255:red, 155; green, 155; blue, 155 }  ,draw opacity=1 ][fill={rgb, 255:red, 155; green, 155; blue, 155 }  ,fill opacity=1 ][line width=0.75]      (0, 0) circle [x radius= 1.34, y radius= 1.34]   ;
\draw [shift={(341.44,70.35)}, rotate = 296.1] [color={rgb, 255:red, 155; green, 155; blue, 155 }  ,draw opacity=1 ][fill={rgb, 255:red, 155; green, 155; blue, 155 }  ,fill opacity=1 ][line width=0.75]      (0, 0) circle [x radius= 1.34, y radius= 1.34]   ;
\draw [color={rgb, 255:red, 155; green, 155; blue, 155 }  ,draw opacity=1 ]   (364.9,71.53) .. controls (358.21,83.85) and (357.31,91.87) .. (349.84,100.81) ;
\draw [shift={(349.84,100.81)}, rotate = 129.88] [color={rgb, 255:red, 155; green, 155; blue, 155 }  ,draw opacity=1 ][fill={rgb, 255:red, 155; green, 155; blue, 155 }  ,fill opacity=1 ][line width=0.75]      (0, 0) circle [x radius= 1.34, y radius= 1.34]   ;
\draw [shift={(364.9,71.53)}, rotate = 118.47] [color={rgb, 255:red, 155; green, 155; blue, 155 }  ,draw opacity=1 ][fill={rgb, 255:red, 155; green, 155; blue, 155 }  ,fill opacity=1 ][line width=0.75]      (0, 0) circle [x radius= 1.34, y radius= 1.34]   ;
\draw    (255.84,89.31) ;
\draw [shift={(255.84,89.31)}, rotate = 0] [color={rgb, 255:red, 0; green, 0; blue, 0 }  ][fill={rgb, 255:red, 0; green, 0; blue, 0 }  ][line width=0.75]      (0, 0) circle [x radius= 3.35, y radius= 3.35]   ;
\draw    (274.87,148.33) ;
\draw [shift={(274.87,148.33)}, rotate = 0] [color={rgb, 255:red, 0; green, 0; blue, 0 }  ][fill={rgb, 255:red, 0; green, 0; blue, 0 }  ][line width=0.75]      (0, 0) circle [x radius= 3.35, y radius= 3.35]   ;
\draw [color={rgb, 255:red, 208; green, 2; blue, 27 }  ,draw opacity=1 ]   (416.7,131.59) ;
\draw [shift={(416.7,131.59)}, rotate = 0] [color={rgb, 255:red, 208; green, 2; blue, 27 }  ,draw opacity=1 ][fill={rgb, 255:red, 208; green, 2; blue, 27 }  ,fill opacity=1 ][line width=0.75]      (0, 0) circle [x radius= 3.35, y radius= 3.35]   ;
\draw    (463.38,116.55) ;
\draw [shift={(463.38,116.55)}, rotate = 0] [color={rgb, 255:red, 0; green, 0; blue, 0 }  ][fill={rgb, 255:red, 0; green, 0; blue, 0 }  ][line width=0.75]      (0, 0) circle [x radius= 3.35, y radius= 3.35]   ;
\draw    (460.68,174.43) ;
\draw [shift={(460.68,174.43)}, rotate = 0] [color={rgb, 255:red, 0; green, 0; blue, 0 }  ][fill={rgb, 255:red, 0; green, 0; blue, 0 }  ][line width=0.75]      (0, 0) circle [x radius= 3.35, y radius= 3.35]   ;
\draw [color={rgb, 255:red, 208; green, 2; blue, 27 }  ,draw opacity=1 ]   (349.84,100.81) ;
\draw [shift={(349.84,100.81)}, rotate = 0] [color={rgb, 255:red, 208; green, 2; blue, 27 }  ,draw opacity=1 ][fill={rgb, 255:red, 208; green, 2; blue, 27 }  ,fill opacity=1 ][line width=0.75]      (0, 0) circle [x radius= 3.35, y radius= 3.35]   ;
\draw [color={rgb, 255:red, 208; green, 2; blue, 27 }  ,draw opacity=1 ]   (128.8,122.4) ;
\draw [shift={(128.8,122.4)}, rotate = 0] [color={rgb, 255:red, 208; green, 2; blue, 27 }  ,draw opacity=1 ][fill={rgb, 255:red, 208; green, 2; blue, 27 }  ,fill opacity=1 ][line width=0.75]      (0, 0) circle [x radius= 3.35, y radius= 3.35]   ;
\draw [color={rgb, 255:red, 208; green, 2; blue, 27 }  ,draw opacity=1 ]   (105.8,96.6) ;
\draw [shift={(105.8,96.6)}, rotate = 0] [color={rgb, 255:red, 208; green, 2; blue, 27 }  ,draw opacity=1 ][fill={rgb, 255:red, 208; green, 2; blue, 27 }  ,fill opacity=1 ][line width=0.75]      (0, 0) circle [x radius= 3.35, y radius= 3.35]   ;

\draw (192,108.2) node [anchor=north west][inner sep=0.75pt]    {$\Longrightarrow $};

\end{tikzpicture}

    \caption{Constructing a 2-connected planar super-graph. The marked vertices are depicted in red.}
    \label{fig:2-con-sup-graph}
\end{figure}
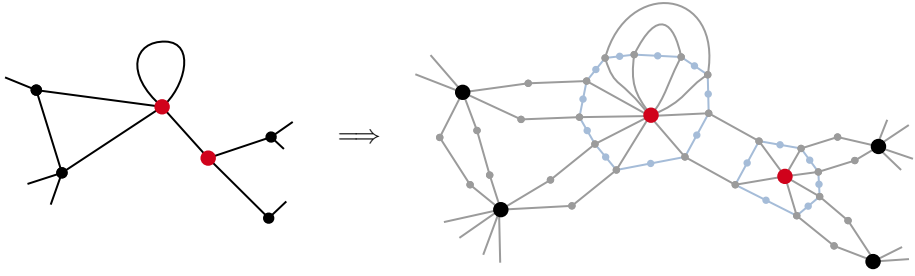

In light of Proposition~\ref{prop:well-embed}, we introduce the following terminology for the sake of brevity. 

\begin{definition}[Good drawing]\label{def:good-drawing}
    Let $\Gamma$ be a connected, locally finite, planar graph. Then a \textit{good drawing} of $\Gamma$ is an embedding $\vartheta : \overline \Gamma \into \bbS^2$. 
\end{definition}

Some planar graphs admit another nice type of drawing. We say that a planar graph $\Gamma$ admits a \textit{vertex accumulation point-free drawing}, or \textit{VAP-free drawing}, if there exists a topological embedding $\vartheta : \Gamma \into \R^2$ such that $\vartheta(V(\Gamma))$ is a discrete subset of $\R^2$. The following is a standard fact and follows easily from Proposition~\ref{prop:well-embed}. 

\begin{corollary}\label{cor:vap-free}
    Let $\Gamma$ be a connected, locally finite, one-ended, planar graph. Then $\Gamma$ admits a VAP-free drawing. 
\end{corollary}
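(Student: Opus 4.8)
The plan is to deduce this quickly from Proposition~\ref{prop:well-embed}, using that one-endedness makes the Freudenthal compactification as simple as possible. Since $\Gamma$ is one-ended we have $\overline\Gamma = \Gamma \sqcup \{\omega\}$ for a single end $\omega$, so Proposition~\ref{prop:well-embed} hands us a topological embedding $\vartheta : \overline\Gamma \into \bbS^2$. Writing $p := \vartheta(\omega)$ and fixing a homeomorphism $h : \bbS^2 - \{p\} \to \R^2$, the composite $h \circ \vartheta|_\Gamma : \Gamma \into \R^2$ is a topological embedding, and the whole content of the corollary is the claim that $h(\vartheta(V\Gamma))$ is a discrete subset of $\R^2$ — equivalently, that $\vartheta(V\Gamma)$ has no accumulation point in $\bbS^2 - \{p\}$.

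To see this I would first note that $VX \cup \Omega X$ is closed in $\overline X$ for any connected, locally finite graph $X$ (its complement is a union of open edge-interiors), so $\vartheta(V\Gamma \cup \{\omega\})$ is closed in the compact set $\vartheta(\overline\Gamma)$, hence closed in $\bbS^2$; removing $p$ shows $\vartheta(V\Gamma)$ is closed in $\bbS^2 - \{p\}$. Second, local finiteness gives each vertex $v$ a neighbourhood in $\Gamma$ — a small star — meeting $V\Gamma$ only in $v$; since $\Gamma$ is open in $\overline\Gamma$ (it is the complement of the closed point $\omega$) and $\vartheta$ is an embedding, this neighbourhood pushes forward to a neighbourhood of $\vartheta(v)$ in $\bbS^2$ that isolates it from the rest of $\vartheta(V\Gamma)$. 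A closed subset of $\R^2$ all of whose points are isolated is discrete, which is exactly what we want.

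I do not expect a genuine obstacle here: everything nontrivial is hidden inside Proposition~\ref{prop:well-embed}, and the remainder is routine point-set topology. The one point worth flagging is that the one-ended hypothesis is essential and used exactly once — if $\Gamma$ had two or more ends, then after puncturing $\bbS^2$ at one end the remaining ends would still be accumulation points of $\vartheta(V\Gamma)$, and with infinitely many ends one cannot even obtain a copy of $\R^2$ by deleting a single point of $\bbS^2$.
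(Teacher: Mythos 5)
Your proof is correct and is exactly the standard point-set argument the paper alludes to when it calls Corollary~\ref{cor:vap-free} ``a standard fact [that] follows easily from Proposition~\ref{prop:well-embed}'': puncture $\bbS^2$ at the image of the unique end, then check that closedness plus local finiteness makes the vertex image discrete. The paper omits the details, and your write-up supplies them accurately, including the correct use of the fact that $\Gamma$ is open in $\overline\Gamma$ so that the open star of a vertex extends to an open set in $\bbS^2$.
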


\begin{remark}
    It is important to note that not every planar graph admits a VAP-free drawing. For example, the standard Cayley graph of $\Z^2 \ast \Z_2$ (a.k.a. the `tree of flats') is planar but admits no such drawing.
\end{remark}

An important feature of planar graphs is their \textit{faces}. Informally, it is clear what we mean by a `face'. However, this word could be referring to several different but closely related concepts, especially in the realm of infinite planar graphs. Is a `face' a component of $\bbS^2 \setminus \vartheta(\overline \Gamma)$, or actually a particular form of subgraph of $\Gamma$? For the sake of clarity, we now set up some notation to help us discuss faces without ambiguity. 

\begin{definition}[Faces and facial subgraphs]\label{def:faces}
    Let $\Gamma$ be a connected, locally finite, planar graph with a fixed good drawing $\vartheta$. The connected components of $\bbS^2 \setminus \vartheta(\overline \Gamma)$ are referred to as the \textit{faces} of $\Gamma$, with respect to the drawing $\vartheta$. Let $\facedisks(\Gamma)$ denote the set of faces. 
    
    Given $U \in \facedisks(\Gamma)$, write 
    $$
    \facepaths [U] := \vartheta^{-1}(\partial U) \setminus \Omega (\Gamma). 
    $$
    We call the subgraph $\facepaths [U]$ the \textit{facial subgraph of $\Gamma$ bordering $U$}. 
    Let 
    $
    \facepaths (\Gamma) 
    $
    denote the set of facial subgraphs of $\Gamma$. 
    $$
    \inffaces (\Gamma) = \{f \in \facepaths (\Gamma) : \text{$f$ is infinite}\}, \ \  \finfaces (\Gamma) = \{f \in \facepaths  (\Gamma) : \text{$f$ is finite}\}.
    $$
\end{definition}

\begin{remark}
    Note that $f \in \inffaces (\Gamma)$ need not be connected in general. However, the closure of $f$ in $\overline \Gamma$ will always be connected. 
\end{remark}

\begin{remark}
    Really, we should include mention of the drawing $\vartheta$ in our notation above. However, for our purposes this drawing will always be fixed in advance and so there is no risk of confusion. 
\end{remark}

There is a key benefit to working with 2-connected planar graphs, which is that their faces are incredibly sensible. More precisely, we have the following. 

\begin{proposition}[{\cite[Prop.~3]{richter20023}}]\label{prop:simple-face}
    Let $K$ be a compact, 2-connected, locally connected subset of the sphere. Then the boundary of every component of $\bbS^2 \setminus K$ is a simple closed curve.  
\end{proposition}

In particular, if $\Gamma$ is a 2-connected locally finite planar graph then the $\vartheta$-image of the closure of every $f \in \facepaths(\Gamma)$ in $\overline \Gamma$ is a simple closed curve in $\bbS^2$. More precisely, if $f \in \finfaces (\Gamma)$ then $f$ is a simple loop, and if $f \in \inffaces (\Gamma)$ then $f$ is a disjoint union of bi-infinite lines.









\section{Cutting up graphs}\label{sec:cuts}

In this section we collect definitions and results relating to cutting up graphs into simpler pieces.

\subsection{The Boolean ring of cuts}

    We open by standardising our terminology. 
    Let $X$ be a connected graph. 
    Given a subset $b \subset V(X)$, let $b^\ast$ denote the complement $V(X) \setminus b$. Let $\delta b$, called the \textit{coboundary of $b$}, denote the set of edges in $X$ with exactly one endpoint in $b$. 
    Given any $b \subset V(X)$, clearly $\delta b = \delta b^\ast$.
    Let 
    $$
    \br (X) = \{b \subset V(X) : \text{$\delta b$ is finite} \}. 
    $$
    Clearly $\br (X)$ is closed under the operations of symmetric difference, intersection, and complementation. This makes $\br (X)$ into a Boolean ring. That is, a commutative ring with unity such that every element $r$ satisfies the equation $r^2 = r$. The multiplicative operation is intersection, while the additive operation is symmetric difference. We will sometimes refer to elements of $\br(X)$ as \emph{cuts}. A cut $b \in \br(X)$ is said to be \emph{tight} if both $X[b]$ and $X[b^\ast]$ are connected. 
    
    Given a group $G$ acting on $X$, this induces an action of $G$ upon $\br (X)$. Thus, we may view $\br (X)$ as a $G$-module by `forgetting' the multiplicative operation. 
    Let $\br_n (X)$ denote the subring of $\br (X)$ generated by elements $b$ such that $|\delta b| < n$. 
    Given $b_1, b_2 \in \br (X)$, we say that $b_1$ \textit{crosses} $b_2$ if the intersections
    $$
    b_1 \cap b_2, \  \ b_1 \cap b_2^\ast, \ \ b_1^\ast \cap b_2, \ \ b_1^\ast \cap b_2^\ast
    $$
    are all non-empty. If $b_1$ and $b_2$ do not cross, we say they are \textit{nested}. We say that a subset $\E \subset \br (X)$ is \textit{nested} if any two $b_1, b_2 \in \E$ are nested. If $\E$ is closed under taking complements, then we say it is \emph{symmetric}. 
    We now state the following key theorem.
    
    \begin{theorem}[Dicks--Dunwoody, {\cite[Thm.~II.2.20]{dicks1989groups}}]\label{thm:dicks-dunwoody-genset}
        Let $X$ be a connected graph and $G$ a group acting on $X$. Then there is a sequence 
        $$
        \E_1 \subset \E_2 \subset \ldots
        $$
        of $G$-invariant nested subsets of $\br (X)$ consisting of tight elements, such that $\E_n$ generates $\br_n (X)$ as a Boolean ring. 
    \end{theorem}

    The following characterisation of accessibility is helpful, due to Thomassen--Woess \cite{thomassen1993vertex}. 

    \begin{theorem}[{\cite[Thm.~7.6]{thomassen1993vertex}}]\label{thm:access-char-br}
        Let $X$ be a connected, locally finite graph equipped with a quasi-transitive action by a group $G$.  Then $X$ is accessible if and only if there exists $n \geq 1$ such that $\br (X) = \br_n (X)$. In other words, $X$ is accessible if and only if there is a nested, $G$-invariant, $G$-finite generating set of $\br (X)$. 
    \end{theorem}

    The following proposition was first observed by Dunwoody in \cite{dunwoody1982cutting} in the case of minimal cuts, and subsequently extended by Thomassen--Woess in \cite{thomassen1993vertex} to tight cuts of bounded size.  

    \begin{proposition}[{\cite[Prop.~4.1]{thomassen1993vertex}}]\label{prop:finitelymany-cuts}
        Let $X$ be a connected graph, $e \in E(X)$, and $k \geq 1$. Then there exists only finitely many tight $b \in \br (X)$ such that $\delta b$ contains $e$ and $|\delta b| \leq k$. 
    \end{proposition}

    Note that Proposition~\ref{prop:finitelymany-cuts} immediately implies that, if $X$ is a locally finite, quasi-transitive $G$-graph, then $\E_n \subset \br(X)$ is finite for all $n \geq 1$, where $\E_n$ is as in Theorem~\ref{thm:dicks-dunwoody-genset}. 
    Moreover, the following corollary is immediate.


    \begin{corollary}\label{cor:bounded-diam}
        Let $X$ be a connected, locally finite, quasi-transitive graph. Then for every $n > 0$ there exists $m > 0$ such that for all tight $b \in \br (X)$, if $|\delta b| < n$ then $\diam(\delta b) < m$. 
    \end{corollary}

    The following fact is also useful and worthy of mention. This was first observed by M\"oller \cite{moller1992ends}, and a short proof can be found in \cite{thomassen1993vertex}. 

    \begin{proposition}[{\cite[Prop.~7.1]{thomassen1993vertex}}]\label{prop:sep-ends}
        Let $X$ be a connected, locally finite graph. Let $\E$ be a subset of $\br (X)$, and let $R$ be the subring of $\br (X)$ generated by $\E$. If $\omega_1, \omega_2 \in \Omega (X)$ are separated by some $b \in R$, then there is some $b' \in \E$ which separates them too. 
    \end{proposition}

    We conclude this subsection by recording the following cheap tricks for creating tight cuts. 

    \begin{proposition}\label{prop:tight-trick}
        Let $X$ be a connected graph and let $b_0 \in \br (X)$ such that $X[b_0]$ is connected. Let $U$ be a connected component of $X \setminus X[b_0]$, and let $b_1$ denote the set of vertices of $U$. Then $b_1$ is a tight element of $\br (X)$.
    \end{proposition}

    \begin{proof}
        Clearly $\delta b_1 \subset \delta b_0$, so in particular $\delta b_1$ is finite and thus  $b_1 \in \br (X)$. By construction, $X[b_1]$ is connected. We need only observe that $X[b_1^\ast]$ is connected, but this is clear since $X[b_0]$ is connected and every edge in $\delta b_1$ abuts $b_0$. 
    \end{proof}

    \begin{proposition}\label{prop:tight-trick-paths}
        Let $X$ be a connected graph. Let $b \in \br (X)$, $e_1, e_2 \in \delta b$. Suppose there exists paths $p \subset X[b]$, $q \subset X[b^\ast]$, both connecting an endpoint of $e_1$ to an endpoint of $e_2$. Then there exists a tight element $b' \in \br (\Gamma)$ such that $\delta b' \subset \delta b$ and $e_1, e_2 \in \delta b'$. 
    \end{proposition}

    \begin{proof}
        Let $b_0 \subset b$ be the vertex set of the connected component of $\Gamma[b]$ which contains $p$. Now, let $b'$ be the vertex set of the connected component of $\Gamma[b_0]$ which contains $q$. By Proposition~\ref{prop:tight-trick}, this is a tight element, and certainly $\delta b' \subset \delta b$ and $e_1, e_2 \in \delta b'$. 
    \end{proof}

\subsection{Peripheral systems and elliptic cuts}\label{sec:ellipticcuts}

In this section we introduce some machinery which pertains to  \emph{elliptic cuts} of a graph relative to some \emph{peripheral system}. This machinery is developed in detail in \cite{macmanusrelacc}, and so we defer to this as a reference for the results we need. 
%
%
We begin with the following definition. 

\begin{definition}[Peripheral systems]\label{def:per}
    Let $X$ be a connected graph. Then a collection $\per$ of subsets of $V(X)$ is called a \emph{peripheral system}. We say that $\per$ is \emph{thin} if every $v \in V(X)$ is contained in at most finitely many $H \in \per$. If $X$ is a $G$-graph, then we say that $\per$ is \emph{$G$-invariant} if for all $g \in G$, we have that $gH \in \per$. 

    The \emph{cone-off} of $X$ over $\per$, denoted $\widehat X_\per$, is the graph formed by adding a new vertex $v_H$ for each $H \in \per$ and connecting this with an edge to every $u \in H$. 
\end{definition}

\begin{definition}[Elliptic cuts]\label{def:elliptic}
    Let $X$ be a connected graph, and $\per$ a peripheral system. We say that $b \in \br(X)$ is \emph{$\per$-elliptic} (or just \emph{elliptic}) if for every $H \in \per$, either $b \cap H$ or $b^\ast \cap H$ is finite. The subset of $\br(X)$ consisting of elliptic cuts is denoted by $\br_\per(X)$. 
\end{definition}

It is an exercise in the algebra of sets to prove that $\br_\per(X)$ is a subring of $\br(X)$; see \cite[Prop.~3.3]{macmanusrelacc} for details. If $X$ is a $G$-graph and $\per$ is $G$-invariant, then similarly $\br_\per(X)$ inherits the structure of a $G$-submodule of $\br(X)$.

    \subsection{Tree decompositions and structure trees}\label{sec:trees}

        We now discuss \emph{tree decompositions} of graphs, how to construct them, and their relationship with accessibility. The results in this section are well known, so we will aim for brevity.

        \begin{definition}[Separation]
            Let $X$ be a connected graph. 
            A \textit{separation} of $X$ is a triple $(Y, S, Z)$ where $Y, S, Z \subset V(X)$ are pairwise disjoint subsets such that $V(X) = Y \cup S \cup Z$, and there is no edge with one endpoint in $Y$ and the other in $Z$. 
        \end{definition}
        

        \begin{definition}[Tree decomposition]
            Let $X$ be a connected graph. A \textit{tree decomposition} of $X$ is a pair $(T, \cV)$, where $T$ is a connected simplicial tree and $\cV = (V_v)_{v\in V(T)}$ is a collection of subsets $V_v \subset V(X)$ indexed by $V(T)$, called \textit{bags}, such that the following hold:
        \begin{enumerate}
            \item $V(X) = \bigcup_{v \in V(T)} V_v$,

            \item For all $uw \in E(X)$, there exists $v \in V(T)$ such that $u,w \in V_v$, 

            \item For all $u \in V(X)$, the subgraph of $T$ induced by the set 
            $$
            \{v \in V(T) : u \in V_v\}
            $$
            is connected.
        \end{enumerate}
        Given an oriented edge $e = (u,w) \in \vec E(T)$, let $T_u$, $T_w$ denote the connected components of $T-\{uw\}$ which contain $u$ and $w$, respectively. Then the \textit{edge-separation} corresponding to this edge is the separation $(Y_e, S_e, Z_e)$ where $S_e = V_u \cap V_w$, $Y_e = \bigcup_{s \in V(T_u)} V_s \setminus S_e$, and  $Z_e = \bigcup_{s \in V(T_w)} V_s \setminus S_e$. The sets $S_e$, $e \in \vec E(T)$ are called the \textit{adhesion sets} of the tree decomposition. We say that this tree decomposition has \textit{bounded adhesion} if there exists $N \geq 1$ such that every adhesion set contains at most $N$ vertices. 

        The subgraphs $X[V_v]$ induced by the bags are called the \emph{parts} of the tree decomposition. We say that the tree decomposition is \emph{connected} if all parts are connected.

        If $X$ is a $G$-graph, then we call this tree decomposition \textit{$G$-canonical} if $G$ acts on $T$ by isometries such that for all $g \in G$, $u \in V(T)$ we have that $gV_u =V_{gu}$. 
        \end{definition}


    If the group $G$ acting on $X$ is clear from context, then we may suppress mention of $G$ from our terminology and simply call a tree decomposition \textit{canonical}. 
%
%
   %
%
    The following is an easy exercise, but worth noting.  

    \begin{proposition}\label{prop:replace-parts-with-nbhd}
        Let $X$ be a connected, locally finite, quasi-transitive graph. Let $(T, \cV)$ be a canonical tree decomposition, where $\cV = (V_u)_{u\in V(T)}$ and let $r \geq 0$. For each $u \in V(T)$, let $V'_u$ denote the set of vertices which lie in the closed $r$-neighbourhood of $V_u$, and let $\cV' = (V'_u)_{u\in V(T)}$. Then $(T, \cV')$ is a canonical tree decomposition. Moreover, if $(T, \cV)$ has bounded adhesion and/or is connected, then the same holds for $(T, \cV')$. 
    \end{proposition}

    It is helpful to view tree decompositions as essentially the graph-theoretical analogue of Bass--Serre trees found in geometric group theory. Indeed, if $G \actson X$ then canonical tree decompositions of $X$ induce splittings of $G$ as a graph of groups in the usual way. 
    
    We have the following standard proposition. 

    \begin{proposition}\label{prop:tree-decomp-acc}
        Let $X$ be a connected, locally finite, quasi-transitive $G$-graph. Let $(T, \cV)$ be a $G$-canonical, connected tree decomposition with bounded adhesion and $T/G$ compact, where $\cV = (V_u)_ {u \in V(T)}$. Then the following hold:
        \begin{enumerate}
            \item\label{itm:tree-1} For each $u \in V(T)$, the part $X[V_u]$ is quasi-transitively stabilised, in the sense of Definition~\ref{def:cocompact}.


            \item\label{itm:tree-4}  If each part $X[V_u]$ is accessible then $X$ is accessible.
        \end{enumerate}
    \end{proposition}

    \begin{proof}
        Property~(\ref{itm:tree-1}) follows from the fact that $\Stab(u)$ must act with finitely many orbits on the set of edges going into $u$, since $T/G$ is compact. See \cite[Prop.~4.5]{hamann2022stallings} for details.\footnote{The cited result in \cite{hamann2022stallings} is stated for tree decompositions $(T,\cV)$ where $G$ acts on $T$ with a single orbit of edges. An almost identical argument goes through assuming just finitely many orbits of edges.}


        To see (\ref{itm:tree-4}), suppose that every part $X_u := X[V_u]$ is accessible. So each $X_u$ admits a canonical tree decomposition of bounded adhesion $(T^u, \cV^u)$ which effectively distinguishes all of its ends \cite[Thm.~6.4]{hamann2022stallings}. Since $T/G$ is compact, we may assume that this bounded on adhesion sets is uniform across all parts. A standard blow-up argument (see e.g. \cite[Prop.~7.2]{carmesin2022canonical}) yields a tree decomposition of $X$ of bounded adhesion which distinguishes all ends of $X$. In particular, $X$ is accessible. 
    \end{proof}

    We now describe how to construct tree decompositions from nested subsets of $\br(X)$. Before we can do this, we must first construct the tree itself. For this, we briefly recall some terminology.
    A \emph{pocset} is a set $\E$ equipped with a partial ordering $\leq$ and an order reversing involution $s \mapsto s^\ast$, such that  $a \not\leq a^\ast$ for all $a \in \E$. 
    We say that $S$ is \emph{nested} if for all $a, b \in \E$, one of
    $$
    a \leq b, \ \ a \leq b^\ast, \ \ a^\ast \leq b, \ \ a^\ast \leq b^\ast 
    $$
    holds. Finally, $\E$ is \emph{discrete} if for all $a, b \in \E$, the interval
    $$
    \{c \in \E: a \leq c \leq b\}
    $$
    is finite.
    The canonical example of a discrete, nested pocset is the set of oriented edges $\vec E(T)$ of a tree $T$, where $e \leq f$ if and only if $e$ and $f$ lie on some common geodesic, oriented to point in the same direction along this path, with $f$ lying `in front of' $e$.
    It is a classical fact, due to Dunwoody, that this is essentially the only example.

    \begin{theorem}[{\cite[Thm.~2.1]{dunwoody1979accessibility}}]
        Let $\E$ be a discrete, nested pocset. Then there is a tree $T = T(\E)$ such that $\E$ is canonically isomorphic to $\vec E(T)$ as a pocset. 
    \end{theorem}

    It is worth stating where the vertices of $T$ come from. Define a relation $\sim$ on $\E$ as follows. Given $a, b \in \E$, say that $a \ll b$ if $a \leq b$ and $a \leq c \leq b$ implies $a = c$ or $b = c$. We then define
    $$
    \text{$a \sim b$ if $a = b$ or $a \ll b^\ast$. }
    $$
    It is an exercise to check that $\sim$ is an equivalence relation on $\E$. The vertex set of $T$ can then be taken to be the set of $\sim$-equivalence classes. Those directed edges of $T$ which point `into' a vertex $v$ are then precisely those $A \in \E$ such that $A \in v$. We call $T$ the \textit{structure tree} of $\mathcal E$. For more details, see \cite{dunwoody1979accessibility}.

    If $X$ is a connected, locally finite quasi-transitive $G$-graph and $\E \subset \br(X)$ is a symmetric, nested, $G$-invariant, $G$-finite subset, then clearly $\E$ has the structure of a discrete, nested pocset. This allows us to build a tree decomposition which encodes the separation properties of $\E$. 
    The following construction essentially appears in \cite[\S7]{thomassen1993vertex}.

    \begin{theorem}\label{thm:vertex-subgraph}
        Let $X$ be a connected, locally finite, quasi-transitive $G$-graph. Let $\E \subset \br(X)$ be a nested, symmetric, $G$-invariant, $G$-finite subset. Let $T = T(\E)$ denote the structure tree of $\E$. Then there exists a $G$-canonical tree decomposition $(T, \cV)$ of $X$ with the following properties:
        \begin{enumerate}
            \item $(T, \cV)$ has bounded adhesion and connected parts, and each part is a quasi-transitively stabilised subgraph. 

            \item There exists $C > 0$ such that for every $b \in \E$, we have that 
            $$
            \dHaus[X](b, Y_b) \leq C,
            $$
            where $(Y_b, S_b, Z_b)$ is the edge-separation induced by $b \in \E \equiv \vec E(T)$. 
        \end{enumerate}
        Furthermore, if $\E$ is taken to be $\E_n$ as in Theorem~\ref{thm:dicks-dunwoody-genset}, then we also have that $\es(X_v) \geq n$. 
    \end{theorem}

    \begin{proof}
        Fix $v \in V(T)$. 
        Given $b \in v$ and $m \geq 1$, let $R(m, b)$ be the subgraph of $X$ induced by the set of vertices which lie a distance of at most $m$ from $b^\ast$. We choose $m$ sufficiently large so that $R(m, b)$ satisfies the following:
        \begin{enumerate}
            \item $R(m,b)$ contains all geodesics in $X[b]$ containing endpoints of edges in $\delta b$, 

            \item\label{itm:disjoint-paths} If there exists $n$ pairwise edge-disjoint paths $p_1, \ldots , p_n$ in $X[b]$ with endpoints in $\delta b$, then $n$ pairwise edge-disjoint paths $p_1', \ldots , p_n'$ in $R(m, b)$ such that $p_i'$ has the same endpoints as $p_i$. 
        \end{enumerate}
        Such an $m$ clearly exists since $G$ acts on $\E$ with finitely many orbits. 
        We then define 
        $$
        X_v = \bigg(\bigcap_{b \in v} X[b^\ast]\bigg) \cup \bigg(\bigcup_{b \in v} (\delta b \cup R(m, b))\bigg). 
        $$
        We assume that $X_v$ is an induced subgraph. If not, then add back the missing edges. 

        \begin{claim}
            $X_v$ is connected.
        \end{claim}

        \begin{proof}
            Let $u, w \in X_v$, we will show that there is a path in $X_v$ connecting $u$ and $w$. It is sufficient to consider the case where $u, w \in \bigcap_{b \in v} X[b^\ast]$, as clearly every other vertex in $X_v$ has a path to this subset. Let $P \subset X$ be a geodesic in $X$ connecting $u, w$. By construction, the pieces of $P$ which are not contained in $\bigcap_{b \in v} X[b^\ast]$ are actually contained in $R(m,b)$ for some $b \in v$. The claim follows. 
        \end{proof}

        Given $b \in \E$, it is clear by construction that $Y_b$ is contained in some finite neighbourhood of $b$, and vice versa. Since $\E$ is $G$-finite, it follows that the Hausdorff distance between $b$ and $Y_b$ is uniformly bounded.
        
        If $\E = \E_n$, then the fact that $\es(X_v) \geq n$ essentially follows directly from property (\ref{itm:disjoint-paths}) above. For full details, see \cite[Lem.~8.1]{thomassen1993vertex}. 
        To complete the construction, simply let $\cV = (V(X_v))_ {v \in V(T)}$. It is easy to verify that $(T, \cV)$ satisfies our requirements.
    \end{proof}

\section{Cuts and quasi-isometries}\label{sec:cuts-qi}

In this section we record some technical results relating to how cuts in graphs interact with quasi-isometries. 

\subsection{Pushing cuts through quasi-isometries}

First, we record the following easy facts, which describes how cuts are translated by quasi-isometries. The first deals with vertex-cuts, and the second with edge cuts.


\begin{lemma}\label{lem:cut-through-qi}
    For all $\lambda \geq 1$, there exists $R > 0$ such that the following holds:
    
    Let $\psi : X \to Y$ be a $(\lambda, \lambda)$-quasi-isometry between connected, locally finite graphs. 
    Then for all $b \in \br (X)$,  there exists $b' \in \br (Y)$ such that 
    $$
    \dHaus[Y](\psi(b), b') < R,
    $$
    and $\delta b'$ is contained in the $R$-neighbourhood of $\psi(\delta b)$. Moreover if $X[b]$ is connected then we may take $b'$ so that $Y[b']$ is connected. 
\end{lemma}

\begin{proof}
    Let $\varphi : Y \to X$ be some choice of quasi-inverse to $\psi$. 
    Fix $\lambda \geq 1$ such that $\psi$ and $\varphi$ are $(\lambda, \lambda)$-quasi-isometries, and $\varphi$ is a $\lambda$-quasi-inverse to $\psi$.

    Let $b' = B_{Y}(\psi(b);R) \cap V(Y)$ for some large $R > 0$, say $ R = 100\lambda^5$. Clearly if $X[b]$ is connected then so is $Y[b']$. 
    Suppose $u$ and $v$ are adjacent vertices in $Y$ such that $u \in b'$ and $v \not\in b'$. 
    Since $R$ is sufficiently large compared to the quasi-isometry constants, we see that $\varphi(v) \not\in b$, but certainly both $\varphi(v)$ and $\varphi(u)$ lie in a bounded neighbourhood of $b$.  Thus, $u$ and $v$ lie in a bounded neighbourhood of $\psi(\delta b)$. Since $u$ and $v$ are arbitrary and $Y$ is locally finite, we have that $\delta b'$ is finite and so $b' \in \br (Y)$.
\end{proof}

\subsection{Cuts and quasi-actions}

We now apply the above to prove a lemma  about graphs equipped with cobounded quasi-actions.




\begin{lemma}\label{lem:bounded-finite-pieces}
    Let $\Gamma$ be a connected, locally finite graph equipped with a cobounded quasi-action by a group $G$. Then there exists $C \geq 1$ such that the following holds. For all \textbf{finite} $b \in \br(X)$, we have that
    $$
    \dist_\Gamma(z, \delta b) < C \diam_\Gamma(\delta b) + C, 
    $$
    for all $z \in b$. 
\end{lemma}

\begin{proof}
    Fix $\lambda > 1$ such that the quasi-action of $G$ on $\Gamma$ is a $\lambda$-quasi-action, and $B$ such that this quasi-action is $B$-cobounded. Fix $R \geq 0$ as in Lemma~\ref{lem:cut-through-qi}. We may assume without loss of generality that $\Gamma$ is infinite, lest the lemma be vacuous. 
    Let $b \in \br(X)$ be finite. Since $b$ is finite, we may choose $z \in b$ as to maximise $D := \dist_\Gamma(z, \delta b)$. Assume without loss of generality that $D$ is much larger than $R$. 
    Choose $g \in G$ such that $\dist_\Gamma(z, \varphi_g(\delta b)) \leq B$. 
    By Lemma~\ref{lem:cut-through-qi}, there exists $b' \in \br(X)$ such that 
    $$
    \dHaus[Y](\varphi_g(b), b') < R,
    $$
    and $\delta b'$ is contained in the $R$-neighbourhood of $\varphi_g(\delta b)$.
    If $D$ is taken to be sufficiently large, then $\delta b' \subset X[b]$ and $y := \varphi_g(z)\in b'$.  In particular, we can deduce that $b' \subset b$, since $b'$ is finite. 
    We now compute 
    \begin{align*}
        D \geq \dist_\Gamma(y, \delta b) &\geq \dist_\Gamma(y, \delta b') + \inf_{x \in \delta b}\dist_\Gamma(x, \delta b') \\
        &\geq \Big(\tfrac 1 \lambda D - \lambda - R\Big) + \Big(D - B - \lambda \diam_\Gamma(\delta b) - \lambda - R\Big).
    \end{align*}
    Combining and simplifying the above, we deduce that 
    $$
    D \leq \lambda^2\diam_\Gamma(\delta b) + \lambda(2\lambda + 2R + B). 
    $$
    Thus, by setting $C = \lambda(2\lambda + 2R + B)$  we are done. 
\end{proof}

We note the following application of Lemma~\ref{lem:bounded-finite-pieces}. Firstly, we state a definition which will be helpful for brevity later on. 

\begin{definition}[Almost 2-connected graph]\label{def:nearly-2-conn}
    Let $\Gamma$ be an infinite, connected, locally finite graph. We say that $\Gamma$ is \textit{almost 2-connected} if there exists a unique maximal 2-connected infinite subgraph $\Gamma_0 \subset \Gamma$ and the inclusion map $\Gamma_0 \into \Gamma$ is a quasi-isometry. The subgraph $\Gamma_0$ is called the \textit{2-connected core} of $\Gamma$. 
\end{definition}

Note that every maximal 2-connected subgraph is necessarily isometrically embedded. This means that checking the quasi-isometry condition in the above definition amounts to just checking that the inclusion is coarsely surjective. Intuitively, an almost 2-connected graph $\Gamma$ is obtained from a 2-connected graph $\Gamma_0$ by `attaching' a selection of boundedly small finite graphs at cut vertices. 

\begin{lemma}\label{lem:2-conn-subgraph}
     Let $\Gamma$ be a infinite, connected, locally finite graph equipped with a cobounded quasi-action. Suppose $\vs(\Gamma) > 1$. Then $\Gamma$ is almost 2-connected. 
\end{lemma}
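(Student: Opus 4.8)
The plan is to build the 2-connected core $\Gamma_0$ as a union of a maximal 2-connected subgraph with all the finite "appendages" hanging off it, and then use the hypothesis $\vs(\Gamma) > 1$ together with the cobounded quasi-action to show that these appendages are uniformly bounded, which gives coarse surjectivity of the inclusion. First I would establish that there is a unique maximal 2-connected infinite subgraph. Since $\vs(\Gamma) > 1$, no single vertex separates two ends, so in the block-cut tree of $\Gamma$ there is a unique block (2-connected component or bridge-block) containing more than one end; call this $\Gamma_0'$. One checks $\Gamma_0'$ is infinite and 2-connected (if it were a bridge we could separate its two ends by a single vertex, contradicting $\vs(\Gamma)>1$; so it has no cut vertex and, being infinite, is 2-connected). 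Any other 2-connected infinite subgraph would have to meet $\Gamma_0'$ in at least a path, hence be contained in the same block; so $\Gamma_0'$ is the unique maximal one. Let $\Gamma_0$ be the union of $\Gamma_0'$ with every finite component of $\Gamma - \Gamma_0'$ that is attached to $\Gamma_0'$; equivalently, $\Gamma_0$ is obtained by deleting all infinite "branches" of the block-cut tree other than the central one. Then $\Gamma_0$ is still 2-connected (attaching finite pendant pieces at cut vertices to a 2-connected graph keeps it 2-connected, provided — which I should double-check — the attachment is genuinely along the block structure; more carefully, $\Gamma_0$ might acquire cut vertices, so I may instead define $\Gamma_0 = \Gamma_0'$ itself and argue that the finite appendages are bounded, so that deleting them doesn't change the quasi-isometry type — this is the cleaner route).

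So the real content: I would show that every finite component $Z$ of $\Gamma - \Gamma_0'$ has diameter bounded by a universal constant. Such a $Z$ is attached to $\Gamma_0'$ at a single cut vertex $s$ (it cannot be attached at two vertices, else $Z \cup \{$two attachment vertices$\}$ would lie in a larger 2-connected subgraph, contradicting maximality of $\Gamma_0'$). Thus $S = \{s\}$ is a separating set with $|S| = 1$ and $Z$ a finite component of $\Gamma - S$. Now apply Lemma~\ref{lem:bounded-finite-pieces}: there is $C \geq 1$ (depending only on the quasi-action) with $\dist_\Gamma(z, S) < C\,\diam_\Gamma(S) + C = C \cdot 0 + C = C$ for all $z \in Z$. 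Hence every such $Z$ lies in the $C$-neighbourhood of $\Gamma_0'$, so the inclusion $\Gamma_0' \into \Gamma$ is $C$-coarsely surjective. Since $\Gamma_0'$ is an induced, 2-connected subgraph it is isometrically embedded (any path in $\Gamma$ between two vertices of $\Gamma_0'$ that leaves $\Gamma_0'$ into a component $Z$ must return through the same cut vertex, hence can be shortcut), so the inclusion is a quasi-isometry. Setting $\Gamma_0 := \Gamma_0'$, we conclude $\Gamma$ is almost 2-connected.

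The main obstacle I anticipate is the bookkeeping around the block-cut-vertex structure: proving cleanly that there is a \emph{unique} maximal infinite 2-connected subgraph, that every finite component of its complement attaches at exactly one vertex, and that $\Gamma_0'$ is isometrically embedded. All three follow from maximality plus the elementary theory of 2-connected blocks, but one must be careful that "2-connected" here is being used in the graph-theoretic sense compatible with $\vs$ (no separating vertex) and that the infinite graph subtleties — e.g., ensuring the central block is actually a \emph{subgraph} and not just a subset of ends — are handled. Once the block structure is pinned down, the quantitative step is immediate from Lemma~\ref{lem:bounded-finite-pieces} applied with the trivial observation $\diam_\Gamma(\{s\}) = 0$.
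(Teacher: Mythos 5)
Your plan is correct and follows essentially the same route as the paper: both reduce to the observation that $\vs(\Gamma) > 1$ forbids any cut vertex from separating ends, which pins down a unique infinite block $\Gamma_0'$, and both then invoke Lemma~\ref{lem:bounded-finite-pieces} with a singleton separator $S = \{s\}$ (so $\diam_\Gamma(S) = 0$) to bound the finite pieces hanging off $\Gamma_0'$, giving coarse surjectivity; the fact that the inclusion of a maximal 2-connected subgraph is isometric is exactly the observation made in the remark just after Definition~\ref{def:nearly-2-conn}. If anything, your version is a bit more careful than the one in the paper: the paper bounds the diameter of the \emph{other blocks}, which strictly speaking leaves open that an attached chain of bounded blocks could be long, whereas you bound each finite \emph{component} of $\Gamma - \Gamma_0'$ directly, which is what coarse surjectivity actually requires. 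One small thing you gesture at but should state: $\Gamma - \Gamma_0'$ has no infinite components at all, since any such component, together with $\Gamma_0'$, would contribute ends separated by the single attachment cut vertex, contradicting $\vs(\Gamma) > 1$; this is implicit in your block-cut-tree discussion but worth making explicit before the diameter bound.
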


\begin{proof}
    It is a standard fact from graph theory \cite{harary2022block} that any connected graph can be expressed as a union of maximal 2-connected subgraphs (called \textit{blocks}) which each intersect in at most one vertex. The dual graph to this decomposition, where the vertices are the blocks and edges correspond to non-empty intersection, is a tree. 
    
    Since $\vs(\Gamma) > 1$, no cut vertex in $\Gamma$ separates ends.
    It follows from Lemma~\ref{lem:bounded-finite-pieces} that there exists some uniform constant $C \geq 0$ such that there is exactly one infinite block $\Lambda$ in $\Gamma$, and every other block has diameter at most $C$. The inclusion map $\Lambda \into \Gamma$ is easily seen to be an isometric embedding and coarsely surjective, and thus a quasi-isometry. 
\end{proof}

\subsection{Images of highly-connected subgraphs}\label{sec:cut-qi-reprise}

We now quickly record the following pigeonhole argument, which will allow us to push the increased cut-size created in Theorem~\ref{thm:vertex-subgraph} through a quasi-isometry.

\begin{lemma}\label{lem:qi-increase-cuts}
    Let $X$, $Y$ be bounded-degree  connected graphs, and let $\varphi : X \to Y$ be a quasi-isometry. Let $m > 0$ and fix a subgraph $\Lambda \subset X$ such that $\vs(\Lambda) \geq m$. Then 
    $$
    \vs(\varphi(\Lambda)) \geq C m,
    $$
    where $C = C(\varphi) > 0$ is some constant depending only on $\varphi$ and $X$. 
\end{lemma}

\begin{proof}
    We assume without loss of generality that $\varphi$ is continuous by Proposition~\ref{prop:qi-wlog}. 
    
    Fix $\lambda \geq 1$ such that $\varphi$ is a $(\lambda, \lambda)$-quasi-isometry. Write $\varphi(\Lambda) =: \Pi$. 
    Suppose that there are $k$ vertices $v_1, \ldots , v_k$ in $\Pi$ whose removal separates distinct ends $\omega_1, \omega_2 \in \Omega (\Pi)$. Let $\xi_i = \varphi^{-1}(\omega_i)$ for each $i = 1,2$, recalling that quasi-isometries induce well-defined bijections on the corresponding sets of ends. Since $\vs(\Lambda) \geq m$ and the $\Gamma_i$ are locally finite, we have by Menger's theorem (\ref{thm:menger}) that there exists $m$ pairwise disjoint bi-infinite paths $\alpha_1, \ldots, \alpha_m$ in $\Lambda$ between $\xi_1$ and $\xi_2$. 
    
    Thus each $\alpha_i' := \varphi (\alpha_i)$ is a bi-infinite path between the ends $\omega_1$ and $\omega_2$. Each $\alpha_i'$ must pass through some $v_j$, so by the Pigeonhole Principle there exists some $v_j$ such that at least $m/k$ of the $\alpha_i'$ pass through $v_j$. By relabelling, we can assume without loss of generality that $\alpha_1', \ldots, \alpha_{m/k}'$ pass through $v_1$. 
    Let $x \in \varphi^{-1}(v_1)$. As $\varphi$ is a quasi-isometry, we have that $\alpha_1, \ldots \alpha_{m/k}$ must intersect the closed $r$-neighbourhood of $x$, for some $r = r(\varphi) > 0$ depending only on $\varphi$. Combining this observation with the assumption that $X$ is bounded-degree  together with the fact that the $\alpha_i$ are disjoint, we deduce that
    $$
    \frac m k \leq |B_{X}(x;r)|.
    $$
    The right-hand side is bounded above by some uniform constant since $X$ is bounded-degree. The lemma follows. 
\end{proof}


\subsection{Bounding tight cuts}

Finally, we have the following lemma, which is a coarse version of Corollary~\ref{cor:bounded-diam}. 

    \begin{lemma}\label{lem:bounded-diam-coarse}
        Let $X$, $\Gamma$ be bounded-degree, connected graphs. Let $X$ be quasi-transitive, and suppose $X$ and $\Gamma$ are quasi-isometric. Then for every $n > 0$ there exists $m > 0$ such that for all tight $b \in \br (\Gamma)$, if $|\delta b| < n$ then $\diam(\delta b) < m$. 
    \end{lemma}

    \begin{proof}
        Let $\varphi : X \to \Gamma$ be a quasi-isometry with quasi-inverse $\psi : \Gamma \to X$. We assume without loss of generality that these are continuous maps. As usual, fix $\lambda \geq 1$ which is larger than all quasi-isometry constants involved. 
        Fix $n > 0$, and let $(b_i)_{i \geq 1}$ be a sequence of tight cuts in $\br(\Gamma)$ such that $|\delta b_i| < n$ for all $i$, and $\diam(\delta b_i) \to \infty$. We will find boundedly small tight cuts of arbitrarily large diameter in $X$, contradicting Corollary~\ref{cor:bounded-diam}. 

        Since each $\delta b_i$ contains at most $n$ edges, it is clear that we can choose decompositions
        $$
        \delta b_i = C_i \sqcup D_i,
        $$
        for each $i$ 
        such that the infimal distance $\dist_\Gamma(C_i, D_i) \to \infty$ as $i \to \infty$.
        Let $W_{i} = \Gamma[b^\ast_i]$, which is connected since $b_i$ is tight. 
        Let $q_i$ be a path through $W_i$ connecting $C_i$ to $D_i$.

        For each $i \geq 1$, using Lemma~\ref{lem:cut-through-qi}, choose $b_i' \in \br (X)$ such that the Hausdorff distance  $\dHaus(\psi(b_i), b_i')$ is uniformly bounded above, and $\delta b_i'$ is contained in a bounded neighbourhood of $\psi(\delta b_i)$. Since each $b_i$ was tight, we may assume that $X[b_i']$ is connected for every $i \geq 1$. As $X$ is bounded-degree, we deduce that there exists some uniform $N > 0$ such that each $b_i'$ satisfies $|\delta b_i'| < N$. 
        We are almost done, but each $b_i'$ may not be tight. We will now find some tight $b_i'' \in \br (X)$ such that $\delta b_i'' \subset \delta b_i'$, and $\diam(\delta b_i'') \to \infty$ as $i \to \infty$. Once this is achieved, we are done.

        Let $r > 0$ be such that  for every $i \geq 1$, $\delta b_i'$ is contained in the $r$-neighbourhood of $\psi(\delta b_i)$. Assume without loss of generality that for all $i \geq 1$, we have that 
        $$
        \dist_\Gamma(C_i, D_i) > 3\lambda (r + \lambda).
        $$
        Let $R = \lambda (r + \lambda) + 1$.
        Then, there must exist a subpath $q'_i$ of $q_i$ such that $q_i'$ lies outside of the $R$-neighbourhood of $\delta b_i$, but $q_i'$ begins in the $2R$-neighbourhood of $C_i$, and ends in the $2R$-neighbourhood of $D_i$. 
        Let $q_i'' = \psi(q_i')$. By our choice of $R$, $q_i''$ is disjoint from the $r$-neighbourhood of $\psi(\delta b_i)$, and thus disjoint from $\delta b_i'$. It follows that $q_i''$ is contained a single connected component of $X \setminus \delta b_i'$.
        Note that the endpoints of $q_i''$ are uniformly close to $\delta b_i'$ by construction, but they become arbitrarily far part as $i \to \infty$. 

        Let $W_i'$ denote the connected component of $X \setminus X[b_i']$ containing $q_i''$, and let $b_i''$ be the vertex set of $W_i'$. By Proposition~\ref{prop:tight-trick}, we have that $b_i''$ is tight. Clearly $\delta b_i'' \subset \delta b_i'$, and $\diam (\delta b_i'') \to \infty$ as $i \to \infty$. The lemma follows.
    \end{proof}


\section{The one-ended case}\label{sec:one-end}

In this section we prove Theorem~\ref{thm:one-ended-intro} from the introduction. 

\subsection{Basic set-up}

Throughout this section, let $X$ be a connected, locally finite, one-ended graph equipped with a quasi-transitive group action $G \actson X$. Let $\Gamma$ be a connected planar graph, and let $\varphi: X \to \Gamma$ be a quasi-isometry with quasi-inverse $\psi: \Gamma \to X$. 
We will assume that $\Gamma$ is bounded-degree, and that both $\varphi$ and $\psi$ are continuous. Propositions~\ref{prop:qi-wlog},~\ref{prop:cts-inverse} demonstrate that these assumptions are completely inconsequential. We may also assume that $\Gamma$ is 2-connected, by an application of Lemma~\ref{lem:2-conn-subgraph}. Thus, every $f \in \facepaths (\Gamma)$ is either a simple cycle or a disjoint union of bi-infinite rays.  In fact, since $\Gamma$ is one-ended, it is easy to see that every $f \in \inffaces(\Gamma)$ must be a single bi-infinite line by Proposition~\ref{prop:simple-face}.

By Corollary~\ref{cor:vap-free}, $\Gamma$ admits a VAP-free embedding $\vartheta : \Gamma \into \R^2$. 
Fix $\lambda, B \geq 1$ be such that the induced quasi-action of $G$ on $\Gamma$ is a $B$-cobounded $\lambda$-quasi-action. 

\subsection{Diverging rays}

We have the following standard construction. 

\begin{lemma}\label{lem:geodesic-rays}
    There exists three geodesic rays $\gamma_1$, $\gamma_2$, $\gamma_3$ based at a common vertex such that
    $$
    \dist_X(\gamma_i(n), \gamma_j(m)) \to \infty,
    $$
    as $n, m \to \infty$, for any distinct $i, j = 1,2,3$. 
\end{lemma}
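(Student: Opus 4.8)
The plan is to use the fact that $X$ is one-ended, locally finite, and quasi-transitive, and in particular that $X$ has infinite diameter and at least three ``independent directions'' in a coarse sense. Since $X$ is quasi-transitive and one-ended, it is an unbounded graph, and by König's lemma it contains at least one geodesic ray from $v$. The real content is producing \emph{three} rays that pairwise diverge. I would first argue that $X$ has ``at least three ends of pairs of geodesics'' in the following sense: because $G$ acts quasi-transitively and $X$ is one-ended (hence in particular not two-ended, and not a quasi-line), the graph $X$ is \emph{not} quasi-isometric to $\R$ or to a ray; a quasi-transitive graph that is one-ended either has linear growth — but then it is quasi-isometric to $\R$ or a ray, contradicting one-endedness unless it is a quasi-line, which is excluded — or it has superlinear growth, in which case, roughly, balls have more than linearly many vertices and one can find three points far apart in a sphere $S(v,n)$ that are moreover far apart from each other even after projecting toward $v$.

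More concretely, here is the construction I would carry out. Fix $v$. For each $n$, since $X$ is infinite and locally finite, pick a vertex $x_n$ with $\dist_X(v,x_n) = n$ and a geodesic $[v,x_n]$. By local finiteness and a diagonal (König's-lemma) argument, some subsequence of these geodesics converges to a geodesic ray $\gamma_1$ based at $v$. Now I want a second ray that eventually diverges from $\gamma_1$. The key subclaim is: there is a constant $D$ and, for every $R$, a geodesic segment $[v,y]$ with $\dist_X(v,y) \geq R$ that stays $D$-far from $\gamma_1$ outside a ball $B(v,D)$ — equivalently, $\gamma_1$ does not ``coarsely dominate'' $X$. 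This follows because if \emph{every} geodesic from $v$ of length $\geq R$ ran within bounded distance of $\gamma_1$, then $X$ would be within bounded Hausdorff distance of $\gamma_1$, hence quasi-isometric to a ray, contradicting one-endedness (a ray has one end, but removing a large ball from a ray leaves a bounded piece and an unbounded piece — actually the contradiction is cleaner: $X$ quasi-transitive and quasi-isometric to a ray forces $X$ to be two-ended or one-ended-with-bounded-valence-ray structure; in any case such an $X$ cannot be one-ended and quasi-transitive, since the stabiliser of the ``far'' direction would have to act cocompactly on a ray). Extracting a limit ray from such segments gives $\gamma_2$ with $\dist_X(\gamma_1(n),\gamma_2(m)) \to \infty$.

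For the third ray, I would repeat the argument relative to the union $\gamma_1 \cup \gamma_2$: if every long geodesic from $v$ stayed within bounded distance of $\gamma_1 \cup \gamma_2$, then $X$ would be quasi-isometric to a tripod or bi-infinite line (two rays glued at $v$), again a two-ended or otherwise non-one-ended space, contradicting quasi-transitivity together with one-endedness — here one uses that a one-ended quasi-transitive graph cannot be quasi-isometric to a graph with finitely many ends other than one, and a ``wedge of two rays'' is two-ended. Hence there is a sequence of long geodesics $[v,z_k]$ staying uniformly far from $\gamma_1 \cup \gamma_2$ outside a fixed ball; take a limit ray $\gamma_3$. Finally, $\dist_X(\gamma_i(n),\gamma_j(m)) \to \infty$ for each pair follows from the construction together with the fact that all three are geodesics based at $v$: if two geodesic rays from $v$ failed to diverge, they would fellow-travel, contradicting the separation we built in.

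The main obstacle I expect is making precise and clean the step ``$X$ not coarsely a ray / wedge of two rays, because it is one-ended and quasi-transitive.'' The cleanest route is probably: a quasi-transitive, locally finite, connected graph quasi-isometric to $\R$ is two-ended (this is standard — such a graph has a cobounded quasi-action of an infinite group on $\R$, forcing two ends), and quasi-isometric to a ray $[0,\infty)$ is impossible for an \emph{infinite} quasi-transitive graph at all (no infinite group quasi-acts coboundedly on a ray, since the quasi-action would have to move the basepoint arbitrarily far while the ray is ``one-sided''). So one-endedness is actually only needed to rule out the two-ended case, and quasi-transitivity rules out the ray case; together these give that $X$ is not within bounded Hausdorff distance of any geodesic ray or of any wedge of $\leq 2$ rays. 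I would state this as a small auxiliary lemma and then the three-ray construction is a routine König-lemma limiting argument.
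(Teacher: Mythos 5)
The paper states this lemma without proof, calling it a ``standard construction,'' so I am reviewing your argument on its own merits. There is a genuine gap in the final step. Extracting $\gamma_2$ as a limit of geodesic segments that stay $D$-far from $\gamma_1$ (for a \emph{fixed} constant $D$) only gives that $\dist_X(\gamma_1(n),\gamma_2(m))$ is eventually bounded below by $D$, whereas the lemma requires this distance to tend to infinity. You try to bridge this by asserting that two geodesic rays from $v$ that fail to diverge must fellow-travel, ``contradicting the separation we built in,'' but this dichotomy is false in general graphs. In $\Z^2$ with $v=(0,0)$, let $\gamma_1(n)=(n,0)$ and let $\gamma_2$ go up $D$ steps and then right forever, so $\gamma_2(m)=(m-D,D)$ for $m\ge D$. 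Both are geodesic rays from $v$, and $\dist(\gamma_1(n),\gamma_2(m))\ge D$ whenever $n,m\ge D$, yet $\dist(\gamma_1(n),\gamma_2(n+D))=D$ for every $n$, so the pair neither fellow-travels nor diverges. Nothing in your extraction of $\gamma_2$ prevents you from landing on exactly such a ray, and the same issue recurs for $\gamma_3$.

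To repair the argument one needs to upgrade the separation from ``bounded below by a fixed $D$'' to ``unbounded,'' and this is where the real content of the lemma lies. Simply running your scheme for each $D$ and taking a convergent subsequence of triples of rays does not work straightforwardly either, because pointwise limits of rays that are eventually $D$-separated (with the threshold after which the separation holds possibly escaping to infinity with $D$) need not inherit the property. You would need either uniform control on when the separation kicks in, or a construction that directly produces rays whose mutual distance is unbounded, for instance by arranging that the endpoints $y$ of the competitor geodesics $[v,y]$ recede from $\gamma_1$ rather than merely stay $D$-far from it. Your auxiliary observations --- that an infinite locally finite quasi-transitive graph cannot be quasi-isometric to a ray, and a one-ended one cannot be quasi-isometric to a line --- are correct and useful for ruling out the degenerate cases, but they only exclude uniformly bounded separation, not bounded-but-nondivergent separation.
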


\begin{proof}
    This is a standard application of the Arzel\'a--Ascoli theorem. First, note that $X$ contains arbitrarily long geodesic segments, since it is locally finite and infinite. We may apply quasi-transitivity to translate their midpoints to a common vertex, after possibly passing to a subsequence. Taking a limit, we find a bi-infinite geodesic $\gamma$. Since $X$ is not two-ended, there exist points in $X$ which are arbitrarily far from $\gamma$, say $(v_n) \subset V(X)$ such that $\dist_X(v_n, \gamma) > n$. For each $n > 0$, let $p_n$ be a shortest path from $v_n$ to $\gamma$. Let $x_n$ denote the point where $p_n$ meets $\gamma$. Applying quasi-transitivity again, we translate these figures so that all the $x_n$ lie on the same point. Applying the  Arzel\'a--Ascoli theorem again and taking another limit, the result is three geodesic rays which pairwise diverge from each other. 
\end{proof}

We now apply the above and push the resulting rays through the quasi-isometry $\varphi$, obtaining a similar feature in $\Gamma$. 

\begin{lemma}\label{lem:disjoint-paths-one-end}
    There exists three distinct $(\lambda, \lambda)$-quasi-geodesic rays $\alpha_1, \alpha_2, \alpha_3 : [0,\infty) \to \Gamma$ based at a common vertex, such that
    $$
    \dist_X(\alpha_i(n), \alpha_j(m)) \to \infty,
    $$
    as $n, m \to \infty$, for any distinct $i, j = 1,2,3$. 
\end{lemma}

\begin{proof}
    This follows from Lemma~\ref{lem:geodesic-rays} by letting $\alpha_i = \varphi \circ \gamma_i$, where the $\gamma_i$ are the rays constructed in the aforementioned lemma. 
\end{proof}

\subsection{Ruling out pathologies}

From now on, the name of the game is to show that $\Gamma$ cannot be `too wild'. In practice, we want to control what the faces of $\Gamma$ look like. 

\begin{lemma}
    $\Gamma$ contains at most one bi-infinite face path. 
\end{lemma}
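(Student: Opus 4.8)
The plan is to move everything into a good drawing, reduce the statement to the assertion that $\Gamma$ has at most one \emph{infinite} face, and then rule out a second infinite face by playing the planar topology against the divergence statement of Lemma~\ref{lem:disjoint-paths-one-end} and the one-endedness of $\Gamma$.

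\emph{Setup and reduction.} Since $X$ is one-ended and $\varphi$ is a quasi-isometry, $\Gamma$ is one-ended; let $\omega$ be its unique end. I would fix a good drawing $\vartheta:\overline\Gamma\into\bbS^2$ and put $p=\vartheta(\omega)$, so that $\vartheta(\Gamma)$ accumulates in $\bbS^2$ at the single point $p$. As $\Gamma$ is $2$-connected, the discussion following Proposition~\ref{prop:simple-face} says that for every face $U$ the closure of $\facepaths(U)$ in $\overline\Gamma$ is a simple closed curve; if $\facepaths(U)$ had two bi-infinite components $L,L'$ then, $\Gamma$ being one-ended, both ends of each line converge to $\omega$, so this closure would be a figure-eight rather than a circle — a contradiction. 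Hence every infinite facial subgraph is a single bi-infinite line $L$, and $\hat L:=\vartheta(L)\cup\{p\}$ is a simple closed curve equal to $\partial U$; moreover distinct infinite faces have distinct facial subgraphs (otherwise $\vartheta(\overline\Gamma)=\hat L$, forcing $\Gamma$ to be a two-ended line). So it suffices to show $\Gamma$ has at most one infinite face. Finally, for such $L$ and $U$: by the Jordan curve theorem $\hat L$ bounds $U$ and exactly one further open disk $V_U:=\bbS^2\setminus\overline U$; since $U$ contains no point of $\vartheta(\Gamma)$ and $\vartheta(\Gamma)$ is connected, $\vartheta(\Gamma\setminus L)\subseteq V_U$, and in particular $\Gamma\setminus L$ is connected.

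\emph{Deriving the contradiction.} Suppose $L_1\neq L_2$ bound distinct (hence disjoint) infinite faces $U_1\neq U_2$. First I would pick a vertex $x\in L_1$ and apply Lemma~\ref{lem:disjoint-paths-one-end} to get three quasi-geodesic rays $\alpha_1,\alpha_2,\alpha_3$ based in $B_\Gamma(x;r)$ with pairwise $\lambda^2$-separated images. None of the $\alpha_i$ can enter $U_1$, so all three lie in $V_{U_1}$; in the drawing they are three arcs inside the closed disk $\overline{V_{U_1}}$ running from near $\vartheta(x)$ to the single boundary point $p$. Joining pairs of them by a short path near $x$ produces simple closed curves through $p$, and the $\lambda^2$-separation — together with Proposition~\ref{prop:freud-prop}, used to keep the relevant open sets connected after deleting $\omega$ — forces the three rays to genuinely subdivide a neighbourhood of $\omega$ into three pieces, each containing a tail of $\Gamma$ reaching $\omega$. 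Now $\hat L_2$ is a simple closed curve contained in $\overline{V_{U_1}}$ and passing through $p$; re-centring the same construction at a vertex of $L_2$, I would read off from this subdivision a \emph{finite} set $K\subseteq\Gamma$ whose removal separates two of these infinite tails from one another — one lying on each side of the ``corridor'' in $\bbS^2\setminus\vartheta(\Gamma)$ joining $U_1$ to $U_2$ past $p$. Here Lemma~\ref{lem:bounded-finite-pieces} is used to guarantee that the finite components of $\Gamma-K$ produced along the way stay within a bounded neighbourhood of $K$, hence never reach $\omega$. Thus $\Gamma-K$ has at least two infinite components, contradicting one-endedness of $\Gamma$.

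\emph{The main obstacle.} The delicate point is precisely the extraction of the finite cut $K$. A priori the boundary $\vartheta(L_i)$ of an infinite face can wind arbitrarily wildly on its approach to $p$, so one cannot conclude from two-dimensional topology alone that two infinite faces give two ends — indeed this is false without some homogeneity. It is exactly the divergent rays furnished by the cobounded quasi-action, via Lemma~\ref{lem:disjoint-paths-one-end}, that pin down the local picture near $\omega$ and allow ``two infinite faces'' to be converted into an honest finite separator between two infinite pieces of $\Gamma$; once $K$ is in hand, the contradiction with one-endedness is immediate.
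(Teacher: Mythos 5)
Your reduction to ``at most one infinite face'' and the observation that $\hat L$ is a simple closed curve through $p$ are fine, but the second half of the argument takes a much harder road than necessary and, as you yourself flag, leaves the crucial step unproved. The paper's proof is essentially one line and uses no quasi-action at all: pick $x_i \in f_i$ for $i=1,2$ and let $\ell$ be a geodesic between them. Then $\ell$ is a \emph{finite} subgraph which separates $\Gamma$ into at least two infinite pieces, contradicting one-endedness. To see the separation, choose arcs $a_i \subset U_i$ from $\vartheta(x_i)$ to $p$; then $J = \vartheta(\ell) \cup a_1 \cup a_2$ is a Jordan curve through $p$, and $\vartheta(\Gamma)\setminus J = \vartheta(\Gamma)\setminus\vartheta(\ell)$. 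A small circle about $p$ meets $J$ only in $a_1$ and $a_2$, so each of its two sub-arcs runs from a point of $U_1$ to a point of $U_2$; since $U_1$, $U_2$ are disjoint faces, each sub-arc must cross $\vartheta(\Gamma)$. Hence both components of $\bbS^2\setminus J$ contain points of $\vartheta(\Gamma)$ arbitrarily close to $p$, and both components of $\Gamma - \ell$ are infinite. This is pure plane topology. In particular, the claim in your ``main obstacle'' paragraph --- that two infinite faces need not force two ends without ``some homogeneity'' --- is incorrect for this lemma; the quasi-action is what drives Lemmas~\ref{lem:far-from-face} and \ref{lem:uniform-faces}, not this one.

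Beyond the unnecessary detour, the step you identify as delicate is a genuine gap as written: you construct three infinite rays $\alpha_1,\alpha_2,\alpha_3$ and then assert one can ``read off'' a \emph{finite} cut $K$ from the resulting subdivision, but no recipe for $K$ is given, and the appeal to Lemma~\ref{lem:bounded-finite-pieces} is miscast --- that lemma bounds finite complementary pieces, whereas here you would need to certify that certain pieces of $\Gamma - K$ are infinite. Without an explicit construction of a finite separator and a proof that two complementary components are infinite, the contradiction with one-endedness is not reached. The geodesic $\ell$ above is exactly the finite separator your argument was searching for, and it is available directly.
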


\begin{proof}
    Suppose there exists distinct  infinite face paths $ f _1, f _2 \subset \Gamma$. Pick $x_i \in  f _i$ and let $\ell$ be a geodesic between $x_1$ and $x_2$. By an application of the Jordan curve theorem, it is easy to see that $\ell$ is a compact subset which separates infinite sets of vertices. This contradicts the fact that $\Gamma$ is one-ended.
\end{proof}

\begin{lemma}\label{lem:far-from-face}
    If $\Gamma$ contains a bi-infinite face path $f$, then $\dHaus[\Gamma](f, \Gamma) = \infty$. 
\end{lemma}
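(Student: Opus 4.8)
The plan is to argue by contradiction. Suppose $\dHaus[\Gamma](f,\Gamma)=D<\infty$, so that $\Gamma=B_\Gamma(f;D)$. Since $\Gamma$ is one-ended, the previous lemma shows $f$ is the unique bi-infinite facial subgraph, so $f=\facepaths(U)$ for the unique face $U\in\facedisks\Gamma$ with $\facepaths(U)$ infinite. Fix a good drawing $\vartheta:\overline\Gamma\into\bbS^2$ (Proposition~\ref{prop:well-embed}) and let $\omega\in\bbS^2$ be the image of the unique end of $\Gamma$. Because $\Gamma$ is $2$-connected, Proposition~\ref{prop:simple-face} gives that $C:=\vartheta(\overline f)$ is a Jordan curve through $\omega$; it bounds two open disks, one of them $U$, and I write $W$ for the other, so $\vartheta(\overline\Gamma)\subseteq\overline W$ and $C=\partial U$.

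Next I would invoke Lemma~\ref{lem:disjoint-paths-one-end} at some fixed vertex $x_0$ to obtain three pairwise-disjoint quasi-geodesic rays $\alpha_1,\alpha_2,\alpha_3$ based in $B_\Gamma(x_0;r)$; these rays are moreover pairwise \emph{divergent}, i.e.\ $\dist_\Gamma(\alpha_i(t),\alpha_j(s))\to\infty$ as $t,s\to\infty$ for $i\neq j$, since by construction they are $\varphi$-images of pairwise-divergent geodesic rays in $X$. As $\Gamma$ is one-ended each $\alpha_i$ converges to $\omega$. After joining the three basepoints by a finite tree $S\subseteq\Gamma$ (chosen, after a routine general-position modification of the $\alpha_i$, so that $\alpha_i\cap S=\{\alpha_i(0)\}$ and the $\alpha_i$ remain pairwise disjoint), the set $\vartheta(S\cup\alpha_1\cup\alpha_2\cup\alpha_3)\cup\{\omega\}$ is a compact connected subset of $\bbS^2$ homeomorphic, up to contracting $\vartheta(S)$, to a theta-graph with the three ``edges'' $\vartheta(\alpha_1),\vartheta(\alpha_2),\vartheta(\alpha_3)$ running from $\vartheta(S)$ to $\omega$. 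Hence its complement in $\bbS^2$ has exactly three components $R_1,R_2,R_3$, each an open disk whose boundary meets the interiors of exactly two of the three rays. The face $U$ is connected and disjoint from this set, so it lies in one component, say $U\subseteq R_1$, where $\partial R_1$ meets $\alpha_2$ and $\alpha_3$ but not the interior of $\alpha_1$. Consequently $\vartheta(\alpha_1(t))\notin\overline{R_1}$ for every $t\geq 1$, while $\vartheta(f(j))\in C=\partial U\subseteq\overline{R_1}$ for every $j$.

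Now fix $t\geq 1$. Since $\alpha_1(t)\in\Gamma=B_\Gamma(f;D)$ there is a $\Gamma$-geodesic $\rho$ from $\alpha_1(t)$ to some $f(j)$ with $\length(\rho)\leq D$. The arc $\vartheta(\rho)$ runs from a point outside $\overline{R_1}$ to a point of $\overline{R_1}$, hence crosses $\partial R_1\subseteq\vartheta(S)\cup\vartheta(\alpha_2)\cup\vartheta(\alpha_3)\cup\{\omega\}$; as $\vartheta(\rho)\subseteq\vartheta(\Gamma)$ avoids $\omega$, the path $\rho$ must meet $S\cup\alpha_2\cup\alpha_3$. Therefore $\dist_\Gamma(\alpha_1(t),S\cup\alpha_2\cup\alpha_3)\leq D$ for all $t\geq 1$. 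Since $S$ is finite and $\alpha_1$ leaves every bounded set (being a quasi-geodesic ray), for all large $t$ we get $\dist_\Gamma(\alpha_1(t),\alpha_2\cup\alpha_3)\leq D$; by the pigeonhole principle one of $\alpha_2,\alpha_3$, say $\alpha_2$, satisfies $\dist_\Gamma(\alpha_1(t_l),\alpha_2(s_l))\leq D$ along a sequence $t_l\to\infty$. Using that $\alpha_1$ is a quasi-geodesic ray we get $\dist_\Gamma(\alpha_1(t_l),x_0)\to\infty$, whence $\dist_\Gamma(\alpha_2(s_l),x_0)\to\infty$ and so $s_l\to\infty$. This contradicts the divergence of $\alpha_1$ and $\alpha_2$, and the contradiction proves the lemma.

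\textbf{Main obstacle.} The delicate part is the topological bookkeeping in the second paragraph: arranging the three rays (together with the connecting tree $S$) so that their image is genuinely a theta-graph with no spurious crossings, and then extracting the clean statement that $\bbS^2$ splits into three disks, exactly one of which is ``invisible'' to $\alpha_1$. Granting that, the remaining work — that a graph geodesic from $\alpha_1(t)$ into the $U$-side must pass through $\alpha_2\cup\alpha_3\cup S$, and the final divergence estimate — is routine application of the Jordan curve theorem and properness of quasi-geodesic rays.
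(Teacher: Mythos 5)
Your proof is correct and follows essentially the same approach as the paper: construct three pairwise-divergent quasi-geodesic rays via Lemma~\ref{lem:disjoint-paths-one-end}, observe that planarity forces one of them (after the Jordan curve theorem) to lie on the opposite side of a curve made up of the other two (plus $\omega$) from the face $f$, and then convert the separation into a contradiction with divergence. The paper presents this as a brief cartoon sketch (two of the rays ``form a Jordan curve'' separating the third from $f$), whereas you carry out the same argument by contradiction with a more explicit theta-graph decomposition of $\bbS^2$; the extra care you take with the general-position modification and the regional bookkeeping is precisely the content the paper leaves to the figure.
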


\begin{proof}
    Assume there is an infinite face $f$. 
    Apply Lemma~\ref{lem:disjoint-paths-one-end} to obtain three quasi-geodesic rays $\alpha_1, \alpha_2, \alpha_3 : [0,\infty) \to \Gamma$ based at a common vertex which pairwise diverge. Two of these rays, say $\alpha_1$, $\alpha_2$, will together form a Jordan curve (if we include the point at infinity) which separates some tail of $\alpha_3$ from the infinite face $ f $. Any path from this tail of $\alpha_3$ to $ f $ must cross through $\alpha_1$ and $\alpha_2$. Since $\alpha_3$ diverges from these two rays, it also diverges from $f$. See Figure~\ref{fig:diverge-from-face} for a cartoon. 
\end{proof}

\begin{figure}
        \centering

        \tikzset{every picture/.style={line width=0.75pt}} 

\begin{tikzpicture}[x=0.75pt,y=0.75pt,yscale=-1,xscale=1]

\draw [color={rgb, 255:red, 155; green, 155; blue, 155 }  ,draw opacity=1 ]   (166.44,146.52) .. controls (185,165.5) and (194.81,155.93) .. (207,153) .. controls (219.5,150) and (239.5,124) .. (247.5,95) .. controls (255.5,66) and (228.5,66) .. (217,65) .. controls (205.5,64) and (152.5,69) .. (145.5,90) .. controls (139.06,109.32) and (148.28,123.98) .. (152.52,130.88) ;
\draw [shift={(153.5,132.5)}, rotate = 239.74] [color={rgb, 255:red, 155; green, 155; blue, 155 }  ,draw opacity=1 ][line width=0.75]    (10.93,-3.29) .. controls (6.95,-1.4) and (3.31,-0.3) .. (0,0) .. controls (3.31,0.3) and (6.95,1.4) .. (10.93,3.29)   ;
\draw [shift={(165,145)}, rotate = 47.12] [color={rgb, 255:red, 155; green, 155; blue, 155 }  ,draw opacity=1 ][line width=0.75]    (10.93,-3.29) .. controls (6.95,-1.4) and (3.31,-0.3) .. (0,0) .. controls (3.31,0.3) and (6.95,1.4) .. (10.93,3.29)   ;
\draw [color={rgb, 255:red, 74; green, 144; blue, 226 }  ,draw opacity=1 ]   (161.12,156.42) .. controls (184.14,177.32) and (301.12,160.64) .. (304.5,123) .. controls (308,84) and (322.5,52.5) .. (218,49) .. controls (116.63,45.61) and (115.01,119.83) .. (139.18,134.79) ;
\draw [shift={(141.5,136)}, rotate = 203.46] [fill={rgb, 255:red, 74; green, 144; blue, 226 }  ,fill opacity=1 ][line width=0.08]  [draw opacity=0] (7.14,-3.43) -- (0,0) -- (7.14,3.43) -- cycle    ;
\draw [shift={(159,154)}, rotate = 56.04] [fill={rgb, 255:red, 74; green, 144; blue, 226 }  ,fill opacity=1 ][line width=0.08]  [draw opacity=0] (7.14,-3.43) -- (0,0) -- (7.14,3.43) -- cycle    ;
\draw [color={rgb, 255:red, 74; green, 144; blue, 226 }  ,draw opacity=1 ]   (304.5,123) .. controls (345.5,103) and (355,77.5) .. (360.5,57.5) .. controls (365.92,37.8) and (318.46,17.61) .. (267.82,16.54) ;
\draw [shift={(265.5,16.5)}, rotate = 0.56] [fill={rgb, 255:red, 74; green, 144; blue, 226 }  ,fill opacity=1 ][line width=0.08]  [draw opacity=0] (7.14,-3.43) -- (0,0) -- (7.14,3.43) -- cycle    ;
\draw [shift={(304.5,123)}, rotate = 334] [color={rgb, 255:red, 74; green, 144; blue, 226 }  ,draw opacity=1 ][fill={rgb, 255:red, 74; green, 144; blue, 226 }  ,fill opacity=1 ][line width=0.75]      (0, 0) circle [x radius= 2.68, y radius= 2.68]   ;
\draw    (304.5,123) ;
\draw [shift={(304.5,123)}, rotate = 0] [color={rgb, 255:red, 0; green, 0; blue, 0 }  ][fill={rgb, 255:red, 0; green, 0; blue, 0 }  ][line width=0.75]      (0, 0) circle [x radius= 3.35, y radius= 3.35]   ;
\draw [color={rgb, 255:red, 155; green, 155; blue, 155 }  ,draw opacity=0.49 ]   (152,107.5) -- (182.5,77) ;
\draw [color={rgb, 255:red, 155; green, 155; blue, 155 }  ,draw opacity=0.49 ]   (159,119.5) -- (203,75.5) ;
\draw [color={rgb, 255:red, 155; green, 155; blue, 155 }  ,draw opacity=0.49 ]   (169.5,130) -- (220.5,79) ;
\draw [color={rgb, 255:red, 155; green, 155; blue, 155 }  ,draw opacity=0.49 ]   (178,140.5) -- (224.25,94.25) -- (233.5,85) ;
\draw [color={rgb, 255:red, 155; green, 155; blue, 155 }  ,draw opacity=0.49 ]   (185,150.5) -- (231.25,104.25) -- (236.5,99) ;

\draw (252,86.9) node [anchor=north west][inner sep=0.75pt]  [color={rgb, 255:red, 155; green, 155; blue, 155 }  ,opacity=1 ]  {$f $};
\draw (152.5,133.9) node [anchor=north west][inner sep=0.75pt]  [color={rgb, 255:red, 128; green, 128; blue, 128 }  ,opacity=1 ]  {$\infty $};
\draw (283,112.4) node [anchor=north west][inner sep=0.75pt]    {$x$};
\draw (120,47.4) node [anchor=north west][inner sep=0.75pt]  [color={rgb, 255:red, 74; green, 144; blue, 226 }  ,opacity=1 ]  {$\alpha _{1}$};
\draw (259,172.4) node [anchor=north west][inner sep=0.75pt]  [color={rgb, 255:red, 74; green, 144; blue, 226 }  ,opacity=1 ]  {$\alpha _{3}$};
\draw (348.5,91.4) node [anchor=north west][inner sep=0.75pt]  [color={rgb, 255:red, 74; green, 144; blue, 226 }  ,opacity=1 ]  {$\alpha _{2}$};
\draw  [color={rgb, 255:red, 255; green, 255; blue, 255 }  ,draw opacity=0 ][fill={rgb, 255:red, 255; green, 255; blue, 255 }  ,fill opacity=0.6 ]  (172.5,94) -- (212.5,94) -- (212.5,118) -- (172.5,118) -- cycle  ;
\draw (175.5,98) node [anchor=north west][inner sep=0.75pt]  [color={rgb, 255:red, 155; green, 155; blue, 155 }  ,opacity=0.5 ] [align=left] {Face};

\end{tikzpicture}

        \caption{The ray $\alpha_2$ is forced to diverge from $ f $ due to the Jordan curve theorem. }
        \label{fig:diverge-from-face}
    \end{figure}
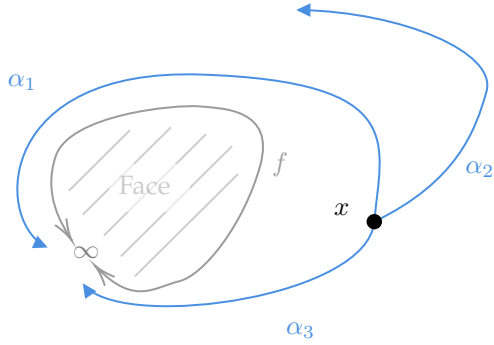

\begin{lemma}\label{lem:uniform-faces}
    There exists some uniform constant $r \geq 0$ such that every finite face cycle of $\Gamma$ has length at most $n$.  
\end{lemma}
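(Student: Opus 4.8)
The plan is to argue by contradiction: suppose there is no such uniform bound, so there are finite face cycles of arbitrarily large length, and fix one, $f$, of length $n$, where we will force a contradiction once $n$ is large enough. Since $\Gamma$ is $2$-connected, Proposition~\ref{prop:simple-face} tells us $f$ is a simple cycle bounding a face disk $U\subseteq\bbS^2$, and the decisive structural point is that $U$ contains no vertex of $\Gamma$; equivalently, $\Gamma$ is drawn inside the closed disk $\bbS^2\setminus U$ whose boundary circle is exactly $f$. The aim is to contradict the $B$-coboundedness of the induced quasi-action together with one-endedness of $\Gamma$.

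The first step reduces to the case where $f$ is a long, nearly geodesic loop. Choose $a,b\in Vf$ cutting $f$ into two arcs $A_1,A_2$, each of length $\sim n/2$, and set $\ell=\dist_\Gamma(a,b)$. Let $p$ be a geodesic from $a$ to $b$; after a routine reduction I may assume $p$ meets $f$ only at $a$ and $b$ (an interior vertex of $p$ on $f$ only splits the picture into finitely many instances of the same shape, with no loss for the bound below). Then $p\cup f$ is an embedded theta-graph in $\bbS^2$, cutting the sphere into three open disks: $U$ (bordered by $A_1\cup A_2$) and two disks $R_1,R_2$ (with $R_i$ bordered by $A_i\cup p$). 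Since $U$ is empty and $\bbS^2$ has no further region, no edge of $\Gamma-V(p)$ can join the $R_1$-side to the $R_2$-side; hence $V(p)$ separates $\Gamma$, and one side is finite while the other carries the unique end (both sides cannot be finite, as $\Gamma$ is infinite). The finite side contains the interior of one of the arcs $A_i$, so it contains a component $Z$ of $\Gamma-V(p)$ with $|VZ|\ge n/3-1$. Applying Lemma~\ref{lem:bounded-finite-pieces} with $S=V(p)$ gives $\dist_\Gamma(z,V(p))<C\diam_\Gamma V(p)+C=C\ell+C$ for all $z\in Z$, so, $\Gamma$ having bounded valence $d$, we get $n/3-1\le|VZ|\le(\ell+1)\,d^{\,C\ell+C+1}$. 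In particular, if $\ell$ stays bounded then so does $n$; thus we may assume $\ell=\dist_\Gamma(a,b)\to\infty$, i.e.\ $f$ has diameter in $\Gamma$ comparable to its length.

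The second step — the crux — rules out such a long face cycle using planarity and the diverging quasi-geodesic rays of Lemma~\ref{lem:disjoint-paths-one-end}. Since $\diam_\Gamma Vf$ is large, $f$ contains three vertices that are pairwise far apart in $\Gamma$; applying Lemma~\ref{lem:disjoint-paths-one-end} at one of them produces three pairwise-diverging rays, two of which, together with the end of $\Gamma$, bound a Jordan curve in $\bbS^2$ with the empty disk $U$ and the tail of the third ray on opposite sides — exactly as in the proof of Lemma~\ref{lem:far-from-face}, using that $U$ is empty so no ray enters it. Feeding this separation back into the large finite region $Z$ of Step~1 (through whose narrow separator $V(p)$ every such ray must pass to escape to infinity) and invoking coboundedness of the quasi-action produces the contradiction. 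Making this last step precise — tracking how the diverging rays are forced to route around the large face while remaining quasi-geodesic, and squeezing out the contradiction with coboundedness and one-endedness — is where the real work lies; the remainder is routine planar topology and bookkeeping with the quasi-isometry constants.
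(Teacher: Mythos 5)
Your Step~1 is sound: the theta--graph $p\cup f$, together with the emptiness of the face disk $U$, does show that $V(p)$ separates $\Gamma$, and one--endedness forces one side, $Z$, to be finite; Lemma~\ref{lem:bounded-finite-pieces} then bounds $|VZ|$ in terms of $\ell=\dist_\Gamma(a,b)$, so a long face cycle must have large $\Gamma$-diameter. (Two small caveats: what follows is only $\ell\to\infty$, not ``$\ell$ comparable to $n$'' --- the Lemma~\ref{lem:bounded-finite-pieces} estimate is exponential in $\ell$, so it gives $\ell\gtrsim\log n$ rather than $\ell\gtrsim n$; and you also need a third point $c\in Vf$ far from both $a$ and $b$, which takes a separate (easy) sentence.)

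The gap is exactly where you flag it, and it is not a matter of ``routine bookkeeping''. Your Step~2 establishes that for any vertex $v$ on $f$, Lemma~\ref{lem:disjoint-paths-one-end} produces a ray $\alpha_3$ that diverges from $U$, by the Jordan-curve argument of Lemma~\ref{lem:far-from-face}. But this conclusion places no constraint whatsoever on the size of $U$: a quasi-geodesic ray that diverges from a face is perfectly compatible with the face being enormous, since the ray simply travels away from it. The ``separation'' you obtain is a statement about one ray, not about $U$, and the sentence ``feeding this separation back into $Z$ \dots\ produces the contradiction'' names no concrete obstruction. In fact no purely local obstruction exists at a vertex on a large finite face: the configuration of diverging rays there looks the same as at any other vertex. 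The contradiction has to come from a global argument.

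This is where the paper's proof takes a genuinely different route, and it is worth understanding why. Rather than assuming a large face and hunting for a local contradiction, the paper fixes a single basepoint $x$ (chosen far from the infinite face if there is one), uses the three rays of Lemma~\ref{lem:disjoint-paths-one-end} together with connecting arcs $\beta_i$ to build a Jordan curve $J$ that separates $x$ from infinity at some fixed margin, and then \emph{translates}: for every $g\in G$, the disjointness pattern of $\alpha_i'=\varphi_g(\alpha_i)$, $\beta_i'=\varphi_g(\beta_i)$ is preserved (because the rays were built with $\lambda^2$-separated neighbourhoods), so $\varphi_g(J)$ still contains a Jordan curve separating $\varphi_g(x)$ from infinity. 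Lemma~\ref{lem:bounded-finite-pieces} then bounds the finite component cut off by $\varphi_g(J)$, and since a face disk cannot cross $\varphi_g(J)$, every face meeting the $B$-ball about $\varphi_g(x)$ lies inside it. Coboundedness sweeps $\varphi_g(x)$ over all of $\Gamma$, bounding every finite face at once. The crucial leverage --- that the quasi-action carries the single fixed Jordan curve $J$ to a uniformly bounded separating curve near every vertex --- is precisely what your local argument lacks, and I do not see how to recover it from the setup you have chosen.
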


\begin{proof}
    Apply Lemma~\ref{lem:disjoint-paths-one-end} to obtain three quasi-geodesic rays $\alpha_1, \alpha_2, \alpha_3 : [0,\infty) \to \Gamma$ based at a common vertex $x \in V(\Gamma)$ which pairwise diverge. Fix $r > 0$ such that for all $i\neq j$ we have that
    $$
    \dist_\Gamma\Big( \alpha_i \setminus B_\Gamma(x;r), \ \alpha_j \setminus B_\Gamma(x;r)   \Big) > \lambda^2
    $$
    Since we have a cobounded quasi-action, it is not hard to see that we may choose $x$ freely, up to slightly inflating the quasi-geodesic constants and the constant $r$ by some controlled amount. 
    
    If $\Gamma$ has no infinite face then choose $x \in V(\Gamma)$ arbitrarily. Otherwise, apply Lemma~\ref{lem:far-from-face} and choose $x \in V(\Gamma)$ such that $x$ lies at least $M = 1000\lambda^{1000}Br$ from this infinite face. 
    
    Since there is no infinite face near $x$, we may connect $\alpha_i$ to $\alpha_{i+1}$ by a path $\beta_i$ which stays inside the region bounded by $\alpha_i \cup \alpha_{i+1}$ not containing $\alpha_{i+2}$ (where indices are taken modulo 3). 
    We can also assume that each $\beta_i$ is disjoint from the $M$-neighbourhood of $x$.  
    The subgraph $\alpha_1 \cup \alpha_2 \cup \alpha_3 \cup \beta_1 \cup \beta_2 \cup \beta_3$ contains a Jordan curve $J$ which separates $x$ from infinity, which is disjoint from the $M$-neighbourhood of $x$. 
    See Figure~\ref{fig:enclosing-circle} for a cartoon. 

    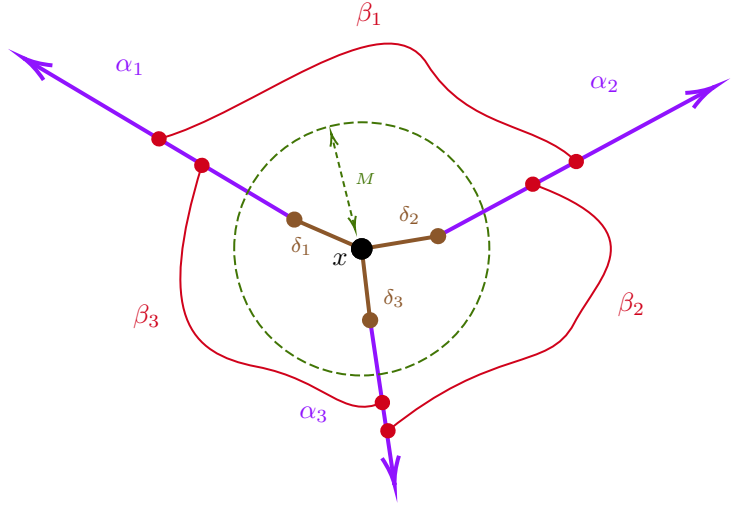
\begin{figure}
        \centering

        \tikzset{every picture/.style={line width=0.75pt}} 

\begin{tikzpicture}[x=0.75pt,y=0.75pt,yscale=-1,xscale=1]

\draw [color={rgb, 255:red, 144; green, 19; blue, 254 }  ,draw opacity=1 ][line width=1.5]    (252.15,118.02) .. controls (254.19,119.25) and (256.22,132.6) .. (269.08,132.75) .. controls (281.94,132.9) and (316.83,94.25) .. (285.8,132.76) ;
\draw [color={rgb, 255:red, 144; green, 19; blue, 254 }  ,draw opacity=1 ][line width=1.5]    (323.9,126.36) .. controls (316.08,131) and (286.47,146.35) .. (299.33,146.5) .. controls (312.19,146.65) and (283.83,107.25) .. (285.8,132.76) ;
\draw [color={rgb, 255:red, 144; green, 19; blue, 254 }  ,draw opacity=1 ][line width=1.5]    (289.97,168.63) .. controls (288.08,152.5) and (247.08,187) .. (258.08,155.25) .. controls (269.08,123.5) and (244.58,118.5) .. (285.8,132.76) ;
\draw [color={rgb, 255:red, 144; green, 19; blue, 254 }  ,draw opacity=1 ][line width=1.5]    (119.57,38.36) -- (252.15,118.02) ;
\draw [shift={(117,36.81)}, rotate = 31] [color={rgb, 255:red, 144; green, 19; blue, 254 }  ,draw opacity=1 ][line width=1.5]    (14.21,-4.28) .. controls (9.04,-1.82) and (4.3,-0.39) .. (0,0) .. controls (4.3,0.39) and (9.04,1.82) .. (14.21,4.28)   ;
\draw [color={rgb, 255:red, 144; green, 19; blue, 254 }  ,draw opacity=1 ][line width=1.5]    (459.2,52.71) -- (323.9,126.36) ;
\draw [shift={(461.83,51.28)}, rotate = 151.44] [color={rgb, 255:red, 144; green, 19; blue, 254 }  ,draw opacity=1 ][line width=1.5]    (14.21,-4.28) .. controls (9.04,-1.82) and (4.3,-0.39) .. (0,0) .. controls (4.3,0.39) and (9.04,1.82) .. (14.21,4.28)   ;
\draw [color={rgb, 255:red, 144; green, 19; blue, 254 }  ,draw opacity=1 ][line width=1.5]    (301.77,248.53) -- (289.97,168.63) ;
\draw [shift={(302.21,251.5)}, rotate = 261.6] [color={rgb, 255:red, 144; green, 19; blue, 254 }  ,draw opacity=1 ][line width=1.5]    (14.21,-4.28) .. controls (9.04,-1.82) and (4.3,-0.39) .. (0,0) .. controls (4.3,0.39) and (9.04,1.82) .. (14.21,4.28)   ;
\draw  [color={rgb, 255:red, 65; green, 117; blue, 5 }  ,draw opacity=1 ][dash pattern={on 3.75pt off 1.5pt}] (222.12,132.76) .. controls (222.12,97.58) and (250.63,69.07) .. (285.8,69.07) .. controls (320.97,69.07) and (349.48,97.58) .. (349.48,132.76) .. controls (349.48,167.93) and (320.97,196.44) .. (285.8,196.44) .. controls (250.63,196.44) and (222.12,167.93) .. (222.12,132.76) -- cycle ;
\draw [line width=1.5]    (285.8,132.76) ;
\draw [shift={(285.8,132.76)}, rotate = 0] [color={rgb, 255:red, 0; green, 0; blue, 0 }  ][fill={rgb, 255:red, 0; green, 0; blue, 0 }  ][line width=1.5]      (0, 0) circle [x radius= 4.36, y radius= 4.36]   ;
\draw [shift={(285.8,132.76)}, rotate = 0] [color={rgb, 255:red, 0; green, 0; blue, 0 }  ][fill={rgb, 255:red, 0; green, 0; blue, 0 }  ][line width=1.5]      (0, 0) circle [x radius= 4.36, y radius= 4.36]   ;
\draw [color={rgb, 255:red, 208; green, 2; blue, 27 }  ,draw opacity=1 ]   (184.58,77.42) .. controls (229.9,65.18) and (297.76,6.22) .. (318.34,39.6) .. controls (338.92,72.97) and (372.84,69.07) .. (392.87,88.82) ;
\draw [shift={(392.87,88.82)}, rotate = 44.6] [color={rgb, 255:red, 208; green, 2; blue, 27 }  ,draw opacity=1 ][fill={rgb, 255:red, 208; green, 2; blue, 27 }  ,fill opacity=1 ][line width=0.75]      (0, 0) circle [x radius= 3.35, y radius= 3.35]   ;
\draw [shift={(184.58,77.42)}, rotate = 344.89] [color={rgb, 255:red, 208; green, 2; blue, 27 }  ,draw opacity=1 ][fill={rgb, 255:red, 208; green, 2; blue, 27 }  ,fill opacity=1 ][line width=0.75]      (0, 0) circle [x radius= 3.35, y radius= 3.35]   ;
\draw [color={rgb, 255:red, 208; green, 2; blue, 27 }  ,draw opacity=1 ]   (205.99,90.76) .. controls (183.19,165.85) and (198.76,185.31) .. (233.24,191.99) .. controls (267.72,198.66) and (276.07,218.69) .. (296.09,210.06) ;
\draw [shift={(296.09,210.06)}, rotate = 336.71] [color={rgb, 255:red, 208; green, 2; blue, 27 }  ,draw opacity=1 ][fill={rgb, 255:red, 208; green, 2; blue, 27 }  ,fill opacity=1 ][line width=0.75]      (0, 0) circle [x radius= 3.35, y radius= 3.35]   ;
\draw [shift={(205.99,90.76)}, rotate = 106.89] [color={rgb, 255:red, 208; green, 2; blue, 27 }  ,draw opacity=1 ][fill={rgb, 255:red, 208; green, 2; blue, 27 }  ,fill opacity=1 ][line width=0.75]      (0, 0) circle [x radius= 3.35, y radius= 3.35]   ;
\draw [color={rgb, 255:red, 208; green, 2; blue, 27 }  ,draw opacity=1 ]   (371.17,100.22) .. controls (434.58,123.58) and (404.54,145.83) .. (391.75,170.3) .. controls (378.96,194.77) and (351.71,180.87) .. (298.87,224.25) ;
\draw [shift={(298.87,224.25)}, rotate = 140.61] [color={rgb, 255:red, 208; green, 2; blue, 27 }  ,draw opacity=1 ][fill={rgb, 255:red, 208; green, 2; blue, 27 }  ,fill opacity=1 ][line width=0.75]      (0, 0) circle [x radius= 3.35, y radius= 3.35]   ;
\draw [shift={(371.17,100.22)}, rotate = 20.22] [color={rgb, 255:red, 208; green, 2; blue, 27 }  ,draw opacity=1 ][fill={rgb, 255:red, 208; green, 2; blue, 27 }  ,fill opacity=1 ][line width=0.75]      (0, 0) circle [x radius= 3.35, y radius= 3.35]   ;
\draw [color={rgb, 255:red, 65; green, 117; blue, 5 }  ,draw opacity=1 ] [dash pattern={on 2.25pt off 1.5pt}]  (282.02,121.06) -- (270.98,76.44) ;
\draw [shift={(270.5,74.5)}, rotate = 76.1] [color={rgb, 255:red, 65; green, 117; blue, 5 }  ,draw opacity=1 ][line width=0.75]    (6.56,-1.97) .. controls (4.17,-0.84) and (1.99,-0.18) .. (0,0) .. controls (1.99,0.18) and (4.17,0.84) .. (6.56,1.97)   ;
\draw [shift={(282.5,123)}, rotate = 256.1] [color={rgb, 255:red, 65; green, 117; blue, 5 }  ,draw opacity=1 ][line width=0.75]    (6.56,-1.97) .. controls (4.17,-0.84) and (1.99,-0.18) .. (0,0) .. controls (1.99,0.18) and (4.17,0.84) .. (6.56,1.97)   ;

\draw (161.45,35.94) node [anchor=north west][inner sep=0.75pt]  [color={rgb, 255:red, 144; green, 19; blue, 254 }  ,opacity=1 ]  {$\alpha _{1}$};
\draw (398.38,44.84) node [anchor=north west][inner sep=0.75pt]  [color={rgb, 255:red, 144; green, 19; blue, 254 }  ,opacity=1 ]  {$\alpha _{2}$};
\draw (253.22,210.59) node [anchor=north west][inner sep=0.75pt]  [color={rgb, 255:red, 144; green, 19; blue, 254 }  ,opacity=1 ]  {$\alpha _{3}$};
\draw (269.63,133.58) node [anchor=north west][inner sep=0.75pt]    {$x$};
\draw (170.24,159.42) node [anchor=north west][inner sep=0.75pt]  [color={rgb, 255:red, 208; green, 2; blue, 27 }  ,opacity=1 ]  {$\beta _{3}$};
\draw (281.47,7.58) node [anchor=north west][inner sep=0.75pt]  [color={rgb, 255:red, 208; green, 2; blue, 27 }  ,opacity=1 ]  {$\beta _{1}$};
\draw (412.17,153.3) node [anchor=north west][inner sep=0.75pt]  [color={rgb, 255:red, 208; green, 2; blue, 27 }  ,opacity=1 ]  {$\beta _{2}$};
\draw (281,93.4) node [anchor=north west][inner sep=0.75pt]  [font=\tiny,color={rgb, 255:red, 65; green, 117; blue, 5 }  ,opacity=1 ]  {$M$};

\end{tikzpicture}

        \caption{Our choice of the paths $\beta_i$ traces out a Jordan curve separating $x$ from infinity.}
        \label{fig:enclosing-circle}
    \end{figure}

    Suppose now we translate this figure somewhere using our quasi-action. Fix $g \in G$. To ease notation, let us denote the quasi-isometry $\varphi_g$ with the following shorthand:  
    $$
    \varphi_g(x) = x'.
    $$ 
    We have that the $\alpha_i'$ are disjoint outside of some controlled neighbourhood of $x'$, and also that 
    \begin{equation}\label{eq:beta-disjoint}
       \beta_i' \cap \alpha_{i+2}' = \emptyset 
    \end{equation}
    for each $i = 1,2,3$. 
    Each $\alpha_{i}'$ is still a quasi-geodesic ray based at $x'$, heading towards the lone end of $\Gamma$. If we try to draw the $\beta_i'$, then the equation (\ref{eq:beta-disjoint}) forces us to once again trace a (possibly not simple, but certainly continuous) closed curve separating $x'$ from infinity in the plane. 
    We have that $J'$ is certainly disjoint from the $B$-neighbourhood of $x'$. 

    Let $U$ be the connected component of $\bbS^2 \setminus \vartheta(J')$ which contains $\vartheta(x')$, and let $b = \vartheta^{-1}(\overline U)$.
    Note that $b$ is finite, and $\delta b$ has diameter in $\Gamma$ which depends only on $J$ and the quasi-action, say at most $D \geq 1$. 
    Any face of $\Gamma$ which intersects the $B$-neighbourhood of $x'$ cannot `cross' $J'$, and so is contained in $b$. Applying Lemma~\ref{lem:bounded-finite-pieces}, we see that $b$ can only contain boundedly many vertices. This puts a uniform upper bound on the lengths of the faces which are $B$-close to $x'$.
    Since $g \in G$ was chosen arbitrarily and the quasi-action is $B$-cobounded, the lemma follows. 
\end{proof}

\begin{lemma}\label{lem:noinffaces}
    The planar graph $\Gamma$ contains no infinite facial subgraphs. 
\end{lemma}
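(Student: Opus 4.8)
The plan is to bootstrap off the surgery in the proof of Lemma~\ref{lem:uniform-faces}, the one new ingredient being to translate the enclosing Jordan curve with the quasi-action so that it surrounds a vertex lying \emph{on} a putative infinite face. Suppose, for contradiction, that $\Gamma$ has an infinite facial subgraph $f$, bounding the face $U \in \facedisks\Gamma$, and write $\omega$ for the unique end of $\Gamma$. Since $\Gamma$ is $2$-connected and one-ended, the discussion following Proposition~\ref{prop:simple-face} gives that $\overline f$ is a simple closed curve in $\overline\Gamma$, so $\partial U = \vartheta(\overline f)$ is a Jordan curve in $\bbS^2$; moreover, $f$ being infinite forces $\overline f$ to strictly contain $f$, so $\vartheta(\omega) \in \partial U$. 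Note also that $\vartheta(\omega) \notin \vartheta(\Gamma)$.

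First I would build a far enclosing curve. By Lemma~\ref{lem:far-from-face}, fix a vertex $x$ with $\dist_\Gamma(x,f)$ at least the constant $M$ appearing in the proof of Lemma~\ref{lem:uniform-faces}. As there is then no infinite face within distance $M$ of $x$, the construction in the first paragraph of that proof applies verbatim and yields a Jordan curve $J \subseteq \Gamma$, disjoint from $B_\Gamma(x;M)$, that separates $x$ from $\omega$. Next, fix any vertex $w \in f$; by $B$-coboundedness of the quasi-action choose $g \in G$ with $\dist_\Gamma(w,\varphi_g(x)) \le B$, and set $x' = \varphi_g(x)$, $J' = \varphi_g(J)$. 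Arguing exactly as in the proof of Lemma~\ref{lem:uniform-faces} — via Lemma~\ref{lem:move-cuts} together with the fact that the translated rays $\varphi_g(\alpha_i)$ are still pairwise divergent quasi-geodesic rays towards $\omega$, so that the translated paths $\varphi_g(\beta_i)$ are again forced to trace such a curve — the loop $J'$ is a Jordan curve separating $x'$ from $\omega$. Write $D_{x'}$ and $D_\omega$ for its two complementary open disks, containing $\vartheta(x')$ and $\vartheta(\omega)$ respectively; this is well-defined since $\vartheta(\omega) \notin \vartheta(\Gamma) \supseteq \vartheta(J')$. Because $\varphi_g$ distorts distances by a bounded amount, $B_\Gamma(x';B)$ is disjoint from $J'$; being connected and containing $\vartheta(x') \in D_{x'}$, its $\vartheta$-image lies in $D_{x'}$, and in particular $\vartheta(w) \in D_{x'}$.

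The contradiction is then immediate from planarity. The face $U$ is open, connected, and disjoint from $\vartheta(\overline\Gamma) \supseteq \vartheta(J')$; as its closure contains $\vartheta(\omega) \in D_\omega$, we get $U \subseteq D_\omega$, hence $\partial U \subseteq \overline{D_\omega}$. But $w \in f$ gives $\vartheta(w) \in \vartheta(f) \subseteq \partial U \subseteq \overline{D_\omega}$, contradicting $\vartheta(w) \in D_{x'}$ since $\overline{D_\omega} \cap D_{x'} = \emptyset$. Therefore $\Gamma$ has no infinite facial subgraph.

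The hard part, and the only genuinely delicate point, is the step inherited wholesale from the proof of Lemma~\ref{lem:uniform-faces}: that after applying the continuous quasi-isometry $\varphi_g$ the purely coarse data of the configuration — divergence of the $\varphi_g(\alpha_i)$ and the disjointness relation (\ref{eq:beta-disjoint}) among the $\varphi_g(\beta_i)$ — still force $J'$ to be a genuine Jordan curve in $\bbS^2$ separating $x'$ from $\omega$. Granting this (it requires exactly the same care as in Lemma~\ref{lem:uniform-faces}), the remainder is a routine application of the Jordan curve theorem and Lemma~\ref{lem:move-cuts}.
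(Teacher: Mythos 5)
Your proposal is correct and follows essentially the same strategy as the paper's (sketched) proof: build the far-enclosing Jordan curve $J$ as in Lemma~\ref{lem:uniform-faces}, translate it with the quasi-action so that it encircles a prescribed point near the putative infinite face, and derive a contradiction from the Jordan curve theorem. The only difference is one of presentation — the paper compresses the final step to ``the $B$-neighbourhood of $y$ cannot contain an infinite face'', whereas you unpack it by naming the two complementary disks and noting that $\vartheta(w)$ would have to lie in both $D_{x'}$ and $\overline{D_\omega}$, which is a helpful clarification of the same idea.
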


\begin{proof}
    The method to prove this is very similar to the proof of Lemma~\ref{lem:uniform-faces}, so we will only sketch the argument. 

    Suppose there were some $ f  \in \inffaces (\Gamma)$. Then by Lemma~\ref{lem:far-from-face} we can choose some basepoint $x \in V(\Gamma)$ which is arbitrarily far from $ f $. We apply the same construction as in the proof of Lemma~\ref{lem:uniform-faces} and obtain a Jordan curve $J$ separating $x$ from infinity which is disjoint from the $M$ neighbourhood of $x$, where $M$ is taken as in the proof of Lemma~\ref{lem:uniform-faces}. 

    As before, we may translate this Jordan curve anywhere using the quasi-action of $G$. The curve
    $\varphi_g(J)$ still separates $y = \varphi_g(x)$ from infinity in the plane, and lies disjoint from the $B$-neighbourhood of $y$. But then the $B$-neighbourhood of $y$ cannot contain an infinite face. Since this quasi-action is $B$-cobounded and $g \in G$ was chosen arbitrarily, we are forced to conclude that there is no infinite face in $\Gamma$. 
\end{proof}

\subsection{Conclusion}

We may now deduce one of our main results. 

\oneend*

\begin{proof}
    By Lemmas~\ref{lem:uniform-faces},~\ref{lem:noinffaces}, we may assume that $\Gamma$ contains no infinite face paths, and that every face of $\Gamma$ has bounded length. 
    It is then clear that if we attach a 2-cell along each face path, then the resulting cell complex $K$ is homeomorphic to $\R^2$. Subdivide the 2-cells of $K$ and obtain a triangulation, so that the inclusion of $\Gamma$ into the 1-skeleton $K^1$ is an isometric embedding. Since the 2-cells of $K$ have boundedly small boundaries, we only need to subdivide each cell into a bounded number of triangles. Thus, the inclusion $\Gamma \into K^1$ is coarsely surjective and thus a quasi-isometry. We now extend the metric on $K^1$ to a metric on $K$, and obtain a piecewise-linear complete plane which is quasi-isometric to $\Gamma$. Standard smoothing arguments imply that $\Gamma$ is thus quasi-isometric to a complete Riemannian plane. 
\end{proof}

This concludes the one-ended case of our results. The rest of this paper will be focused on dealing with infinite-ended graphs, and the issue of accessibility.

\section{The cycle space}\label{sec:cycles}

We now discuss the \emph{cycle space} of a graph. As the title of this paper might suggest, there will be particular attention paid to how the cycle space interacts with accessibility, planar graphs, and quasi-isometries. 

\subsection{Generating the cycle space}

We begin with some basic definitions.

\begin{definition}[Cycle space]
    Let $X$ be a connected graph with edge set $E = E(X)$. The \emph{cycle space} of $X$, denoted $\cC(X)$, is the subgroup of $\bigoplus_E\Z_2$ consisting of all finite Eulerian subgraphs. 
\end{definition}

Morally, this is just the first simplicial homology group of $X$ with $\Z_2$-coefficients. 
We want to allow infinite sums of elements in $\cC(X)$, and will do so by considering its closure in the direct product $\prod_E \Z_2$. Note that this product is a compact topological group, by Tychonoff's theorem.  

\begin{definition}[Topological generation]
    Let $X$ be a connected graph with edge set $E = E(X)$. Let $S \subset \cC(X)$. We say that $S$ \emph{topologically generates} $\cC(X)$ if 
    $$
    \cC(X) \subset \overline {\langle S \rangle},
    $$
    where this closure is taken in $\prod_E \Z_2$. 
\end{definition}

For clarity, if $S \subset \cC(X)$ generates the cycle space as a group (i.e. not just topologically), then we refer to $S$ as an \emph{algebraic} generating set. We will often speak of a set of cycles $S \subset \cC(X)$ having \emph{bounded support}. By this, we mean that there exists $N > 0$ such that every cycle in $S$ is supported on at most $N$ edges. 

\begin{remark}
    Note that, in general, $\cC(X)$ is not a closed subset of $\prod_E \Z_2$. If $X$ is locally finite, then $\overline {\cC(X)}$ can be identified with the \emph{topological cycle space}, studied by Diestel and K\"uhn \cite{diestelkuhn2004infinite}. 
\end{remark}

Topological generation is better-suited for applications to planar graphs, as is illustrated in Figure~\ref{fig:planar-topgen}. In this example, the cycle space of this planar graph is topologically generated by cycles of length 4, but any algebraic generating set must contain cycles of arbitrarily large support.

\begin{figure}
    \centering
    \input{figs/planar-top-gen}
    \caption{The cycle space of this planar graph is topologically generated by cycles of length 4, but any algebraic generating set must contain arbitrarily long cycles.}
    \label{fig:planar-topgen}
\end{figure}




\subsection{Cycles and accessibility}


Bounded generation of the cycle space has implications for accessibility of graphs, via the following theorem of Hamann \cite{hamann2018accessibility}, which is a combinatorial variant of Dunwoody's theorem \cite{dunwoody1985accessibility}.

\begin{theorem}[Hamann]\label{thm:hamann}
    Let $X$ be a connected, locally finite, quasi-transitive graph. Suppose $\cC(X)$ is algebraically generated by cycles of bounded support. Then $X$ is accessible. 
\end{theorem}

\begin{remark}
    It is possible to weaken the hypotheses of Theorem~\ref{thm:hamann} to only require topological generation. See Appendix~A of \cite{macmanusrelacc} for details. 
\end{remark}

We make use of the following theorem, which is a relative variant of Hamann's accessibility result for graphs with finitely generated cycle spaces \cite{hamann2018accessibility}. The proof is an application of Hamann's result, taken together with more tools in \cite{macmanusrelacc}.

\begin{theorem}[{\cite[Thm.~D]{macmanusrelacc}}]\label{thm:relacc-cycles}
    Let $X$ be a connected, locally finite, quasi-transitive $G$-graph, and $\per$ be a thin, $G$-invariant peripheral system. Suppose that $\cC(\widehat X_\per)$ is topologically $G$-finitely generated. Then $\br_\per(X)$ is admits a $G$-invariant, nested, $G$-finite generating set.
\end{theorem}

\subsection{Cycles and planar graphs}

In this subsection we give some thought to cycle spaces of planar graphs. First, we have the following definition. 

\begin{definition}[Bad loops]\label{def:bad}
    Let $\Gamma$ be a connected, locally finite, planar graph with good drawing $\vartheta : \overline \Gamma \into \bbS^2$. 
    We say that a simple combinatorial loop $\ell$ in $\Gamma$ is a \textit{bad loop} if there exist two ends $\omega_1, \omega_2 \in \Omega (\Gamma)$ such that $\vartheta (\omega_1)$ and $\vartheta (\omega_2)$ lie in distinct components of $\bbS^2 \setminus \vartheta(\ell)$. 
\end{definition}

Bad loops are the main source of complexity when it comes to dealing with the cycle spaces of planar graphs. Indeed, dealing with these loops is the main technical step in \cite{hamann2018planar}. When they are not present, things are much more straightforward, as the next result demonstrates. 

\begin{proposition}\label{prop:no-bad-loops-genset}
    Let $\Gamma$ be a 2-connected, locally finite, planar graph with good drawing $\vartheta : \overline \Gamma \into \bbS^2$. 
    Suppose $\Gamma$ does not contain any bad loops. Then $\cC(\Gamma)$ is algebraically generated by $\finfaces(\Gamma)$. 
\end{proposition}

\begin{proof}
    Let $\ell$ be a closed loop in $\Gamma$. We assume without loss of generality that $\ell$ is simple, and not a bad loop. Thinking of $\ell$ as a Jordan curve in $\bbS^2$ we have that one side of $\ell$ contains only finitely many finite facial cycles. The sum of these facial cycles is exactly $\ell$. 
\end{proof}

Bad loops alone are not a huge amount of trouble, and considering topological generation of the cycle space actually remedies this problem in some cases. For example, the following is not hard to see. We leave the proof as an exercise as we will not directly use it.

\begin{proposition}
    Let $\Gamma$ be a 2-connected, locally finite, planar graph with good drawing $\vartheta : \overline \Gamma \into \bbS^2$. 
    Suppose $\inffaces(\Gamma) = \emptyset$. Then $\cC(\Gamma)$ is topologically generated by $\finfaces(\Gamma)$. 
\end{proposition}

\begin{proof}
    Left as an exercise to the reader (see Figure~\ref{fig:planar-topgen} for inspiration).
\end{proof}

Indeed, this is simply a manifestation of the fact that $\facepaths(\Gamma)$ generates the topological cycle space $\overline{ \cC(\Gamma)}$. However, `infinite generators' are of no use to us. The work-around we will employ is to consider the cone-off of $\Gamma$ over its infinite faces (see \S\ref{sec:ellipticcuts} above for the definition). After coning off these faces, the following is not hard to see. 

\begin{proposition}\label{prop:planar-rel-genset}
     Let $\Gamma$ be a 2-connected, locally finite, planar graph with a fixed good drawing. 
     Let $\mathcal F = \inffaces(\Gamma)$. Then $\cC(\widehat \Gamma_{\mathcal F})$ is topologically generated by
     $$
     \finfaces(\Gamma) \cup \{\text{cycles of length 3}\}.
     $$
\end{proposition}

Our goal will be to push this `nice' generating set through a quasi-isometry. This is the topic the next subsection.

\subsection{Cycles and quasi-isometries}

In order to apply the above result, we will need to be able to meaningfully push topological generating sets of a cycle space through quasi-isometries. To this end, we state and prove a collection of results.
First, we record the following. 

\begin{theorem}[{\cite[Lem.~1.3]{fujiwara2007note}}]\label{thm:alg-gen-qi-inv}
    Let $X$, $Y$ be connected, quasi-isometric graphs. Suppose $\cC(X)$ is algebraically generated by cycles of bounded support. Then the same is true of $\cC(Y)$. 
\end{theorem}

We will need a similar result for topological generation. When the graphs in question are locally finite, this follows from a very similar argument to that which is presented in \cite{fujiwara2007note}. However, we are interested in the cycle spaces of cone-offs, which will generally be locally infinite. We are therefore required to take some care. We will need the following two definitions. 

\begin{definition}[Triangular cycles]
    Let $X$ be a graph with peripheral system $\per$. Let $c \in \cC(\widehat X_\per)$ be a cycle in the cone-off. 
    We say that $c$ is \emph{triangular} if $c$ is supported on at most two edges which do not lie in $E(X)$. 
\end{definition}

\begin{definition}[Peripheral-preserving quasi-isometry]
    Let $X$, $Y$ be connected graphs with respective peripheral systems $\per$, $\mathcal K$. Let $\varepsilon > 0$. Then a quasi-isometry $\varphi : X \to Y$ is said to be \emph{$\varepsilon$-peripheral preserving} if there exists a bijection $f : \per \to \mathcal K$ such that
    $$
    \dHaus[Y](\varphi(H), f(H)) < \varepsilon,
    $$
    for all $H \in \per$. If $\varphi$ is $\varepsilon$-peripheral preserving for some $\varepsilon > 0$, then we may just call $\varphi$ \emph{peripheral preserving}. 
\end{definition}

It is immediate that a quasi-inverse to a peripheral preserving quasi-isometry is also peripheral preserving, witnessed by the inverse $f^{-1} : \mathcal K \to \per$. 
We now prove the following theorem, which is specialised to our purposes. 

\begin{theorem}\label{thm:cone-off-qi-chomp}
    Let $X$ be a connected, locally finite, quasi-transitive $G$-graph with a thin, $G$-invariant peripheral system $\per$. Let $\Gamma$ be a connected, bounded-degree graph with thin peripheral system $\mathcal F$, and let  $\varphi : X \to \Gamma$ be peripheral preserving quasi-isometry.

    Suppose $\cC(\widehat \Gamma_\facepaths)$ admits a topological generating set of bounded support, containing only triangular cycles. Then $\cC(\widehat X_\per)$ is topologically $G$-finitely generated. 
\end{theorem}

\begin{proof}
    Let $\psi : \Gamma \to X$ be a quasi-inverse to $\varphi$. Assume without loss of generality that $\varphi$ and $\psi$ send vertices to vertices and edges to combinatorial geodesics. Fix $\varepsilon > 0$ such that both $\varphi$ and $\psi$ are $\varepsilon$-peripheral preserving. Extend $\varphi$ as follows.
    Define $\widehat \varphi$ on $V(\widehat X_\per)$ via
    $$
    \widehat \varphi (v) = 
    \begin{cases}
        \varphi(v) &\text{if $v \in V(X)$,}\\
        v_{f(H)} &\text{if $v = v_H$ is a cone vertex.}
    \end{cases}
    $$
    We then extend $\widehat \varphi$ to the edges of $\widehat X_\per$ so that it agrees with $\varphi$ on $E(X)$, and given an edge $e$ connecting some cone vertex $v_H$ to $u \in H$ for some $H \in \per$, we choose some geodesic in $\Gamma$ of length at most $\varepsilon$ connecting $\varphi(u)$ to some vertex $u' \in f(H)$, and then extend this with a single edge to a path from $\varphi(u)$ to $v_{f(H)}$ of length at most $\varepsilon + 1$.
    We define $\widehat \psi$ similarly, using the inverse $f^{-1}$ in place of $f$. 

    \begin{claim}
        $\widehat \varphi$ and $\widehat \psi$ are proper\footnote{Recall a continuous map is \emph{proper} if preimages if compact sets are compact.} quasi-isometries, and are quasi-inverses to one another. 
    \end{claim}

    \begin{proof}
        It is immediate that these  maps are Lipschitz, and have the property that there exists $\eta > 0$ so that
        $$
        \dist\Big(x, \widehat \psi \circ \widehat \varphi(x)\Big) < \eta
        $$
        for all $x \in \widehat X_\per$. This is sufficient to deduce that $\widehat \varphi$ and $\widehat \psi$ are quasi-isometries which are quasi-inverses to one another. We now observe that these maps are proper. Let $K \subset \widehat \Gamma_\facepaths$ be some compact subset. Since our maps send vertices to vertices and edges to closed simple paths, it is sufficient to assume $K$ is a single edge $e \in E(\widehat \Gamma_\facepaths)$. 
        First, let us assume that some endpoint of $e$ is a cone-vertex, say $v_H$. Let $u$ denote the other endpoint of $e$. Let $e_1, e_2 \ldots$ be a sequence of edges in $E(\widehat X_\per)$ such that $e \subset \widehat \varphi(e_i)$ for all $i > 0$. By construction, the $e_i$ share an endpoint which is the cone-vertex $v_{f^{-1}(H)}$. Let $u_i \in V(X)$ denote the other endpoint of $e_i$. It is easy to see that the $u_i$ must map uniformly close to $u$ under $\varphi$, but since $\Gamma$ and $X$ are bounded-degree, and $\varphi$ is a quasi-isometry, this implies a uniform bound on how many such $u_i$ there are. An almost identical argument goes through in the case that neither endpoint of $e$ is a cone-vertex. In particular, we deduce that $\widehat \varphi$ is proper. A symmetrical argument shows that $\widehat \psi$ is proper. 
    \end{proof}

    Since $\widehat \psi$ is a continuous map which sends edges to combinatorial paths, each edge determines a finite collection of edges in $E(\widehat X_\per)$, namely the edges forming its image. Extending linearly, we have a natural map 
    $$
    \Psi : \bigoplus_{E(\widehat \Gamma_\facepaths)}\Z_2 \to \bigoplus _{E(\widehat X_\per)} \Z_2,
    $$
    which restricts to a homomorphism $\Psi : \cC(\widehat \Gamma_\facepaths) \to \cC(\widehat X_\per)$. Indeed, this is just the morphism which $\widehat \psi$ induces on the first homology. 
    We similarly get a morphism $\Phi :\cC(\widehat X_\per) \to \cC(\widehat \Gamma_\facepaths) $. Since our maps are proper, we actually get something more, namely that $\Psi$ (and $\Phi)$ extends to a continuous homomorphism
    $$
        \Psi : \prod_{E(\widehat \Gamma_\facepaths)}\Z_2 \to \prod _{E(\widehat X_\per)} \Z_2.
    $$
    Indeed, by properness have that any edge $e \in E(\widehat X_\per)$  is contained in the image of at most finitely many edges under $\widehat \psi$. Thus, this extension of $\Psi$ is well-defined. 
    To see that $\Psi$ is continuous, let $\sigma_i \to \sigma $ in $\prod_{E(\widehat \Gamma_\facepaths)}\Z_2$, so the $\sigma_i$ pointwise converge to $\sigma$.  Fix $e \in E(\widehat X_\per)$, and let $e_1, \ldots, e_n$ be the finitely many edges such that $e \subset \psi(e_i)$. Eventually the $\sigma_i$ will all agree with $\sigma$ on the $e_i$, and so $\Psi(\sigma_i)(e) = \Psi(\sigma)(e)$ for all sufficiently large $i > 0$. Since $e$ was arbitrary, this implies $\Psi(\sigma_i) \to \Psi(\sigma)$, and so $\Psi$ is continuous. We extend $\Phi$ similarly. 

    We now have a sequence of claims.

    \begin{claim}\label{claim:special-subgroup}
        There exists a subgroup $K_0 \leq \cC(\widehat X_\per)$ which is (algebraically) generated by triangular cycles of bounded support, such that for all $c \in \cC(\widehat X_\per)$, we have 
        $$
        \Psi \circ \Phi(c) \in c + K_0.
        $$
    \end{claim}

    \begin{proof}
        We first construct $K_0$. For every $v \in V(\widehat X_\per)$, let $P_v$ be a simple path of uniformly bounded length connecting $v$ to $v' := \widehat \psi \circ \widehat \varphi(v)$. If $v = v_H$ is a cone-vertex then $v' = v$, and so we take $P_v$ to be the empty path. Otherwise, we will take $P_v$ to be a path through $X$, which importantly does not pass through any cone-vertices. Given an edge $e \in E(\widehat X_\per)$ with endpoints $u$, $v$, we define a cycle 
        $$
        a_e = P_u + P_v + e + \Psi \circ \Phi(e).
        $$
        It is easy to see that this is indeed a cycle supported on at most a uniformly bounded number of edges, and that it is triangular. Indeed, if $e$ is contained in $X$ then $a_e$ is also contained entirely within $X$, and if $e$ adjoins a cone-vertex, then we have that, say $P_u$ is the empty path, $P_v$ is supported on exactly one cone-edge by construction, as is $\Psi \circ \Phi(e)$. 
        We then let 
        $$
        K_0 := \Big\langle a_e : e \in E(\widehat X_\per) \Big\rangle.
        $$
        Now, given $c \in \cC(\widehat X_\per)$, with support $e_1, \ldots, e_n$. Write $a_i = a_{e_i}$. A quick computation verifies that 
        $$
        c + \sum_i a_i = \sum_i e_i + a_i = \sum_i \Psi \circ \Phi(e_i) = \Psi \circ \Phi(c).
        $$
        The claim follows. 
    \end{proof}

    \begin{claim}\label{claim:triangles-preserved}
        If $c \in \cC(\widehat \Gamma_\facepaths)$ is triangular, then so is $\Psi(c)$.
    \end{claim}

    \begin{proof}
        This follows immediately from the construction of $\widehat \psi$. 
    \end{proof}

    \begin{claim}\label{claim:cofinite-triangles}
        For every $N > 0$, the group $G$ acts with finitely many orbits on the set of simple triangular cycles in $\cC(\widehat X_\per)$ of length at most $N$. 
    \end{claim}

    \begin{proof}
        A simple triangular cycle of length $N$ is determined by either a simple cycle in $X$ of length $N$, or a simple path in $X$ of length $N-2$. Since $X$ is locally finite and $G$ acts quasi-transitively, there are finitely many orbits of such objects. 
    \end{proof}

    Fix $\ell > 0$ such that $\cC(\widehat \Gamma_\facepaths)$ is topologically generated by triangular simple cycles supported on at most $\ell$. If $c \in \cC(\widehat \Gamma_\facepaths)$ is such a cycle, then $\Psi(c)$ is a cycle supported on at most some uniformly bounded number of edges, say $k$.
    Now, let $c \in \cC(\widehat X_\per)$ be arbitrary. Consider $\Phi(c)$. By assumption there exists a sequence of cycles $(c_i)$ in $\cC(\widehat \Gamma_\facepaths)$, where each $c_i$ is supported on at most $\ell$ edges, such that 
    $$
    \Phi(c) = \lim_{n \to \infty} \sum_{i=1}^n c_i.
    $$
    Since $\Psi$ is a continuous homomorphism, we deduce 
    $$
    \Psi \circ \Phi(c) = \lim_{n \to \infty} \sum_{i=1}^n \Psi(c_i),
    $$
    where each $\Psi(c_i)$ is supported on at most $k$ edges. By Claim~\ref{claim:triangles-preserved}, each $\Psi(c_i)$ is triangular, and we may decompose each of these as a finite sum of simple triangular cycles of bounded support. Combining this with Claim~\ref{claim:special-subgroup}, we deduce that $\cC(\widehat X_\per)$ is topologically generated by a collection of triangular simple cycles of bounded support. By Claim~\ref{claim:cofinite-triangles}, these cycles fall into finitely many orbits. The theorem follows. 
\end{proof}

This has the following immediate corollary which we record here as it may be of independent interest. 

\begin{corollary}
    Let $X$ be a connected, locally finite, quasi-transitive $G$-graph, and let $\Gamma$ be a connected, bounded-degree graph which is quasi-isometric to $X$.
    Suppose that $\cC(\Gamma)$ is topologically generated by cycles of bounded support. Then $\cC(X)$ is topologically $G$-finitely generated. 
\end{corollary}

\subsection{Upshot}

Let us now piece together the results of above into statements which are directly relevant to our setting. 

\begin{lemma}\label{lem:no-bad-loops-small-faces-acc}
    Let $X$ be a connected, locally finite, quasi-transitive graph, and $\Gamma$ be a bounded-degree, 2-connected, planar graph with a fixed good drawing, which is quasi-isometric to $X$. Suppose every $f \in \finfaces(\Gamma)$ is uniformly bounded in length, and $\Gamma$ contains no bad loops. Then $X$ is accessible. 
\end{lemma}

\begin{proof}
    By Proposition~\ref{prop:no-bad-loops-genset} and Theorem~\ref{thm:alg-gen-qi-inv}, we have that $\cC(X)$ is algebraically generated by cycles of bounded length. By Theorem~\ref{thm:hamann}, we conclude that $X$ is accessible. 
\end{proof}

\begin{lemma}\label{lem:peripheral-system-gives-relacc}
    Let $X$ be a connected, locally finite, quasi-transitive $G$-graph with a thin, $G$-invariant peripheral system $\per$. 
    Let $\Gamma$ be a bounded-degree, 2-connected, planar graph with a fixed good drawing, equipped with the peripheral system $\mathcal F = \inffaces(\Gamma)$. 
    Suppose that the following hold:
    \begin{enumerate}
        \item Every $f \in \finfaces(\Gamma)$ is uniformly bounded in length. 

        \item There exists a  peripheral preserving quasi-isometry $\varphi : X \to \Gamma$. 
    \end{enumerate}
    Then $\br_\per(X)$ contains a nested, $G$-invariant, $G$-finite generating set. 
\end{lemma}

\begin{proof}
    By Proposition~\ref{prop:planar-rel-genset}, we have that $\cC(\widehat \Gamma_{\mathcal F})$ is topologically generated by simple triangular cycles of bounded support. By Theorem~\ref{thm:cone-off-qi-chomp}, we deduce that $\cC(\widehat X_\per)$ is topologically $G$-finitely generated. The claim now follows from applying Theorem~\ref{thm:relacc-cycles}.
\end{proof}

The upshot of all of this is that if we can obtain good control over how the quasi-action of $G$ on $\Gamma$ interacts with the faces of $\Gamma$---so much so that we can satisfy the hypotheses of the above lemma---then we should be in a good position to say something about accessibility. Gaining this control will not be easy, and is the main technical challenge of this paper.
Roughly speaking, the reason this helps is that we will use Lemma~\ref{lem:peripheral-system-gives-relacc} to `cut along a representative selection of bad loops', and obtain a decomposition of our graph into pieces which are quasi-isometric to planar graphs that contain no bad loops at all. Following this, we will be able to apply Lemma~\ref{lem:no-bad-loops-small-faces-acc} to the individual pieces and deduce accessibility.


\section{Coboundary diameters}\label{sec:coboundaries}

We now introduce some new terminology and notation which will help us reason about the metric geometry of the boundaries of subgraphs. 

\subsection{Definition and basic properties}

For brevity, we will make the following notational concession. 
Given a graph $\Gamma$ and $\Lambda \subset \Gamma$ a subgraph, we define the \textit{coboundary of $\Lambda$}, denoted $\delta \Lambda$, as simply the coboundary of its vertex set. That is,  $\delta \Lambda := \delta V(\Lambda)$.

\begin{definition}[Coboundary diameters]\label{def:unif-cobound}
    Let $\Gamma$ be a connected graph and $\Lambda \subset \Gamma$ a connected subgraph. Define the \textit{inner coboundary diameter} of $\Lambda$, denoted $\incut{\Lambda}$, as 
    $$
    \incut{\Lambda} = \sup\{ \diam_\Lambda(\delta U \cap \Lambda) : \text{$U$ is a connected component of $\Gamma \setminus \Lambda$}  \}. 
    $$
    Similarly, we define the \textit{outer coboundary diameter} of $\Lambda$ as
    $$
    \outcut{\Lambda} = \sup\{ \incut{U} : \text{$U$ is a connected component of $\Gamma \setminus \Lambda$} \}. 
    $$
    We say that $\Lambda$ has \textit{uniform coboundary} if both the inner and outer coboundary diameters of $\Lambda$ are finite. If $\Lambda = \Gamma$, we adopt the convention that $\incut{\Lambda} = \outcut{\Lambda} = 0$.

    We may write $\incut{\Lambda}^\Gamma$ or $\outcut{\Lambda}^\Gamma$ if the intended super-graph $\Gamma$ is ever unclear. 
\end{definition}

Intuitively, we imagine the coboundary of $\Lambda$ as a selection of tight cuts, separating $\Lambda$ from the components of its complement. The inner diameter measures the size of these cuts from `inside' $\Lambda$, whereas the outer diameter measures this diameter from the other side of this cut. Without placing further restrictions on $\Lambda$, there is no reason these values need to correlate. However, we can say something in the presence of an appropriate group action, namely when $\Lambda$ is \emph{quasi-transitively stabilised}, in the sense of Definition~\ref{def:cocompact}.

\begin{lemma}\label{lem:cocompact-in-to-uniform}
    Let $G$ be a group acting on a connected, bounded-degree graph $\Gamma$. Let $\Lambda \subset \Gamma$ be a connected, quasi-transitively stabilised subgraph.  If $\incut{\Lambda}$ is finite, then $\Lambda$ has uniform coboundary. 
\end{lemma}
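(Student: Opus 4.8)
The plan is to derive the conclusion $\outcut{\Lambda} < \infty$ from the hypothesis $\incut{\Lambda} < \infty$. Write $D = \incut{\Lambda}$ and $H = \Stab(\Lambda)$, which by assumption acts quasi-transitively on $\Lambda$.

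First I would reduce to a single, cleaner quantity. For a connected component $U$ of $\Gamma - \Lambda$, let $A_U = \delta U \cap U$ and $A'_U = \delta U \cap \Lambda$ denote the sets of endpoints lying in $U$, respectively in $\Lambda$, of the edges of $\delta U$; since $U$ is a component of $\Gamma - \Lambda$ and $\Gamma$ is connected, every edge of $\delta U$ genuinely has one endpoint in $U$ and one in $\Lambda$, and both sets are non-empty. The key structural observation is that $\Gamma - U$ is connected: it contains the connected subgraph $\Lambda$, and every other component of $\Gamma - \Lambda$ is joined to $\Lambda$ by an edge. Hence $\Gamma - U$ is the unique component of $\Gamma - U$, and unwinding Definition~\ref{def:unif-cobound} gives $\incut{U} = \diam_U(A_U)$. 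So the whole lemma reduces to: find a constant $M$, independent of $U$, with $\diam_U(A_U) \leq M$ for every component $U$ of $\Gamma - \Lambda$. I would also record the two elementary facts that every $u \in A_U$ is adjacent in $\Gamma$ to a vertex of $A'_U$, and that $\diam_\Lambda(A'_U) \leq D$.

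Next I would bring in the group action. Since $H$ has finitely many orbits on $V\Lambda$ and $\Gamma$ (hence $\Lambda$) has bounded valence, there are only finitely many $H$-orbits of pairs $(a,a') \in V\Lambda \times V\Lambda$ with $\dist_\Lambda(a,a') \leq D$: any such pair can be translated so that $a$ becomes one of finitely many orbit representatives, after which $a'$ lies in a fixed finite ball. Fix representatives $(a_1,a_1'), \dots, (a_N,a_N')$. Each $a_i$ has finite degree, so it is joined by an edge to only finitely many components of $\Gamma - \Lambda$. I would then define $M$ to be the maximum, taken over $1 \leq i \leq N$, over components $U$ of $\Gamma - \Lambda$ joined by an edge to both $a_i$ and $a_i'$, and over vertices $w, w'$ of $U$ adjacent in $\Gamma$ to $a_i$ and $a_i'$ respectively, of $\dist_U(w,w')$ (and $M = 0$ if this index set is empty). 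This is a maximum over a finite set of finite numbers, so $M < \infty$.

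Finally I would close the loop. Given an arbitrary component $U$ of $\Gamma - \Lambda$ and arbitrary $u, u' \in A_U$, pick neighbours $a, a' \in A'_U$ of $u$ and $u'$; then $\dist_\Lambda(a,a') \leq D$, so $(a,a') = h \cdot (a_i, a_i')$ for some $i$ and $h \in H$. As $h$ is a graph automorphism of $\Gamma$ fixing $\Lambda$ setwise, it permutes the components of $\Gamma - \Lambda$; hence $U^* := h^{-1}U$ is such a component, $h^{-1}$ restricts to a graph isomorphism $U \to U^*$ (so an isometry for the intrinsic metrics), and $h^{-1}u, h^{-1}u'$ are vertices of $U^*$ adjacent to $a_i$, $a_i'$ respectively. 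Therefore $\dist_U(u,u') = \dist_{U^*}(h^{-1}u, h^{-1}u') \leq M$, and since $u,u',U$ were arbitrary, $\outcut{\Lambda} \leq M < \infty$. The step I expect to carry the most weight is recognising that this argument genuinely needs the quasi-transitive stabilisation hypothesis (without it the conclusion can fail, e.g.\ for $\Lambda$ a bi-infinite path with paths of unbounded length glued on between a fixed pair of its vertices), the crucial mechanism being that bounded valence forces only finitely many complementary components to attach near any given pair of vertices of $\Lambda$, so that the bound can be transported around by automorphisms.
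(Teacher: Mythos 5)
Your proof is correct and follows essentially the same strategy as the paper's: combine quasi-transitivity of $\Stab(\Lambda)$ with the bounded valence of $\Gamma$ to transport boundary configurations back to finitely many representatives, using $\incut{\Lambda}<\infty$ to control how wide those configurations can be. The paper's version is terser in its bookkeeping — it observes directly that $\Stab(\Lambda)$ acts with finitely many orbits on the components of $\Gamma-\Lambda$ and that each $\incut{U}$ is finite (since $\incut{\Lambda}<\infty$ together with bounded valence makes each $\delta U$ a finite edge set) — whereas you carry out the same idea at the level of orbits of anchor pairs in $V\Lambda$, but the mechanism is identical.
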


\begin{proof}
    Since $\Stab(\Lambda)$ acts on $\Lambda$ quasi-transitively and $\Gamma$ has bounded-degree, we see that $\Stab(\Lambda)$ acts with finite quotient on the set of connected components of $\Gamma \setminus \Lambda$. If $U$ is a connected component of $\Gamma \setminus \Lambda$, then we have that $\delta U$ is a finite set of edges, and so $\incut U$ is finite. Since there are only finitely many orbits of these components, we deduce that $\outcut{\Lambda}$ must be finite, and so $\Lambda$ has uniform coboundary. 
\end{proof}

\begin{proposition}\label{prop:end-injective}
    Let $\Gamma$ be a locally finite graph and $\Lambda \subset \Gamma$ a connected subgraph with $\incut{\Lambda} < \infty$. Then the natural map $\Omega (\Lambda) \to \Omega (\Gamma)$ is injective. 
\end{proposition}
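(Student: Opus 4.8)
The plan is to prove the contrapositive: if two simple rays $r_1, r_2$ in $\Lambda$ represent distinct ends of $\Lambda$, then they represent distinct ends of $\Gamma$. Write $k = \incut{\Lambda}$, which is finite by hypothesis. Two preliminary observations will carry most of the weight. First, $\Lambda$ inherits local finiteness from $\Gamma$, so every compact (equivalently, finite) subgraph $K \subset \Lambda$ has a finite closed $k$-neighbourhood $B_\Lambda(K;k)$ in $\Lambda$. Second, if $U$ is a connected component of $\Gamma - \Lambda$, then no edge of $\Gamma$ joins $U$ to a different component of $\Gamma - \Lambda$ — such an edge would itself lie in $\Gamma - \Lambda$ and merge the two components — so every edge of $\delta U$ joins $U$ to $\Lambda$. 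Hence the set $\partial_\Lambda U$ of endpoints in $\Lambda$ of edges of $\delta U$ is exactly the set $\delta U \cap \Lambda$ appearing in Definition~\ref{def:unif-cobound}, and therefore $\diam_\Lambda(\partial_\Lambda U) \leq k$.

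Given $r_1, r_2$ with distinct ends in $\Lambda$, I would first choose a compact subgraph $K \subset \Lambda$ such that the tails of $r_1$ and $r_2$ lie in distinct connected components of $\Lambda - K$. Set $K' = \Gamma[B_\Lambda(K;k)]$, a finite (hence compact) subgraph of $\Gamma$ by the first observation above. The claim is that the tails of $r_1$ and $r_2$ lie in distinct components of $\Gamma - K'$, which is exactly what is needed. To see this, pass to tails $t_1 \subset r_1$, $t_2 \subset r_2$ contained in $\Gamma - K'$ (possible since a simple ray omits every finite vertex set eventually), and suppose for contradiction there is a walk $p = v_0, \dots, v_n$ in $\Gamma - K'$ from a vertex of $t_1$ to a vertex of $t_2$. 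Since $p$ begins and ends in $\Lambda$, the second observation lets us write $p$ as an alternation of subwalks inside $\Lambda$ and excursions $v_i, v_{i+1}, \dots, v_j$ with $v_i, v_j \in V\Lambda$ and $v_{i+1}, \dots, v_{j-1}$ lying in a single component $U$ of $\Gamma - \Lambda$. Replacing each excursion by a geodesic of $\Lambda$ from $v_i$ to $v_j$, which has length at most $k$ because $v_i, v_j \in \partial_\Lambda U$, produces a walk $p'$ in $\Lambda$ with the same endpoints as $p$.

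The final step is to verify that $p'$ avoids $K$; this contradicts the choice of $K$, since $p'$ would then join a vertex of $t_1$ to a vertex of $t_2$ within $\Lambda - K$. The original $\Lambda$-subwalks of $p$ avoid $K' \supseteq K$ already. For a replacement geodesic from $v_i$ to $v_j$: its endpoint $v_i$ does not lie in $K' = \Gamma[B_\Lambda(K;k)]$, so $\dist_\Lambda(v_i, K) > k$, and since the geodesic has length at most $k$, every vertex $w$ on it satisfies $\dist_\Lambda(w, K) \geq \dist_\Lambda(v_i, K) - \dist_\Lambda(v_i, w) > 0$, hence $w \notin K$. Thus $p' \subset \Lambda - K$, the contradiction we wanted. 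I expect the main point to be precisely this shortcut argument: an excursion through a component of $\Gamma - \Lambda$ can always be replaced by a path inside $\Lambda$, but that path may be as long as $k$, so the separating set must first be thickened by $k$ to guarantee the shortcuts cannot slip back across it. (If $\Lambda = \Gamma$ there are no such components and the map is the identity, consistent with the convention $\incut{\Lambda} = 0$.)
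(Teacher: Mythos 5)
Your proof is correct, and it follows the same overall strategy as the paper's: from a finite $K \subset \Lambda$ separating the tails of the two rays in $\Lambda$, produce a finite $K' \subset \Gamma$ and show that $K'$ separates the tails in $\Gamma$, using that $\incut{\Lambda}$ is finite and $\Gamma$ is locally finite. Where you differ is in the choice of $K'$ and the care with which separation is verified. The paper sets $K' = K \cup \bigcup_U \delta U$, with $U$ ranging over those components of $\Gamma - \Lambda$ whose coboundary meets both $U_1$ and $U_2$, and asserts briefly that this $K'$ separates. You instead take $K' = \Gamma[B_\Lambda(K;k)]$ with $k = \incut{\Lambda}$, and verify separation directly by the shortcut argument: given a hypothetical walk in $\Gamma - K'$ from one tail to the other, replace each excursion through a component $U$ of $\Gamma - \Lambda$ by a $\Lambda$-geodesic of length at most $k$ between its entry and exit vertices, and observe that, because the original walk avoided $K'$, those vertices sit at $\Lambda$-distance strictly greater than $k$ from $K$, so the replacement geodesics also miss $K$, yielding a walk in $\Lambda - K$ and a contradiction. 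This extra care is worthwhile: a walk avoiding $K$ could in principle hop through a chain of components of $\Gamma - \Lambda$, none of which individually has coboundary touching both $U_1$ and $U_2$, so the paper's terse justification leaves a small gap, whereas your $k$-thickening inside $\Lambda$ handles exactly that scenario. Both constructions produce a finite separating set, but yours comes with a clean proof that it works.
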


\begin{proof}
    Let $\gamma_1, \gamma_2$ be rays in $\Lambda$. We need to show that if $\gamma_1$, $\gamma_2$ approach distinct ends in $\Lambda$, then they approach distinct ends in $\Gamma$. Let $K \subset \Lambda$ be a compact subgraph such that infinite subpaths of $\gamma_1$, $\gamma_2$ lie in distinct components of $\Lambda \setminus K$. Let $U_i$ denote the component of $\Lambda \setminus K$ containing the tail of $\gamma_i$. 
    Any path in $\Gamma$ connecting $U_1$ to $U_2$ must pass through some connected component $U$ of $\Gamma \setminus \Lambda$, where $\delta U$ intersects both $U_1$ and $U_2$. Only finitely many such $U$ exist since $\incut{\Lambda}$ is finite, and moreover $\delta U$ can only contain finitely many edges. Thus, let 
    $$
    K' = K \cup \bigcup_U \delta U, 
    $$
    where $U$ ranges over the aforementioned components of $\Gamma \setminus \Lambda$. Clearly $K'$ is compact, and the tails of $\gamma_1$, $\gamma_2$ lie in distinct components of $\Gamma \setminus K'$. 
\end{proof}

\begin{remark}\label{rmk:good-drawing-subgraph}
    With $\Gamma$, $\Lambda$ as above, i.e. $\incut{\Lambda}< \infty$, it follows from Proposition~\ref{prop:end-injective} that the closure of $\Lambda$ in the Freudenthal compactification $\overline \Gamma$ of $\Gamma$ is naturally homeomorphic to $\overline \Lambda$. In particular, if $\Gamma$ is planar and $\vartheta : \overline \Gamma \into \bbS^2$ is a good drawing of $\Gamma$, then the restriction of $\vartheta$ to (the closure of) $\Lambda$ is a good drawing of $\Lambda$. 
\end{remark}

\begin{proposition}\label{prop:uniform-boundary-qie}
    Let $\Gamma$ be a connected, locally finite graph, and let $\Lambda \subset \Gamma$ be a connected subgraph with uniform coboundary. Then the inclusion $\Lambda \into \Gamma$ is a quasi-isometric embedding. 
\end{proposition}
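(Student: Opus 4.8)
The plan is to establish the two inequalities of a quasi-isometric embedding directly. One is free: for vertices $x,y\in V\Lambda$, a path in $\Lambda$ between them is also a path in $\Gamma$ of the same length, so $\dist_\Gamma(x,y)\le\dist_\Lambda(x,y)$; extending to all points of $\Lambda$ costs only an additive constant coming from edge interiors. So the task is to produce $\lambda\ge 1$ with $\dist_\Lambda(x,y)\le\lambda\,\dist_\Gamma(x,y)$ for all $x,y\in V\Lambda$, and I claim $\lambda=1+\tfrac12\incut{\Lambda}$ works — this is finite precisely because $\Lambda$ has uniform coboundary.

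To get this, fix distinct $x,y\in V\Lambda$ and a geodesic $p$ of $\Gamma$ from $x$ to $y$, of length $n=\dist_\Gamma(x,y)$; being geodesic, $p$ is simple. I would cut $p$ at its visits to $V\Lambda$, writing it as an alternating concatenation of \emph{$\Lambda$-segments} (maximal subpaths with all vertices in $V\Lambda$, hence genuine paths of $\Lambda$) and \emph{excursions}, where each excursion begins and ends with an edge of some coboundary $\delta U$ and in between stays inside a single connected component $U$ of $\Gamma-\Lambda$ — a single component, because distinct components of $\Gamma-\Lambda$ are joined by no edge of $\Gamma$. Since $x,y\in V\Lambda$, the path $p$ begins and ends with a $\Lambda$-segment.

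Next I would shortcut each excursion through $\Lambda$. An excursion through $U$ leaves $\Lambda$ along an edge of $\delta U$ with $\Lambda$-endpoint $a$ and returns along an edge of $\delta U$ with $\Lambda$-endpoint $b$, so both $a,b$ lie in the vertex set $\delta U\cap\Lambda$ (the vertices of $\Lambda$ incident to an edge of $\delta U$); hence $\dist_\Lambda(a,b)\le\diam_\Lambda(\delta U\cap\Lambda)\le\incut{\Lambda}$. Replacing the excursion by a $\Lambda$-geodesic from $a$ to $b$ costs at most $\incut{\Lambda}$ edges, whereas the excursion contained at least $2$ edges (its two boundary edges; and $a\ne b$ as $p$ is simple). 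Doing this for every excursion turns $p$ into a walk in $\Lambda$ from $x$ to $y$ of length at most $(\text{total length of }\Lambda\text{-segments})+\incut{\Lambda}\cdot(\text{number of excursions})\le n+\incut{\Lambda}\cdot\tfrac n2$, since the excursions are edge-disjoint subpaths of $p$, each of length at least $2$. Thus $\dist_\Lambda(x,y)\le(1+\tfrac12\incut{\Lambda})\dist_\Gamma(x,y)$, and the passage to points in edge interiors is routine.

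The one point I would be careful about — the main, if minor, obstacle — is the claim that each maximal $V\Lambda$-subpath of $p$ is genuinely a path of $\Lambda$: this amounts to saying that $\Gamma$ has no chord of $\Lambda$ (an edge with both endpoints in $V\Lambda$ but not in $E\Lambda$), which holds because the subgraphs to which this proposition is applied are induced (e.g.\ the $2$-connected core of Definition~\ref{def:nearly-2-conn}, or the pieces $\Gamma[b]$ carved out by the Boolean ring $\br\Gamma$); one should read $\Lambda$ as induced here. I note that only the finiteness of $\incut{\Lambda}$ enters the argument — $\outcut{\Lambda}$ is not needed for this statement, though it is part of the definition of uniform coboundary for use elsewhere.
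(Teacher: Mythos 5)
Your proof is correct and, at the level of strategy, is the same as the paper's: decompose a $\Gamma$-geodesic between two vertices of $\Lambda$ into $\Lambda$-segments and excursions through components of $\Gamma-\Lambda$, then shortcut each excursion through $\Lambda$ at cost at most $\incut{\Lambda}$. Where you depart is in the bookkeeping. The paper bounds the \emph{length} of each excursion by $2+\outcut{\Lambda}$ and then asserts the resulting proportionality without spelling it out; you instead \emph{count} excursions (each has at least two edges, so there are at most $n/2$ of them) and arrive at the explicit constant $\lambda=1+\tfrac12\incut{\Lambda}$. Your observation that $\outcut{\Lambda}$ is never used is correct — finiteness of $\incut{\Lambda}$ alone suffices for this statement — so your argument actually establishes a slightly stronger proposition. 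You have also put your finger on a real subtlety that the paper's proof leaves silent: the coboundary data in Definition~\ref{def:unif-cobound} depend only on $V\Lambda$ and see nothing about chords of $\Lambda$, so the conclusion can fail for non-induced subgraphs (e.g.\ a bi-infinite path with chords of unbounded span attached has $\incut{\Lambda}=\outcut{\Lambda}=0$ but is not quasi-isometrically embedded). Both proofs implicitly require $\Lambda$ to be induced; this holds in every application the paper makes (the parts $X_v$ are arranged to be induced subgraphs), but you are right to flag it.
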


\begin{proof}
    Take a geodesic in $\Gamma$ connecting $x, y \in V (\Lambda)$. Any segment of this geodesic lying outside of $\Lambda$ has length at most $2 + \outcut{\Lambda}$, and we also know that there exists a path inside of $\Lambda$ of length at most $\incut{\Lambda}$, with precisely the same endpoints. Thus, such a geodesic can be transformed into a path contained in $\Lambda$ whose length is proportional to the length of the original geodesic. It follows immediately that the inclusion $\Lambda \into \Gamma$ is a quasi-isometric embedding. 
\end{proof}

The following is our motivating example. 

 \begin{example}
    Let $X$ be a connected, locally finite, quasi-transitive $G$-graph, and let $(T, \cV)$ be a $G$-canonical connected tree decomposition of bounded adhesion, with $T/G$ compact. By Proposition~\ref{prop:tree-decomp-acc}(\ref{itm:tree-1}), each part is quasi-transitively stabilised. It is easy to see that the inner-coboundary diameter of each part is finite, and so in particular we have by Lemma~\ref{lem:cocompact-in-to-uniform} that the parts have uniform coboundary. 
\end{example}

The next section studies how the `niceness' of the above example can be passed through quasi-isometries.

\subsection{Coboundary diameters and quasi-isometries}

We now study how coboundary diameters interact with quasi-isometries. 
In particular, we will show the following. 

\begin{restatable}[Neighbourhoods with uniform coboundary]{theorem}{neighbourhoodunifcob}\label{thm:uniform-boundary-qi}
    Let $X$ be a connected, locally finite, quasi-transitive graph and $Y \subset X$ a connected, quasi-transitively stabilised subgraph with uniform coboundary. Let $\Gamma$ be a connected, bounded-degree graph and $\varphi : X \onto \Gamma$ a continuous, surjective quasi-isometry. 
    Then there exists $\delta > 0$ such that 
    $$
    \varphi(B_X(Y;\delta))
    $$
    has uniform coboundary in $\Gamma$.
\end{restatable}

This result will have important applications in the next section.
We will prove this through a series of lemmas.

\begin{lemma}\label{lem:small-increase-still-bounded}
    Let $\Gamma$ be a connected, bounded-degree graph and $\Lambda \subset \Pi \subset \Gamma$ be connected subgraphs. If both $\incut{\Lambda}^\Gamma$ and $\dHaus[\Gamma](\Lambda, \Pi)$ are finite then $\incut{\Pi}$ is finite too. 
\end{lemma}

\begin{proof}
    Write $k = \incut{\Lambda}^\Gamma$, $r = \dHaus[\Gamma](\Lambda, \Pi)$. 
    Fix a connected component $U$ of $\Gamma \setminus \Pi$. Then there exists a unique component $V$ of $\Gamma \setminus \Lambda$ such that $U \subset V$. Let $x,y \in \delta U  \cap \Pi$. Choose $x', y' \in \Lambda$ such that 
    $$
    \dist_{\Gamma}(x, x') = \dist_{\Gamma}(x, \Lambda), \ \  \dist_{\Gamma}(y, y') = \dist_{\Gamma}(y, \Lambda). 
    $$
    In particular, $x'$, $y'$ lie in $\delta V$ and a geodesic  connecting $x$ to $x'$ or $y$ to $y'$ is internally disjoint from $\Lambda$. Such a geodesic is contained in $B_{\Gamma}(\delta V; r)$ and so has bounded length, say length at most $L \geq 0$, since $\delta V$ contains boundedly many vertices and $\Gamma$ is bounded-degree. Note that $L$ depends only on $r$, $\Gamma$ and $\Lambda$. 
 
 There is a path through $\Lambda$ of length at most $k$ connecting $x'$ to $y'$. It follows that there is a path through $\Pi$ of length at most $k + 2L$ connecting $x$ to $y$. Since $x$, $y$ and $U$ were arbitrary, it follows that $\incut{\Pi} \leq k + 2L$. 
\end{proof}

\begin{lemma}\label{lem:onto-qi}
    Let $X$, $\Gamma$ be connected bounded-degree graphs, and let $\varphi : X \onto \Gamma$ be a  surjective quasi-isometry. Let $Y \subset X$ be a subgraph. Then for all $\varepsilon > 0$ there exists $\delta > 0$ such that
    $$
    \varphi(B_X(Y;\delta)) \supset B_\Gamma(\varphi(Y);\varepsilon).
    $$
\end{lemma}

\begin{proof}
    Choose $x \in B_\Gamma(\varphi(Y);\varepsilon)$. Since $\varphi$ is surjective, let $z \in \varphi^{-1}(x)$. We have that $\dist_X(z, Y)$ is uniformly bounded, and so the lemma follows. 
\end{proof}

\begin{lemma}\label{lem:nbhd-incut-bounded}
    Let $X$, $\Gamma$ be connected bounded-degree graphs, and let $\varphi : X \to \Gamma$ be a continuous quasi-isometry. Let $Y \subset X$ a subgraph such that $\incut{Y}$ is finite. Then there exists $\varepsilon_0 > 0$ such that for all $\varepsilon > \varepsilon_0$ we have that $\incut{B_\Gamma(\varphi(Y);\varepsilon)}$ is finite. 
\end{lemma}

\begin{proof}
    Let $\psi : \Gamma \to X$ be a choice of continuous quasi-inverse to $\varphi$. Fix $\lambda \geq 1$ so that $\varphi$ and $\psi$ are $(\lambda, \lambda)$-quasi-isometries and $\lambda$-quasi-inverses to each other. 
    Let $\varepsilon_0 = \lambda^2 + 2\lambda$ and fix $\varepsilon > \varepsilon_0$.
    To ease notation, write $N = B_\Gamma(\varphi(Y);\varepsilon)$. 
    
    Let $U$ be some connected component of $\Gamma \setminus N$. 
    Since $\varepsilon > \lambda^2$, we have that $\psi(U)$ will be contained entirely in some connected component of $X \setminus Y$. Call this connected component $V$. 
    Let $u, v \in \delta U \cap N$, and let $\gamma$ be a path connecting $u$ to $v$, such that $\gamma \cap N = \{u, v\}$.
    We have that $\psi(\gamma)$ is a path connecting $\psi(u)$ to $\psi(v)$, contained entirely within $V$. 

    Let $p_1$ be a geodesic of length $\varepsilon$ from $u$ to $\varphi(Y)$, say terminating at $a \in \varphi(Y)$. Similarly let $p_2$ be a geodesic of length $\varepsilon$ joining $v$ to some $b \in \varphi(Y)$. We have that $\psi(a)$, $\psi(b)$ lie at most a distance $\lambda$ from $Y$, so let $a', b' \in Y$ lie a minimal distance from $\psi(a)$, $\psi(b)$ respectively. Let $q_1$ be a geodesic joining $\psi(a)$ to $a'$ and $q_2$ a geodesic joining $b'$ to $\psi(b)$. 
    Let 
    $$
    q = q_1 \psi(p_1)\psi(\gamma)\psi(p_2)q_2
    $$
    be the concatenation of these paths. This is a path which starts and ends in $Y$. 
    Since $\varepsilon > \lambda^2$, we have that $\psi(\gamma)$ is disjoint from $Y$, and so $\psi(u)$ and $\psi(v)$ lie in the same connected component of $X \setminus Y$. 
    There will exist some subpath $l_1$ of $q_1\psi(p_1)$ which connects $\psi(u)$ to some $c \in Y$, which is otherwise disjoint from $Y$. Similarly, there will exist some subpath $l_2$ of $q_2\psi(p_2)$ connecting $\psi(v)$ to some $d \in Y$ which is otherwise disjoint from $Y$. In particular, since $\incut{Y}$ is finite, say $\incut{Y} = k$, there is some path $l_3$ contained in $Y$ of length at most $k$ connecting $c$ to $d$. Note that the endpoints of $l_3$ lie at most a distance $\lambda$ from $\psi(p_1)$, $\psi(p_2)$. 

    Now, let $l = \varphi(l_3)$. This is a path of length at most $\lambda k + \lambda$, whose endpoints lie at a distance of at most $\lambda^2 + 2\lambda < \varepsilon$ from $p_1$ and $p_2$. Join $l$ to $p_1$ and $p_2$ with geodesics $f_1$, $f_2$. Note that $f_1$ and $f_2$ are contained within $N$. Thus, the union $p_1 \cup p_2 \cup f_1 \cup f_2 \cup l$ contains a path $q$ connecting $u$ to $v$, and is contained entirely within $N$. Finally, note that $q$ contains at most 
    $
    4 \varepsilon + \lambda k + \lambda
    $
    edges, and so since $U$, $u$, $v$ were arbitrary it follows that $\incut{N}$ is finite. 
\end{proof}

\begin{lemma}\label{lem:incut-bdd-to-unif}
    Let $X$, $\Gamma$ be connected bounded-degree graphs, and let $\varphi : X \to \Gamma$ be a continuous quasi-isometry. Let $G$ act upon $X$, and let $Y \subset X$ be a connected, quasi-transitively stabilised subgraph. Suppose that $\incut{Y}$ and $\incut{\varphi(Y)}$ are finite. Then $\varphi(Y)$ has uniform coboundary. 
\end{lemma}

\begin{proof}
    Let $\psi : \Gamma \to X$ be a choice of continuous quasi-inverse to $\varphi$. Fix $\lambda \geq 1$ so that $\varphi$ and $\psi$ are $(\lambda, \lambda)$-quasi-isometries and $\lambda$-quasi-inverses to each other. 
    Fix a connected component $U$ of $\Gamma \setminus \varphi(Y)$. Let $v, u \in \delta U \cap U$. We need to find a path of bounded length through $U$ connecting $v$ to $u$. Clearly there is some simple path in $U$ connecting them together, so denote this path $p$. We will replace $p$ with a path $p'$ of bounded length. 

    Fix $\varepsilon > 2\lambda^3 + \lambda^2$. Decompose $p$ into a composition of paths 
    $$
    p = p_0 q_1 p_1 \ldots q_n p_n,
    $$
    where each $p_i$ is contained in the closed $\varepsilon$-neighbourhood of $\varphi(Y)$, and each $q_i$ is contained in the complement of the open $\varepsilon$-neighbourhood of $\varphi(Y)$. See Figure~\ref{fig:decomposing-path} for a cartoon of this decomposition. 
    Let $x_i$, $y_i$ denote the endpoints of $q_i$. We will first find a short path in $U$ connecting $x_i$ to $y_i$. 
    
    \begin{figure}
        \centering

                   

\tikzset{every picture/.style={line width=0.75pt}} 

\begin{tikzpicture}[x=0.75pt,y=0.75pt,yscale=-1,xscale=1]

\draw  [draw opacity=0][fill={rgb, 255:red, 155; green, 155; blue, 155 }  ,fill opacity=0.53 ] (60.5,183.4) .. controls (60.5,186.77) and (63.23,189.5) .. (66.6,189.5) -- (436.9,189.5) .. controls (440.27,189.5) and (443,186.77) .. (443,183.4) -- (443,158.99) .. controls (443,158.99) and (443,158.99) .. (443,158.99) -- (60.5,158.99) .. controls (60.5,158.99) and (60.5,158.99) .. (60.5,158.99) -- cycle ;
\draw    (60.5,158.99) -- (443,158.99) ;
\draw  [draw opacity=0][fill={rgb, 255:red, 74; green, 144; blue, 226 }  ,fill opacity=0.29 ] (60.5,105) -- (443,105) -- (443,158.99) -- (60.5,158.99) -- cycle ;
\draw [color={rgb, 255:red, 74; green, 144; blue, 226 }  ,draw opacity=1 ]   (60.5,105) -- (443,105) ;
\draw [line width=1.5]    (178.5,141.5) -- (178.5,159) ;
\draw [shift={(178.5,159)}, rotate = 90] [color={rgb, 255:red, 0; green, 0; blue, 0 }  ][fill={rgb, 255:red, 0; green, 0; blue, 0 }  ][line width=1.5]      (0, 0) circle [x radius= 3.05, y radius= 3.05]   ;
\draw [line width=1.5]    (409,141) -- (409,158.5) ;
\draw [shift={(409,158.5)}, rotate = 90] [color={rgb, 255:red, 0; green, 0; blue, 0 }  ][fill={rgb, 255:red, 0; green, 0; blue, 0 }  ][line width=1.5]      (0, 0) circle [x radius= 3.05, y radius= 3.05]   ;
\draw    (171,105) .. controls (179,119) and (174.5,133) .. (178.5,141.5) ;
\draw    (401.5,104.5) .. controls (401,122.5) and (405,132.5) .. (409,141) ;
\draw [shift={(401.5,104.5)}, rotate = 91.59] [color={rgb, 255:red, 0; green, 0; blue, 0 }  ][fill={rgb, 255:red, 0; green, 0; blue, 0 }  ][line width=0.75]      (0, 0) circle [x radius= 3.35, y radius= 3.35]   ;
\draw    (340.5,103.5) .. controls (334.5,74.5) and (342.5,54) .. (370,54.5) .. controls (397.5,55) and (399,56.5) .. (401.5,104.5) ;
\draw [shift={(340.5,103.5)}, rotate = 258.31] [color={rgb, 255:red, 0; green, 0; blue, 0 }  ][fill={rgb, 255:red, 0; green, 0; blue, 0 }  ][line width=0.75]      (0, 0) circle [x radius= 3.35, y radius= 3.35]   ;
\draw    (171,105) .. controls (171.5,76) and (168.5,55) .. (196,55.5) .. controls (223.5,56) and (222.5,84.5) .. (232,106) ;
\draw [shift={(171,105)}, rotate = 270.99] [color={rgb, 255:red, 0; green, 0; blue, 0 }  ][fill={rgb, 255:red, 0; green, 0; blue, 0 }  ][line width=0.75]      (0, 0) circle [x radius= 3.35, y radius= 3.35]   ;
\draw    (269.5,104.5) .. controls (271,81.5) and (269.5,58) .. (285.5,54) .. controls (301.5,50) and (297.5,81.5) .. (300.5,105.5) ;
\draw [shift={(269.5,104.5)}, rotate = 273.73] [color={rgb, 255:red, 0; green, 0; blue, 0 }  ][fill={rgb, 255:red, 0; green, 0; blue, 0 }  ][line width=0.75]      (0, 0) circle [x radius= 3.35, y radius= 3.35]   ;
\draw    (232,106) .. controls (234.5,130.5) and (233,141) .. (249,137) .. controls (265,133) and (263.5,116) .. (269.5,104.5) ;
\draw [shift={(232,106)}, rotate = 84.17] [color={rgb, 255:red, 0; green, 0; blue, 0 }  ][fill={rgb, 255:red, 0; green, 0; blue, 0 }  ][line width=0.75]      (0, 0) circle [x radius= 3.35, y radius= 3.35]   ;
\draw    (300.5,105.5) .. controls (301.5,120.5) and (312,135) .. (322,136) .. controls (332,137) and (340.5,116) .. (340.5,103.5) ;
\draw [shift={(300.5,105.5)}, rotate = 86.19] [color={rgb, 255:red, 0; green, 0; blue, 0 }  ][fill={rgb, 255:red, 0; green, 0; blue, 0 }  ][line width=0.75]      (0, 0) circle [x radius= 3.35, y radius= 3.35]   ;
\draw [color={rgb, 255:red, 208; green, 2; blue, 27 }  ,draw opacity=1 ][line width=1.5]    (409,141) ;
\draw [shift={(409,141)}, rotate = 0] [color={rgb, 255:red, 208; green, 2; blue, 27 }  ,draw opacity=1 ][fill={rgb, 255:red, 208; green, 2; blue, 27 }  ,fill opacity=1 ][line width=1.5]      (0, 0) circle [x radius= 3.05, y radius= 3.05]   ;
\draw [color={rgb, 255:red, 208; green, 2; blue, 27 }  ,draw opacity=1 ][line width=1.5]    (178.5,141.5) ;
\draw [shift={(178.5,141.5)}, rotate = 0] [color={rgb, 255:red, 208; green, 2; blue, 27 }  ,draw opacity=1 ][fill={rgb, 255:red, 208; green, 2; blue, 27 }  ,fill opacity=1 ][line width=1.5]      (0, 0) circle [x radius= 3.05, y radius= 3.05]   ;

\draw (80.31,165.97) node [anchor=north west][inner sep=0.75pt]    {$\varphi ( Y )$};
\draw (81.11,121.55) node [anchor=north west][inner sep=0.75pt]  [color={rgb, 255:red, 74; green, 144; blue, 226 }  ,opacity=1 ]  {$B_\Gamma(\varphi(Y);\varepsilon)$};
\draw (179.5,111.4) node [anchor=north west][inner sep=0.75pt]  [font=\footnotesize]  {$p_{0}$};
\draw (185,131.4) node [anchor=north west][inner sep=0.75pt]  [color={rgb, 255:red, 208; green, 2; blue, 27 }  ,opacity=1 ]  {$u$};
\draw (415,131.4) node [anchor=north west][inner sep=0.75pt]  [color={rgb, 255:red, 208; green, 2; blue, 27 }  ,opacity=1 ]  {$v$};
\draw (239.5,112.9) node [anchor=north west][inner sep=0.75pt]  [font=\footnotesize]  {$p_{1}$};
\draw (312.5,107.9) node [anchor=north west][inner sep=0.75pt]  [font=\footnotesize]  {$p_{2}$};
\draw (383.5,115.4) node [anchor=north west][inner sep=0.75pt]  [font=\footnotesize]  {$p_{3}$};
\draw (155,63.9) node [anchor=north west][inner sep=0.75pt]  [font=\footnotesize]  {$q_{1}$};
\draw (253.5,61.4) node [anchor=north west][inner sep=0.75pt]  [font=\footnotesize]  {$q_{2}$};
\draw (351.5,61.4) node [anchor=north west][inner sep=0.75pt]  [font=\footnotesize]  {$q_{3}$};

\end{tikzpicture}

        \caption{Decomposing the path $p$ into the $p_i$ and $q_i$.}
        \label{fig:decomposing-path}
    \end{figure}

    Fix $i$, and to ease notation let $x = x_i$, $y = y_i$, $q = q_i$.  
    We have that 
    $$
    \dist_{\Gamma}(x, \varphi(Y)) = \dist_{\Gamma}(y, \varphi(Y)) = \varepsilon. 
    $$
    Let $q' = \psi(q)$, which is a path connecting $x' := \psi(x)$ to $y' := \psi(y)$. We have that $q'$ is a contained in the complement of the $\varepsilon'$-neighbourhood of $Y$, where 
    $$
    \varepsilon' = \tfrac 1 \lambda \varepsilon - \lambda >  2\lambda ^2 > 0. 
    $$ 
    We also have that $x',y' \in B_X(Y;r)$, where $r = \lambda \varepsilon + \lambda$. 
    Choose $a,b \in B_X(Y;\varepsilon' + 1)$ such that $\dist_{X}(a,x')$, $\dist_{X}(b,y')$ are minimal. In particular, these distances are at most $r$. 

    Since $Y$ is quasi-transitively stabilised, so is $N := B_X(Y;\varepsilon')$. Thus, $N$ has uniform coboundary. The vertices $a, b \in \delta N$ lie in the same connected component of $X \setminus N$, and so there is some path in $X \setminus N$ connecting them of bounded length, say $k$, where this bound depends only on $\varepsilon'$ and $Y$. Let $l_2$ be such a path of minimal length. Let $l_1$ be a geodesic connecting $x'$ to $a$ and $l_3$ a geodesic connecting $b$ to $y'$. The concatenation $l = l_1l_2l_3$ is a path of length at most $2r + k$ connecting $x'$ to $y'$, and is disjoint from $N$. Let $l' = \varphi(l)$. 
    This is a path of length at most $\lambda(2r+k) + \lambda$. Moreover, we have that 
    $$
    \inf_{z \in l}\dist_{\Gamma}(z, \varphi(Y)) \geq \tfrac 1 \lambda \varepsilon' - \lambda > \tfrac 1 \lambda (2\lambda^2) - \lambda > 0. 
    $$
    The endpoints of $l'$ are $x'' := \varphi \circ \psi(x)$ and $y'' := \varphi \circ \psi(y)$, which lie a distance at most $\lambda$ from $x$ and $y$ respectively. Join $x$ to $x''$ by a geodesic of length at most $\lambda$, and similarly join $y$ to $y''$. These geodesics are disjoint from $\varphi(Y)$ since $\varepsilon > 
    \lambda$. Adjoin these geodesics to the start and end of $l'$ and form a new path $l''$ connecting $x$ to $y$ of length at most 
    $
    \lambda (2r + k) + 2\lambda$. 
    Importantly, $l''$ has a uniformly bounded length depending only on $Y$ and the quasi-isometries.  and is disjoint from $\varphi(Y)$. 
    Thus, we can replace $q$ with this path $l''$. 

    Returning to our original set-up, thanks to the above we may now assume that each path $q_i$ is a path of uniformly bounded length. Since $\Gamma$ is bounded-degree  and $|\delta U|$ is bounded, $B_\Gamma(\delta U;\varepsilon)$ contains at most boundedly many vertices, say $M$. Since each $q_i$ starts at a unique vertex in $B_\Gamma(\delta U;\varepsilon)$, we see that $n \leq M$. Also, the union of the $p_i$ contains at most $M$ vertices. It follows that 
    $$
    \dist_U(u, v) \leq M(\lambda (2r + k) + 2\lambda) + M,
    $$
    This bound depends only on $X$, $Y$, $\Gamma$, and the quasi-isometries $\varphi$, $\psi$. Since $U$, $u$, and $v$ were arbitrary it follows that $\outcut{\varphi(Y)}$ is finite, and thus $\varphi(Y)$ has uniform coboundary. 
\end{proof}

Piecing the above together we can deduce the main result of this subsection, which we restate for the convenience of the reader. 

\neighbourhoodunifcob*

\begin{proof}
    By Lemma~\ref{lem:nbhd-incut-bounded} we may choose $\varepsilon > 0$ so that $\incut{B_\Gamma(\varphi(Y);\varepsilon)}$ is finite. Apply Lemma~\ref{lem:onto-qi} and choose $\delta > 0$ such that 
    $$
    Y' := \varphi(B_X(Y;\delta)) \supset B_\Gamma(\varphi(Y);\varepsilon).
    $$
    By Lemma~\ref{lem:small-increase-still-bounded} we have that $\incut{Y'}$ is finite. By Lemma~\ref{lem:incut-bdd-to-unif}, we conclude that $Y'$ has uniform coboundary.
\end{proof}

\section{Friendly-faced subgraphs of planar graphs}\label{sec:friendly-faces}

We now give some consideration to the relationship between the faces of a planar graph and the faces of its subgraphs. 
Now would be a good time for the reader to review our notation and terminology pertaining to the faces of a planar graph (see Definition~\ref{def:faces}). 

\subsection{Initial discussion}
The faces of a subgraph of a planar graph can be wildly different from the faces of the super-graph. This has the potential to cause issues for us further down the line, so the goal of this section is to try and claw back some control here. With this in mind, we have the following definition.

\begin{definition}[Friendly-faced subgraphs]\label{def:friendly-faced}
    Let $\Gamma$ be a connected, locally finite planar graph with fixed good drawing $\vartheta : \overline \Gamma \into \bbS^2$. We say that a connected subgraph $\Lambda \subset \Gamma$ is \textit{friendly-faced} if the following are satisfied: 
    \begin{enumerate}
        \item\label{itm:good-drawing-restrict} The good drawing $\vartheta$ of $\Gamma$ restricts to a good drawing of $\Lambda$.

        \item There exists $r > 0$ such that for every $U_1 \in \facedisks(\Lambda)$ (with the aforementioned induced good drawing) there exists $U_2 \in \facedisks(\Gamma)$ such that $U_2 \subset U_1$ and
        $$f_1 \subset B_\Gamma(f_2; r), $$
        where $f_i = \facepaths[U_i]$ for $i = 1,2$. 
    \end{enumerate} 
\end{definition}

Intuitively, a subgraph is friendly-faced if we can approximate its own facial subgraphs with those of the super-graph. Condition (\ref{itm:good-drawing-restrict}) is just to ensure that the induced drawing of the subgraph is sensible enough for us to work with, and is equivalent to asking that the inclusion $\Lambda \into \Gamma$ induces an inclusion $\Omega (\Lambda) \into \Omega (\Gamma)$ on the sets of ends, and thus an embedding $\overline \Lambda \into \overline \Gamma$ of Freudenthal compactifications. 

\begin{example}
    We illustrate this definition with a couple of examples, depicted in Figure~\ref{fig:friendly-faced}. Here, a fixed planar graph $\Gamma$ is shown twice, with two subgraphs $\Lambda_1$ and $\Lambda_2$ highlighted in red and blue, respectively. 

    The first example, $\Lambda_1$, is not friendly-faced. Notice that $\Lambda_1$ has one infinite face $f \in \inffaces (\Lambda_1)$, but there is no face of $\Gamma$ whose boundary facial subgraph lies close to all of $f$.

    In the second example, $\Lambda_2$ is friendly-faced. Indeed, the two faces of $\Lambda_2$ which are not already faces of $\Gamma$ are contained in some finite neighbourhood of the two infinite faces of $\Gamma$. 
\end{example}

\begin{figure}
    \centering
    \input{figs/friendly-faced}
    \caption{Non-example and example of friendly-faced subgraphs of a given graph $\Gamma$.}
    \label{fig:friendly-faced}
\end{figure}

An easy but important example of a friendly-faced subgraph is the \textit{2-connected core} of an \textit{almost 2-connected graph} (recall Definition~\ref{def:nearly-2-conn}).

\begin{proposition}
    Let $\Gamma$ be a locally finite, almost 2-connected planar graph with fixed good drawing. Let $\Gamma_0 \subset \Gamma$ be its 2-connected core. Then $\Gamma_0$ is a friendly-faced subgraph of $\Gamma$. 
\end{proposition}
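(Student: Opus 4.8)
The plan is to verify the two conditions in Definition~\ref{def:friendly-faced} directly, exploiting the structure of an almost 2-connected graph. Recall that $\Gamma$ is obtained from its 2-connected core $\Gamma_0$ by attaching boundedly small finite subgraphs at cut vertices; let $C \geq 0$ be a uniform bound on the diameters of these `decorations', which exists since $\Gamma_0 \into \Gamma$ is coarsely surjective (equivalently, since $\incut{\Gamma_0}$ is finite by Definition~\ref{def:nearly-2-conn} together with Proposition~\ref{prop:uniform-boundary-qie}).

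\textbf{Step 1: Condition (\ref{itm:good-drawing-restrict}).} Since $\Gamma_0 \into \Gamma$ is a quasi-isometry, in particular $\incut{\Gamma_0} < \infty$, so by Proposition~\ref{prop:end-injective} the natural map $\Omega \Gamma_0 \to \Omega \Gamma$ is injective. By Remark~\ref{rmk:good-drawing-subgraph}, the restriction of the good drawing $\vartheta$ of $\Gamma$ to (the closure of) $\Gamma_0$ is a good drawing of $\Gamma_0$. This gives condition (\ref{itm:good-drawing-restrict}) immediately.

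\textbf{Step 2: Condition (2), matching faces.} Fix $r = C + 1$ (or any bound slightly larger than $C$). Let $U_1 \in \facedisks \Gamma_0$ be a face of $\Gamma_0$ with respect to the induced drawing, and let $f_1 = \facepaths(U_1)$. Every decoration attached to $\Gamma_0$ sits inside $\bbS^2 - \vartheta(\overline{\Gamma_0})$, hence inside some single face of $\Gamma_0$; in particular each decoration that lands inside $U_1$ is drawn in the open disk $U_1$ and subdivides it into smaller regions. Since such a decoration $D$ is finite and connected and attaches to $\Gamma_0$ at a single cut vertex, it cannot separate any two points of $\partial U_1$ (removing a single vertex from the simple closed curve $\partial U_1$ leaves it connected, by Proposition~\ref{prop:simple-face} applied to the 2-connected $\Gamma_0$). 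Therefore exactly one of the regions into which the decorations subdivide $U_1$ — call it $U_2$ — has $\vartheta^{-1}(\partial U_2) \supset f_1$ up to the finitely many decoration edges; more precisely $U_2 \in \facedisks \Gamma$, $U_2 \subset U_1$, and $f_2 := \facepaths(U_2)$ differs from $f_1$ only by replacing boundary arcs of $f_1$ that pass through cut vertices with arcs through the corresponding decorations. Since each decoration has diameter at most $C$, every vertex of $f_1$ lies within distance $C < r$ of $f_2$, i.e. $f_1 \subset B_\Gamma(f_2; r)$. This is precisely condition (2).

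\textbf{Main obstacle.} The delicate point is Step 2: carefully arguing that a single face $U_2$ of $\Gamma$ `carries' the whole facial subgraph $f_1$ of $\Gamma_0$, rather than $f_1$ being split among several faces of $\Gamma$. The key fact making this work is that each decoration is attached at a \emph{single} cut vertex (so it is `dangling' and cannot chop $\partial U_1$ into pieces lying in different $\Gamma$-faces), combined with local finiteness (only finitely many decorations meet any bounded region, so the subdivision of $U_1$ is locally finite and $U_2$ is well-defined as a genuine component of $\bbS^2 - \vartheta(\overline{\Gamma})$). One should also handle the case where $f_1$ is infinite (a disjoint union of bi-infinite lines): the same reasoning applies componentwise, using that the closure of $f_1$ in $\overline{\Gamma_0}$ is a simple closed curve and that decorations remain boundedly small.
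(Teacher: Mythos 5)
The paper leaves this proposition ``as an exercise,'' so there is no proof to compare against beyond the structure sketched in Remark~\ref{rmk:simple-body}; your argument fills the gap in essentially the intended way, and the overall plan is sound. Two points deserve tightening. First, a minor misattribution in Step~1: the inclusion $\Gamma_0 \into \Gamma$ being a quasi-isometry does not by itself yield $\incut{\Gamma_0} < \infty$ (that implication runs the other way, via Proposition~\ref{prop:uniform-boundary-qie}). What is actually true, and trivially so, is that $\incut{\Gamma_0} = 0$: every connected component $U$ of $\Gamma - \Gamma_0$ meets $\Gamma_0$ in exactly one vertex, because if $U$ were adjacent to two distinct $v_1, v_2 \in V\Gamma_0$, a path through $U$ from $v_1$ to $v_2$ would be an ear on $\Gamma_0$, producing a strictly larger 2-connected subgraph and contradicting maximality of the core. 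This single-attachment-point fact is also what makes Step~2 go, and you invoke it without proving it, so it is worth making explicit.

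Second, the crux claim in Step~2 --- that a decoration $D$ attached at a single cut vertex $v$ of $\partial U_1$ and drawn inside $U_1$ cannot split $\partial U_1$ among distinct $\Gamma$-faces --- is correct, but the parenthetical justification you give (``$\partial U_1 - \{v\}$ is connected'') is the right \emph{hypothesis}, not yet the conclusion. Connectivity of $\partial U_1 - \{v\}$ together with $D \cap \partial U_1 = \{v\}$ shows $\partial U_1 - \{v\}$ lies in a single component of $\overline{U_1} - D$, but what you actually need is that all of $\partial U_1 - \{v\}$ is accessible from a single component of the \emph{open} disk $U_1 - D$, i.e.\ from a single face of $\Gamma$. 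This extra step is a genuine (if elementary) plane-topology argument: e.g., for each closed arc $A \subset \partial U_1 - \{v\}$ one has $\dist(A, D) > 0$ by compactness, so $A$ can be pushed slightly into $U_1$ disjointly from $D$, and the resulting pushed-in arcs for a nested family of $A$'s are all connected to one another in $U_1 - D$. With finitely many decorations (the case of a compact $f_1$) one iterates; for an infinite $f_1$ the same local argument applies near each cut vertex, using local finiteness to ensure only finitely many decorations are nearby. I would also note that you do not strictly need the decorations to be of bounded diameter $C$ for the stated conclusion --- the $U_2$ you construct satisfies $f_1 \subset f_2$, so $r = 0$ already works --- though having the bound in hand costs nothing and is certainly available.
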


\begin{proof}
    Left as an exercise to the reader. 
\end{proof}

\begin{remark}\label{rmk:simple-body}
    It is possible to give quite a concrete description of the facial subgraphs of an almost 2-connected, locally finite planar graph $\Gamma$. 

    Let $\Gamma_0$ be its 2-connected core. The closure of every facial subgraph of $\Gamma_0$ in the Freudenthal compactification is a simple closed curve by Proposition~\ref{prop:simple-face}. Let $U \in \facedisks(\Gamma)$, and let $U_0 \in \facedisks (\Gamma_0)$ be the unique element such that $U_0 \supset U$. Assume that $\diam_\Gamma(f)$ is sufficiently large (possibly infinite). Then $f = \facepaths [U]$ is obtained from $f_0 = \facepaths[U_0]$ by attaching boundedly small cacti graphs to $f_0$ at cut vertices. We may refer to $f_0$ as the \textit{simple body} of $f$, and the components of $f \setminus f_0$ as the \textit{adjoined cacti} of $f$. See Figure~\ref{fig:almost-2conn-face} for cartoon. 
\end{remark}

\begin{figure}
    \centering

\tikzset{every picture/.style={line width=0.75pt}} 

\begin{tikzpicture}[x=0.75pt,y=0.75pt,yscale=-1,xscale=1]

\draw  [draw opacity=0][fill={rgb, 255:red, 155; green, 155; blue, 155 }  ,fill opacity=0.46 ] (190.5,102.25) .. controls (190.5,54.89) and (228.89,16.5) .. (276.25,16.5) .. controls (323.61,16.5) and (362,54.89) .. (362,102.25) .. controls (362,149.61) and (323.61,188) .. (276.25,188) .. controls (228.89,188) and (190.5,149.61) .. (190.5,102.25) -- cycle ;
\draw  [color={rgb, 255:red, 0; green, 0; blue, 0 }  ,draw opacity=1 ][fill={rgb, 255:red, 255; green, 255; blue, 255 }  ,fill opacity=1 ] (208.09,102.25) .. controls (208.09,64.6) and (238.6,34.09) .. (276.25,34.09) .. controls (313.9,34.09) and (344.41,64.6) .. (344.41,102.25) .. controls (344.41,139.9) and (313.9,170.41) .. (276.25,170.41) .. controls (238.6,170.41) and (208.09,139.9) .. (208.09,102.25) -- cycle ;
\draw    (243.89,162.15) -- (246.27,157.02) ;
\draw  [fill={rgb, 255:red, 155; green, 155; blue, 155 }  ,fill opacity=0.53 ] (244.87,153.61) .. controls (245.42,152.28) and (246.95,151.65) .. (248.28,152.21) .. controls (249.61,152.76) and (250.23,154.29) .. (249.68,155.62) .. controls (249.12,156.95) and (247.6,157.57) .. (246.27,157.02) .. controls (244.94,156.46) and (244.31,154.94) .. (244.87,153.61) -- cycle ;
\draw  [fill={rgb, 255:red, 155; green, 155; blue, 155 }  ,fill opacity=0.53 ] (247.04,146.92) .. controls (247.34,146.2) and (248.17,145.86) .. (248.88,146.16) .. controls (249.6,146.46) and (249.94,147.28) .. (249.64,148) .. controls (249.34,148.72) and (248.52,149.06) .. (247.8,148.76) .. controls (247.08,148.46) and (246.74,147.63) .. (247.04,146.92) -- cycle ;
\draw    (248.28,152.21) -- (247.8,148.76) ;
\draw  [fill={rgb, 255:red, 155; green, 155; blue, 155 }  ,fill opacity=0.53 ] (254.59,151.25) .. controls (255.36,151.33) and (255.93,152.02) .. (255.85,152.8) .. controls (255.77,153.57) and (255.08,154.13) .. (254.3,154.05) .. controls (253.53,153.97) and (252.97,153.28) .. (253.05,152.51) .. controls (253.13,151.74) and (253.82,151.17) .. (254.59,151.25) -- cycle ;
\draw    (249.88,153.97) -- (253.05,152.51) ;

\draw    (234.07,156.08) -- (236.43,153.46) ;
\draw  [fill={rgb, 255:red, 155; green, 155; blue, 155 }  ,fill opacity=0.53 ] (236.21,151.17) .. controls (236.79,150.48) and (237.81,150.38) .. (238.5,150.96) .. controls (239.19,151.53) and (239.29,152.55) .. (238.71,153.24) .. controls (238.14,153.93) and (237.12,154.03) .. (236.43,153.46) .. controls (235.74,152.88) and (235.64,151.86) .. (236.21,151.17) -- cycle ;
\draw  [fill={rgb, 255:red, 155; green, 155; blue, 155 }  ,fill opacity=0.53 ] (238.73,147.58) .. controls (239.04,147.21) and (239.59,147.15) .. (239.96,147.46) .. controls (240.34,147.77) and (240.39,148.33) .. (240.08,148.7) .. controls (239.77,149.07) and (239.22,149.12) .. (238.84,148.81) .. controls (238.47,148.5) and (238.42,147.95) .. (238.73,147.58) -- cycle ;
\draw    (238.5,150.96) -- (238.84,148.81) ;
\draw  [fill={rgb, 255:red, 155; green, 155; blue, 155 }  ,fill opacity=0.53 ] (242.44,151.54) .. controls (242.88,151.73) and (243.09,152.24) .. (242.91,152.69) .. controls (242.72,153.13) and (242.2,153.34) .. (241.76,153.15) .. controls (241.31,152.97) and (241.1,152.45) .. (241.29,152.01) .. controls (241.48,151.56) and (241.99,151.35) .. (242.44,151.54) -- cycle ;
\draw    (239.14,152.3) -- (241.29,152.01) ;
\draw    (227.62,150.32) -- (229.16,148.6) ;
\draw  [fill={rgb, 255:red, 155; green, 155; blue, 155 }  ,fill opacity=0.53 ] (229.02,147.1) .. controls (229.4,146.65) and (230.07,146.58) .. (230.52,146.96) .. controls (230.98,147.34) and (231.04,148.01) .. (230.66,148.46) .. controls (230.29,148.91) and (229.61,148.98) .. (229.16,148.6) .. controls (228.71,148.22) and (228.65,147.55) .. (229.02,147.1) -- cycle ;
\draw  [fill={rgb, 255:red, 155; green, 155; blue, 155 }  ,fill opacity=0.53 ] (230.67,144.74) .. controls (230.88,144.5) and (231.24,144.46) .. (231.48,144.67) .. controls (231.73,144.87) and (231.76,145.23) .. (231.56,145.48) .. controls (231.36,145.72) and (230.99,145.75) .. (230.75,145.55) .. controls (230.5,145.35) and (230.47,144.99) .. (230.67,144.74) -- cycle ;
\draw    (230.52,146.96) -- (230.75,145.55) ;
\draw  [fill={rgb, 255:red, 155; green, 155; blue, 155 }  ,fill opacity=0.53 ] (233.11,147.34) .. controls (233.4,147.46) and (233.54,147.8) .. (233.42,148.1) .. controls (233.29,148.39) and (232.95,148.53) .. (232.66,148.4) .. controls (232.37,148.28) and (232.23,147.94) .. (232.35,147.65) .. controls (232.48,147.36) and (232.82,147.22) .. (233.11,147.34) -- cycle ;
\draw    (230.94,147.84) -- (232.35,147.65) ;
\draw    (222.86,144.83) -- (224.54,143.82) ;
\draw  [fill={rgb, 255:red, 155; green, 155; blue, 155 }  ,fill opacity=0.53 ] (224.8,142.57) .. controls (225.22,142.3) and (225.78,142.41) .. (226.05,142.83) .. controls (226.32,143.25) and (226.2,143.81) .. (225.79,144.08) .. controls (225.37,144.35) and (224.81,144.24) .. (224.54,143.82) .. controls (224.27,143.4) and (224.38,142.84) .. (224.8,142.57) -- cycle ;
\draw  [fill={rgb, 255:red, 155; green, 155; blue, 155 }  ,fill opacity=0.53 ] (226.72,141.08) .. controls (226.95,140.93) and (227.25,140.99) .. (227.4,141.22) .. controls (227.55,141.44) and (227.48,141.74) .. (227.26,141.89) .. controls (227.03,142.04) and (226.73,141.98) .. (226.58,141.75) .. controls (226.44,141.52) and (226.5,141.22) .. (226.72,141.08) -- cycle ;
\draw    (226.05,142.83) -- (226.58,141.75) ;
\draw  [fill={rgb, 255:red, 155; green, 155; blue, 155 }  ,fill opacity=0.53 ] (228.04,143.79) .. controls (228.25,143.96) and (228.28,144.27) .. (228.1,144.47) .. controls (227.93,144.68) and (227.62,144.7) .. (227.42,144.53) .. controls (227.21,144.36) and (227.18,144.05) .. (227.36,143.85) .. controls (227.53,143.64) and (227.84,143.61) .. (228.04,143.79) -- cycle ;
\draw    (226.17,143.65) -- (227.36,143.85) ;
\draw    (283.43,33.82) -- (285.1,47) ;
\draw  [fill={rgb, 255:red, 155; green, 155; blue, 155 }  ,fill opacity=0.53 ] (292.14,52.03) .. controls (292.7,55.37) and (290.44,58.52) .. (287.11,59.07) .. controls (283.78,59.63) and (280.62,57.37) .. (280.07,54.04) .. controls (279.52,50.7) and (281.77,47.55) .. (285.1,47) .. controls (288.44,46.44) and (291.59,48.7) .. (292.14,52.03) -- cycle ;
\draw  [fill={rgb, 255:red, 155; green, 155; blue, 155 }  ,fill opacity=0.53 ] (296.17,68.06) .. controls (296.47,69.86) and (295.25,71.56) .. (293.46,71.86) .. controls (291.66,72.16) and (289.95,70.94) .. (289.65,69.14) .. controls (289.36,67.34) and (290.57,65.64) .. (292.37,65.34) .. controls (294.17,65.04) and (295.87,66.26) .. (296.17,68.06) -- cycle ;
\draw    (287.11,59.07) -- (292.37,65.34) ;
\draw  [fill={rgb, 255:red, 155; green, 155; blue, 155 }  ,fill opacity=0.53 ] (275.74,68.85) .. controls (274.11,69.66) and (272.13,68.99) .. (271.32,67.35) .. controls (270.51,65.72) and (271.18,63.74) .. (272.82,62.93) .. controls (274.46,62.12) and (276.44,62.79) .. (277.24,64.43) .. controls (278.05,66.06) and (277.38,68.05) .. (275.74,68.85) -- cycle ;
\draw    (281.72,57.58) -- (277.24,64.43) ;

\draw    (280.02,51.62) -- (268.48,58.21) ;
\draw  [fill={rgb, 255:red, 155; green, 155; blue, 155 }  ,fill opacity=0.53 ] (266.53,66.64) .. controls (263.66,68.43) and (259.88,67.56) .. (258.1,64.69) .. controls (256.31,61.82) and (257.18,58.05) .. (260.05,56.26) .. controls (262.92,54.47) and (266.69,55.34) .. (268.48,58.21) .. controls (270.27,61.08) and (269.39,64.85) .. (266.53,66.64) -- cycle ;
\draw  [fill={rgb, 255:red, 155; green, 155; blue, 155 }  ,fill opacity=0.53 ] (253.26,76.5) .. controls (251.72,77.46) and (249.68,76.99) .. (248.71,75.44) .. controls (247.75,73.9) and (248.22,71.86) .. (249.77,70.89) .. controls (251.32,69.93) and (253.35,70.4) .. (254.32,71.95) .. controls (255.28,73.5) and (254.81,75.53) .. (253.26,76.5) -- cycle ;
\draw    (258.1,64.69) -- (254.32,71.95) ;
\draw  [fill={rgb, 255:red, 155; green, 155; blue, 155 }  ,fill opacity=0.53 ] (244.71,57.93) .. controls (243.34,56.73) and (243.2,54.64) .. (244.4,53.27) .. controls (245.6,51.9) and (247.69,51.76) .. (249.06,52.96) .. controls (250.43,54.17) and (250.57,56.25) .. (249.37,57.62) .. controls (248.17,59) and (246.08,59.13) .. (244.71,57.93) -- cycle ;
\draw    (257.41,59.14) -- (249.37,57.62) ;

\draw    (327.66,146.31) -- (320.52,142.3) ;
\draw  [fill={rgb, 255:red, 155; green, 155; blue, 155 }  ,fill opacity=0.53 ] (315.44,143.93) .. controls (313.59,142.97) and (312.86,140.7) .. (313.81,138.85) .. controls (314.76,136.99) and (317.04,136.26) .. (318.89,137.22) .. controls (320.74,138.17) and (321.47,140.44) .. (320.52,142.3) .. controls (319.57,144.15) and (317.3,144.88) .. (315.44,143.93) -- cycle ;
\draw  [fill={rgb, 255:red, 155; green, 155; blue, 155 }  ,fill opacity=0.53 ] (306.04,140.02) .. controls (305.04,139.51) and (304.64,138.28) .. (305.16,137.28) .. controls (305.67,136.28) and (306.9,135.89) .. (307.9,136.4) .. controls (308.9,136.92) and (309.29,138.14) .. (308.78,139.14) .. controls (308.26,140.14) and (307.04,140.54) .. (306.04,140.02) -- cycle ;
\draw    (313.81,138.85) -- (308.78,139.14) ;
\draw  [fill={rgb, 255:red, 155; green, 155; blue, 155 }  ,fill opacity=0.53 ] (313.16,129.63) .. controls (313.36,128.52) and (314.42,127.79) .. (315.52,127.99) .. controls (316.63,128.19) and (317.36,129.26) .. (317.16,130.36) .. controls (316.96,131.47) and (315.9,132.2) .. (314.79,132) .. controls (313.69,131.8) and (312.95,130.73) .. (313.16,129.63) -- cycle ;
\draw    (316.53,136.73) -- (314.79,132) ;

\draw (213.64,132.78) node [anchor=north west][inner sep=0.75pt]  [font=\tiny,rotate=-327.68]  {$\vdots $};
\draw (281.2,101) node [anchor=north west][inner sep=0.75pt]  [color={rgb, 255:red, 155; green, 155; blue, 155 }  ,opacity=0.9 ]  {$U$};
\draw (213.66,87.9) node [anchor=north west][inner sep=0.75pt]  [color={rgb, 255:red, 0; green, 0; blue, 0 }  ,opacity=1 ]  {$\facepaths[U]$};

\end{tikzpicture}

    \caption{The simple body and adjoined cacti of a facial subgraph of an almost 2-connected planar graph. Each adjoined cactus has bounded diameter.}
    \label{fig:almost-2conn-face}
\end{figure}

We also note the following lemma, which sheds some light on how the friendly-faced property interacts with chains on subgraphs. 

\begin{lemma}\label{lem:friendly-faces-chain}
    Let $\Gamma$ be a locally finite, connected planar graph. Let $\Lambda \subset \Pi \subset \Gamma$ be connected subgraphs, such that 
    \begin{enumerate}
        \item $\Pi$ has uniform coboundary in $\Gamma$,
        \item $\Lambda$ is a friendly-faced subgraph of $\Gamma$. 
    \end{enumerate}
    Then $\Lambda$ is also a friendly-faced subgraph of $\Pi$. 
\end{lemma}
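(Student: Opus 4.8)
\textbf{Proof plan for Lemma~\ref{lem:friendly-faces-chain}.}

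The plan is to verify the two defining conditions of Definition~\ref{def:friendly-faced} for the inclusion $\Lambda \into \Pi$, using that both hold for $\Lambda \into \Gamma$ together with the uniform coboundary of $\Pi$ in $\Gamma$. First I would fix a good drawing $\vartheta : \overline\Gamma \into \bbS^2$ of $\Gamma$. Since $\Pi$ has uniform coboundary in $\Gamma$, Proposition~\ref{prop:uniform-boundary-qie} gives that $\incut{\Pi} < \infty$, so by Remark~\ref{rmk:good-drawing-subgraph} the restriction of $\vartheta$ to (the closure of) $\Pi$ is a good drawing of $\Pi$; similarly it restricts to a good drawing of $\Lambda$ by hypothesis~(2). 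Condition~(\ref{itm:good-drawing-restrict}) of Definition~\ref{def:friendly-faced} for $\Lambda \into \Pi$ then follows, since the composite $\overline\Lambda \into \overline\Pi \into \overline\Gamma$ is an embedding and hence so is each stage.

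The substance is condition~(2): given $U_1 \in \facedisks\Lambda$, I must produce $W \in \facedisks\Pi$ with $W \subset U_1$ and $\facepaths_\Pi(W) \subset B_\Pi(\facepaths_\Lambda(U_1); r')$ for some uniform $r'$. Since $\Lambda$ is friendly-faced in $\Gamma$, there is a uniform $r$ and some $U_2 \in \facedisks\Gamma$ with $U_2 \subset U_1$ and $f_1 := \facepaths_\Lambda(U_1) \subset B_\Gamma(f_2; r)$ where $f_2 := \facepaths_\Gamma(U_2)$. Now $U_2$ is a face of $\Gamma$ contained in $U_1 \subset \bbS^2 - \vartheta(\overline\Lambda)$, hence also contained in $\bbS^2 - \vartheta(\overline\Pi)$; therefore $U_2$ lies inside a unique face $W \in \facedisks\Pi$, and $W \subset U_1$ because $W$ is a connected subset of $\bbS^2 - \vartheta(\overline\Lambda)$ meeting $U_1$. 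It remains to control $\facepaths_\Pi(W)$. Every point of $\partial W$ is a limit of points of $U_2$, and since $U_2 \subset W \subset U_1$, the boundary $\partial W$ lies between $\partial U_2$ and $\partial U_1$; concretely, $\facepaths_\Pi(W) = \vartheta^{-1}(\partial W) - \Omega\Pi$ consists of vertices and edges of $\Pi$ each of which borders $U_2$, hence lies in $f_2 \cup (\text{stuff incident to } f_2)$. So I would argue $\facepaths_\Pi(W) \subset B_\Pi(f_2; 1)$, whence $\facepaths_\Pi(W) \subset B_\Pi(f_2; 1)$. The final step is to pass from distances in $\Gamma$ to distances in $\Pi$: since $\Pi$ has uniform coboundary in $\Gamma$, Proposition~\ref{prop:uniform-boundary-qie} shows the inclusion $\Pi \into \Gamma$ is a quasi-isometric embedding, say $(L, L)$. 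Then $f_1 \subset B_\Gamma(f_2; r)$ together with the facts that $f_1 \subset \Lambda \subset \Pi$ and $f_2 \subset \Pi$ upgrades (using $\incut{\Pi} < \infty$ and $\outcut{\Pi} < \infty$, exactly as in the proof of Proposition~\ref{prop:uniform-boundary-qie}) to $f_1 \subset B_\Pi(f_2; r')$ for a uniform $r' = L r + L$. Combining with $\facepaths_\Pi(W) \subset B_\Pi(f_2; 1)$ and $f_2 \subset B_\Pi(f_1; r')$ (by the same quasi-isometry bound, since $f_2$ is within $r$ in $\Gamma$ of $f_1$... more carefully: $f_2 \subset B_\Gamma(f_1; r)$ need not hold, so instead one notes $\facepaths_\Pi(W)$ is within bounded $\Pi$-distance of $f_2$, and $f_2$ is within bounded $\Gamma$-distance of $f_1$; then apply the quasi-isometric embedding to the $\Gamma$-geodesic from $f_2$ to $f_1$, which lies in $B_\Gamma(f_1; r) \subset B_\Gamma(\Lambda; r)$, a region within $B_\Gamma(\Pi; \outcut{\Pi} + \ldots)$, so it can be pushed into $\Pi$ with bounded length increase), one gets $\facepaths_\Pi(W) \subset B_\Pi(f_1; r'')$ for a uniform $r''$, as required.

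I expect the main obstacle to be the last point: carefully converting the containment $f_1 \subset B_\Gamma(f_2; r)$ into a $\Pi$-metric statement $f_1 \subset B_\Pi(f_2; r'')$. The subtlety is that a $\Gamma$-geodesic witnessing small $\Gamma$-distance between $f_1$ and $f_2$ may wander outside $\Pi$; but because $\Pi$ has uniform coboundary, any excursion of such a geodesic outside $\Pi$ has length at most $2 + \outcut{\Pi}$ and can be replaced by a detour inside $\Pi$ of length at most $\incut{\Pi}$ with the same endpoints (this is precisely the argument of Proposition~\ref{prop:uniform-boundary-qie}). Since there are at most $r$ such excursions along a geodesic of $\Gamma$-length at most $r$, this gives a $\Pi$-path of length at most $r(2 + \incut{\Pi})$ between the relevant points, so $r'' := r(2 + \incut{\Pi}) + 1$ works uniformly. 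Apart from this, the argument is a routine unwinding of the definitions of faces and facial subgraphs in $\bbS^2$, so I would keep those parts brief.
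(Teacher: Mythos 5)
Your identification of the face $W \in \facedisks \Pi$ satisfying $U_2 \subset W \subset U_1$ matches the paper's construction, but the quantitative step goes wrong in two places. First, the containment you set out to prove is backwards: unwinding Definition~\ref{def:friendly-faced} for the inclusion $\Lambda \subset \Pi$, you need
$f_1 = \facepaths(U_1) \subset B_\Pi(\facepaths(W); r')$, not $\facepaths(W) \subset B_\Pi(f_1; r')$ as you state and then work towards. These are not interchangeable: a face of $\Pi$ inside a fixed face of $\Lambda$ can have boundary of unbounded diameter.

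Second, and more fundamentally, the intermediate claim $\facepaths(W) \subset B_\Pi(f_2; 1)$ is unjustified and in general false. The assertion that ``every point of $\partial W$ is a limit of points of $U_2$'' fails: $W$ is a face of $\Pi$ and typically absorbs several faces of $\Gamma$ together with pieces of $\Gamma - \Pi$ lying between $\overline{U_2}$ and $\partial W$, so a point of $\partial W$ is only guaranteed to be a limit of points of $W$, not of the possibly much smaller set $U_2$, and can lie arbitrarily far from $f_2$. (Moreover $f_2$ need not lie in $\Pi$, which makes the expression $B_\Pi(f_2;1)$ problematic on its face.)

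The missing idea, and the one that produces the containment in the correct direction, is a crossing argument. Fix $x \in f_1 \subset \Lambda \subset \Pi$. Friendly-facedness of $\Lambda$ in $\Gamma$ gives a $\Gamma$-path of length at most $r$ from $x$ to a point of $f_2 = \facepaths(U_2)$. Its initial point $x$ lies in $\Pi$ and hence outside $W$ (and, if $x \notin \partial W$, outside $\overline{W}$), while its terminal point lies in $\overline{U_2} \subset \overline{W}$. So the path crosses $\partial W$, and since the path is finite it does so at a point of $\Gamma$ rather than at an end; that crossing point lies in $\partial W \cap \Pi = \facepaths(W)$. This yields $\dist_\Gamma(x, \facepaths(W)) \le r$ directly, with no need to compare $\facepaths(W)$ to $f_2$. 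Finally, $x$ and the crossing point both lie in $\Pi$, and by Proposition~\ref{prop:uniform-boundary-qie} the inclusion $\Pi \into \Gamma$ is a quasi-isometric embedding, so $\dist_\Pi(x,\facepaths(W))$ is uniformly bounded; this is exactly $f_1 \subset B_\Pi(\facepaths(W); r')$. Your excursion-replacement mechanism for upgrading $\Gamma$-distances between points of $\Pi$ to $\Pi$-distances is the right tool here, but it must be applied to the pair $\bigl(x, \facepaths(W)\bigr)$ and not to any comparison involving $f_2$.
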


\begin{proof}
    Since $\Pi$ has uniform coboundary in $\Gamma$, it is clear that $\vartheta$ restricts to a good drawing of $\Pi$. Note also that $\Pi$ is quasi-isometrically embedded into $\Gamma$, by Proposition~\ref{prop:uniform-boundary-qie}. 

    Let $U \in \facedisks(\Lambda)$. Using the fact that $\Lambda$ is a friendly-faced subgraph of $\Gamma$, let $W \in \facedisks(\Gamma)$ be such that $W \subset U$ and 
    $$
    \facepaths[U] \subset B_\Gamma(\facepaths[W]; \varepsilon), 
    $$
    for some uniform $\varepsilon > 0$. Let $V \in \facedisks(\Pi)$ be the unique face such that $U \supset V \supset W$. 

    Let $x \in \facepaths[U]$. Then there exists a path of length at most $\varepsilon$ in $\Gamma$ connecting $x$ to some point in $\facepaths[W]$. But clearly this path must intersect $\facepaths[V]$. Since $\Pi$ is quasi-isometrically embedded in $\Gamma$, there exists a path of bounded length in $\Pi$ connecting $x$ to $\facepaths[V]$. Since $x$ was arbitrary, it follows immediately that $\Lambda$ is a friendly-faced subgraph of $\Pi$. 
\end{proof}

In the presence of an appropriate group action, it is often possible to `thicken up' a subgraph into a friendly-faced subgraph. In particular, it is possible to prove the following.

\begin{proposition}\label{prop:ff-cocompact}
    Let $\Gamma$ be a connected, locally finite, quasi-transitive, planar graph. Let $\Lambda \subset \Gamma$ be a 2-connected, quasi-transitively stabilised subgraph with uniform coboundary. Then there exists some $\delta > 0$ such that $B_\Gamma(\Lambda;\delta)$ is a friendly-faced subgraph of $\Gamma$. 
\end{proposition}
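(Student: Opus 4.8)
\textbf{Proof proposal for Proposition~\ref{prop:ff-cocompact}.}

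The plan is to choose $\delta$ large enough that, for every face $U$ of the thickened subgraph $\Lambda' := B_\Gamma(\Lambda;\delta)$, the facial subgraph $\facepaths(U)$ can be approximated by a face of $\Gamma$ itself, with the approximation constant controlled uniformly via the quasi-transitive action. The key observation is that, since $\Lambda$ is quasi-transitively stabilised, $\Stab(\Lambda)$ acts on $\Gamma$ with finitely many orbits on both $\Lambda$ and (by the uniform coboundary hypothesis together with Lemma~\ref{lem:cocompact-in-to-uniform} and local finiteness of $\Gamma$) on the components of $\Gamma - \Lambda$. So there are only finitely many `shapes' of things hanging off $\Lambda$, up to the action, and every bound we produce will automatically be uniform. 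Note first that condition~\ref{itm:good-drawing-restrict} of Definition~\ref{def:friendly-faced} holds for $\Lambda'$ for every $\delta \geq 0$: $\Lambda'$ also has uniform coboundary in $\Gamma$ (by Proposition~\ref{prop:replace-parts-with-nbhd}-style reasoning, since it is $\Stab(\Lambda)$-invariant and $\Stab(\Lambda)$ acts cocompactly on it), hence $\incut{\Lambda'}^\Gamma < \infty$, and so by Proposition~\ref{prop:end-injective} and Remark~\ref{rmk:good-drawing-subgraph} the drawing $\vartheta$ restricts to a good drawing of $\Lambda'$.

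The substance is condition~(2). Fix a face $U \in \facedisks{\Lambda'}$ with boundary subgraph $f := \facepaths(U)$. The complement $\bbS^2 - \vartheta(\overline{\Lambda'})$ is the disjoint union of the faces of $\Lambda'$, and $U$ is one of them; its boundary $f$ is a subgraph of $\Lambda'$. Now inside $U$ there may sit pieces of $\Gamma - \Lambda'$ — these are exactly the components $W$ of $\Gamma - \Lambda$ (intersected with the exterior of the $\delta$-collar) whose attaching edges meet $f$. I would argue as follows. Because $\Lambda$ is $2$-connected, by Proposition~\ref{prop:simple-face} each facial subgraph of $\Lambda$ is a simple closed curve (in $\overline\Lambda$); the same is true of $\Lambda'$ once we know $\Lambda'$ is $2$-connected, which holds for $\delta$ large since thickening a $2$-connected graph by a collar keeps it $2$-connected — and in fact I only need that the closure of $f$ in $\overline{\Lambda'}$ is connected, which follows from Proposition~\ref{prop:simple-face} applied to the compact locally connected set $\vartheta(\overline{\Lambda'})$, or directly. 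Pick the face $W_0 \in \facedisks{\Gamma}$ with $W_0 \subset U$ as follows: take any component $W$ of $\Gamma - \Lambda$ sitting inside $U$; by Lemma~\ref{lem:cocompact-in-to-uniform} and the uniform coboundary property, $W$ has bounded diameter — say every such $W$ has diameter at most $D_0$, uniformly, where $D_0$ depends only on $\Gamma$, $\Lambda$, and the action. Then the set $f' := f - \bigcup W$ (remove all such bounded components hanging into $U$), or rather a face of $\Gamma$ bordering a small region near $f$, serves as $\facepaths(W_0)$ up to bounded error. More precisely: every edge of $f$ either lies on $\vartheta^{-1}(\partial W_0)$ for a suitable face $W_0$ of $\Gamma$, or lies within distance $D_0 + \delta + C$ of such an edge, where $C$ comes from Lemma~\ref{lem:bounded-finite-pieces}. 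Choosing $\delta$ at least, say, the uniform bound on $\incut{W}$ over components $W$ of $\Gamma - \Lambda$ (so that $\Lambda'$ already `swallows' the coboundary collars), one gets that $f \subset B_\Gamma(\facepaths(W_0); r)$ with $r = r(\Gamma, \Lambda, \delta)$ uniform, giving exactly condition~(2).

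The main obstacle is the bookkeeping in the previous paragraph: pinning down which face $W_0$ of $\Gamma$ to associate to a given face $U$ of $\Lambda'$, and verifying that the whole of $f = \facepaths(U)$ — not just a dense subset — lies in a bounded neighbourhood of $\facepaths(W_0)$. The subtlety is that $f$ can wander close to $\Lambda$ in regions where $\Gamma$ and $\Lambda'$ differ only by bounded `bubbles' (the components $W$ of $\Gamma - \Lambda$), and one must check these bubbles do not let $\facepaths(U)$ drift arbitrarily far from any single face of $\Gamma$. This is handled by the uniform diameter bound $D_0$ on such components, plus the fact (Corollary~\ref{cor:bounded-diam}, via Lemma~\ref{lem:cocompact-in-to-uniform}) that the coboundaries $\delta W$ have uniformly bounded diameter, so each bubble contributes only a uniformly bounded detour to $f$. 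Once $\delta$ is chosen larger than all these constants, one repeats the argument across the finitely many $\Stab(\Lambda)$-orbits of faces of $\Lambda'$, concludes uniformity, and the proof is complete.
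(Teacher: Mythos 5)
There is a genuine gap. Your argument hinges on the claim that, because $\Lambda$ has uniform coboundary and is quasi-transitively stabilised, every component $W$ of $\Gamma - \Lambda$ has diameter at most some uniform $D_0$. That is false, and it is not what Lemma~\ref{lem:cocompact-in-to-uniform} or the uniform coboundary hypothesis gives you. Uniform coboundary (Definition~\ref{def:unif-cobound}) bounds the diameter of the ``port set'' $\delta W$ as seen from inside $\Lambda$ (that is $\incut{\Lambda}$) and as seen from inside $W$ (that is $\outcut{\Lambda}$), but it places no bound at all on $\diam_\Gamma(W)$. Indeed, in the intended applications $\Lambda$ is (the image of) one part of a canonical tree decomposition, and the complementary components correspond to the rest of the tree and are typically infinite. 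Since everything after this point in your sketch — forming $f' = f - \bigcup W$ by deleting ``bounded bubbles,'' and bounding the detour each bubble contributes — rests on this bounded-diameter assertion, the core of your argument for condition~(2) does not go through.

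The paper itself does not prove the proposition directly; it records it as the special case $X=\Gamma$, $\varphi=\mathrm{id}$ of Theorem~\ref{thm:friendly-faces}, whose engine is Lemma~\ref{lem:nice-faces}. That lemma proceeds by a fundamentally different mechanism. Given a face $U_1$ of the thickened graph and the corresponding face $U_0 \supset U_1$ of $\Lambda$, one first shows (Claim~\ref{claim:common-faces}) that any two $x,y \in \facepaths(U_1) \cap \Lambda$ already lie on a common face of $\Gamma$ inside $U_0$: if not, there is a simple path $p$ drawn inside $U_0$ with endpoints on $\facepaths(U_0)$ separating $\vartheta(x)$ from $\vartheta(y)$. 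The interior of $p$ lies in a single component $C$ of $\Gamma - \Lambda$, and it is precisely $\outcut{\Lambda}$ — not any bound on $\diam(C)$ — that lets you shorten $p$ to a path of length at most $r+2$, which then lies in $\Pi = B_\Gamma(\Lambda;r)$ and contradicts $x,y$ bordering a single face of $\Pi$. A second step (Claim~\ref{claim:tripod}) upgrades this pairwise statement to a global one via a tripod/pigeonhole argument, using the almost-2-connected structure and bounded valence to cap how many vertices can fail to share a common $\Gamma$-face. The moral: the control comes from bounding the diameter of the \emph{coboundaries} and feeding that into a Jordan-curve separation argument, not from bounding the complementary components themselves. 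You would need to replace your bounded-component step with something of this flavour before your proof would be complete.
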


We will need to prove a stronger, more coarse version of this. 
To this end, the rest of this section is devoted to proving the following.

\begin{restatable}[Neighbourhoods with friendly faces]{theorem}{friendlyfaces}\label{thm:friendly-faces}
    Let $X$ be a connected, locally finite, quasi-transitive graph and $Y \subset X$ a connected, quasi-transitively stabilised subgraph with uniform coboundary. Let $\Gamma$ be a connected, bounded-degree, planar graph, and $\varphi : X \onto \Gamma$ a continuous, surjective quasi-isometry. Suppose further that $\varphi(Y)$ is almost 2-connected. 
    Then there exists $\delta > 0$ such that 
    $$
    \varphi(B_X(Y;\delta))
    $$
    is a friendly-faced subgraph of $\Gamma$. 
\end{restatable}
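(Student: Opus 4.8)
The strategy is to reduce Theorem~\ref{thm:friendly-faces} to the easier Proposition~\ref{prop:ff-cocompact} by transferring everything through the quasi-isometry $\varphi$ and using the structure of almost 2-connected graphs. Write $\Pi = \varphi(Y)$, which we are told is almost 2-connected; let $\Pi_0 \subset \Pi$ be its 2-connected core, so that the inclusion $\Pi_0 \into \Pi$ is a quasi-isometry and $\Pi_0$ is a friendly-faced subgraph of $\Pi$ by the (unlabelled) proposition preceding Remark~\ref{rmk:simple-body}. The core $\Pi_0$ is quasi-transitively stabilised inside $\Gamma$: indeed, $\Stab(Y)$ acts quasi-transitively on $Y$, hence (after conjugating by $\varphi$, using that $\varphi$ is a continuous surjective quasi-isometry) the induced quasi-action of $\Stab(Y)$ on $\Pi$ is cobounded, and since $\Pi_0$ is the \emph{unique} maximal infinite 2-connected subgraph it is preserved by every genuine automorphism in $\Stab(Y)$ that happens to stabilise $\Pi$. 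The first technical step is therefore to produce an honest subgroup $H \leq \Aut(\Gamma)$ with $H$ stabilising $\Pi_0$ and acting quasi-transitively on it — or, if that is awkward, to argue directly via Corollary~\ref{cor:bounded-diam} and Lemma~\ref{lem:bounded-finite-pieces} that $\Pi_0$ has uniform coboundary in $\Gamma$ (its coboundary cuts are boundedly small because $\Pi$ is almost 2-connected, so the finite pieces hanging off $\Pi_0$ are uniformly bounded).

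\textbf{Main step.} Having established that $\Pi_0$ is a 2-connected, quasi-transitively stabilised subgraph of $\Gamma$ with uniform coboundary, apply Proposition~\ref{prop:ff-cocompact} to obtain $\delta_0 > 0$ such that $\Lambda := B_\Gamma(\Pi_0; \delta_0)$ is a friendly-faced subgraph of $\Gamma$. Next I would choose $\delta > 0$ large enough that $\varphi(B_X(Y;\delta)) \supseteq \Lambda$ and that $\varphi(B_X(Y;\delta))$ still has uniform coboundary in $\Gamma$: the first is possible because $\varphi$ is coarsely surjective and $\Pi$ lies in a bounded neighbourhood of $\Pi_0$, so $\Lambda$ lies in a bounded neighbourhood of $\varphi(Y)$; the second follows because $Y$ has uniform coboundary in $X$, so $B_X(Y;\delta)$ does too (thickening does not destroy this, by essentially Proposition~\ref{prop:replace-parts-with-nbhd}), and uniform coboundary is transported by continuous quasi-isometries — this last point is where Lemma~\ref{lem:cut-through-qi} and Lemma~\ref{lem:bounded-diam-coarse} do the work, controlling the coboundary cuts of $\varphi(B_X(Y;\delta))$ in $\Gamma$. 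Write $\Pi' := \varphi(B_X(Y;\delta))$, so we now have the chain $\Lambda \subset \Pi' \subset \Gamma$ with $\Lambda$ friendly-faced in $\Gamma$, $\Pi'$ of uniform coboundary in $\Gamma$, and $\Lambda$ of uniform coboundary in $\Pi'$ (again a neighbourhood argument).

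\textbf{Finishing.} Apply Lemma~\ref{lem:friendly-faces-chain} with the roles $\Lambda \subset \Pi' \subset \Gamma$ to conclude that $\Lambda$ is a friendly-faced subgraph of $\Pi'$; but that is the wrong direction — I actually want $\Pi'$ friendly-faced in $\Gamma$, given that its subgraph $\Lambda$ is. So the genuinely needed lemma is a ``going up'' companion to Lemma~\ref{lem:friendly-faces-chain}: if $\Lambda \subset \Pi' \subset \Gamma$ with $\Lambda$ friendly-faced in $\Gamma$, $\Lambda$ of uniform coboundary in $\Pi'$, and $\Pi'$ of uniform coboundary in $\Gamma$, then $\Pi'$ is friendly-faced in $\Gamma$. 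This should be proved exactly as Lemma~\ref{lem:friendly-faces-chain}: given $U \in \facedisks \Pi'$, since $\Lambda \into \Pi'$ is a quasi-isometric embedding with uniform coboundary, $U$ contains a face $V \in \facedisks \Lambda$ whose facial subgraph approximates $\facepaths(U)$ up to a bounded error; then the friendliness of $\Lambda$ in $\Gamma$ gives a face $W \in \facedisks \Gamma$ with $W \subset V \subset U$ and $\facepaths(V) \subset B_\Gamma(\facepaths(W); \varepsilon)$; chaining the two bounded-error approximations (and using that the relevant inclusions are quasi-isometric embeddings, so bounded $\Gamma$-distance converts to bounded $\Pi'$-distance) shows $\facepaths(U) \subset B_\Gamma(\facepaths(W); \varepsilon')$ for a uniform $\varepsilon'$. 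The main obstacle I anticipate is the bookkeeping in the first step: pinning down precisely in what sense $\Pi_0$ inherits a quasi-transitive stabiliser action (the quasi-action of $\Stab(Y)$ need not be by automorphisms), so that Proposition~\ref{prop:ff-cocompact}, which is stated for genuine group actions, can legitimately be invoked — this may require either replacing $\Gamma$ by a quasi-transitive graph quasi-isometric to it on which a genuine action exists, or re-deriving the conclusion of Proposition~\ref{prop:ff-cocompact} directly from coboundary bounds without an honest action.
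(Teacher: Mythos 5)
Your high-level plan — first transport uniform coboundary through $\varphi$, then upgrade a friendly-faced core to the full neighbourhood via a ``going up'' lemma — is the right shape, and it is essentially what the paper does (Theorem~\ref{thm:uniform-boundary-qi} followed by Lemma~\ref{lem:nice-faces}). But there are two genuine gaps in your proposal, not mere bookkeeping.

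First, the detour through Proposition~\ref{prop:ff-cocompact} is a dead end. That proposition requires $\Gamma$ itself to be quasi-transitive, and requires $\Pi_0$ to be stabilised by an honest subgroup acting quasi-transitively. Neither assumption is available: in Theorem~\ref{thm:friendly-faces}, $\Gamma$ is just a bounded-valence planar graph quasi-isometric to $X$, and the induced quasi-action of $\Stab(Y)$ on $\Gamma$ is \emph{not} by automorphisms. There is no reason for $\Aut(\Gamma)$ to be nontrivial at all, so no honest group action will materialise. You flag this at the end as ``the main obstacle,'' but your two suggested remedies — find an honest action, or replace $\Gamma$ by a quasi-transitive graph — both fail (the first for the reason above; the second would change the ambient graph whose faces we are trying to approximate, which is the whole point of the statement). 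The only workable route is the one you mention last in passing: re-derive the needed friendly-facedness without a group action. That is precisely Lemma~\ref{lem:nice-faces}, whose hypotheses are purely graph-theoretic (almost 2-connected plus uniform coboundary) and which never mentions a group.

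Second, your ``going up'' lemma is under-specified in a way that hides where the work actually lies. The assertion ``given $U \in \facedisks \Pi'$, $U$ contains a face $V \in \facedisks \Lambda$ whose facial subgraph approximates $\facepaths(U)$ up to bounded error'' is the hard step, and it does not follow from $\Lambda \into \Pi'$ being a quasi-isometric embedding with uniform coboundary: a priori, $\facepaths(U)$ could wander far from any single facial subgraph of $\Lambda$. The paper's Claims~\ref{claim:common-faces} and~\ref{claim:tripod} are devoted to exactly this control, and they use the almost 2-connectedness of $\Lambda$ in an essential way (via the simple body / adjoined cacti decomposition of facial subgraphs, see Remark~\ref{rmk:simple-body}, and a planar tripod argument). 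Your formulation replaces ``$\Lambda$ almost 2-connected with uniform coboundary'' by ``$\Lambda$ friendly-faced'', which is a different (and, as far as I can tell, insufficient) hypothesis — you would additionally need structural control over the faces of $\Lambda$, not just that they admit $\Gamma$-approximants. Note also that the $\Lambda$ you propose feeding in, namely $B_\Gamma(\Pi_0;\delta_0)$, is not obviously almost 2-connected, so even the correct lemma would not apply to it. The correct choice is $\Lambda = \varphi(Y)$ itself (almost 2-connected by hypothesis, uniform coboundary by Theorem~\ref{thm:uniform-boundary-qi}) and $\Pi = \varphi(B_X(Y;\delta))$, to which Lemma~\ref{lem:nice-faces} applies directly.
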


Note that if we set $X = \Gamma$ and take $\varphi$ to be the identity map, then we recover Proposition~\ref{prop:ff-cocompact}. 
The reader short on time may  proceed to the next section, if they so wish. The rest of \S\ref{sec:friendly-faces} is devoted to proving Theorem~\ref{thm:friendly-faces}, and nothing beyond this point in this section will make a second appearance.

\subsection{Neighbourhoods with friendly faces}

We now work towards a proof of Theorem~\ref{thm:friendly-faces}. Most of the heavy lifting will be done by the next lemma. 

\begin{lemma}\label{lem:nice-faces}
    Let $\Gamma$ be a bounded-degree, connected, planar graph with fixed good drawing $\vartheta$. Let $\Lambda \subset \Gamma$ be an almost 2-connected subgraph with uniform coboundary.  
    Then there exists $r > 0$ such that if $\Pi \subset \Gamma$ is a connected subgraph satisfying
    $$
    B_\Gamma(\Lambda;r) \subset \Pi \ \ \ \text{and} \ \ \  \dHaus[\Gamma](\Lambda, \Pi) < \infty, 
    $$
    then $\Pi$ is a friendly-faced subgraph of $\Gamma$. 
\end{lemma}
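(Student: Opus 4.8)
The plan is as follows. Write $D = \dHaus[\Gamma](\Lambda,\Pi)$ and let $\Lambda_0 \subseteq \Lambda$ be the $2$-connected core of $\Lambda$ (Definition~\ref{def:nearly-2-conn}); since $\Lambda$ is almost $2$-connected it is obtained from $\Lambda_0$ by attaching finite pieces of diameter bounded by some $R_0$, so in particular $\Lambda \subseteq B_\Gamma(\Lambda_0;R_0)$. First I would carry out the routine reductions. As $\Lambda$ has uniform coboundary, $\incut{\Lambda}^\Gamma < \infty$; since $B_\Gamma(\Lambda;r)\subseteq \Pi \subseteq B_\Gamma(\Lambda;D)$, Lemma~\ref{lem:small-increase-still-bounded} gives $\incut{\Pi}^\Gamma<\infty$, and the same kind of argument (the pieces of $\Lambda$ outside $\Lambda_0$ being boundedly small) gives $\incut{\Lambda_0}^\Gamma<\infty$. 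By Proposition~\ref{prop:end-injective} and Remark~\ref{rmk:good-drawing-subgraph}, $\vartheta$ then restricts to good drawings of both $\Pi$ and $\Lambda_0$ — this is already condition~(1) of Definition~\ref{def:friendly-faced} — and, $\Lambda_0$ being $2$-connected and locally finite, the closure of every facial subgraph of $\Lambda_0$ is a Jordan curve in $\bbS^2$ by Proposition~\ref{prop:simple-face}, so each face of $\Lambda_0$ is an open disc.

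Next I would show that the faces of $\Pi$ coarsely refine those of $\Lambda_0$. Fix $U_1 \in \facedisks\Pi$, with facial subgraph $f_1 = \facepaths(U_1)$. Since $\Lambda_0 \subseteq \Pi$, the connected region $U_1$ lies inside a unique face $U_0 \in \facedisks\Lambda_0$; write $f_0 = \facepaths(U_0)$, and note $\vartheta(\overline{f_1}) = \partial U_1 \subseteq \overline{U_0}$. I claim $f_1 \subseteq B_\Gamma(f_0;C)$ with $C := D + R_0$, uniformly in $U_1$: any vertex $x$ of $f_1$ is drawn in $\overline{U_0}$ and lies in $\Pi \subseteq B_\Gamma(\Lambda_0;C)$, so there is a path of length $\le C$ in $\Gamma$ from $x$ to $\Lambda_0$; if $x \notin f_0$ then $\vartheta(x)$ lies in the open disc $U_0$, while the path leaves $U_0$, so its drawing must cross the Jordan curve $\vartheta(\overline{f_0})$, necessarily through a vertex of $f_0$, giving $\dist_\Gamma(x,f_0) \le C$. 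When $U_0$ is an infinite face, so that $\overline{f_0}$ is a Jordan curve passing through ends of $\overline{\Gamma}$, the same argument applies.

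Then I would extract the required face of $\Gamma$. For our fixed $U_1$, the part of $\Gamma$ drawn inside $U_1$ is exactly the union of those components $V$ of $\Gamma - \Pi$ whose drawings lie in $U_1$; each such $V$ is joined to $\Pi$ only along $\delta V \cap \Pi \subseteq f_1$, a set of $\Gamma$-diameter at most $\incut{\Pi}^\Gamma$, so the drawing of $V$ meets $\partial U_1$ only in a bounded region near that attaching set. The plan is to let $U_2$ be the face of $\Gamma$ obtained by pushing $\partial U_1$ slightly into $U_1$, away from all of these bounded attaching regions: this should be a single face of $\Gamma$ contained in $U_1$ (faces of $\Gamma$ refine faces of $\Pi$), whose facial subgraph picks up all of $f_1$ except bounded pieces near the attaching sets, so that $f_1 \subseteq B_\Gamma(\facepaths(U_2);r_0)$ with $r_0$ depending only on $\incut{\Pi}^\Gamma$, and hence — by the first step — only on $\Gamma$, $\Lambda$, $\vartheta$ and $D$. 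Here the hypothesis that $\Pi$ contains the large ball $B_\Gamma(\Lambda;r)$ will be used to keep the topology of the faces of $\Pi$ under control: in particular to guarantee that the isolated region really is one face, and that two distinct components of $\Gamma - \Pi$ cannot ``pinch off'' an unbounded arc of $f_1$ — the estimate of the previous step being what rules this out quantitatively. Combining the three steps verifies condition~(2), so $\Pi$ is friendly-faced, and taking $X = \Gamma$ with $\varphi = \mathrm{id}$ recovers Proposition~\ref{prop:ff-cocompact}.

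The hard part will be this last step: making rigorous that the pushed-off region is genuinely a single face of $\Gamma$ and that its facial subgraph captures $f_1$ up to a bounded error, given that $\partial U_1$ need not be a Jordan curve ($\Pi$ is typically not $2$-connected) and that one face $U_0$ of $\Lambda_0$ may contain many faces of $\Pi$; the infinite faces, where the relevant curves run through ends of $\overline{\Gamma}$, will require additional bookkeeping as well.
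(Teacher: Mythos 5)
Your first two steps are sound and close to what the paper actually does in spirit: the routine reductions via Lemma~\ref{lem:small-increase-still-bounded}, Proposition~\ref{prop:end-injective} and Remark~\ref{rmk:good-drawing-subgraph} correctly give condition~(1) of Definition~\ref{def:friendly-faced}, and your step~2 estimate, that every vertex of $f_1 := \facepaths(U_1)$ lies within bounded $\Gamma$-distance of the facial subgraph of the unique face $U_0 \supset U_1$ of the core, is correct and essentially appears in the paper's closing argument. But the proposal does not complete the proof, and you say as much: step~3 is where the entire content of the lemma lies, and the ``pushing'' argument is left as a sketch with the real work unidentified.

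The gap is concrete. You want to produce a single $U_2 \in \facedisks\Gamma$ with $U_2 \subset U_1$ whose facial subgraph captures all of $f_1$ up to bounded error. Your heuristic is that the components $V$ of $\Gamma - \Pi$ drawn inside $U_1$ attach to $f_1$ only along small sets, so a slightly shrunk copy of $\partial U_1$ avoids them all and bounds one face. This is the hard claim, and it is not purely topological bookkeeping: you have not said why, if two vertices $x,y \in f_1$ do \emph{not} lie on a common face of $\Gamma$, this contradicts anything. Indeed, your formulation of the role of the lower bound $r$ (``keep the topology of the faces of $\Pi$ under control'') is a placeholder rather than an argument; it does not isolate the mechanism that actually forces a common face to exist.

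The paper's proof supplies exactly this missing mechanism, and it is worth contrasting. It sets $r = \outcut{\Lambda}^\Gamma$ and proves two claims. The first (Claim~\ref{claim:common-faces}) is: if $x,y \in f_1 \cap \Lambda$ do not lie on a common face of $\Gamma$ inside $U_0$, then some simple path $p$ in $\Gamma$, drawn inside $U_0$ with endpoints on $\facepaths(U_0)$, separates $\vartheta(x)$ from $\vartheta(y)$ in $U_0$. The interior of $p$ passes through a single component $C$ of $\Gamma - \Lambda$, so $p$ can be taken of length at most $\outcut{\Lambda}+2 \leq r+2$; since $B_\Gamma(\Lambda;r) \subset \Pi$, this short $p$ lies in $\Pi$, contradicting that $x,y$ both lie on $\facepaths(U_1)$ with $U_1 \in \facedisks\Pi$. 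Note that this is precisely where the hypothesis $B_\Gamma(\Lambda;r) \subset \Pi$ is used, and its role is sharp and quantitative, not a vague control of topology. The second (Claim~\ref{claim:tripod}) is a pigeonhole/tripod argument, using the ``simple body plus bounded adjoined cacti'' structure of faces of an almost $2$-connected graph (Remark~\ref{rmk:simple-body}), to upgrade ``any two of the vertices share a subface'' to ``all of them share a single subface $U_2 \in \facedisks\Gamma$''. Together these give the required $U_2$ by purely combinatorial means, avoiding the analysis of how components of $\Gamma - \Pi$ are drawn inside $U_1$, how they might pinch off arcs of $f_1$, and the extra case analysis for infinite faces through ends --- precisely the issues you flag but do not resolve. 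If you want to salvage your ``push inward'' picture, you would need to prove an analogue of Claim~\ref{claim:common-faces} anyway, to justify that the region you push into is a single face; at that point you have re-derived the paper's argument.
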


\begin{proof}
    Since $\Lambda$ has uniform coboundary, we know by Proposition~\ref{prop:end-injective} that the inclusions $\Lambda \into \Gamma$, $\Pi \into \Gamma$ induce injections on the sets of ends, and so $\vartheta$ restricts to a good drawing of both $\Lambda$ and $\Pi$. Thus, it makes complete sense for us to speak of the faces of these subgraphs. 

    Let $r = \outcut{\Lambda}^\Gamma$, and let $\Pi \subset \Gamma$ be such that 
    $$
    B_\Gamma(\Lambda;r) \subset \Pi \ \ \ \text{and} \ \ \  \dHaus[\Gamma](\Lambda, \Pi) < \infty. 
    $$
    Let $U_1 \in \facedisks(\Pi)$, and let $U_0$ be the unique element in $\facedisks(\Lambda)$ such that $U_1 \subset U_0$. 
    
    Since $\Lambda$ is almost 2-connected, we have that $\facepaths[U_0]$ is formed of a \textit{simple body} and \textit{adjoined cacti} (see Remark~\ref{rmk:simple-body} for a description of the faces of an almost 2-connected planar graph).

    \begin{claim}\label{claim:common-faces}
         Let $x,y \in \facepaths[U_1] \cap \Lambda$. Then there exists $U_2 \in \facedisks(\Gamma)$ such that $U_2 \subset U_1$ and $x,y \in \facepaths[U_2]$. 
    \end{claim}
    
    \begin{proof}
    Clearly $x,y \in \facepaths [U_0]$. 
    Suppose for the sake of a contradiction that such a $U_2 \in \facedisks(\Gamma)$ does not exist. Then there must exist a simple path $p$ in $\Gamma$ such that the initial and terminal vertices of $p$ lie on $f_0$, and $p$ is otherwise drawn entirely within $U_0$, and also such that $\vartheta(x)$, $\vartheta(y)$ do not lie in the closure of a common component of $U_0 \setminus \vartheta(p)$. See Figure~\ref{fig:separating-path} for a cartoon. 

    Let $u$ be the second vertex to appear along $p$, and $v$ be the penultimate. In particular, $u$ and $v$ are adjacent to $\Lambda$, and are contained in the same connected component $C$ of $\Gamma \setminus \Lambda$. Note that $\vartheta(C) \subset U$. Since $\outcut{\Lambda}^\Gamma = r$, we have that $\dist_C(x,y) \leq r$. Thus, there exists a path of bounded length through $C$ connecting $x$ to $y$. In particular, we may assume that the path $p$ we constructed above has length at most $r+2$. This path will be contained in $\Pi$. But this contradicts the assumption that $x$ and $y$ both lie on the facial subgraph bordering $U_1$, where $U_1 \in \facedisks(\Pi)$ was such that $U_1 \subset U_0$. Thus, the claim follows. 
    \end{proof}

    We can extend the above claim from two vertices to arbitrarily many, via the following technical statement. 

    \begin{claim}\label{claim:tripod}
        There exists some uniform $N > 0$ depending only on $\Lambda$ such that the following holds. 
        Let $S$ be a set of vertices in $\facepaths[U_0]$ with $|S| > N$. Suppose $S$ satisfies the following: 
        \begin{enumerate}
            \item[$(\dagger)$] For any two $x,y \in S$ there exists $U \in \facedisks(\Gamma)$ such that $U \subset U_0$ and $x, y \in \facepaths [U]$.
        \end{enumerate}
        Then there exists $U_2 \in \facedisks(\Gamma)$ such that $U_2 \subset U_0$ and $S \subset \facepaths[U_2]$. 
    \end{claim}

    \begin{proof}
        Assume $N > 3$. 
        Suppose to the contrary that this were not true. Then it is easy to see that there must exist a tripod $T$ embedded in $\overline \Gamma$ with the following properties: 
        \begin{enumerate}
            \item The leaves of $T$ lie in $\facepaths[U_0]$, and $T$ otherwise maps into $U_0$ under $\vartheta$,

            \item $U_0 \setminus \vartheta(T)$ contains exactly three connected components $V_1$, $V_2$, $V_3$,

            \item There exists three distinct $x_1, x_2, x_3 \in S$ such that 
            $$
            \vartheta(x_i) \in (\overline {V_i} \cap \overline {V_{i+1}}) \setminus \overline {V_{i+2}},
            $$
            where indices are taken modulo 3. 
        \end{enumerate}
        See Figure~\ref{fig:face-tripod} for a cartoon.

        Now, let us proceed to add more elements from $S$ to this picture while preserving ($\dagger$). In particular, it is not hard to see that any $y \in S$ must satisfy one of the following properties:
        \begin{enumerate}
            \item Either $y$ lies on the simple body of $\facepaths[U_0]$, and coincides precisely with a leaf of $T$, or

            \item The vertex $y$ lies on an adjoined cactus $C$ of $\facepaths[U_0]$, and $C$ also contains a leaf of $T$.
        \end{enumerate}
        If neither of these happens, then there must exist some $x_i$ such that $y$ and $x_i$ do not lie on a common subface of $U_0$, and thus we contradict ($\dagger$).

        Finally, note that an adjoined cactus can only contain boundedly many vertices since $\Gamma$ is bounded-degree, say at most $m$ vertices. Setting $N = 3m$, the claim follows. 
    \end{proof}

    We now show that $\Pi$ is friendly-faced. 
    Note that since $\Lambda$ has uniform coboundary, $\Gamma$ is bounded-degree, and $\dHaus[\Gamma](\Pi, \Lambda)$ is finite, we see that each connected component of $\Pi \setminus \Lambda$ has at most boundedly many vertices. 
    Thus, if $\diam_{\Pi}(\facepaths[U_1])$ is sufficiently large then we must have that 
    $$
    \facepaths[U_1] \subset B_{\Pi}(\facepaths[U_1] \cap \Lambda;m), 
    $$
    where $m > 0$ is some uniform constant. We also assume that $\facepaths[U_1]$ is large enough so that $\facepaths[U_1] \cap \Lambda$ contains more than $N$ vertices, where $N > 0$ is as in Claim~\ref{claim:tripod}. Now, by Claims~\ref{claim:common-faces} and \ref{claim:tripod}, there exists some $U_2 \in \facedisks(\Gamma)$ such that $U_2 \subset U_1$, and 
    $$
    \facepaths[U_1] \cap \Lambda \subset \facepaths[U_2]. 
    $$
    We thus deduce that $\facepaths[U_1]$ lies inside of a bounded neighbourhood of $\facepaths[U_2]$ in $\Gamma$. Since $U_1$ was an arbitrary face of $\Lambda$ with $\facepaths[U_1]$ sufficiently large, it follows that $\Pi$ is friendly-faced. 
\end{proof}

We are now able to prove Theorem~\ref{thm:friendly-faces}, which we restate for the reader's convenience. 

\friendlyfaces*

\begin{proof}[Proof]
    By Lemmas~\ref{lem:cocompact-in-to-uniform} and \ref{lem:small-increase-still-bounded} we have that $B_X(Y;\varepsilon)$ has uniform coboundary for every $\varepsilon > 0$. By Theorem~\ref{thm:uniform-boundary-qi}, there exists some $\delta' > 0$ such that $\varphi(B_X(Y;\delta'))$ has uniform coboundary in $\Gamma$. Now, combining Lemma~\ref{lem:onto-qi} with Lemmas~\ref{lem:nice-faces},  we deduce that there is some $\delta > \delta'$ such that $\varphi(B_X(Y;\delta))$ is friendly-faced.
\end{proof}

\begin{figure}
\begin{minipage}{.5\textwidth}
  \centering

\tikzset{every picture/.style={line width=0.75pt}} 

\begin{tikzpicture}[x=0.75pt,y=0.75pt,yscale=-1,xscale=1]

\draw  [draw opacity=0][fill={rgb, 255:red, 155; green, 155; blue, 155 }  ,fill opacity=0.46 ] (402.1,116.25) .. controls (402.1,68.89) and (440.49,30.5) .. (487.85,30.5) .. controls (535.21,30.5) and (573.6,68.89) .. (573.6,116.25) .. controls (573.6,163.61) and (535.21,202) .. (487.85,202) .. controls (440.49,202) and (402.1,163.61) .. (402.1,116.25) -- cycle ;
\draw  [color={rgb, 255:red, 0; green, 0; blue, 0 }  ,draw opacity=1 ][fill={rgb, 255:red, 255; green, 255; blue, 255 }  ,fill opacity=1 ] (419.69,116.25) .. controls (419.69,78.6) and (450.2,48.09) .. (487.85,48.09) .. controls (525.5,48.09) and (556.01,78.6) .. (556.01,116.25) .. controls (556.01,153.9) and (525.5,184.41) .. (487.85,184.41) .. controls (450.2,184.41) and (419.69,153.9) .. (419.69,116.25) -- cycle ;
\draw    (455.49,176.15) -- (457.87,171.02) ;
\draw  [fill={rgb, 255:red, 155; green, 155; blue, 155 }  ,fill opacity=0.53 ] (456.47,167.61) .. controls (457.02,166.28) and (458.55,165.65) .. (459.88,166.21) .. controls (461.21,166.76) and (461.83,168.29) .. (461.28,169.62) .. controls (460.72,170.95) and (459.2,171.57) .. (457.87,171.02) .. controls (456.54,170.46) and (455.91,168.94) .. (456.47,167.61) -- cycle ;
\draw  [fill={rgb, 255:red, 155; green, 155; blue, 155 }  ,fill opacity=0.53 ] (458.64,160.92) .. controls (458.94,160.2) and (459.77,159.86) .. (460.48,160.16) .. controls (461.2,160.46) and (461.54,161.28) .. (461.24,162) .. controls (460.94,162.72) and (460.12,163.06) .. (459.4,162.76) .. controls (458.68,162.46) and (458.34,161.63) .. (458.64,160.92) -- cycle ;
\draw    (459.88,166.21) -- (459.4,162.76) ;
\draw  [fill={rgb, 255:red, 155; green, 155; blue, 155 }  ,fill opacity=0.53 ] (466.19,165.25) .. controls (466.96,165.33) and (467.53,166.02) .. (467.45,166.8) .. controls (467.37,167.57) and (466.68,168.13) .. (465.9,168.05) .. controls (465.13,167.97) and (464.57,167.28) .. (464.65,166.51) .. controls (464.73,165.74) and (465.42,165.17) .. (466.19,165.25) -- cycle ;
\draw    (461.48,167.97) -- (464.65,166.51) ;

\draw    (445.67,170.08) -- (448.03,167.46) ;
\draw  [fill={rgb, 255:red, 155; green, 155; blue, 155 }  ,fill opacity=0.53 ] (447.81,165.17) .. controls (448.39,164.48) and (449.41,164.38) .. (450.1,164.96) .. controls (450.79,165.53) and (450.89,166.55) .. (450.31,167.24) .. controls (449.74,167.93) and (448.72,168.03) .. (448.03,167.46) .. controls (447.34,166.88) and (447.24,165.86) .. (447.81,165.17) -- cycle ;
\draw  [fill={rgb, 255:red, 155; green, 155; blue, 155 }  ,fill opacity=0.53 ] (450.33,161.58) .. controls (450.64,161.21) and (451.19,161.15) .. (451.56,161.46) .. controls (451.94,161.77) and (451.99,162.33) .. (451.68,162.7) .. controls (451.37,163.07) and (450.82,163.12) .. (450.44,162.81) .. controls (450.07,162.5) and (450.02,161.95) .. (450.33,161.58) -- cycle ;
\draw    (450.1,164.96) -- (450.44,162.81) ;
\draw  [fill={rgb, 255:red, 155; green, 155; blue, 155 }  ,fill opacity=0.53 ] (454.04,165.54) .. controls (454.48,165.73) and (454.69,166.24) .. (454.51,166.69) .. controls (454.32,167.13) and (453.8,167.34) .. (453.36,167.15) .. controls (452.91,166.97) and (452.7,166.45) .. (452.89,166.01) .. controls (453.08,165.56) and (453.59,165.35) .. (454.04,165.54) -- cycle ;
\draw    (450.74,166.3) -- (452.89,166.01) ;
\draw    (439.22,164.32) -- (440.76,162.6) ;
\draw  [fill={rgb, 255:red, 155; green, 155; blue, 155 }  ,fill opacity=0.53 ] (440.62,161.1) .. controls (441,160.65) and (441.67,160.58) .. (442.12,160.96) .. controls (442.58,161.34) and (442.64,162.01) .. (442.26,162.46) .. controls (441.89,162.91) and (441.21,162.98) .. (440.76,162.6) .. controls (440.31,162.22) and (440.25,161.55) .. (440.62,161.1) -- cycle ;
\draw  [fill={rgb, 255:red, 155; green, 155; blue, 155 }  ,fill opacity=0.53 ] (442.27,158.74) .. controls (442.48,158.5) and (442.84,158.46) .. (443.08,158.67) .. controls (443.33,158.87) and (443.36,159.23) .. (443.16,159.48) .. controls (442.96,159.72) and (442.59,159.75) .. (442.35,159.55) .. controls (442.1,159.35) and (442.07,158.99) .. (442.27,158.74) -- cycle ;
\draw    (442.12,160.96) -- (442.35,159.55) ;
\draw  [fill={rgb, 255:red, 155; green, 155; blue, 155 }  ,fill opacity=0.53 ] (444.71,161.34) .. controls (445,161.46) and (445.14,161.8) .. (445.02,162.1) .. controls (444.89,162.39) and (444.55,162.53) .. (444.26,162.4) .. controls (443.97,162.28) and (443.83,161.94) .. (443.95,161.65) .. controls (444.08,161.36) and (444.42,161.22) .. (444.71,161.34) -- cycle ;
\draw    (442.54,161.84) -- (443.95,161.65) ;
\draw    (434.46,158.83) -- (436.14,157.82) ;
\draw  [fill={rgb, 255:red, 155; green, 155; blue, 155 }  ,fill opacity=0.53 ] (436.4,156.57) .. controls (436.82,156.3) and (437.38,156.41) .. (437.65,156.83) .. controls (437.92,157.25) and (437.8,157.81) .. (437.39,158.08) .. controls (436.97,158.35) and (436.41,158.24) .. (436.14,157.82) .. controls (435.87,157.4) and (435.98,156.84) .. (436.4,156.57) -- cycle ;
\draw  [fill={rgb, 255:red, 155; green, 155; blue, 155 }  ,fill opacity=0.53 ] (438.32,155.08) .. controls (438.55,154.93) and (438.85,154.99) .. (439,155.22) .. controls (439.15,155.44) and (439.08,155.74) .. (438.86,155.89) .. controls (438.63,156.04) and (438.33,155.98) .. (438.18,155.75) .. controls (438.04,155.52) and (438.1,155.22) .. (438.32,155.08) -- cycle ;
\draw    (437.65,156.83) -- (438.18,155.75) ;
\draw  [fill={rgb, 255:red, 155; green, 155; blue, 155 }  ,fill opacity=0.53 ] (439.64,157.79) .. controls (439.85,157.96) and (439.88,158.27) .. (439.7,158.47) .. controls (439.53,158.68) and (439.22,158.7) .. (439.02,158.53) .. controls (438.81,158.36) and (438.78,158.05) .. (438.96,157.85) .. controls (439.13,157.64) and (439.44,157.61) .. (439.64,157.79) -- cycle ;
\draw    (437.77,157.65) -- (438.96,157.85) ;
\draw    (495.03,47.82) -- (496.7,61) ;
\draw  [fill={rgb, 255:red, 155; green, 155; blue, 155 }  ,fill opacity=0.53 ] (503.74,66.03) .. controls (504.3,69.37) and (502.04,72.52) .. (498.71,73.07) .. controls (495.38,73.63) and (492.22,71.37) .. (491.67,68.04) .. controls (491.12,64.7) and (493.37,61.55) .. (496.7,61) .. controls (500.04,60.44) and (503.19,62.7) .. (503.74,66.03) -- cycle ;
\draw  [fill={rgb, 255:red, 155; green, 155; blue, 155 }  ,fill opacity=0.53 ] (507.77,82.06) .. controls (508.07,83.86) and (506.85,85.56) .. (505.06,85.86) .. controls (503.26,86.16) and (501.55,84.94) .. (501.25,83.14) .. controls (500.96,81.34) and (502.17,79.64) .. (503.97,79.34) .. controls (505.77,79.04) and (507.47,80.26) .. (507.77,82.06) -- cycle ;
\draw    (498.71,73.07) -- (503.97,79.34) ;
\draw  [fill={rgb, 255:red, 155; green, 155; blue, 155 }  ,fill opacity=0.53 ] (487.34,82.85) .. controls (485.71,83.66) and (483.73,82.99) .. (482.92,81.35) .. controls (482.11,79.72) and (482.78,77.74) .. (484.42,76.93) .. controls (486.06,76.12) and (488.04,76.79) .. (488.84,78.43) .. controls (489.65,80.06) and (488.98,82.05) .. (487.34,82.85) -- cycle ;
\draw    (493.32,71.58) -- (488.84,78.43) ;

\draw    (491.62,65.62) -- (480.08,72.21) ;
\draw  [fill={rgb, 255:red, 155; green, 155; blue, 155 }  ,fill opacity=0.53 ] (478.13,80.64) .. controls (475.26,82.43) and (471.48,81.56) .. (469.7,78.69) .. controls (467.91,75.82) and (468.78,72.05) .. (471.65,70.26) .. controls (474.52,68.47) and (478.29,69.34) .. (480.08,72.21) .. controls (481.87,75.08) and (480.99,78.85) .. (478.13,80.64) -- cycle ;
\draw  [fill={rgb, 255:red, 155; green, 155; blue, 155 }  ,fill opacity=0.53 ] (464.86,90.5) .. controls (463.32,91.46) and (461.28,90.99) .. (460.31,89.44) .. controls (459.35,87.9) and (459.82,85.86) .. (461.37,84.89) .. controls (462.92,83.93) and (464.95,84.4) .. (465.92,85.95) .. controls (466.88,87.5) and (466.41,89.53) .. (464.86,90.5) -- cycle ;
\draw    (469.7,78.69) -- (465.92,85.95) ;
\draw  [fill={rgb, 255:red, 155; green, 155; blue, 155 }  ,fill opacity=0.53 ] (456.31,71.93) .. controls (454.94,70.73) and (454.8,68.64) .. (456,67.27) .. controls (457.2,65.9) and (459.29,65.76) .. (460.66,66.96) .. controls (462.03,68.17) and (462.17,70.25) .. (460.97,71.62) .. controls (459.77,73) and (457.68,73.13) .. (456.31,71.93) -- cycle ;
\draw    (469.01,73.14) -- (460.97,71.62) ;

\draw    (539.26,160.31) -- (532.12,156.3) ;
\draw  [fill={rgb, 255:red, 155; green, 155; blue, 155 }  ,fill opacity=0.53 ] (527.04,157.93) .. controls (525.19,156.97) and (524.46,154.7) .. (525.41,152.85) .. controls (526.36,150.99) and (528.64,150.26) .. (530.49,151.22) .. controls (532.34,152.17) and (533.07,154.44) .. (532.12,156.3) .. controls (531.17,158.15) and (528.9,158.88) .. (527.04,157.93) -- cycle ;
\draw  [fill={rgb, 255:red, 155; green, 155; blue, 155 }  ,fill opacity=0.53 ] (517.64,154.02) .. controls (516.64,153.51) and (516.24,152.28) .. (516.76,151.28) .. controls (517.27,150.28) and (518.5,149.89) .. (519.5,150.4) .. controls (520.5,150.92) and (520.89,152.14) .. (520.38,153.14) .. controls (519.86,154.14) and (518.64,154.54) .. (517.64,154.02) -- cycle ;
\draw    (525.41,152.85) -- (520.38,153.14) ;
\draw  [fill={rgb, 255:red, 155; green, 155; blue, 155 }  ,fill opacity=0.53 ] (524.76,143.63) .. controls (524.96,142.52) and (526.02,141.79) .. (527.12,141.99) .. controls (528.23,142.19) and (528.96,143.26) .. (528.76,144.36) .. controls (528.56,145.47) and (527.5,146.2) .. (526.39,146) .. controls (525.29,145.8) and (524.55,144.73) .. (524.76,143.63) -- cycle ;
\draw    (528.13,150.73) -- (526.39,146) ;

\draw [color={rgb, 255:red, 65; green, 117; blue, 5 }  ,draw opacity=1 ] [dash pattern={on 3.75pt off 1.5pt}]  (420.19,121.73) -- (433.64,114.5) ;
\draw [shift={(433.64,114.5)}, rotate = 331.76] [color={rgb, 255:red, 65; green, 117; blue, 5 }  ,draw opacity=1 ][fill={rgb, 255:red, 65; green, 117; blue, 5 }  ,fill opacity=1 ][line width=0.75]      (0, 0) circle [x radius= 3.35, y radius= 3.35]   ;
\draw [color={rgb, 255:red, 65; green, 117; blue, 5 }  ,draw opacity=1 ]   (433.64,114.5) -- (444.85,119.73) ;
\draw [shift={(433.64,114.5)}, rotate = 25.02] [color={rgb, 255:red, 65; green, 117; blue, 5 }  ,draw opacity=1 ][fill={rgb, 255:red, 65; green, 117; blue, 5 }  ,fill opacity=1 ][line width=0.75]      (0, 0) circle [x radius= 2.01, y radius= 2.01]   ;
\draw [color={rgb, 255:red, 65; green, 117; blue, 5 }  ,draw opacity=1 ]   (444.85,119.73) -- (458.55,115.75) ;
\draw [shift={(444.85,119.73)}, rotate = 343.78] [color={rgb, 255:red, 65; green, 117; blue, 5 }  ,draw opacity=1 ][fill={rgb, 255:red, 65; green, 117; blue, 5 }  ,fill opacity=1 ][line width=0.75]      (0, 0) circle [x radius= 2.01, y radius= 2.01]   ;
\draw [color={rgb, 255:red, 65; green, 117; blue, 5 }  ,draw opacity=1 ]   (458.55,115.75) -- (472.75,120.48) ;
\draw [shift={(458.55,115.75)}, rotate = 18.43] [color={rgb, 255:red, 65; green, 117; blue, 5 }  ,draw opacity=1 ][fill={rgb, 255:red, 65; green, 117; blue, 5 }  ,fill opacity=1 ][line width=0.75]      (0, 0) circle [x radius= 2.01, y radius= 2.01]   ;
\draw [color={rgb, 255:red, 65; green, 117; blue, 5 }  ,draw opacity=1 ]   (472.75,120.48) -- (484.95,111.76) ;
\draw [shift={(484.95,111.76)}, rotate = 324.46] [color={rgb, 255:red, 65; green, 117; blue, 5 }  ,draw opacity=1 ][fill={rgb, 255:red, 65; green, 117; blue, 5 }  ,fill opacity=1 ][line width=0.75]      (0, 0) circle [x radius= 2.01, y radius= 2.01]   ;
\draw [shift={(472.75,120.48)}, rotate = 324.46] [color={rgb, 255:red, 65; green, 117; blue, 5 }  ,draw opacity=1 ][fill={rgb, 255:red, 65; green, 117; blue, 5 }  ,fill opacity=1 ][line width=0.75]      (0, 0) circle [x radius= 2.01, y radius= 2.01]   ;
\draw [color={rgb, 255:red, 65; green, 117; blue, 5 }  ,draw opacity=1 ]   (497.16,121.73) -- (484.95,111.76) ;
\draw [shift={(497.16,121.73)}, rotate = 219.23] [color={rgb, 255:red, 65; green, 117; blue, 5 }  ,draw opacity=1 ][fill={rgb, 255:red, 65; green, 117; blue, 5 }  ,fill opacity=1 ][line width=0.75]      (0, 0) circle [x radius= 2.01, y radius= 2.01]   ;
\draw [color={rgb, 255:red, 65; green, 117; blue, 5 }  ,draw opacity=1 ]   (515.59,115) -- (497.16,121.73) ;
\draw [shift={(515.59,115)}, rotate = 159.95] [color={rgb, 255:red, 65; green, 117; blue, 5 }  ,draw opacity=1 ][fill={rgb, 255:red, 65; green, 117; blue, 5 }  ,fill opacity=1 ][line width=0.75]      (0, 0) circle [x radius= 2.01, y radius= 2.01]   ;
\draw [color={rgb, 255:red, 65; green, 117; blue, 5 }  ,draw opacity=1 ]   (530.04,122.22) -- (515.59,115) ;
\draw [shift={(530.04,122.22)}, rotate = 206.57] [color={rgb, 255:red, 65; green, 117; blue, 5 }  ,draw opacity=1 ][fill={rgb, 255:red, 65; green, 117; blue, 5 }  ,fill opacity=1 ][line width=0.75]      (0, 0) circle [x radius= 2.01, y radius= 2.01]   ;
\draw [color={rgb, 255:red, 65; green, 117; blue, 5 }  ,draw opacity=1 ]   (537.51,115.25) -- (530.04,122.22) ;
\draw [shift={(537.51,115.25)}, rotate = 136.97] [color={rgb, 255:red, 65; green, 117; blue, 5 }  ,draw opacity=1 ][fill={rgb, 255:red, 65; green, 117; blue, 5 }  ,fill opacity=1 ][line width=0.75]      (0, 0) circle [x radius= 2.01, y radius= 2.01]   ;
\draw [color={rgb, 255:red, 65; green, 117; blue, 5 }  ,draw opacity=1 ]   (546.98,119.23) -- (537.51,115.25) ;
\draw [shift={(546.98,119.23)}, rotate = 202.83] [color={rgb, 255:red, 65; green, 117; blue, 5 }  ,draw opacity=1 ][fill={rgb, 255:red, 65; green, 117; blue, 5 }  ,fill opacity=1 ][line width=0.75]      (0, 0) circle [x radius= 2.01, y radius= 2.01]   ;
\draw [color={rgb, 255:red, 65; green, 117; blue, 5 }  ,draw opacity=1 ] [dash pattern={on 3.75pt off 1.5pt}]  (555.2,110.02) -- (546.98,119.23) ;
\draw [shift={(546.98,119.23)}, rotate = 131.73] [color={rgb, 255:red, 65; green, 117; blue, 5 }  ,draw opacity=1 ][fill={rgb, 255:red, 65; green, 117; blue, 5 }  ,fill opacity=1 ][line width=0.75]      (0, 0) circle [x radius= 3.35, y radius= 3.35]   ;
\draw [color={rgb, 255:red, 144; green, 19; blue, 254 }  ,draw opacity=1 ]   (435.48,73.37) ;
\draw [shift={(435.48,73.37)}, rotate = 0] [color={rgb, 255:red, 144; green, 19; blue, 254 }  ,draw opacity=1 ][fill={rgb, 255:red, 144; green, 19; blue, 254 }  ,fill opacity=1 ][line width=0.75]      (0, 0) circle [x radius= 3.35, y radius= 3.35]   ;
\draw [color={rgb, 255:red, 144; green, 19; blue, 254 }  ,draw opacity=1 ]   (531.43,151.1) ;
\draw [shift={(531.43,151.1)}, rotate = 0] [color={rgb, 255:red, 144; green, 19; blue, 254 }  ,draw opacity=1 ][fill={rgb, 255:red, 144; green, 19; blue, 254 }  ,fill opacity=1 ][line width=0.75]      (0, 0) circle [x radius= 3.35, y radius= 3.35]   ;

\draw (533.91,132.8) node [anchor=north west][inner sep=0.75pt]  [color={rgb, 255:red, 144; green, 19; blue, 254 }  ,opacity=1 ]  {$x$};
\draw (442.7,72.98) node [anchor=north west][inner sep=0.75pt]  [color={rgb, 255:red, 144; green, 19; blue, 254 }  ,opacity=1 ]  {$y$};
\draw (427.52,117.86) node [anchor=north west][inner sep=0.75pt]  [font=\footnotesize,color={rgb, 255:red, 65; green, 117; blue, 5 }  ,opacity=1 ]  {$u$};
\draw (538.73,121.37) node [anchor=north west][inner sep=0.75pt]  [font=\footnotesize,color={rgb, 255:red, 65; green, 117; blue, 5 }  ,opacity=1 ]  {$v$};
\draw (470.11,94.95) node [anchor=north west][inner sep=0.75pt]  [color={rgb, 255:red, 65; green, 117; blue, 5 }  ,opacity=1 ]  {$p$};
\draw (425.24,146.78) node [anchor=north west][inner sep=0.75pt]  [font=\tiny,rotate=-327.68]  {$\vdots $};
\draw (484.4,154.6) node [anchor=north west][inner sep=0.75pt]  [color={rgb, 255:red, 155; green, 155; blue, 155 }  ,opacity=0.9 ]  {$U_{0}$};

\end{tikzpicture}

  \captionof{figure}{There exists a path $p$ in $\Gamma$ passing through $U_0$ which separates $x$ from $y$ in this face.}
  \label{fig:separating-path}
\end{minipage}%
\begin{minipage}{.5\textwidth}
  \centering

\tikzset{every picture/.style={line width=0.75pt}} 

\begin{tikzpicture}[x=0.75pt,y=0.75pt,yscale=-1,xscale=1]

\draw  [draw opacity=0][fill={rgb, 255:red, 155; green, 155; blue, 155 }  ,fill opacity=0.46 ] (422.1,136.25) .. controls (422.1,88.89) and (460.49,50.5) .. (507.85,50.5) .. controls (555.21,50.5) and (593.6,88.89) .. (593.6,136.25) .. controls (593.6,183.61) and (555.21,222) .. (507.85,222) .. controls (460.49,222) and (422.1,183.61) .. (422.1,136.25) -- cycle ;
\draw  [color={rgb, 255:red, 0; green, 0; blue, 0 }  ,draw opacity=1 ][fill={rgb, 255:red, 255; green, 255; blue, 255 }  ,fill opacity=1 ] (439.69,136.25) .. controls (439.69,98.6) and (470.2,68.09) .. (507.85,68.09) .. controls (545.5,68.09) and (576.01,98.6) .. (576.01,136.25) .. controls (576.01,173.9) and (545.5,204.41) .. (507.85,204.41) .. controls (470.2,204.41) and (439.69,173.9) .. (439.69,136.25) -- cycle ;
\draw    (475.49,196.15) -- (477.87,191.02) ;
\draw  [fill={rgb, 255:red, 155; green, 155; blue, 155 }  ,fill opacity=0.53 ] (476.47,187.61) .. controls (477.02,186.28) and (478.55,185.65) .. (479.88,186.21) .. controls (481.21,186.76) and (481.83,188.29) .. (481.28,189.62) .. controls (480.72,190.95) and (479.2,191.57) .. (477.87,191.02) .. controls (476.54,190.46) and (475.91,188.94) .. (476.47,187.61) -- cycle ;
\draw  [fill={rgb, 255:red, 155; green, 155; blue, 155 }  ,fill opacity=0.53 ] (478.64,180.92) .. controls (478.94,180.2) and (479.77,179.86) .. (480.48,180.16) .. controls (481.2,180.46) and (481.54,181.28) .. (481.24,182) .. controls (480.94,182.72) and (480.12,183.06) .. (479.4,182.76) .. controls (478.68,182.46) and (478.34,181.63) .. (478.64,180.92) -- cycle ;
\draw    (479.88,186.21) -- (479.4,182.76) ;
\draw  [fill={rgb, 255:red, 155; green, 155; blue, 155 }  ,fill opacity=0.53 ] (486.19,185.25) .. controls (486.96,185.33) and (487.53,186.02) .. (487.45,186.8) .. controls (487.37,187.57) and (486.68,188.13) .. (485.9,188.05) .. controls (485.13,187.97) and (484.57,187.28) .. (484.65,186.51) .. controls (484.73,185.74) and (485.42,185.17) .. (486.19,185.25) -- cycle ;
\draw    (481.48,187.97) -- (484.65,186.51) ;

\draw    (465.67,190.08) -- (468.03,187.46) ;
\draw  [fill={rgb, 255:red, 155; green, 155; blue, 155 }  ,fill opacity=0.53 ] (467.81,185.17) .. controls (468.39,184.48) and (469.41,184.38) .. (470.1,184.96) .. controls (470.79,185.53) and (470.89,186.55) .. (470.31,187.24) .. controls (469.74,187.93) and (468.72,188.03) .. (468.03,187.46) .. controls (467.34,186.88) and (467.24,185.86) .. (467.81,185.17) -- cycle ;
\draw  [fill={rgb, 255:red, 155; green, 155; blue, 155 }  ,fill opacity=0.53 ] (470.33,181.58) .. controls (470.64,181.21) and (471.19,181.15) .. (471.56,181.46) .. controls (471.94,181.77) and (471.99,182.33) .. (471.68,182.7) .. controls (471.37,183.07) and (470.82,183.12) .. (470.44,182.81) .. controls (470.07,182.5) and (470.02,181.95) .. (470.33,181.58) -- cycle ;
\draw    (470.1,184.96) -- (470.44,182.81) ;
\draw  [fill={rgb, 255:red, 155; green, 155; blue, 155 }  ,fill opacity=0.53 ] (474.04,185.54) .. controls (474.48,185.73) and (474.69,186.24) .. (474.51,186.69) .. controls (474.32,187.13) and (473.8,187.34) .. (473.36,187.15) .. controls (472.91,186.97) and (472.7,186.45) .. (472.89,186.01) .. controls (473.08,185.56) and (473.59,185.35) .. (474.04,185.54) -- cycle ;
\draw    (470.74,186.3) -- (472.89,186.01) ;
\draw    (459.22,184.32) -- (460.76,182.6) ;
\draw  [fill={rgb, 255:red, 155; green, 155; blue, 155 }  ,fill opacity=0.53 ] (460.62,181.1) .. controls (461,180.65) and (461.67,180.58) .. (462.12,180.96) .. controls (462.58,181.34) and (462.64,182.01) .. (462.26,182.46) .. controls (461.89,182.91) and (461.21,182.98) .. (460.76,182.6) .. controls (460.31,182.22) and (460.25,181.55) .. (460.62,181.1) -- cycle ;
\draw  [fill={rgb, 255:red, 155; green, 155; blue, 155 }  ,fill opacity=0.53 ] (462.27,178.74) .. controls (462.48,178.5) and (462.84,178.46) .. (463.08,178.67) .. controls (463.33,178.87) and (463.36,179.23) .. (463.16,179.48) .. controls (462.96,179.72) and (462.59,179.75) .. (462.35,179.55) .. controls (462.1,179.35) and (462.07,178.99) .. (462.27,178.74) -- cycle ;
\draw    (462.12,180.96) -- (462.35,179.55) ;
\draw  [fill={rgb, 255:red, 155; green, 155; blue, 155 }  ,fill opacity=0.53 ] (464.71,181.34) .. controls (465,181.46) and (465.14,181.8) .. (465.02,182.1) .. controls (464.89,182.39) and (464.55,182.53) .. (464.26,182.4) .. controls (463.97,182.28) and (463.83,181.94) .. (463.95,181.65) .. controls (464.08,181.36) and (464.42,181.22) .. (464.71,181.34) -- cycle ;
\draw    (462.54,181.84) -- (463.95,181.65) ;
\draw    (454.46,178.83) -- (456.14,177.82) ;
\draw  [fill={rgb, 255:red, 155; green, 155; blue, 155 }  ,fill opacity=0.53 ] (456.4,176.57) .. controls (456.82,176.3) and (457.38,176.41) .. (457.65,176.83) .. controls (457.92,177.25) and (457.8,177.81) .. (457.39,178.08) .. controls (456.97,178.35) and (456.41,178.24) .. (456.14,177.82) .. controls (455.87,177.4) and (455.98,176.84) .. (456.4,176.57) -- cycle ;
\draw  [fill={rgb, 255:red, 155; green, 155; blue, 155 }  ,fill opacity=0.53 ] (458.32,175.08) .. controls (458.55,174.93) and (458.85,174.99) .. (459,175.22) .. controls (459.15,175.44) and (459.08,175.74) .. (458.86,175.89) .. controls (458.63,176.04) and (458.33,175.98) .. (458.18,175.75) .. controls (458.04,175.52) and (458.1,175.22) .. (458.32,175.08) -- cycle ;
\draw    (457.65,176.83) -- (458.18,175.75) ;
\draw  [fill={rgb, 255:red, 155; green, 155; blue, 155 }  ,fill opacity=0.53 ] (459.64,177.79) .. controls (459.85,177.96) and (459.88,178.27) .. (459.7,178.47) .. controls (459.53,178.68) and (459.22,178.7) .. (459.02,178.53) .. controls (458.81,178.36) and (458.78,178.05) .. (458.96,177.85) .. controls (459.13,177.64) and (459.44,177.61) .. (459.64,177.79) -- cycle ;
\draw    (457.77,177.65) -- (458.96,177.85) ;
\draw    (515.03,67.82) -- (516.7,81) ;
\draw  [fill={rgb, 255:red, 155; green, 155; blue, 155 }  ,fill opacity=0.53 ] (523.74,86.03) .. controls (524.3,89.37) and (522.04,92.52) .. (518.71,93.07) .. controls (515.38,93.63) and (512.22,91.37) .. (511.67,88.04) .. controls (511.12,84.7) and (513.37,81.55) .. (516.7,81) .. controls (520.04,80.44) and (523.19,82.7) .. (523.74,86.03) -- cycle ;
\draw  [fill={rgb, 255:red, 155; green, 155; blue, 155 }  ,fill opacity=0.53 ] (527.77,102.06) .. controls (528.07,103.86) and (526.85,105.56) .. (525.06,105.86) .. controls (523.26,106.16) and (521.55,104.94) .. (521.25,103.14) .. controls (520.96,101.34) and (522.17,99.64) .. (523.97,99.34) .. controls (525.77,99.04) and (527.47,100.26) .. (527.77,102.06) -- cycle ;
\draw    (518.71,93.07) -- (523.97,99.34) ;
\draw  [fill={rgb, 255:red, 155; green, 155; blue, 155 }  ,fill opacity=0.53 ] (507.34,102.85) .. controls (505.71,103.66) and (503.73,102.99) .. (502.92,101.35) .. controls (502.11,99.72) and (502.78,97.74) .. (504.42,96.93) .. controls (506.06,96.12) and (508.04,96.79) .. (508.84,98.43) .. controls (509.65,100.06) and (508.98,102.05) .. (507.34,102.85) -- cycle ;
\draw    (513.32,91.58) -- (508.84,98.43) ;

\draw    (511.62,85.62) -- (500.08,92.21) ;
\draw  [fill={rgb, 255:red, 155; green, 155; blue, 155 }  ,fill opacity=0.53 ] (498.13,100.64) .. controls (495.26,102.43) and (491.48,101.56) .. (489.7,98.69) .. controls (487.91,95.82) and (488.78,92.05) .. (491.65,90.26) .. controls (494.52,88.47) and (498.29,89.34) .. (500.08,92.21) .. controls (501.87,95.08) and (500.99,98.85) .. (498.13,100.64) -- cycle ;
\draw  [fill={rgb, 255:red, 155; green, 155; blue, 155 }  ,fill opacity=0.53 ] (484.86,110.5) .. controls (483.32,111.46) and (481.28,110.99) .. (480.31,109.44) .. controls (479.35,107.9) and (479.82,105.86) .. (481.37,104.89) .. controls (482.92,103.93) and (484.95,104.4) .. (485.92,105.95) .. controls (486.88,107.5) and (486.41,109.53) .. (484.86,110.5) -- cycle ;
\draw    (489.7,98.69) -- (485.92,105.95) ;
\draw  [fill={rgb, 255:red, 155; green, 155; blue, 155 }  ,fill opacity=0.53 ] (476.31,91.93) .. controls (474.94,90.73) and (474.8,88.64) .. (476,87.27) .. controls (477.2,85.9) and (479.29,85.76) .. (480.66,86.96) .. controls (482.03,88.17) and (482.17,90.25) .. (480.97,91.62) .. controls (479.77,93) and (477.68,93.13) .. (476.31,91.93) -- cycle ;
\draw    (489.01,93.14) -- (480.97,91.62) ;

\draw    (559.26,180.31) -- (552.12,176.3) ;
\draw  [fill={rgb, 255:red, 155; green, 155; blue, 155 }  ,fill opacity=0.53 ] (547.04,177.93) .. controls (545.19,176.97) and (544.46,174.7) .. (545.41,172.85) .. controls (546.36,170.99) and (548.64,170.26) .. (550.49,171.22) .. controls (552.34,172.17) and (553.07,174.44) .. (552.12,176.3) .. controls (551.17,178.15) and (548.9,178.88) .. (547.04,177.93) -- cycle ;
\draw  [fill={rgb, 255:red, 155; green, 155; blue, 155 }  ,fill opacity=0.53 ] (537.64,174.02) .. controls (536.64,173.51) and (536.24,172.28) .. (536.76,171.28) .. controls (537.27,170.28) and (538.5,169.89) .. (539.5,170.4) .. controls (540.5,170.92) and (540.89,172.14) .. (540.38,173.14) .. controls (539.86,174.14) and (538.64,174.54) .. (537.64,174.02) -- cycle ;
\draw    (545.41,172.85) -- (540.38,173.14) ;
\draw  [fill={rgb, 255:red, 155; green, 155; blue, 155 }  ,fill opacity=0.53 ] (544.76,163.63) .. controls (544.96,162.52) and (546.02,161.79) .. (547.12,161.99) .. controls (548.23,162.19) and (548.96,163.26) .. (548.76,164.36) .. controls (548.56,165.47) and (547.5,166.2) .. (546.39,166) .. controls (545.29,165.8) and (544.55,164.73) .. (544.76,163.63) -- cycle ;
\draw    (548.13,170.73) -- (546.39,166) ;

\draw [color={rgb, 255:red, 65; green, 117; blue, 5 }  ,draw opacity=1 ]   (443.41,155.1) .. controls (482.33,156) and (478,138.67) .. (511,130) ;
\draw [shift={(511,130)}, rotate = 345.28] [color={rgb, 255:red, 65; green, 117; blue, 5 }  ,draw opacity=1 ][fill={rgb, 255:red, 65; green, 117; blue, 5 }  ,fill opacity=1 ][line width=0.75]      (0, 0) circle [x radius= 1.34, y radius= 1.34]   ;
\draw [shift={(443.41,155.1)}, rotate = 1.32] [color={rgb, 255:red, 65; green, 117; blue, 5 }  ,draw opacity=1 ][fill={rgb, 255:red, 65; green, 117; blue, 5 }  ,fill opacity=1 ][line width=0.75]      (0, 0) circle [x radius= 1.34, y radius= 1.34]   ;
\draw [color={rgb, 255:red, 65; green, 117; blue, 5 }  ,draw opacity=1 ]   (511,130) .. controls (515.8,111.2) and (504.33,114.33) .. (516.48,92.5) ;
\draw [shift={(516.48,92.5)}, rotate = 299.08] [color={rgb, 255:red, 65; green, 117; blue, 5 }  ,draw opacity=1 ][fill={rgb, 255:red, 65; green, 117; blue, 5 }  ,fill opacity=1 ][line width=0.75]      (0, 0) circle [x radius= 1.34, y radius= 1.34]   ;
\draw [shift={(511,130)}, rotate = 284.32] [color={rgb, 255:red, 65; green, 117; blue, 5 }  ,draw opacity=1 ][fill={rgb, 255:red, 65; green, 117; blue, 5 }  ,fill opacity=1 ][line width=0.75]      (0, 0) circle [x radius= 1.34, y radius= 1.34]   ;
\draw [color={rgb, 255:red, 65; green, 117; blue, 5 }  ,draw opacity=1 ]   (536.76,171.28) .. controls (516.76,162.28) and (532.33,144.67) .. (511,130) ;
\draw [shift={(511,130)}, rotate = 214.51] [color={rgb, 255:red, 65; green, 117; blue, 5 }  ,draw opacity=1 ][fill={rgb, 255:red, 65; green, 117; blue, 5 }  ,fill opacity=1 ][line width=0.75]      (0, 0) circle [x radius= 1.34, y radius= 1.34]   ;
\draw [shift={(536.76,171.28)}, rotate = 204.23] [color={rgb, 255:red, 65; green, 117; blue, 5 }  ,draw opacity=1 ][fill={rgb, 255:red, 65; green, 117; blue, 5 }  ,fill opacity=1 ][line width=0.75]      (0, 0) circle [x radius= 1.34, y radius= 1.34]   ;
\draw [color={rgb, 255:red, 144; green, 19; blue, 254 }  ,draw opacity=1 ]   (552.12,176.3) ;
\draw [shift={(552.12,176.3)}, rotate = 0] [color={rgb, 255:red, 144; green, 19; blue, 254 }  ,draw opacity=1 ][fill={rgb, 255:red, 144; green, 19; blue, 254 }  ,fill opacity=1 ][line width=0.75]      (0, 0) circle [x radius= 3.35, y radius= 3.35]   ;
\draw [color={rgb, 255:red, 144; green, 19; blue, 254 }  ,draw opacity=1 ]   (443.41,155.1) ;
\draw [shift={(443.41,155.1)}, rotate = 0] [color={rgb, 255:red, 144; green, 19; blue, 254 }  ,draw opacity=1 ][fill={rgb, 255:red, 144; green, 19; blue, 254 }  ,fill opacity=1 ][line width=0.75]      (0, 0) circle [x radius= 3.35, y radius= 3.35]   ;
\draw [color={rgb, 255:red, 144; green, 19; blue, 254 }  ,draw opacity=1 ]   (516.7,81) ;
\draw [shift={(516.7,81)}, rotate = 0] [color={rgb, 255:red, 144; green, 19; blue, 254 }  ,draw opacity=1 ][fill={rgb, 255:red, 144; green, 19; blue, 254 }  ,fill opacity=1 ][line width=0.75]      (0, 0) circle [x radius= 3.35, y radius= 3.35]   ;

\draw (492.04,68.2) node [anchor=north west][inner sep=0.75pt]  [color={rgb, 255:red, 144; green, 19; blue, 254 }  ,opacity=1 ]  {$x_{2}$};
\draw (444.83,133.31) node [anchor=north west][inner sep=0.75pt]  [color={rgb, 255:red, 144; green, 19; blue, 254 }  ,opacity=1 ]  {$x_{1}$};
\draw (529.23,173.31) node [anchor=north west][inner sep=0.75pt]  [color={rgb, 255:red, 144; green, 19; blue, 254 }  ,opacity=1 ]  {$x_{3}$};
\draw (445.24,166.78) node [anchor=north west][inner sep=0.75pt]  [font=\tiny,rotate=-327.68]  {$\vdots $};
\draw (491.73,177.93) node [anchor=north west][inner sep=0.75pt]  [color={rgb, 255:red, 155; green, 155; blue, 155 }  ,opacity=0.9 ]  {$U_{0}$};

\end{tikzpicture}

  \captionof{figure}{If $x_1$, $x_2$, $x_3$ pairwise lie on subfaces of $U_0$, but share no common face, then a tripod as above must exist in $\overline \Gamma$.}
  \label{fig:face-tripod}
\end{minipage}
\end{figure}

\section{Quasi-actions on planar graphs}\label{sec:quasi-act-planar}

The next technical challenge is to study quasi-actions on planar graphs and their subgraphs. We open with a discussion of what is to come. 

\subsection{Initial discussion}\label{sec:quasi-discuss}

The following definition will be useful for brevity. 

\begin{definition}[Quasi-planar tuple]\label{def:qp}
    A \textit{quasi-planar tuple} is a sextuple 
    $$
    \Qp = (G, X, \Gamma, \vartheta,  \varphi, \psi),
    $$
    satisfying the following:
    \begin{enumerate}
        \item $X$ is a connected, locally finite, multi-ended graph,
        \item $G$ is a group acting quasi-transitively on $X$, 
        \item $\Gamma$ is a connected, bounded-degree  planar graph with good drawing $\vartheta : \overline \Gamma \into \bbS^2$,
        \item The map $\varphi : X \onto \Gamma$ is a continuous surjective quasi-isometry with continuous quasi-inverse $\psi : \Gamma \to X$. 
    \end{enumerate}
\end{definition}

By Propositions~\ref{prop:qi-wlog}, \ref{prop:cts-inverse}, we lose nothing by assuming that $\Gamma$ is bounded-degree, or that the quasi-isometries are continuous. The reason we want $\varphi$ to be surjective is so we can apply Lemma~\ref{lem:onto-qi} later in this section. 

Note that there is an induced quasi-action of $G$ upon $\Gamma$ (see Definition~\ref{def:quasi-action}). The following definition sets out criteria for this quasi-action to be `nice'.

\begin{definition}[Well-behaved quasi-planar tuple]\label{def:good-behavior}
    Let $\Qp = (G, X, \Gamma, \vartheta,  \varphi, \psi)$ be a quasi-planar tuple. We then define two \textit{good behaviour} properties as follows:

    \begin{enumerate}
        \myitem{(GB1)}\label{itm:gb1} There exists $m, n > 0$ such that the following holds. Let $f \in \facepaths(\Gamma)$ satisfy $\diam_\Gamma(f) > n$, then for all $g \in G$ there exists $f' \in \facepaths(\Gamma)$ such that
        $$
        \dHaus[\Gamma](f', \varphi_g(f)) < m. 
        $$

        \myitem{(GB2)}\label{itm:gb2} $\Gamma$ is 2-connected. 
    \end{enumerate}
    We say that $\Qp$ is \textit{well-behaved} if both of the above are satisfied. 
\end{definition}

The property \ref{itm:gb1} should be interpreted as a coarse analogue to Whitney's unique planar embedding theorem (see Theorem~\ref{thm:whitney}) and essentially says that the drawing of $\Gamma$ is `coarsely preserved' by the induced quasi-action. The reason we ask for \ref{itm:gb2} is essentially for hygiene purposes, as this ensures that faces of our planar graph are always bound by simple closed curves. 

Recall that $\finfaces (\Gamma)$ denotes the set of compact facial subgraphs of $\Gamma$. Note that \ref{itm:gb1} has the following consequence. 

\begin{proposition}\label{prop:bounded-faces}
    Let $\Qp = (G, X, \Gamma, \vartheta,  \varphi, \psi)$ be a quasi-planar tuple, and suppose $\Qp$ satisfies \ref{itm:gb1}. Then there exists $R > 0$ such that 
    $
    \diam_\Gamma(f) < R 
    $
    for all $f \in \finfaces (\Gamma)$. 
\end{proposition}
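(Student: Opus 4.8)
\textbf{Proof plan for Proposition~\ref{prop:bounded-faces}.}

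The idea is to combine the coarse Whitney property \ref{itm:gb1} with the quasi-transitivity of the $G$-action via the cobounded induced quasi-action on $\Gamma$. First I would set $n, m > 0$ to be the constants supplied by \ref{itm:gb1}, and fix $\lambda, B \geq 1$ such that the induced quasi-action of $G$ on $\Gamma$ is a $B$-cobounded $\lambda$-quasi-action (this exists because $X$ is quasi-transitive and $\varphi$ is a quasi-isometry, as recorded at the end of Section~\ref{sec:qi}). Suppose for contradiction that there is a sequence $(f_i)_{i \geq 1}$ of compact facial subgraphs with $\diam_\Gamma(f_i) \to \infty$. For each sufficiently large $i$ we have $\diam_\Gamma(f_i) > n$, so \ref{itm:gb1} applies to $f_i$ under any $\varphi_g$.

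The key step is to argue that every compact face of $\Gamma$ is in fact coarsely controlled by a single orbit of faces. Since $\varphi$ is coarsely surjective, $\varphi(VX)$ is $\varepsilon$-dense in $\Gamma$ for some $\varepsilon \geq 0$. Pick any $f \in \finfaces \Gamma$ with $\diam_\Gamma(f) > n$ and pick a vertex $x_f$ on it. Choosing $y \in X$ with $\dist_\Gamma(\varphi(y), x_f) \leq \varepsilon$, and then using quasi-transitivity to move $y$ into one of finitely many orbit representatives $y_1, \dots, y_k$, we obtain some $g \in G$ with $\dist_\Gamma(x_f, \varphi_g(\varphi(y_j))) \leq \varepsilon'$ for a uniform $\varepsilon'$; thus $\varphi_{g^{-1}}$ moves $f$ so that it passes within a uniformly bounded distance of one of the points $\varphi(y_j)$. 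Apply \ref{itm:gb1} to get $f' \in \facepaths\Gamma$ with $\dHaus[\Gamma](f', \varphi_{g^{-1}}(f)) < m$; then $f'$ passes within a uniform bound of some $\varphi(y_j)$. Since $\varphi_{g^{-1}}$ is a $(\lambda,\lambda)$-quasi-isometry, $\diam_\Gamma(f') \geq \tfrac{1}{\lambda}\diam_\Gamma(f) - (\text{bounded error})$, so $\diam_\Gamma(f') \to \infty$ along the bad sequence as well. But the faces $f'$ all pass within a uniformly bounded distance of the finite set $\{\varphi(y_1), \dots, \varphi(y_k)\}$, and $\Gamma$ is bounded valence and locally finite, so there are only finitely many facial subgraphs of $\Gamma$ meeting the bounded neighbourhood of this finite set --- a contradiction with unbounded diameter. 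Setting $R$ to be one more than the maximum diameter of a face of diameter $\leq n$ together with the finitely many exceptional faces identified above (or just arguing directly that $\diam_\Gamma(f') \le n$ is impossible for large $i$) finishes the proof.

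The main obstacle I anticipate is making the bookkeeping of constants precise: one needs to check carefully that applying \ref{itm:gb1} with respect to $\varphi_{g^{-1}}$ (rather than conjugating the face into a fixed fundamental domain and back) genuinely yields a face $f'$ of comparable diameter sitting in a \emph{uniformly} bounded neighbourhood of the finite set $\{\varphi(y_j)\}$, independently of $i$. This is where coboundedness of the quasi-action and bounded valence of $\Gamma$ are both essential. Once that uniformity is in hand, the finiteness argument (only boundedly many faces touch a bounded ball in a locally finite bounded-valence planar graph, since the facial subgraph bordering a face $U$ is determined by $\partial U$, which is locally determined) is routine.
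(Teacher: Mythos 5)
Your proposal is correct and uses essentially the same argument as the paper: translate a putative sequence of compact faces of unbounded diameter to a bounded region using coboundedness of the quasi-action, apply \ref{itm:gb1} to produce genuine faces of $\Gamma$ of unbounded diameter passing through that bounded region, and derive a contradiction with local finiteness. Your detour through orbit representatives $y_1,\dots,y_k$ in $X$ is unnecessary (the coboundedness of the induced quasi-action on $\Gamma$, which you already invoke, gives the needed $g$ directly), but this is a cosmetic rather than substantive difference.
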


\begin{proof}
    Suppose to the contrary that $\Gamma$ contains a sequence $(f_n)_{n \geq 1}$, $f_n \in \finfaces (\Gamma)$ of compact facial subgraphs such that $\diam(f_n) \geq n$ for all $n \geq 1$. Let $v_0 \in \Gamma$ be arbitrary. Pick a vertex $v_n$ on each $f_n$ and let $g_n \in G$ be such that $\varphi_{g_n}(v_n)$ lies within a bounded distance of $v_0$ (recall the quasi-action of $G$ on $\Gamma$ is cobounded). By \ref{itm:gb1} we deduce that a bounded neighbourhood of $v_0$ must intersect compact facial subgraphs of arbitrarily large diameter. This contradicts the fact that $\Gamma$ is locally finite. 
\end{proof}

In \S\ref{sec:accessibility}, we will see that good behavior is enough to deduce accessibility. The sole purpose of \S\ref{sec:quasi-act-planar} is to reduce the general problem to the well-behaved case. In particular, we will prove the following statement, which we state now for the convenience of the reader. 

\begin{restatable}[Tree decomposition with well-behaved parts]{theorem}{treedecomp}\label{thm:2-conn-to-wb}
    Let $X$ be a connected, locally finite, quasi-transitive graph, and suppose that $X$ is quasi-isometric to some connected, locally finite planar graph $\Gamma$ with good drawing $\vartheta$. Then there exists a $G$-canonical tree decomposition $(T, \cV)$ of $X$ with bounded adhesion and $T/G$ compact, where $\cV = (V_u)_{u \in V(T)}$, such that each part $X_u := X[V_u]$ satisfies the following:
    \begin{enumerate}
        \item $X_u$ is connected and quasi-transitive, 

        \item One of the following holds: 

        \begin{enumerate}
            \item $X_u$ has at most one end, or

        \item There is a well-behaved quasi-planar tuple 
        $$
        \Qp_u = (G_u, X_u, \Gamma_u, \vartheta_u, \varphi_u, \psi_u),
        $$
        where $\Gamma_u \subset \Gamma$ and $\vartheta_u$ is the restriction of $\vartheta$ to the closure of $\Gamma_u$ in $\overline \Gamma$.

        \end{enumerate}
    \end{enumerate}
\end{restatable}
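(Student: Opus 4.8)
The proof will proceed by iterated splitting, in the spirit of the sketch in the introduction (step (I)–(III)), but the three key reductions all fold into producing the tree decomposition. The plan is to first handle the easy reductions — passing to a 2-connected situation via Lemma~\ref{lem:2-conn-subgraph} and building an initial finite splitting whose parts are highly connected — and then to bootstrap connectivity repeatedly until property~\ref{itm:gb1} falls out. More precisely, I would argue as follows. If $X$ is one-ended there is nothing to do, so assume $X$ is multi-ended. Using the Dicks--Dunwoody generating set (Theorem~\ref{thm:dicks-dunwoody-genset}) together with Theorem~\ref{thm:vertex-subgraph}, for each $n$ we obtain a $G$-canonical tree decomposition $(T_n, \cV^{(n)})$ with bounded adhesion, connected parts, $T_n/G$ compact, and $\es(X_v) \geq n$ for every part. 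Properties (1) and (2)(a)-vs-(2)(b) of the theorem are then essentially bookkeeping: a part either has at most one end, or by Proposition~\ref{prop:tree-decomp-acc}(3) it is multi-ended and its stabiliser acts quasi-transitively on it.

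\textbf{Transferring the planar structure to parts.} The substantive point is to equip each multi-ended part $X_u$ with a well-behaved quasi-planar tuple $\Qp_u = (G_u, X_u, \Gamma_u, \vartheta_u, \varphi_u, \psi_u)$. Here is where the earlier machinery does the heavy lifting. Since $X_u$ is quasi-transitively stabilised with uniform coboundary (Proposition~\ref{prop:tree-decomp-acc}(1)--(2)), Proposition~\ref{prop:uniform-boundary-qie} says the inclusion $X_u \into X$ is a quasi-isometric embedding, so $\varphi|_{X_u}$ is a quasi-isometry onto its image in $\Gamma$ after restricting appropriately (Proposition~\ref{prop:restriction-cts}). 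By Lemma~\ref{lem:qi-increase-cuts}, the increased cut-size $\es(X_u) \geq n$ translates through $\varphi$ into $\vs(\varphi(X_u)) \geq Cn$; choosing $n$ large enough, Lemma~\ref{lem:2-conn-subgraph} makes $\varphi(X_u)$ almost 2-connected. Now Theorem~\ref{thm:friendly-faces} applies: thickening $X_u$ slightly, $\varphi(B_X(X_u;\delta))$ is a friendly-faced subgraph $\Gamma_u \subset \Gamma$, and $\vartheta_u$ is the restriction of $\vartheta$ to its closure (legitimate by Remark~\ref{rmk:good-drawing-subgraph}). Replacing $X_u$ by $B_X(X_u;\delta)$ — which is still quasi-transitively stabilised with uniform coboundary, and still gives a canonical tree decomposition by Proposition~\ref{prop:replace-parts-with-nbhd} — we may assume $\Gamma_u = \varphi(X_u)$. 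Property~\ref{itm:gb2} (2-connectedness of $\Gamma_u$) is then arranged by passing to the 2-connected core of the almost 2-connected graph $\Gamma_u$, which is friendly-faced and quasi-isometric to $\Gamma_u$ (the proposition after Remark~\ref{rmk:simple-body}), and absorbing the bounded cactus pieces into the parts again.

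\textbf{The main obstacle: property~\ref{itm:gb1}.} The crux, and the step I expect to be hardest, is verifying the coarse Whitney property~\ref{itm:gb1} for the induced quasi-action of $G_u = \Stab(u)$ on $\Gamma_u$. The friendly-faced conclusion of Theorem~\ref{thm:friendly-faces} tells us that faces of $\Gamma_u$ are coarsely approximated by faces of the ambient $\Gamma$, and the inclusion $\Gamma_u \hookrightarrow \Gamma$ is quasi-isometric; the quasi-action of $G_u$ on $\Gamma_u$ is (up to bounded error) the restriction of the quasi-action of $G$ on $\Gamma$, which preserves $\Gamma$'s own faces only up to the bounded Hausdorff error of Whitney's theorem applied to $\Gamma$ itself — but $\Gamma$ need not be 3-connected, so Whitney does not literally apply. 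The honest resolution is that \ref{itm:gb1} for $\Gamma_u$ is \emph{not} obtained from $\Gamma$ directly but must be extracted from the high connectivity $\vs(\Gamma_u) \geq Cn$ together with planarity: a large face cycle $f$ of $\Gamma_u$ is a cut whose two sides each contain many disjoint bi-infinite rays (Corollary~\ref{cor:pw-disjoint-rays}), and a quasi-isometry $\varphi_g$ must send these to many disjoint rays on the two sides of $\varphi_g(f)$; combining this with Lemma~\ref{lem:bounded-diam-coarse} (bounded cuts have bounded diameter) and the planarity of $\Gamma_u$ forces $\varphi_g(f)$ to lie uniformly close to an actual face cycle of $\Gamma_u$. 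This last implication — that a quasi-geodesic loop in a highly connected planar graph that coarsely separates the graph must track a genuine facial subgraph — is the technical heart; I anticipate it requires a careful Jordan-curve argument together with the face-approximation estimates of Section~\ref{sec:friendly-faces}, and should be isolated as its own lemma. Once \ref{itm:gb1} is in hand, well-behavedness of each $\Qp_u$ follows, and assembling the parts (using Proposition~\ref{prop:replace-parts-with-nbhd} once more to re-thicken as needed) completes the construction of $(T, \cV)$.
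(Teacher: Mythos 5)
Your outline follows the paper's approach quite closely: split off highly-connected, friendly-faced parts via Theorem~\ref{thm:vertex-subgraph} and Lemma~\ref{lem:qi-increase-cuts}, verify \ref{itm:gb1} via planarity and Jordan-curve arguments, and obtain \ref{itm:gb2} by passing to the 2-connected core (Lemma~\ref{lem:wbqp-2conn}). You correctly flag the verification of \ref{itm:gb1} as the technical heart and correctly anticipate that it will rest on Menger-type disjoint-ray arguments, Jordan curves, and the face-intersection bounds of Lemma~\ref{lem:bounded-diam-coarse}; the paper indeed isolates this as Lemmas~\ref{lem:shared-face-preserved} and~\ref{lem:translate-big-faces}, using Lemma~\ref{lem:jordan-curve-finite} and Lemmas~\ref{lem:two-faces-small-intersection},~\ref{lem:three-faces-small-intersection}.

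There is, however, a genuine gap that your proposal does not resolve and arguably does not quite identify: the circularity in the choice of quasi-isometry constants. You want to use $\vs(\Gamma_u) \geq Cn$ (with $n$ ``large enough'') to find three bi-infinite paths that remain pairwise far apart, push them around by the quasi-action of $G_u$, and read off a separating Jordan curve. But the quasi-action of $G_u$ on $\Gamma_u$ is \emph{not} simply the restriction of the ambient quasi-action of $G$ on $\Gamma$: when you restrict $\varphi$ and form a quasi-inverse whose codomain is $X_u$, the quasi-isometry constants inflate, and the new constant may depend on $X_u$ itself. So ``large enough $n$'' is a moving target, and Corollary~\ref{cor:pw-disjoint-rays} applied inside $\Gamma_u$ only gives separation in the $\Gamma_u$-metric, not the amount of separation you need to defeat the inflated constants. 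Your remark that ``the quasi-action of $G_u$ on $\Gamma_u$ is (up to bounded error) the restriction'' glosses over exactly where the bound comes from. The paper's resolution is a bookkeeping device that you omit: work with a \emph{nested pair} $\Lambda = \varphi(X_u) \subset \Pi = \varphi(B_X(X_u;\lambda))$ and choose the quasi-inverse $\nu : \Pi \to Y$ so that $\nu|_\Lambda = \psi|_\Lambda$ (Lemma~\ref{lem:qa-agrees}); this forces the induced quasi-action on $\Pi$ to agree with the ambient quasi-action on the inner subgraph $\Lambda$. One then requires only $\vs(\Lambda) > 3\lambda^2 + 3$ with $\lambda$ the fixed ambient constant (Lemma~\ref{lem:disjoint-paths}), proves \ref{itm:gb1} for $\Pi$ by approximating faces of $\Pi$ by faces of $\Lambda$ (Lemma~\ref{lem:approx-faces}), and finally passes to the 2-connected core. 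Without this inner/outer device, the ``choose $n$ large enough'' step in your argument cannot be made uniform, and the whole chain does not close.
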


Apart from this main theorem, no other results of \S\ref{sec:quasi-act-planar} will make a second appearance outside of this section. Thus, the reader who is short on time may now skip to \S\ref{sec:accessibility}, if they so wish.

\subsection{Intersecting neighbourhoods of facial subgraphs}
We need to prove technical lemmas, which tell us that if $\Gamma$ is a planar graph which is quasi-isometric to some quasi-transitive graph $X$, then intersections of tubular neighbourhoods of facial-subgraphs are boundedly small.

\begin{lemma}\label{lem:two-faces-small-intersection}
    Let $X$ be a connected, quasi-transitive, locally finite graph. Let $\Gamma$ be a connected, locally finite, planar graph. Suppose further that $\Gamma$ is almost 2-connected. Let $\varphi : X \to \Gamma$ be a quasi-isometry.
    Then for every $r > 0$ there exists $m= m(r) > 0$ such that the following holds. Let $f_1, f_2 \in \facepaths(\Gamma)$ be distinct. Then
    $$
    \diam_\Gamma\big( B_\Gamma(f_1; r) \cap B_\Gamma(f_2; r) \big) \leq m.
    $$
\end{lemma}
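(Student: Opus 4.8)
The plan is to argue by contradiction using the quasi-action of $G$ on $\Gamma$ together with the structure of faces of an almost 2-connected planar graph. Suppose the statement fails: then there is some fixed $r>0$ and sequences of distinct faces $f_1^{(k)}, f_2^{(k)} \in \facepaths\Gamma$ such that the diameter of $B_\Gamma(f_1^{(k)};r) \cap B_\Gamma(f_2^{(k)};r)$ tends to infinity. First I would observe that if the intersection of these $r$-neighbourhoods has large diameter, then in particular both $f_1^{(k)}$ and $f_2^{(k)}$ must themselves have large diameter (since the intersection is contained in the $r$-neighbourhood of each), so we are really comparing two long faces that run parallel to each other (within distance $2r$) for a long stretch. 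The induced quasi-action of $G$ on $\Gamma$ is cobounded, so after translating by suitable $g_k \in G$ we may assume that a point $x_k$ witnessing this large intersection stays within a bounded neighbourhood of a fixed basepoint $v_0 \in \Gamma$.

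The key step is to pull this configuration back through $\psi$ into $X$ and derive a contradiction with the geometry of $X$. Pick long sub-paths $p_1^{(k)} \subset f_1^{(k)}$ and $p_2^{(k)} \subset f_2^{(k)}$ that remain within Hausdorff distance $\approx 2r$ of each other and whose lengths go to infinity; by Remark~\ref{rmk:simple-body} the simple bodies of these faces are, after the trivial modification of passing to the 2-connected core $\Gamma_0$, disjoint simple arcs (or bi-infinite lines) whose closures are simple closed curves in $\overline\Gamma$ bounding disjoint disks. The two long parallel arcs, together with short arcs joining their endpoints, bound a thin disk region $R_k$ in $\bbS^2$. Since the simple bodies are genuine boundaries of distinct faces, no edge of $\Gamma$ crosses into the interior of the corresponding face-disks; so $R_k$ — which is sandwiched between two face-disks — is a "thin tube" of bounded width $O(r)$ containing all of the graph that lies between $f_1^{(k)}$ and $f_2^{(k)}$. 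Pulling $R_k$ back via $\psi$ (using that $\psi$ is a quasi-isometry and $X$ is locally finite and bounded valence) produces in $X$ a subgraph of bounded width but unbounded length; more precisely, $\psi$ of the two bounding arcs gives two quasi-geodesics in $X$ staying within a bounded distance of each other for arbitrarily long. Translating so this sits near $v_0$, as above, we find arbitrarily long quasi-geodesic segments trapped in a bounded neighbourhood of $v_0$ — but $X$ is locally finite, so a bounded neighbourhood of $v_0$ contains only finitely many vertices, a contradiction.

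I expect the main obstacle to be making the "thin tube" argument rigorous in the planar-topological setting: one needs to be careful that the region between $B_\Gamma(f_1;r)$ and $B_\Gamma(f_2;r)$ is genuinely controlled, because the $r$-neighbourhoods themselves are graph neighbourhoods, not topological ones, and the faces may have adjoined cacti (per Remark~\ref{rmk:simple-body}) sticking into them. The cleanest route is probably first to reduce to the 2-connected core $\Gamma_0$ — using that $\Gamma_0 \into \Gamma$ is a quasi-isometry (Definition~\ref{def:nearly-2-conn}) and that the adjoined cacti have uniformly bounded diameter, so they change the relevant neighbourhoods only by a bounded amount — and then to apply Proposition~\ref{prop:simple-face}, which guarantees that each facial subgraph of $\Gamma_0$ has closure a simple closed curve bounding an open disk $D_{f_i}$ in $\bbS^2$. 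Once we know $D_{f_1}$ and $D_{f_2}$ are disjoint open disks, the Jordan curve theorem forces any long overlap of their $r$-neighbourhoods to be a thin collar, and the pullback-to-$X$ contradiction goes through. A secondary technical point is handling the case where one or both $f_i$ are infinite faces (disjoint unions of bi-infinite lines): here the same topology applies in $\overline\Gamma$, with the curves passing through ends, and one uses that $\Gamma$ is quasi-isometric to a quasi-transitive graph to control the ends. I would organise the proof as: (i) reduce to $\Gamma_0$; (ii) set up the topological picture with disjoint face-disks; (iii) extract long parallel arcs and the thin collar; (iv) pull back to $X$ and invoke coboundedness plus local finiteness for the contradiction.
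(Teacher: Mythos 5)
Your approach has a genuine gap. The key erroneous step is the inference, from the hypothesis that $B_\Gamma(f_1;r) \cap B_\Gamma(f_2;r)$ has large diameter, that the two faces ``run parallel to each other (within distance $2r$) for a long stretch,'' i.e.\ that you can extract long sub-arcs $p_1 \subset f_1$, $p_2 \subset f_2$ staying within bounded Hausdorff distance of each other. The hypothesis only gives you \emph{two} points $x, y$, each within distance $r$ of both faces, with $\dist_\Gamma(x,y)$ large; it says nothing about what the faces do in between. The arcs of $f_1$ and $f_2$ running from near $x$ to near $y$ can diverge widely in the middle, enclosing a ``lens'' of graph of arbitrary size. (Think of two simple closed curves bounding disjoint face-disks that nearly touch at two antipodal points but bulge far apart between them.) Consequently there is no ``thin tube of bounded width,'' and the pullback to $X$ does not give arbitrarily long quasi-geodesics trapped in a bounded ball, so the local-finiteness contradiction never materialises.

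The paper avoids this by working only with the \emph{boundary} of the region, never its interior. Having found $x, y$ far apart with both close to $f_1$ and $f_2$, it constructs two short disjoint paths $p$ (near $x$) and $q$ (near $y$), each joining $f_1$ to $f_2$ and otherwise disjoint from them. Then $\overline\Gamma - (p \cup q)$ is disconnected, and since $\overline{f_1}$ is a simple closed curve (Proposition~\ref{prop:simple-face}, using 2-connectedness), the two arcs of $\overline{f_1} - (p\cup q)$ land in distinct components. The coboundary of either component abuts only $p \cup q$ and hence has uniformly bounded size, but it contains two edges $e_1$ near $x$ and $e_2$ near $y$, so it has diameter roughly $m$. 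After tightening via Proposition~\ref{prop:tight-trick-paths}, this gives a tight $b' \in \br\Gamma$ with $|\delta b'|$ bounded and $\diam_\Gamma(\delta b') \to \infty$, contradicting Lemma~\ref{lem:bounded-diam-coarse}. The crucial ingredient you do not invoke is precisely this coarse boundedness-of-tight-cuts lemma; your argument tries to substitute local finiteness of $X$, which is not strong enough here because the offending region need not be thin. Your reduction to the 2-connected core and the use of simple closed curves is the correct first step, and matches the paper's opening moves, but the rest of the argument needs to be replaced by the cut-diameter contradiction.
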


\begin{proof}
    We will prove this by contradicting Lemma~\ref{lem:bounded-diam-coarse}. We first prove this under the stronger assumption that $\Gamma$ is 2-connected. 
    
    Thus, assume $\Gamma$ is 2-connected. Fix $r > 0$, and suppose that for every $m > 0$ there exists a pair of distinct $f_1, f_2 \in \facepaths(\Gamma)$, and a pair of points 
    $$
    x, y \in B_\Gamma(f_1;r) \cap B_\Gamma(f_2;r)
    $$ 
    such that $\dist_\Pi(x,y) > m$. 
    Note that since $\Gamma$ is 2-connected, the closure of every facial subgraph of $\Gamma$ in $\overline \Gamma$ is a simple closed curve Proposition~\ref{prop:simple-face}. Let $\overline {f_i}$ denote the closure of $f_i$ in $\overline \Gamma$.  
    
    We assume without loss of generality that $m$ is much larger than $r$. It is clear that there exist (possibly degenerate) simple paths $p$, $q$ in $\Gamma$ such that \begin{enumerate}
        \item The path $p$ is contained within a bounded neighbourhood of $x$, and similarly $q$ lies in a bounded neighbourhood of $y$.

        \item Both $p$ and $q$ begin on $f_1$ and end on $f_2$, and are otherwise disjoint from $f_1 \cup f_2$. 

        \item The paths $p$ and $q$ are disjoint from each other.
    \end{enumerate}
    It is easy to see pictorially that $\overline \Gamma \setminus (p \cup q)$ is disconnected, see Figure~\ref{fig:two-faces}. 
    
    Consider $f_1$.
    Since 
    $\overline {f_1} \cong \bbS^1$ and $f_1 \cap (p \cup q)$ consists of exactly two vertices, say $u$ and $v$, we have that $\overline {f_1} \setminus (p \cup q) = \overline {f_1} \setminus \{u,v\}$ is a disjoint union of two open intervals, each lying in a distinct component of $\overline \Gamma \setminus (p \cup q)$. Let $I_1$, $I_2$ denote these components. Let $e_1$, $e_2$ be the unique two edges of $\Gamma$ such that
    \begin{enumerate}
        \item Both $e_1$, $e_2$ lie inside $f_1$, and in particular in (the closure of) $I_1$.

        \item We have that $e_1$ abuts $p$, and $e_2$ abuts $q$. 
    \end{enumerate}
    Let $U_1$ denote the connected component of $\overline \Gamma \setminus (p \cup q)$ containing $I_1$. This is a connected open subset of $\overline \Gamma$. By Proposition~\ref{prop:freud-prop}, we have that $C_1 = U_1 \setminus \Omega (\Gamma)$ is connected. In particular, there is a combinatorial path through $C_1$ connects endpoints of $e_1$ and $e_2$. 
    Similarly, we also see that there is a combinatorial path through $\Gamma \setminus C_1$ connecting endpoints of $e_1$ to $e_2$. 
    Finally, let $b$ denote the vertex set of $C_1$. We have that every edge in $\delta b$ abuts $p$ or $q$, so $\delta b$ is finite and in particular uniformly bounded in size. Thus, $b \in \br(\Gamma)$. It is also clear that $e_1, e_2 \in \delta b$. By Proposition~\ref{prop:tight-trick-paths}, there exists a tight $b' \in \br(\Gamma)$ such that $\delta b' \subset \delta b$, and $e_1, e_2 \in \delta b'$.
    In particular, $\diam_\Gamma(\delta b')$ is approximately at least $m$ but $|\delta b'|$ is uniformly bounded.
    By taking $m \to \infty$, we eventually contradict Lemma~\ref{lem:bounded-diam-coarse}. 

    Now suppose that $\Gamma$ is just almost 2-connected, but not necessarily 2-connected. Let $\Gamma_0 \subset \Gamma$ be the 2-connected core. Once again, fix $r > 0$, and suppose that for every $m > 0$ there exists a pair of distinct $f_1, f_2 \in \facepaths(\Gamma)$ such that
    $$
    \diam_\Gamma\big( B_\Gamma(f_1; r) \cap B_\Gamma(f_2; r) \big) > m.
    $$
    Recall (Remark~\ref{rmk:simple-body}) that every facial subgraph of $\Gamma$ decomposes into its simple body (which is a facial subgraph of $\Gamma_0$) plus boundedly small adjoined cacti. It follows quickly from this that for some $r' > 0$ and for every $m' > 0$ there exists $f_1', f_2' \in \facepaths(\Gamma_0)$ such that 
    $$
    \diam_{\Gamma_0}\big( B_{\Gamma_0}(f_1'; r) \cap B_{\Gamma_0}(f_2'; r) \big) > m'.
    $$
    But we know that this cannot happen, since $\Gamma_0$ is also quasi-isometric to $X$ and 2-connected. Thus, the lemma follows. 
\end{proof}

\begin{figure}
\begin{minipage}{.5\textwidth}
  \centering

\tikzset{every picture/.style={line width=0.75pt}} 

\begin{tikzpicture}[x=0.75pt,y=0.75pt,yscale=-1,xscale=1]

\draw  [fill={rgb, 255:red, 250; green, 250; blue, 250 }  ,fill opacity=1 ] (180.99,40.37) .. controls (199.05,28.15) and (204.9,45.68) .. (208.09,77.02) .. controls (211.27,108.37) and (224.02,95.62) .. (221.9,114.21) .. controls (219.77,132.8) and (211.8,128.55) .. (201.71,148.21) .. controls (191.62,167.87) and (183.65,156.18) .. (157.09,116.34) .. controls (130.52,76.49) and (162.93,52.59) .. (180.99,40.37) -- cycle ;
\draw  [fill={rgb, 255:red, 250; green, 250; blue, 250 }  ,fill opacity=1 ] (243.68,66.4) .. controls (251.65,24.43) and (292.56,41.96) .. (295.74,73.3) .. controls (298.93,104.65) and (292.02,90.3) .. (289.9,108.9) .. controls (287.77,127.49) and (307.43,158.84) .. (272.37,157.77) .. controls (237.3,156.71) and (258.02,132.27) .. (258.55,115.8) .. controls (259.09,99.34) and (235.71,108.37) .. (243.68,66.4) -- cycle ;
\draw [color={rgb, 255:red, 208; green, 2; blue, 27 }  ,draw opacity=1 ][line width=1.5]    (210.21,134.93) .. controls (227.74,142.37) and (241.55,125.9) .. (255.37,131.21) ;
\draw [shift={(255.37,131.21)}, rotate = 21.04] [color={rgb, 255:red, 208; green, 2; blue, 27 }  ,draw opacity=1 ][fill={rgb, 255:red, 208; green, 2; blue, 27 }  ,fill opacity=1 ][line width=1.5]      (0, 0) circle [x radius= 1.74, y radius= 1.74]   ;
\draw [shift={(210.21,134.93)}, rotate = 22.99] [color={rgb, 255:red, 208; green, 2; blue, 27 }  ,draw opacity=1 ][fill={rgb, 255:red, 208; green, 2; blue, 27 }  ,fill opacity=1 ][line width=1.5]      (0, 0) circle [x radius= 1.74, y radius= 1.74]   ;
\draw [color={rgb, 255:red, 208; green, 2; blue, 27 }  ,draw opacity=1 ][line width=1.5]    (206.23,65.34) .. controls (221.37,73.84) and (231.99,57.37) .. (243.68,66.4) ;
\draw [shift={(243.68,66.4)}, rotate = 37.69] [color={rgb, 255:red, 208; green, 2; blue, 27 }  ,draw opacity=1 ][fill={rgb, 255:red, 208; green, 2; blue, 27 }  ,fill opacity=1 ][line width=1.5]      (0, 0) circle [x radius= 1.74, y radius= 1.74]   ;
\draw [shift={(206.23,65.34)}, rotate = 29.31] [color={rgb, 255:red, 208; green, 2; blue, 27 }  ,draw opacity=1 ][fill={rgb, 255:red, 208; green, 2; blue, 27 }  ,fill opacity=1 ][line width=1.5]      (0, 0) circle [x radius= 1.74, y radius= 1.74]   ;
\draw [color={rgb, 255:red, 65; green, 117; blue, 5 }  ,draw opacity=1 ][line width=1.5]    (203.3,48.34) -- (206.23,65.34) ;
\draw [shift={(206.23,65.34)}, rotate = 80.25] [color={rgb, 255:red, 65; green, 117; blue, 5 }  ,draw opacity=1 ][fill={rgb, 255:red, 65; green, 117; blue, 5 }  ,fill opacity=1 ][line width=1.5]      (0, 0) circle [x radius= 1.74, y radius= 1.74]   ;
\draw [shift={(205.31,59.99)}, rotate = 260.25] [fill={rgb, 255:red, 65; green, 117; blue, 5 }  ,fill opacity=1 ][line width=0.08]  [draw opacity=0] (4.64,-2.23) -- (0,0) -- (4.64,2.23) -- cycle    ;
\draw [shift={(203.3,48.34)}, rotate = 80.25] [color={rgb, 255:red, 65; green, 117; blue, 5 }  ,draw opacity=1 ][fill={rgb, 255:red, 65; green, 117; blue, 5 }  ,fill opacity=1 ][line width=1.5]      (0, 0) circle [x radius= 1.74, y radius= 1.74]   ;
\draw [color={rgb, 255:red, 65; green, 117; blue, 5 }  ,draw opacity=1 ][line width=1.5]    (201.71,148.21) -- (210.21,134.93) ;
\draw [shift={(210.21,134.93)}, rotate = 302.62] [color={rgb, 255:red, 65; green, 117; blue, 5 }  ,draw opacity=1 ][fill={rgb, 255:red, 65; green, 117; blue, 5 }  ,fill opacity=1 ][line width=1.5]      (0, 0) circle [x radius= 1.74, y radius= 1.74]   ;
\draw [shift={(207.69,138.88)}, rotate = 122.62] [fill={rgb, 255:red, 65; green, 117; blue, 5 }  ,fill opacity=1 ][line width=0.08]  [draw opacity=0] (4.64,-2.23) -- (0,0) -- (4.64,2.23) -- cycle    ;
\draw [shift={(201.71,148.21)}, rotate = 302.62] [color={rgb, 255:red, 65; green, 117; blue, 5 }  ,draw opacity=1 ][fill={rgb, 255:red, 65; green, 117; blue, 5 }  ,fill opacity=1 ][line width=1.5]      (0, 0) circle [x radius= 1.74, y radius= 1.74]   ;

\draw (254.21,84.44) node [anchor=north west][inner sep=0.75pt]  [font=\scriptsize]  {$f_{2}$};
\draw (163.9,90.28) node [anchor=north west][inner sep=0.75pt]  [font=\scriptsize]  {$f_{1}$};
\draw (222.01,46.13) node [anchor=north west][inner sep=0.75pt]  [font=\scriptsize,color={rgb, 255:red, 208; green, 2; blue, 27 }  ,opacity=1 ]  {$p$};
\draw (224.66,141.75) node [anchor=north west][inner sep=0.75pt]  [font=\scriptsize,color={rgb, 255:red, 208; green, 2; blue, 27 }  ,opacity=1 ]  {$q$};
\draw (187.48,51.77) node [anchor=north west][inner sep=0.75pt]  [font=\scriptsize,color={rgb, 255:red, 65; green, 117; blue, 5 }  ,opacity=1 ]  {$e_{1}$};
\draw (188.01,127.2) node [anchor=north west][inner sep=0.75pt]  [font=\scriptsize,color={rgb, 255:red, 65; green, 117; blue, 5 }  ,opacity=1 ]  {$e_{2}$};

\end{tikzpicture}

  \captionof{figure}{}
  \label{fig:two-faces}
\end{minipage}%
\begin{minipage}{.5\textwidth}
  \centering

\tikzset{every picture/.style={line width=0.75pt}} 

\begin{tikzpicture}[x=0.75pt,y=0.75pt,yscale=-1,xscale=1]

\draw  [fill={rgb, 255:red, 250; green, 250; blue, 250 }  ,fill opacity=1 ] (172.2,42.6) .. controls (190.26,30.38) and (181.8,41.4) .. (198.6,51.8) .. controls (215.4,62.2) and (191.8,69.8) .. (213,92.6) .. controls (234.2,115.4) and (196.29,93.74) .. (186.2,113.4) .. controls (176.11,133.06) and (161.57,129.27) .. (152.29,105.14) .. controls (143,81) and (154.14,54.82) .. (172.2,42.6) -- cycle ;
\draw  [fill={rgb, 255:red, 250; green, 250; blue, 250 }  ,fill opacity=1 ] (236.48,34.4) .. controls (240.76,22.2) and (283.8,25.4) .. (288.54,41.3) .. controls (293.29,57.21) and (284.82,58.3) .. (282.7,76.9) .. controls (280.57,95.49) and (271.79,97.87) .. (250.19,94.27) .. controls (228.59,90.67) and (246.07,76.27) .. (246.6,59.8) .. controls (247.13,43.33) and (232.2,46.6) .. (236.48,34.4) -- cycle ;
\draw [color={rgb, 255:red, 208; green, 2; blue, 27 }  ,draw opacity=1 ][line width=1.5]    (198.6,51.8) .. controls (213.74,60.3) and (223.71,29.17) .. (235.4,38.2) ;
\draw [shift={(235.4,38.2)}, rotate = 37.69] [color={rgb, 255:red, 208; green, 2; blue, 27 }  ,draw opacity=1 ][fill={rgb, 255:red, 208; green, 2; blue, 27 }  ,fill opacity=1 ][line width=1.5]      (0, 0) circle [x radius= 1.74, y radius= 1.74]   ;
\draw [shift={(198.6,51.8)}, rotate = 29.31] [color={rgb, 255:red, 208; green, 2; blue, 27 }  ,draw opacity=1 ][fill={rgb, 255:red, 208; green, 2; blue, 27 }  ,fill opacity=1 ][line width=1.5]      (0, 0) circle [x radius= 1.74, y radius= 1.74]   ;
\draw  [fill={rgb, 255:red, 250; green, 250; blue, 250 }  ,fill opacity=1 ] (243,112.6) .. controls (266.2,114.6) and (274.6,116.6) .. (276.6,132.2) .. controls (278.6,147.8) and (268.3,140.5) .. (257,144.2) .. controls (245.7,147.9) and (256.2,154.2) .. (234.6,150.6) .. controls (213,147) and (193,144.2) .. (204.2,133.8) .. controls (215.4,123.4) and (219.8,110.6) .. (243,112.6) -- cycle ;
\draw [color={rgb, 255:red, 208; green, 2; blue, 27 }  ,draw opacity=1 ][line width=1.5]    (243,112.6) .. controls (249.39,103.47) and (239.39,103.07) .. (250.19,94.27) ;
\draw [shift={(250.19,94.27)}, rotate = 320.83] [color={rgb, 255:red, 208; green, 2; blue, 27 }  ,draw opacity=1 ][fill={rgb, 255:red, 208; green, 2; blue, 27 }  ,fill opacity=1 ][line width=1.5]      (0, 0) circle [x radius= 1.74, y radius= 1.74]   ;
\draw [shift={(243,112.6)}, rotate = 304.98] [color={rgb, 255:red, 208; green, 2; blue, 27 }  ,draw opacity=1 ][fill={rgb, 255:red, 208; green, 2; blue, 27 }  ,fill opacity=1 ][line width=1.5]      (0, 0) circle [x radius= 1.74, y radius= 1.74]   ;
\draw [color={rgb, 255:red, 208; green, 2; blue, 27 }  ,draw opacity=1 ][line width=1.5]    (231.4,112.6) .. controls (225,106.6) and (221.4,111.4) .. (219.8,103.4) ;
\draw [shift={(219.8,103.4)}, rotate = 258.69] [color={rgb, 255:red, 208; green, 2; blue, 27 }  ,draw opacity=1 ][fill={rgb, 255:red, 208; green, 2; blue, 27 }  ,fill opacity=1 ][line width=1.5]      (0, 0) circle [x radius= 1.74, y radius= 1.74]   ;
\draw [shift={(231.4,112.6)}, rotate = 223.15] [color={rgb, 255:red, 208; green, 2; blue, 27 }  ,draw opacity=1 ][fill={rgb, 255:red, 208; green, 2; blue, 27 }  ,fill opacity=1 ][line width=1.5]      (0, 0) circle [x radius= 1.74, y radius= 1.74]   ;

\draw (256.61,54.04) node [anchor=north west][inner sep=0.75pt]  [font=\scriptsize]  {$f_{2}$};
\draw (161.1,71.88) node [anchor=north west][inner sep=0.75pt]  [font=\scriptsize]  {$f_{1}$};
\draw (205.21,30.53) node [anchor=north west][inner sep=0.75pt]  [font=\scriptsize,color={rgb, 255:red, 208; green, 2; blue, 27 }  ,opacity=1 ]  {$p_{1}$};
\draw (229.1,125.88) node [anchor=north west][inner sep=0.75pt]  [font=\scriptsize]  {$f_{3}$};
\draw (252.19,97.67) node [anchor=north west][inner sep=0.75pt]  [font=\scriptsize,color={rgb, 255:red, 208; green, 2; blue, 27 }  ,opacity=1 ]  {$p_{2}$};
\draw (202.59,106.47) node [anchor=north west][inner sep=0.75pt]  [font=\scriptsize,color={rgb, 255:red, 208; green, 2; blue, 27 }  ,opacity=1 ]  {$p_{3}$};

\end{tikzpicture}

  \captionof{figure}{}
  \label{fig:three-faces}
\end{minipage}
\end{figure}

We can also prove a similar result for three faces. 

\begin{lemma}\label{lem:three-faces-small-intersection}
     Let $X$ be a connected, quasi-transitive, locally finite graph. Let $\Gamma$ be a connected, locally finite, planar graph. Let $\varphi : X \to \Gamma$ be a quasi-isometry. Suppose further that $\Gamma$ is almost 2-connected. 
    Then for every $r > 0$ there exists $n = n(r)> 0$ such that the following holds. Let $f_1, f_2, f_3 \in \facepaths(\Gamma)$ be pairwise distinct. 
    Suppose there exists 
    \begin{align*}\tag{$\ddagger$}\label{eq:triple}
        x_1 &\in B_\Gamma(f_1;r) \cap B_\Gamma(f_2;r),\\ x_2 &\in B_\Gamma(f_2;r) \cap B_\Gamma(f_3;r),\\ x_3 &\in B_\Gamma(f_3;r) \cap B_\Gamma(f_1;r).
    \end{align*}
    Then $\diam_\Gamma(\{x_1,x_2,x_3\}) \leq n$. 
\end{lemma}

\begin{proof}
    This argument is very similar to that of Lemma~\ref{lem:two-faces-small-intersection}, so we shall only give a sketch and leave the details to the reader. 

    As in the proof of Lemma~\ref{lem:two-faces-small-intersection}, we can assume that $\Gamma$ is 2-connected without any real loss of generality. Fix $r > 0$ and suppose that for every $n > 0$ there exists distinct facial subgraphs $f_1, f_2, f_3 \in \facepaths(\Gamma)$ and points $x_1, x_2, x_3$ such that (\ref{eq:triple}) holds, and $\diam_\Gamma(\{x_1,x_2,x_3\}) > n$.
    Since we assume $\Gamma$ is 2-connected, the closure of each $f_i$ in $\overline \Gamma$ is a simple closed curve by Proposition~\ref{prop:simple-face}. 

    For each $i = 1,2,3$, let $p_i$ be a simple path connecting $f_i$ to $f_{i+1}$ which is contained a bounded neighbourhood of $x_i$, and intersects each of $f_i$ and $f_{i+1}$ in exactly one vertex. Here, indices are to be taken modulo 3. It is clear pictorially that $\overline \Gamma \setminus (p_1 \cup p_2 \cup p_3)$ is disconnected (see Figure~\ref{fig:three-faces}). Without loss of generality, $p_1$ and $p_2$ are at distance approximately $n$ from each other. In fact, we may assume that all three $p_i$ are pairwise far apart, as if $p_2$ is near $p_3$ then we deduce that the intersection of bounded neighbourhoods of $f_1$ and $f_2$ has large diameter, and we can apply Lemma~\ref{lem:two-faces-small-intersection}. Thus, without loss of generality the $p_i$ are pairwise disjoint.
    
    As in the proof of Lemma~\ref{lem:two-faces-small-intersection} we now construct a tight element $b \in \br(\Gamma)$ such that $|\delta b|$ is uniformly bounded but $\diam_\Gamma(\delta b)$ is approximately at least $n$. This leads to a contradiction of Lemma~\ref{lem:bounded-diam-coarse}, and so we are done.
\end{proof}


\subsection{Combinatorial Jordan curves}

We now prove the following two results, which will be helpful for showing that two points lie near a common face later on.

\begin{lemma}\label{lem:scc-bdry}
    Let $\Gamma$ be a connected, locally finite, planar graph with fixed good drawing $\vartheta$. Given a finite collection $U_1, \ldots , U_n \in \facedisks(\Gamma)$,  let $V$ be a connected component of 
    $
    \bbS^2 \setminus \bigcup_{i} \overline U_i. 
    $
    Then $\overline V$ is locally (path) connected, and every connected component of $\partial V$ is a simple closed curve.
\end{lemma}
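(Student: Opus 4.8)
The statement is purely topological: $\bbS^2$ minus the closures of finitely many faces (each of which is an open connected set), and we want to understand the boundary of a complementary component $V$. The natural approach is to recognise $\overline V$ as the complement of an open set whose closure is a compact, locally connected set, and then invoke Proposition~\ref{prop:simple-face}. Concretely, first I would observe that $K := \vartheta(\overline \Gamma) \cup \bigcup_i \overline{U_i}$ is a closed, hence compact, subset of $\bbS^2$. It is locally connected: $\vartheta(\overline \Gamma)$ is locally path connected (the Freudenthal compactification is, by the remark after its definition), each $\overline{U_i}$ is a closed disk (using Proposition~\ref{prop:simple-face}, after reducing to the 2-connected case — see below) hence locally connected, and the $\overline{U_i}$ meet $\vartheta(\overline \Gamma)$ exactly along their boundary circles $\partial U_i$, so a standard pasting argument gives local connectedness of $K$. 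Then $\bbS^2 - K$ is a disjoint union of the faces of $\Gamma$ \emph{other than} the $U_i$, and $V$ is a union of such faces together with arcs of $\vartheta(\overline\Gamma)$ and of the circles $\partial U_i$; more precisely $\overline V = \bbS^2 - W$ where $W$ is the (open) union of the $U_i$ and of all faces of $\Gamma$ not contained in $\overline V$.

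The cleanest route is then: set $W$ to be the union of all the components of $\bbS^2 - \overline V$. Since $\overline V$ is closed, $W$ is open; I want to show $\overline W$ is a compact, locally connected set so that Proposition~\ref{prop:simple-face} applies to each component of its complement, which will be $V$ (up to checking $V$ is itself a single component of $\bbS^2 - \overline W$, which holds since $V$ is open and connected and $\overline V = \bbS^2 - W$). To see $\overline W$ is locally connected, note that $\overline W = \bbS^2 - V$, and $\bbS^2 - V$ is the union of the $\overline{U_i}$ with $\vartheta(\overline\Gamma)$ minus an open arc system, plus the faces not in $\overline V$; again each piece is a disk or is locally connected via Freudenthal, and they are glued along arcs, so local connectedness passes through. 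Actually the slicker statement to prove first is just: \emph{$\overline V$ is locally connected}. Once that is known, $\bbS^2 - \overline V = W$ is open, and Proposition~\ref{prop:simple-face} applied to the compact locally connected $2$-connected set $\overline V$ (we must verify $\overline V$ is $2$-connected) tells us that every component of $\bbS^2 - \overline V = W$ has simple-closed-curve boundary; but $\partial W = \partial V$ as subsets of $\bbS^2$ when $V$ is a single component, giving the conclusion. I would organise the write-up around: (i) reduce to $\Gamma$ $2$-connected by the simple-body/adjoined-cacti trick of Remark~\ref{rmk:simple-body} exactly as in Lemma~\ref{lem:two-faces-small-intersection}, so that each $\overline{U_i}$ is a closed disk; (ii) show $\overline V$ is compact and locally connected; (iii) show $\overline V$ is $2$-connected (it is the intersection over a nested cover, or: $\bbS^2$ minus finitely many disjoint open disks glued into the face-disks of a $2$-connected planar set, is $2$-connected); (iv) apply Proposition~\ref{prop:simple-face} to conclude.

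The main obstacle I anticipate is step (ii)–(iii): rigorously establishing local connectedness of $\overline V$ and, more delicately, its $2$-connectedness. Local connectedness is intuitively clear but requires care at accumulation points of vertices (ends of $\Gamma$) lying on $\partial V$, where one leans on Proposition~\ref{prop:freud-prop} (so that removing ends from a connected open subset of $\overline\Gamma$ keeps it connected, hence the local structure near an end is controlled). The $2$-connectedness is where I'd expect to spend the most effort: one needs to rule out cut points of $\overline V$, and these could in principle arise at a vertex of $\Gamma$ that touches $V$ along two "sides"; here the hypothesis that we are looking at a component of the complement of the \emph{closures} $\overline{U_i}$ (not just the open faces) is essential, because it ensures $V$ does not pinch along a face boundary. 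A clean way to handle this is to note that $\overline V$ deformation retracts onto, or is "cellular-equivalent" to, a $2$-connected planar graph obtained from $\Gamma$ by deleting the interiors of the $U_i$ and possibly contracting finite cacti, and then cite that the relevant subgraph is $2$-connected with uniform coboundary so that Proposition~\ref{prop:end-injective} and the surrounding machinery apply; alternatively, argue directly that any would-be cut point of $\overline V$ would produce a cut point of this graph, contradicting $2$-connectedness. I'd also keep in mind the footnote-level subtlety that we only need \emph{continuous injection} rather than embedding in places, but here $\overline V$ is genuinely a subspace of $\bbS^2$ so no such issue arises.

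\begin{remark}
A second, perhaps more robust, way to organise step (iii): rather than proving $\overline V$ is $2$-connected directly, one can cone off the boundary circles $\partial U_i$ by re-gluing disks $D_i$ (this just undoes the deletion) to land back inside $\vartheta(\overline\Gamma)\cup\bigcup_i \overline{U_i}$, which \emph{is} $2$-connected and locally connected, apply Proposition~\ref{prop:simple-face} there to get that each complementary component (in particular $V$, since $V$ is disjoint from all the $\overline{U_i}$ and $D_i$) has simple closed curve boundary, and observe the boundary is unaffected by the re-gluing. This sidesteps having to verify $2$-connectedness of the possibly-complicated set $\overline V$ itself, at the cost of a short verification that the components of $\bbS^2 - (\vartheta(\overline\Gamma)\cup\bigcup_i\overline{U_i})$ that are disjoint from the $\overline{U_i}$ are exactly the faces of $\Gamma$ other than the $U_i$, and that $V$ is a union of such together with the relevant $\partial U_i$-arcs. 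I would likely present the proof this way, as it minimises the topological bookkeeping.
\end{remark}
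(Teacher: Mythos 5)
Your overall scheme matches the paper: show $\overline V$ is compact, locally connected and $2$-connected, then invoke Proposition~\ref{prop:simple-face} with $K = \overline V$. You also correctly identify Proposition~\ref{prop:freud-prop} as the key tool for handling ends. But several specifics diverge from the paper, and two of them are genuine problems.

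First, the reduction to the $2$-connected case that you rely on in step~(i) is not available. Remark~\ref{rmk:simple-body} describes facial subgraphs of \emph{almost $2$-connected} graphs, a hypothesis present in Lemma~\ref{lem:two-faces-small-intersection} but absent here: Lemma~\ref{lem:scc-bdry} is stated for an arbitrary connected locally finite planar graph, and nothing in its hypotheses guarantees a cobounded quasi-action or almost $2$-connectedness. The paper makes no such reduction; its argument is direct and never assumes the $\overline{U_i}$ are closed disks.

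Second, your estimate of where the difficulty lies is inverted. You anticipate $2$-connectedness of $\overline V$ to be the bulk of the work and sketch a fairly heavy detour through graph $2$-connectedness; the paper dispatches it in one sentence by appealing to the elementary plane-topology fact that a connected open subset of $\bbS^2$ cannot be disconnected by removing a totally disconnected set (so $V$, hence $\overline V$, is $2$-connected). Conversely, local connectedness of $\overline V$ is where the paper spends its effort, and your proposal leaves it at ``intuitively clear but requires care.'' The missing content is roughly: reduce to $n=1$ using that an end $x$ is not a local cut-point of $\overline\Gamma$ (Proposition~\ref{prop:freud-prop}) and hence lies in at most one $\overline{U_i}$; choose an open $O$ with $O$ and $O\cap\overline\Gamma$ path-connected; since $\partial V$ cannot be totally disconnected near $x$, pick a genuine vertex $v_0 \in \partial V \cap O$; then for any $z \in \overline V\cap\overline\Gamma\cap O$, replace a connecting path by a combinatorial one (again via Proposition~\ref{prop:freud-prop}) and show it stays in $\overline V$ by analysing the cyclic edge order at a hypothetical ``first exit'' vertex $b$, producing a Jordan curve disjoint from $\overline V$ separating $v_0$ from $z$ — a contradiction. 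That construction is the heart of the proof, and it is not recoverable from ``a standard pasting argument.''

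Third, the alternative in your Remark has a gap. You propose applying Proposition~\ref{prop:simple-face} to $K := \vartheta(\overline\Gamma)\cup\bigcup_i\overline{U_i}$ and reading off the conclusion for $V$. But $V$ is not a component of $\bbS^2 - K$; the components of $\bbS^2 - K$ are exactly the remaining faces $U_j$, $j\notin\{1,\dots,n\}$, so that application of Proposition~\ref{prop:simple-face} tells you only that those faces have simple-closed-curve boundaries — which is already Proposition~\ref{prop:simple-face} for $\Gamma$ itself. It says nothing about $\partial V$, which is a proper subset of $\bigcup_i\partial U_i$ cut into arcs in a way that the re-gluing device does not control. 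The observation in your parenthesis (``in particular $V$, since $V$ is disjoint from all the $\overline{U_i}$'') is where this goes wrong: disjointness from the $\overline{U_i}$ does not make $V$ a component of $\bbS^2 - K$, since $V$ also contains arcs of $\vartheta(\overline\Gamma)$.
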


\begin{proof}
    In what follows, we will suppress $\vartheta$ from our notation for the sake of readability. That is, we will identify $\overline \Gamma$ with its $\vartheta$-image in $\bbS^2$.

    We first show that $\overline V$ is locally connected, i.e. that for every $x \in \overline V$ and every open subset $W \subset \bbS^2$ containing $x$, there exists an open subset $O \subset W$ with $x \in O$ and $O \cap \overline V$ connected. The only non-trivial case to check is when $x \in \Omega (\Gamma)$, i.e. $x$ is an end of $\Gamma$. 
    By Proposition~\ref{prop:freud-prop}, we have that $x$ is not a local cut-point of $\overline \Gamma$, so in particular we have that $x$ lies inside the closure of at most one of the $U_i$. By replacing $W$ with a smaller open subset, we may therefore reduce to the case where $n=1$. Let us then simplify notation and write $U:= U_1$.

    Since $\overline \Gamma$  and $\bbS^2$ are locally path connected, let $O \subset W$ be an open sub-neighbourhood of $x$ such that both $O$ and 
    $\overline \Gamma \cap O$ are (path) connected. 
    It is a standard fact from plane topology that any open connected subset of the plane cannot be disconnected by the removal of any totally disconnected set. In particular, since $\partial V \subset \overline \Gamma$, we have that $\partial V \cap O$ is either empty or contains some $v_0 \in \Gamma$. That is, $v_0$ is not an end of $\Gamma$. After possibly subdividing an edge of $\Gamma$, we may assume without loss of generality that $v_0$ is a vertex of $\Gamma$. 

    Now, let $z \in \overline \Gamma \cap \overline V \cap O$ be arbitrary. Since $O \cap \overline \Gamma$ is path connected, there exists a path $p$ connecting $z$ to $v_0$ contained in $O$. By an application of Proposition~\ref{prop:freud-prop}, we may assume that $p$ is a combinatorial (possibly one-way infinite) path. 
    We claim that $p$ is contained entirely in $\overline V$. Suppose not, then there exists a `first' vertex $a$ along $p$ which is not contained $\overline V$. 
    Let $b$ be the vertex immediately preceding $a$ along $p$, so $b \in \overline V$. Let $e \in E(\Gamma)$ be the edge in $p$ connecting $a$ to $b$, so $e \cap \overline V = \{b\}$. 
    Consider now the clockwise cyclic sequence of edges $e_1, \ldots, e_m$ incoming to $b$, say $e = e_1$. Inspections reveals that there must exist $1 \leq i \leq j \leq m$ such that
    \begin{itemize}
        \item we have $e_i, e_j \subset \overline U$, and

        \item for all $\ell$ such that either $1 \leq \ell \leq i$ or $j \leq \ell \leq m$,  the interior of $e_\ell$ is disjoint from $\overline V$. 
    \end{itemize}
    Indeed, if this were not the case then it would necessarily follow that $a \in \overline V$. Note, it could be that $i = j$. 
    
    Using this observation, along with the fact that $U$ is path connected (it is open and connected) we may draw a Jordan curve $J$ which is disjoint from $\overline V$ and also separates $v_0$ from $z$, as depicted in Figure~\ref{fig:sep-curve}. This contradicts the fact that $\overline V$ is connected. 
    It follows that $\overline \Gamma \cap \overline V \cap O$ is connected. Since $O$ is connected and $\partial V \subset \overline \Gamma$ we have that $\overline V \cap O$ is connected and so $\overline V$ is locally connected. 

    Finally, as noted above we have that any connected open set of $\bbS^2$ cannot be disconnected by the removal of any totally disconnected subset. Thus $V$ is 2-connected and so is $\overline V$. By Proposition~\ref{prop:simple-face}, it follows that every connected component of  $\partial V$ is a simple closed curve. 
\end{proof}

\begin{figure}
    \centering

\tikzset{every picture/.style={line width=0.75pt}} 

\begin{tikzpicture}[x=0.75pt,y=0.75pt,yscale=-1,xscale=1]

\draw  [draw opacity=0][fill={rgb, 255:red, 241; green, 186; blue, 255 }  ,fill opacity=1 ] (249.5,170.03) .. controls (231.61,168.99) and (217.42,154.15) .. (217.42,136) .. controls (217.42,118.75) and (230.23,104.5) .. (246.86,102.23) -- (249.71,123.02) .. controls (243.32,123.89) and (238.4,129.37) .. (238.4,136) .. controls (238.4,142.98) and (243.85,148.68) .. (250.73,149.08) -- cycle ;
\draw  [draw opacity=0][fill={rgb, 255:red, 228; green, 228; blue, 228 }  ,fill opacity=1 ] (221.33,179.33) -- (260.33,193.33) -- (251.5,136) -- cycle ;
\draw  [draw opacity=0][fill={rgb, 255:red, 228; green, 228; blue, 228 }  ,fill opacity=1 ] (251.5,136) -- (277.5,98.5) -- (233.5,84) -- cycle ;
\draw [line width=2.25]    (175,136) -- (251.5,136) ;
\draw [shift={(251.5,136)}, rotate = 0] [color={rgb, 255:red, 0; green, 0; blue, 0 }  ][fill={rgb, 255:red, 0; green, 0; blue, 0 }  ][line width=2.25]      (0, 0) circle [x radius= 3.75, y radius= 3.75]   ;
\draw [shift={(175,136)}, rotate = 0] [color={rgb, 255:red, 0; green, 0; blue, 0 }  ][fill={rgb, 255:red, 0; green, 0; blue, 0 }  ][line width=2.25]      (0, 0) circle [x radius= 3.75, y radius= 3.75]   ;
\draw [line width=1.5]    (251.5,136) -- (295,145.5) ;
\draw [shift={(295,145.5)}, rotate = 12.32] [color={rgb, 255:red, 0; green, 0; blue, 0 }  ][fill={rgb, 255:red, 0; green, 0; blue, 0 }  ][line width=1.5]      (0, 0) circle [x radius= 3.05, y radius= 3.05]   ;
\draw [shift={(251.5,136)}, rotate = 12.32] [color={rgb, 255:red, 0; green, 0; blue, 0 }  ][fill={rgb, 255:red, 0; green, 0; blue, 0 }  ][line width=1.5]      (0, 0) circle [x radius= 3.05, y radius= 3.05]   ;
\draw [line width=1.5]    (132.5,129) -- (175,136) ;
\draw [shift={(175,136)}, rotate = 9.35] [color={rgb, 255:red, 0; green, 0; blue, 0 }  ][fill={rgb, 255:red, 0; green, 0; blue, 0 }  ][line width=1.5]      (0, 0) circle [x radius= 3.05, y radius= 3.05]   ;
\draw [shift={(132.5,129)}, rotate = 9.35] [color={rgb, 255:red, 0; green, 0; blue, 0 }  ][fill={rgb, 255:red, 0; green, 0; blue, 0 }  ][line width=1.5]      (0, 0) circle [x radius= 3.05, y radius= 3.05]   ;
\draw    (163.5,114) -- (175,136) ;
\draw    (178,111) -- (175,136) ;
\draw    (175,136) -- (167.67,156.67) ;
\draw    (175,136) -- (179,151.33) ;
\draw    (230,73) -- (251.5,136) ;
\draw    (285,88.5) -- (251.5,136) ;
\draw    (251.5,136) -- (214.67,188) ;
\draw [line width=1.5]    (328,146.5) -- (357,140.5) ;
\draw [shift={(357,140.5)}, rotate = 348.31] [color={rgb, 255:red, 0; green, 0; blue, 0 }  ][fill={rgb, 255:red, 0; green, 0; blue, 0 }  ][line width=1.5]      (0, 0) circle [x radius= 3.05, y radius= 3.05]   ;
\draw [shift={(328,146.5)}, rotate = 348.31] [color={rgb, 255:red, 0; green, 0; blue, 0 }  ][fill={rgb, 255:red, 0; green, 0; blue, 0 }  ][line width=1.5]      (0, 0) circle [x radius= 3.05, y radius= 3.05]   ;
\draw  [dash pattern={on 1.5pt off 5.25pt}]  (295,145.5) -- (328,146.5) ;
\draw [color={rgb, 255:red, 208; green, 2; blue, 27 }  ,draw opacity=1 ][line width=2.25]    (175,136) ;
\draw [shift={(175,136)}, rotate = 0] [color={rgb, 255:red, 208; green, 2; blue, 27 }  ,draw opacity=1 ][fill={rgb, 255:red, 208; green, 2; blue, 27 }  ,fill opacity=1 ][line width=2.25]      (0, 0) circle [x radius= 3.75, y radius= 3.75]   ;
\draw [shift={(175,136)}, rotate = 0] [color={rgb, 255:red, 208; green, 2; blue, 27 }  ,draw opacity=1 ][fill={rgb, 255:red, 208; green, 2; blue, 27 }  ,fill opacity=1 ][line width=2.25]      (0, 0) circle [x radius= 3.75, y radius= 3.75]   ;
\draw [line width=1.5]  [dash pattern={on 1.5pt off 6pt}]  (100,134) -- (132.5,129) ;
\draw [shift={(132.5,129)}, rotate = 351.25] [color={rgb, 255:red, 0; green, 0; blue, 0 }  ][fill={rgb, 255:red, 0; green, 0; blue, 0 }  ][line width=1.5]      (0, 0) circle [x radius= 3.05, y radius= 3.05]   ;
\draw [color={rgb, 255:red, 144; green, 19; blue, 254 }  ,draw opacity=1 ]   (226.5,135.67) .. controls (227.33,120.67) and (247.33,118.33) .. (257.33,110.33) .. controls (267.33,102.33) and (277.67,70.67) .. (321,71.33) .. controls (364.33,72) and (415,163.17) .. (380,184.17) .. controls (345,205.17) and (289.5,215.79) .. (262,208.33) .. controls (234.5,200.88) and (250.67,175) .. (247.33,162.33) .. controls (244,149.67) and (227.33,152.33) .. (226.5,135.67) -- cycle ;
\draw [shift={(226.5,135.67)}, rotate = 273.18] [color={rgb, 255:red, 144; green, 19; blue, 254 }  ,draw opacity=1 ][fill={rgb, 255:red, 144; green, 19; blue, 254 }  ,fill opacity=1 ][line width=0.75]      (0, 0) circle [x radius= 2.68, y radius= 2.68]   ;
\draw [line width=1.5]    (91,136) ;
\draw [shift={(91,136)}, rotate = 0] [color={rgb, 255:red, 0; green, 0; blue, 0 }  ][fill={rgb, 255:red, 0; green, 0; blue, 0 }  ][line width=1.5]      (0, 0) circle [x radius= 3.05, y radius= 3.05]   ;
\draw [shift={(91,136)}, rotate = 0] [color={rgb, 255:red, 0; green, 0; blue, 0 }  ][fill={rgb, 255:red, 0; green, 0; blue, 0 }  ][line width=1.5]      (0, 0) circle [x radius= 3.05, y radius= 3.05]   ;
\draw    (251.5,136) -- (260.33,196.33) ;
\draw    (251.5,136) -- (214.33,155.67) ;
\draw    (251.5,136) -- (213,121) ;
\draw [color={rgb, 255:red, 74; green, 144; blue, 226 }  ,draw opacity=1 ][line width=2.25]    (251.5,136) ;
\draw [shift={(251.5,136)}, rotate = 0] [color={rgb, 255:red, 74; green, 144; blue, 226 }  ,draw opacity=1 ][fill={rgb, 255:red, 74; green, 144; blue, 226 }  ,fill opacity=1 ][line width=2.25]      (0, 0) circle [x radius= 3.75, y radius= 3.75]   ;
\draw [shift={(251.5,136)}, rotate = 0] [color={rgb, 255:red, 74; green, 144; blue, 226 }  ,draw opacity=1 ][fill={rgb, 255:red, 74; green, 144; blue, 226 }  ,fill opacity=1 ][line width=2.25]      (0, 0) circle [x radius= 3.75, y radius= 3.75]   ;

\draw (347.5,120.4) node [anchor=north west][inner sep=0.75pt]    {$v_{0}$};
\draw (244.83,94.9) node [anchor=north west][inner sep=0.75pt]  [font=\footnotesize,color={rgb, 255:red, 128; green, 128; blue, 128 }  ,opacity=1 ]  {$U$};
\draw (78.5,138.4) node [anchor=north west][inner sep=0.75pt]    {$z$};
\draw (228.17,169.23) node [anchor=north west][inner sep=0.75pt]  [font=\footnotesize,color={rgb, 255:red, 128; green, 128; blue, 128 }  ,opacity=1 ]  {$U$};
\draw (193.33,140.4) node [anchor=north west][inner sep=0.75pt]  [font=\scriptsize]  {$e_{1}$};
\draw (218,75.07) node [anchor=north west][inner sep=0.75pt]  [font=\scriptsize]  {$e_{i}$};
\draw (202,173.73) node [anchor=north west][inner sep=0.75pt]  [font=\scriptsize]  {$e_{j}$};
\draw (159.08,136.57) node [anchor=north west][inner sep=0.75pt]  [font=\small,color={rgb, 255:red, 208; green, 2; blue, 27 }  ,opacity=1 ]  {$a$};
\draw (257.42,140.9) node [anchor=north west][inner sep=0.75pt]  [font=\small,color={rgb, 255:red, 74; green, 144; blue, 226 }  ,opacity=1 ]  {$b$};
\draw (301.67,187.4) node [anchor=north west][inner sep=0.75pt]  [color={rgb, 255:red, 144; green, 19; blue, 254 }  ,opacity=1 ]  {$J$};

\end{tikzpicture}
    
    \caption{Jordan curve $J$ separating $v_0$ from $z$. The purple region is disjoint from $\overline V$.}
    \label{fig:sep-curve}
\end{figure}
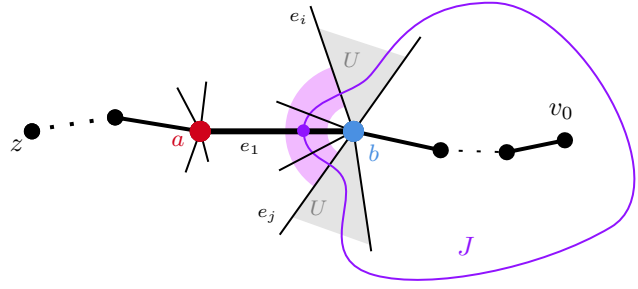

\begin{proposition}\label{prop:2-conn-jordan-curve}
    Let $\Gamma$ be a connected, locally finite, planar graph with fixed good drawing $\vartheta$. Let $x,y \in V(\Gamma)$, $r \geq 0$. Suppose there does \textbf{not} exist $f \in \facepaths(\Gamma)$ such that 
    $
    x, y \in B_\Gamma(f; r)
    $. 
    Then there exists a simple combinatorial loop $\ell$ in $\Gamma$ such that 
    \begin{enumerate}
        \item $\vartheta(x)$, $\vartheta(y)$ lie in distinct components of $\bbS^2 \setminus \vartheta(\ell)$, and

        \item $\{x,y\} \cap B_\Gamma(\ell;r) = \emptyset$.
    \end{enumerate}
\end{proposition}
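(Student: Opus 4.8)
The idea is to pass to a large but finite collection of faces near $x$ and $y$, remove their closures from the sphere, and extract a suitable component whose boundary will supply the desired loop. Concretely, fix $r$, and for each vertex $z$ in $B_\Gamma(x;r) \cup B_\Gamma(y;r)$ (a finite set, since $\Gamma$ is locally finite), let $\mathcal U_z$ be the set of faces $U \in \facedisks \Gamma$ with $z \in \overline{\facepaths(U)}$; collect all of these into a finite set $U_1, \dots, U_n$ of faces. By hypothesis, no single face $f$ satisfies $x,y \in B_\Gamma(f;r)$, so $x$ and $y$ are ``surrounded'' by genuinely different faces. Set $K = \bigcup_i \overline{U_i} \subset \bbS^2$ and consider the components of $\bbS^2 - K$. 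I would show that $\vartheta(x)$ and $\vartheta(y)$ lie in distinct components of $\bbS^2 - K$: if they lay in the same component $V$, then since $\overline V$ is locally connected with simple-closed-curve boundary components (Lemma~\ref{lem:scc-bdry}), one could connect $\vartheta(x)$ to $\vartheta(y)$ within $\overline V$, and by pushing to a combinatorial path via Proposition~\ref{prop:freud-prop} and tracking which faces it abuts, deduce that $x$ and $y$ both lie within distance $r$ of a common facial subgraph --- contradiction.

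Having separated $\vartheta(x)$ from $\vartheta(y)$ in $\bbS^2 - K$, let $V_x$ be the component containing $\vartheta(x)$. Then $\partial V_x$ separates $\vartheta(x)$ from $\vartheta(y)$ in $\bbS^2$ (standard plane topology: $\vartheta(y) \notin \overline{V_x}$ since $\vartheta(y)$ is in a different component, and $\vartheta(x)$ is interior to $V_x$). By Lemma~\ref{lem:scc-bdry}, every connected component of $\partial V_x$ is a simple closed curve, and since $\vartheta(x)$ and $\vartheta(y)$ are separated by $\partial V_x$, at least one such component $J_0$ already separates them --- this is the usual argument that a finite union of disjoint simple closed curves separates two points only if one of the curves does. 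Now $J_0 \subset \partial V_x \subset K = \bigcup_i \overline{U_i}$, and in fact $J_0$ is built from pieces of the facial subgraphs $\facepaths(U_i)$; I would argue that $J_0$ is (the $\vartheta$-image of) a combinatorial loop $\ell$ in $\Gamma$, possibly after a subdivision, using that each $\overline{U_i}$ is a closed disk whose boundary is the $\vartheta$-image of $\facepaths(U_i)$ --- here one must take a little care to arrange that $\ell$ is \emph{simple}, which one gets by taking $J_0$ to be a single component of the boundary rather than the whole boundary.

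It remains to check condition (2), that $\{x,y\} \cap B_\Gamma(\ell; r) = \emptyset$. By construction $\ell$ lies on $\bigcup_i \facepaths(U_i)$, so if, say, $x$ were within distance $r$ of $\ell$, then $x$ would be within distance $r$ of some $\facepaths(U_i)$, i.e. $x \in B_\Gamma(\facepaths(U_i); r)$. I would need to also ensure $y \in B_\Gamma(\facepaths(U_i); r)$ to derive the forbidden conclusion --- this is where the choice of $J_0$ matters: since $J_0$ separates $\vartheta(x)$ from $\vartheta(y)$ and $J_0$ is a single simple closed curve contained in $\partial U_i$ for the relevant $i$ (a simple closed curve lying in the boundary of one disk $\overline{U_i}$ is forced, because distinct $\overline{U_i}$ meet only in a totally disconnected set and so cannot share an arc of a simple closed curve), both $x$ and $y$ get pushed within distance $r$ of $\facepaths(U_i)$, contradicting the hypothesis. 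The main obstacle I anticipate is this last bookkeeping step: making rigorous that the separating curve $J_0$ can be chosen to be a simple combinatorial loop lying near (within distance $r$ of) a single facial subgraph, so that the ``common face'' contradiction actually triggers. The topological input (local connectedness of $\overline{V_x}$, simple-closed-curve boundary) is handled cleanly by Lemma~\ref{lem:scc-bdry} and Proposition~\ref{prop:freud-prop}, but the combinatorial identification of $J_0$ with a loop in $\Gamma$ and the uniform control on its distance from the $f_i$'s will require the careful local analysis of cyclic edge orderings around vertices, much as in the proof of Lemma~\ref{lem:scc-bdry} itself.
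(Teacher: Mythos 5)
Your approach has a conceptual flaw at the outset. You collect faces bordering vertices within distance $r$ of $x$ \emph{or} $y$, form $K = \bigcup_i \overline{U_i}$, and then claim $\vartheta(x)$ and $\vartheta(y)$ lie in distinct components of $\bbS^2 - K$. But neither point lies in $\bbS^2 - K$ at all: the faces bordering $x$ itself are among the $U_i$, so locally around $\vartheta(x)$ the sphere is exhausted by $\vartheta(\overline\Gamma)$ together with those faces, all contained in $K$. Thus $\vartheta(x)$ (and similarly $\vartheta(y)$) lies in the \emph{interior} of $K$, and the component $V_x$ you invoke does not exist. Moreover, since an edge far from both $x$ and $y$ can border one face in your ``near-$x$'' collection and one in your ``near-$y$'' collection, $K$ itself may well be connected, in which case no boundary component of $\bbS^2-K$ can possibly separate two points of $K$. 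The paper's argument is deliberately asymmetric: it removes only the closures of faces whose facial subgraphs meet $B_\Gamma(x;r)$. Then the hypothesis forces $B_\Gamma(y;r)$ to be disjoint from this union, so $y$ and its entire $r$-ball sit in a single component $V$ of the complement, while $B_\Gamma(x;r)$ lies in the interior of the removed region; consequently $\partial V$ (a simple closed curve by Lemma~\ref{lem:scc-bdry}) separates $B_x$ from $B_y$ and automatically avoids both $r$-balls.

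A second independent gap is your claim that distinct $\overline{U_i}$ meet in a totally disconnected set, hence a simple closed curve in $K$ must lie in a single $\partial U_i$. This is false in a planar graph: adjacent faces share whole edges, so their closures share arcs. Your verification of condition (2) hinges on this false claim and does not go through. You have the right supporting lemmas in view (Lemma~\ref{lem:scc-bdry}, Proposition~\ref{prop:freud-prop}), and the idea of extracting a separating curve from a boundary of a complementary component is essentially what the paper does, but the symmetric collection of faces is the wrong starting point and the endgame reasoning via single-face containment is unsound. You would also need to handle carefully the step of turning the topological curve $L \subset \partial V$ into a genuine combinatorial loop, which is delicate because $L$ may pass through ends of $\Gamma$; the paper's construction with the auxiliary curves $J_x$, $J_y$ and arcs $P_i$, $Q_i$ is precisely to navigate that.
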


\begin{proof}
    To ease notation we will identify $\overline \Gamma$ with its image in $\bbS^2$ and suppress mention of $\vartheta$. 
    
    Let $B_x = B_\Gamma(x;r)$, $B_y = B_\Gamma(y;r)$ be the closed $r$-balls about $x$ and $y$. 
    Let  $U_1, \ldots, U_n \in \facedisks(\Gamma)$ be those faces which intersect $B_x$. 
    Let $V$ be the connected component of 
    $
    \bbS^2 \setminus \bigcup_{i} \overline U_i 
    $
    which contains $y$, and thus also contains all of $B_y$. Then by Lemma~\ref{lem:scc-bdry} we have that $\partial V$ contains a simple closed curve $L$\footnote{In fact, since $\bigcup_{i} \overline U_i$ is connected it is not hard to deduce that $\partial V = L$.} which separates $B_x$ from $B_y$. Note that $L \subset \overline \Gamma$. We will modify $L$ and form a combinatorial loop $\ell$ with the same property.

    Let $W_x$, $W_y$ be the connected components of $\bbS^2 \setminus L$ containing $x$ and $y$ respectively. 
    If $L$ is not already a combinatorial loop then we must at least have that $L$ contains two distinct vertices of $\Gamma$, say $u,v \in V(\Gamma)$, since $\Omega (\Gamma)$ is totally disconnected. 
    Let $\gamma_1$, $\gamma_2$ be the connected components of $L \setminus \{v,u\}$. 
    Now, in $\bbS^2$ (or equivalently, in the plane) draw a Jordan curve $J_x$ contained in $W_x$ separating $L$ from $B_x$. Similarly, draw a Jordan curve $J_y$ contained in $W_y$ separating $L$ from $B_y$. Let $P_1$ be an arc in $W_x$ connecting $J_x$ to $u$, otherwise disjoint from $J_x$ and $L$, and $P_2$ an arc in $W_x$ connecting $J_x$ to $v$, also otherwise disjoint from $J_x$. Define $Q_1$, $Q_2 \subset \overline {W_y}$ similarly. Let 
    $$
    Z = J_x \cup J_y \cup P_1 \cup P_2 \cup Q_1 \cup Q_2.
    $$
    Note that we do not require $Z$ to be contained in $\overline \Gamma$. See Figure~\ref{fig:figure-J} for a cartoon. Note that $\bbS^2 \setminus Z$ is a disjoint union of four open disks: each containing exactly one of $B_x$, $B_y$, $\gamma_1$ and $\gamma_2$. Let $U_i$ be the disk containing $\gamma_i$. Let $O_i \subset U_i$ denote the connected component of $\overline \Gamma \cap U_i$ containing $\gamma_i$. Since $\overline \Gamma$ is locally connected, we have that $O_i$ is an open subset of $\overline \Gamma$. Now, using Proposition~\ref{prop:freud-prop} we may replace $\gamma_i$ with a combinatorial path $\ell_i$ contained in $O_i$ with the same endpoints, which is therefore drawn entirely inside of $U_i$. The loop $\ell = \ell_1 \cup \ell_2$ satisfies our requirements. 
\end{proof}

\begin{figure}
    \centering

\tikzset{every picture/.style={line width=0.75pt}} 

\begin{tikzpicture}[x=0.75pt,y=0.75pt,yscale=-1,xscale=1]

\draw [color={rgb, 255:red, 65; green, 117; blue, 5 }  ,draw opacity=1 ] [dash pattern={on 3.75pt off 1.5pt}]  (211.2,78) .. controls (215,67.4) and (223.4,69.8) .. (228.68,64.88) .. controls (233.96,59.97) and (305,58.2) .. (320.6,76.6) ;
\draw [shift={(320.6,76.6)}, rotate = 49.71] [color={rgb, 255:red, 65; green, 117; blue, 5 }  ,draw opacity=1 ][fill={rgb, 255:red, 65; green, 117; blue, 5 }  ,fill opacity=1 ][line width=0.75]      (0, 0) circle [x radius= 2.01, y radius= 2.01]   ;
\draw [shift={(211.2,78)}, rotate = 289.72] [color={rgb, 255:red, 65; green, 117; blue, 5 }  ,draw opacity=1 ][fill={rgb, 255:red, 65; green, 117; blue, 5 }  ,fill opacity=1 ][line width=0.75]      (0, 0) circle [x radius= 2.01, y radius= 2.01]   ;
\draw [color={rgb, 255:red, 65; green, 117; blue, 5 }  ,draw opacity=1 ] [dash pattern={on 3.75pt off 1.5pt}]  (227.8,147) .. controls (236.2,152.6) and (241,155) .. (246.08,159.13) .. controls (251.16,163.26) and (285.4,154.2) .. (298.6,147.8) ;
\draw [shift={(298.6,147.8)}, rotate = 334.13] [color={rgb, 255:red, 65; green, 117; blue, 5 }  ,draw opacity=1 ][fill={rgb, 255:red, 65; green, 117; blue, 5 }  ,fill opacity=1 ][line width=0.75]      (0, 0) circle [x radius= 2.01, y radius= 2.01]   ;
\draw [shift={(227.8,147)}, rotate = 33.69] [color={rgb, 255:red, 65; green, 117; blue, 5 }  ,draw opacity=1 ][fill={rgb, 255:red, 65; green, 117; blue, 5 }  ,fill opacity=1 ][line width=0.75]      (0, 0) circle [x radius= 2.01, y radius= 2.01]   ;
\draw  [color={rgb, 255:red, 155; green, 155; blue, 155 }  ,draw opacity=1 ][fill={rgb, 255:red, 228; green, 228; blue, 228 }  ,fill opacity=1 ] (174.6,117.3) .. controls (174.6,104.87) and (184.67,94.8) .. (197.1,94.8) .. controls (209.53,94.8) and (219.6,104.87) .. (219.6,117.3) .. controls (219.6,129.73) and (209.53,139.8) .. (197.1,139.8) .. controls (184.67,139.8) and (174.6,129.73) .. (174.6,117.3) -- cycle ;
\draw  [color={rgb, 255:red, 155; green, 155; blue, 155 }  ,draw opacity=1 ][fill={rgb, 255:red, 228; green, 228; blue, 228 }  ,fill opacity=1 ] (311.8,115.3) .. controls (311.8,102.87) and (321.87,92.8) .. (334.3,92.8) .. controls (346.73,92.8) and (356.8,102.87) .. (356.8,115.3) .. controls (356.8,127.73) and (346.73,137.8) .. (334.3,137.8) .. controls (321.87,137.8) and (311.8,127.73) .. (311.8,115.3) -- cycle ;
\draw  [color={rgb, 255:red, 208; green, 2; blue, 27 }  ,draw opacity=1 ] (142.17,72.99) .. controls (165.68,60.32) and (211.05,49.68) .. (247.96,72.99) .. controls (284.87,96.3) and (247.72,112.51) .. (247.96,149) .. controls (248.19,185.48) and (165.68,187) .. (142.17,149) .. controls (118.66,110.99) and (118.66,85.66) .. (142.17,72.99) -- cycle ;
\draw [color={rgb, 255:red, 208; green, 2; blue, 27 }  ,draw opacity=1 ]   (228.68,64.88) ;
\draw [shift={(228.68,64.88)}, rotate = 0] [color={rgb, 255:red, 208; green, 2; blue, 27 }  ,draw opacity=1 ][fill={rgb, 255:red, 208; green, 2; blue, 27 }  ,fill opacity=1 ][line width=0.75]      (0, 0) circle [x radius= 3.35, y radius= 3.35]   ;
\draw [color={rgb, 255:red, 208; green, 2; blue, 27 }  ,draw opacity=1 ]   (246.08,159.13) ;
\draw [shift={(246.08,159.13)}, rotate = 0] [color={rgb, 255:red, 208; green, 2; blue, 27 }  ,draw opacity=1 ][fill={rgb, 255:red, 208; green, 2; blue, 27 }  ,fill opacity=1 ][line width=0.75]      (0, 0) circle [x radius= 3.35, y radius= 3.35]   ;
\draw  [color={rgb, 255:red, 65; green, 117; blue, 5 }  ,draw opacity=1 ][dash pattern={on 3.75pt off 1.5pt}] (162.4,78.2) .. controls (182.4,68.2) and (196.4,71.4) .. (221.8,81) .. controls (247.2,90.6) and (240.8,135.4) .. (227.8,147) .. controls (214.8,158.6) and (170.2,158.2) .. (155.4,140.2) .. controls (140.6,122.2) and (142.4,88.2) .. (162.4,78.2) -- cycle ;
\draw  [color={rgb, 255:red, 65; green, 117; blue, 5 }  ,draw opacity=1 ][dash pattern={on 3.75pt off 1.5pt}] (302.4,81.4) .. controls (322.4,71.4) and (334.6,78.6) .. (361,79.4) .. controls (387.4,80.2) and (380.8,138.6) .. (367.8,150.2) .. controls (354.8,161.8) and (301.8,161) .. (295.4,143.4) .. controls (289,125.8) and (282.4,91.4) .. (302.4,81.4) -- cycle ;

\draw (185,107.6) node [anchor=north west][inner sep=0.75pt]  [color={rgb, 255:red, 128; green, 128; blue, 128 }  ,opacity=1 ]  {$B_{x}$};
\draw (325,104.8) node [anchor=north west][inner sep=0.75pt]  [color={rgb, 255:red, 128; green, 128; blue, 128 }  ,opacity=1 ]  {$B_{y}$};
\draw (108.43,88.7) node [anchor=north west][inner sep=0.75pt]  [color={rgb, 255:red, 208; green, 2; blue, 27 }  ,opacity=1 ]  {$L$};
\draw (225.76,45.67) node [anchor=north west][inner sep=0.75pt]  [font=\scriptsize,color={rgb, 255:red, 208; green, 2; blue, 27 }  ,opacity=1 ]  {$u$};
\draw (248.08,162.53) node [anchor=north west][inner sep=0.75pt]  [font=\scriptsize,color={rgb, 255:red, 208; green, 2; blue, 27 }  ,opacity=1 ]  {$v$};

\end{tikzpicture}

    \caption{The figure $Z$ (dashed) drawn in the plane.}
    \label{fig:figure-J}
\end{figure}
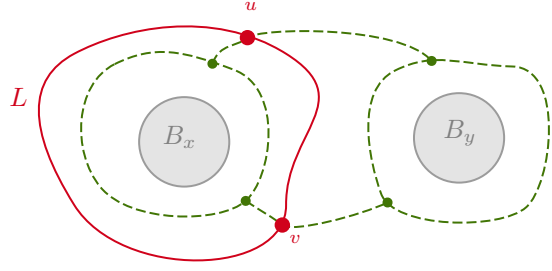







\subsection{Retaining control of our quasi-action}

We begin with some discussion, as what follows will be fairly technical.

It is possible to prove\footnote{To prove this directly, one can adapt and simplify the arguments of this section. It suffices to take $\vs(\Gamma) > 3\lambda^2 + 3$. } that if $\Qp$ is a quasi-planar tuple with 2-connected planar graph $\Gamma$ and $\vs(\Gamma)$ is sufficiently large compared to the constants associated with the induced quasi-action, then $\Qp$ is well-behaved. The pipe-dream we now experience is the hope that we might pass to some `highly connected' torso/part of some canonical tree decomposition, and apply the aforementioned result. Such parts will be quasi-isometric to some highly connected subgraph of $\Gamma$, surely?

Unfortunately, an important subtlety arises here. When we pass to a part of some canonical tree decomposition, simply restricting our quasi-isometries to these parts will not quite produce a well-defined quasi-planar tuple. Instead, we must modify the restricted quasi-inverse slightly in order to ensure that the domains and codomains align. This causes our quasi-isometry constants to increase, and thus moves the goalposts on what we consider to be ``highly connected''. In order to break out of this cycle, some additional bookkeeping is required, which will allow us to refer back to the original quasi-action on the super-graph. Roughly speaking, we will consider a tubular neighbourhood of our subgraph, and approximate the faces of the `outer' subgraph with faces of the `inner' subgraph. The induced quasi-action on this `inner' subgraph will agree with the original quasi-action, and thus we avoid the problem of our quasi-isometry constants inflating. This is the motivation for wanting friendly-faced subgraphs. 

We now set up some notation which will follow us for the rest of \S\ref{sec:quasi-act-planar}. 
In what follows, fix $\lambda \geq 1$ such that the induced quasi-action of $G$ upon $\Gamma$ is a $\lambda$-quasi-action (as in Definition~\ref{def:quasi-action}). We also assume that $\psi$ is a $\lambda$-quasi-inverse to $\varphi$. Applying Theorem~\ref{thm:vertex-subgraph} and Lemma~\ref{lem:qi-increase-cuts}, we obtain a $G$-canonical tree decomposition $(T, \cV)$ of $X$ with finite adhesion and connected parts, where $T/G$ is compact. Writing $\cV = (V_u)_{u \in V(T)}$, we may also assume for each part $X_u  = X[V_u]$ that: 
\begin{itemize}
    \item $X_u$ is quasi-transitively stabilised and connected. 
    \item $\varphi(X_u)$ is highly connected. In particular, we have that
    $$
    \vs(\varphi(X_u)) > 3\lambda^2 + 3. 
    $$
\end{itemize}
By Proposition~\ref{prop:replace-parts-with-nbhd}, we may replace each part of our tree decomposition with a bounded neighbourhood of itself and not affect any of the above properties. In particular, by Theorem~\ref{thm:friendly-faces} we may assume the following without loss of generality:
\begin{itemize}
    \item $\varphi(X_u)$ is a friendly-faced subgraph of $\Gamma$.
\end{itemize}

We now fix $u \in V(T)$ and consider a specific part of this tree decomposition. The notation we fix here will follow us through the rest of this section. Let 
$$
Z = X_u, \ \ \Lambda = \varphi(Z), \ \ H = \Stab(u). 
$$
Note that $H$ acts on $Z$ quasi-transitively. 
We now pick a tubular neighbourhood of $Z$. Let 
$$
\ytwo = B_X(\yone; \lambda), \ \ \ptwo = \varphi(\ytwo). 
$$
Also, let $\mu  : \ytwo \onto \ptwo$ denote the restriction of $\varphi$ to $\ytwo$. By Proposition~\ref{prop:restriction-cts}, $\mu $ is a quasi-isometry (where $Y$ and $\Pi$ are considered with their own intrinsic path-metric).  
We need to be a bit careful about how we choose a quasi-inverse to $\mu$. 

\begin{lemma}\label{lem:qa-agrees}
    There exists a quasi-isometry $\nu : \ptwo \to \ytwo$ such that the following hold:
    \begin{enumerate}
        \item $\nu$ is a quasi-inverse to $\mu $,
        \item $\nu|_{\pone} = \psi|_{\pone}$. 
    \end{enumerate}
    In particular, for all $g \in H$ and $x \in\pone$ we have that $\varphi_g(x) = \mu _g(x)$.
\end{lemma}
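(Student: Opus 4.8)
The plan is to build $\nu$ by modifying the existing quasi-inverse $\psi$ only where necessary. Since $Z$ has uniform coboundary in $X$ (it is quasi-transitively stabilised, so Lemma~\ref{lem:cocompact-in-to-uniform} applies) and $Y = B_X(Z;\lambda)$ also has uniform coboundary by Lemma~\ref{lem:small-increase-still-bounded}, Proposition~\ref{prop:uniform-boundary-qie} tells us the inclusions $Z \hookrightarrow X$ and $Y \hookrightarrow X$ are quasi-isometric embeddings. Hence restricting $\varphi$ to $Y$ (this is $\mu$) is already a quasi-isometry onto $\Pi$ by Proposition~\ref{prop:restriction-cts}, and $\psi(\Lambda)$ lies in a uniformly bounded neighbourhood of $Z$ inside $X$, hence inside $Y$. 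So on the sub-domain $\Lambda = \varphi(Z) \subset \Pi$ we simply \emph{define} $\nu$ to agree with $\psi$; this makes sense because $\psi(\Lambda) \subset B_X(Z;c) \subset Y$ for some uniform $c$, and one checks (using that $Y \hookrightarrow X$ is a quasi-isometric embedding, so the intrinsic metric on $Y$ is comparable to the ambient one on the relevant neighbourhood) that $\psi|_\Lambda$ is a quasi-isometric embedding into $Y$.

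Next I would extend this partial map to all of $\Pi$. The set $\Pi - \Lambda$ consists of points at bounded distance from $\Lambda$ (since $\Pi = \varphi(Y)$ and $Y = B_X(Z;\lambda)$, every point of $\Pi$ is within $\lambda^2 + \lambda$ of $\Lambda$ in $\Gamma$, hence within a bounded distance in $\Pi$ using that $\Pi \hookrightarrow \Gamma$ is a quasi-isometric embedding). For each $x \in \Pi - \Lambda$, choose a point $x_0 \in \Lambda$ at minimal $\Pi$-distance and set $\nu(x) = \psi(x_0)$. This is a bounded perturbation of any genuine quasi-inverse of $\mu$ restricted to $\Pi - \Lambda$, so the resulting $\nu : \Pi \to Y$ is still a $(\lambda', \lambda')$-quasi-isometric embedding for some $\lambda'$, and since its image lies a bounded distance from $\psi(\Lambda)$ which is coarsely dense in $Y$ (as $\psi$ is coarsely surjective onto $X$ and $Y$ is a bounded neighbourhood of $Z = X_u$), $\nu$ is coarsely surjective. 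The verification that $\mu \circ \nu$ and $\nu \circ \mu$ are uniformly close to the respective identity maps follows from the corresponding property of $\varphi, \psi$ together with the bounded perturbations introduced, so $\nu$ is a quasi-inverse of $\mu$. By construction $\nu|_\Lambda = \psi|_\Lambda$, giving property (2).

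For the final sentence: the induced quasi-action $\mu_g$ of $H = \Stab(u)$ on $\Pi$ is defined (as in the discussion after Definition~\ref{def:quasi-action}) by $\mu_g = \mu \circ g \circ \nu$, while the original quasi-action is $\varphi_g = \varphi \circ g \circ \psi$. For $x \in \Lambda = \varphi(Z)$, we have $\nu(x) = \psi(x) \in Z$ (up to the bounded-neighbourhood business, and since $Z = X_u$ is $H$-invariant, $g$ maps $Z$ into itself, so $g\nu(x) \in Z \subset Y$), whence $\mu(g\nu(x)) = \varphi(g\nu(x)) = \varphi(g\psi(x)) = \varphi_g(x)$; that is, $\mu_g(x) = \varphi_g(x)$ for $x \in \Lambda$.

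The main obstacle I anticipate is the bookkeeping around \emph{which} metric is being used: $Y$ and $\Pi$ carry their own intrinsic path metrics, and one must repeatedly invoke Proposition~\ref{prop:uniform-boundary-qie} (via the uniform coboundary of $Z$ and $Y$) to move between intrinsic and ambient distances without losing uniformity of the quasi-isometry constants. A secondary subtlety is ensuring $\psi(\Lambda) \subset Y$ honestly — i.e. that the bounded-neighbourhood constant from coarse surjectivity of $\psi$ onto $X$, combined with $\psi(\Lambda)$ lying near $Z$, actually lands inside $B_X(Z;\lambda)$; if $\lambda$ is not large enough one may need to first enlarge the tubular neighbourhood, which is harmless since Proposition~\ref{prop:replace-parts-with-nbhd} lets us absorb such enlargements into the tree decomposition. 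Everything else is routine quasi-isometry manipulation.
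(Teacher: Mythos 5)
Your proposal is correct and takes essentially the same approach as the paper's (very terse) proof, which simply observes that since $\psi$ is a $\lambda$-quasi-inverse to $\varphi$, one has $\psi(\Lambda) \subset B_X(Z;\lambda) = Y$ automatically, and then declares the existence of $\nu$ ``immediate.'' Your construction fills in exactly what ``immediate'' is hiding: agree with $\psi$ on $\Lambda$, and extend to $\Pi - \Lambda$ by a bounded perturbation, then check the quasi-isometry and quasi-inverse conditions using the uniform-coboundary facts already established.

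One small correction: the ``secondary subtlety'' you flag — that $\psi(\Lambda)$ might not honestly land inside $B_X(Z;\lambda)$ and that $\lambda$ might need enlarging — is a non-issue, and recognising why it dissolves is the main insight of the lemma. The paper fixes $\lambda$ \emph{before} defining $Y$, specifically so that $\lambda$ is (at least) a quasi-inverse constant for $\psi$ with respect to $\varphi$, and \emph{then} sets $Y = B_X(Z;\lambda)$. Hence for any $x = \varphi(z) \in \Lambda$ with $z \in Z$, the quasi-inverse property gives $\dist_X(\psi(x), z) = \dist_X(\psi(\varphi(z)), z) \le \lambda$, so $\psi(x) \in B_X(Z;\lambda) = Y$ exactly — no enlargement required, and no appeal to Proposition~\ref{prop:replace-parts-with-nbhd} is needed at this stage. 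This tightness is the whole point of the definition of $Y$: it is the smallest tubular neighbourhood of $Z$ guaranteed to catch $\psi(\Lambda)$, which is what makes the induced quasi-action $\mu_g$ on $\Pi$ agree on the nose with $\varphi_g$ on $\Lambda$ rather than merely up to bounded error.
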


\begin{proof}
    Since $\psi$ is a $\lambda$-quasi-inverse to $\varphi$, we have that $\psi(\Lambda) \subset B_X(\yone; \lambda) = \ytwo$. It is immediate from this observation that such a $\nu$ exists.
\end{proof}

Note that since $\Lambda$ and $\Pi$ have uniform coboundary, we have by Remark~\ref{rmk:good-drawing-subgraph} that the restriction of $\vartheta$ to closure of one of these subgraphs in $\overline \Gamma$ is also a good drawing. 

We record the following helpful consequence of $\Lambda$ being friendly-faced. 

\begin{lemma}\label{lem:approx-faces}
    There exists $M > 0$ such that
    for every $f_1 \in \facepaths (\pone)$ there exists $f_2 \in \facepaths(\ptwo)$ with $$\dHaus[\Gamma](f_1, f_2) < M.$$ 
\end{lemma}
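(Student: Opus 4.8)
The plan is to combine the fact that $\pone = \varphi(Z)$ is a friendly-faced subgraph of $\Gamma$ with the observation that $\ptwo$ is a bounded Hausdorff-distance from $\pone$. First I would recall that $\yone \subset \ytwo$ with $\dHaus[X](\yone, \ytwo) \leq \lambda$ by construction, and since $\varphi$ is a quasi-isometry this gives $\dHaus[\Gamma](\pone, \ptwo) \leq C$ for some uniform $C > 0$. Moreover, both $\pone$ and $\ptwo$ have uniform coboundary in $\Gamma$: this follows from Lemma~\ref{lem:cocompact-in-to-uniform} applied to $Z$ (which is quasi-transitively stabilised), from Lemma~\ref{lem:small-increase-still-bounded}, and from Theorem~\ref{thm:uniform-boundary-qi}. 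In particular, by Proposition~\ref{prop:uniform-boundary-qie} the inclusions $\pone \into \Gamma$ and $\ptwo \into \Gamma$ are quasi-isometric embeddings, so intrinsic distances in $\pone$ and $\ptwo$ are comparable to distances in $\Gamma$.

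Next I would use the friendly-faced hypothesis. Fix $f_1 \in \facepaths \pone$. By Definition~\ref{def:friendly-faced}, since $\pone$ is friendly-faced in $\Gamma$ there exists a uniform $r > 0$, a face $U_1 \in \facedisks \pone$ with $f_1 = \facepaths(U_1)$, and a face $W \in \facedisks \Gamma$ with $W \subset U_1$ such that $f_1 \subset B_\Gamma(\facepaths(W); r)$. Now I claim $\pone$ is also friendly-faced in $\ptwo$: this is exactly Lemma~\ref{lem:friendly-faces-chain}, whose hypotheses are met because $\ptwo$ has uniform coboundary in $\Gamma$ and $\pone$ is friendly-faced in $\Gamma$. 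Hence there exists a unique $U_2 \in \facedisks \ptwo$ with $U_1 \supset U_2 \supset W$, and the proof of Lemma~\ref{lem:friendly-faces-chain} shows (for $f_1$ with $\diam_\pone(f_1)$ large enough, via the quasi-isometric embedding of $\ptwo$ into $\Gamma$) that $f_1 \subset B_{\ptwo}(\facepaths(U_2); m)$ for some uniform $m$, whence $f_1 \subset B_\Gamma(f_2; m)$ where $f_2 := \facepaths(U_2)$.

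It then remains to get the reverse containment $f_2 \subset B_\Gamma(f_1; M)$. For this I would argue that each connected component of $\ptwo - \pone$ has boundedly many vertices: this follows because $\pone$ has uniform coboundary in $\Gamma$, $\Gamma$ is bounded valence, and $\dHaus[\Gamma](\pone, \ptwo) < \infty$ (so the components of $\ptwo - \pone$ are finite and of uniformly bounded size, just as in the proof of Lemma~\ref{lem:nice-faces}). Any vertex $y \in f_2 = \facepaths(U_2)$ either lies on $\pone$, in which case it is within bounded distance of $f_1$ by the previous paragraph's comparison of faces (using that $U_2 \subset U_1$ forces boundary vertices of $U_2$ on $\pone$ to be close to $\facepaths(U_1)$), or it lies on a component of $\ptwo - \pone$ abutting $U_2$, in which case it is within the bounded diameter of that component from a vertex of $f_2 \cap \pone$, hence again within bounded distance of $f_1$. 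Taking $M$ to be the maximum of these uniform bounds — and handling the finitely many orbits of faces $f_1$ with $\diam_\pone(f_1)$ small by a separate (trivially bounded) argument since $\ptwo/H$ is compact — finishes the proof.

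The main obstacle I anticipate is the careful bookkeeping in the reverse containment: verifying that a vertex of $f_2$ lying on $\pone$ is genuinely close to $f_1$ requires knowing that passing from $U_1$ down to the subface $U_2$ of the \emph{larger} graph $\ptwo$ does not introduce boundary vertices far from $f_1$ — this is essentially a re-run of Claims~\ref{claim:common-faces} and \ref{claim:tripod} inside the proof of Lemma~\ref{lem:nice-faces}, but restricted to the thin annular region $\ptwo - \pone$, where it is much easier because that region has uniformly bounded components. I expect this can be dispatched by directly quoting that the components of $\ptwo - \pone$ are uniformly small, so no genuinely new topological input is needed, but the statement must be assembled with care.
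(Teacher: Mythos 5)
Your proposal is correct and follows essentially the same route as the paper, which invokes Lemma~\ref{lem:friendly-faces-chain} for the containment $f_1 \subset B_\Gamma(f_2; m)$ and then dispatches the reverse inclusion with the one-line observation that $\Pi$ lies in a bounded neighbourhood of $\Lambda$; you have merely unpacked that last step via the bounded size of the components of $\Pi - \Lambda$, which is exactly the right justification. One small slip in your final paragraph: a vertex $y \in f_2$ lying on a component $C$ of $\Pi - \Lambda$ need not be close to a vertex of $f_2 \cap \Lambda$ — the attachment points of $C$ to $\Lambda$ lie on $f_1 = \facepaths(U_1)$ (since $C$ is drawn inside $U_1$) but need not lie on $f_2$; fortunately this is all you need for the stated conclusion.
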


\begin{proof}
    By Lemma~\ref{lem:friendly-faces-chain}, we have that $\Lambda$ is a friendly-faced subgraph of $\Pi$. Thus, for every $U \in \facedisks(\Lambda)$ there exists $U' \in \facepaths (\Pi)$ such that $U' \subset U$ and $\facepaths[U]$ is contained in a bounded neighbourhood of $\facepaths[U']$. But since $\Pi$ is contained in a bounded neighbourhood of $\Lambda$, it is clear that actually we must have $\dHaus[\Gamma](\facepaths[U], \facepaths[U'])$ is uniformly bounded. 
\end{proof}

The importance of this is that now we may study the quasi-action of $H$ on the faces of $\ptwo$ by approximating them as faces of $\pone$. Then, we may apply Lemma~\ref{lem:qa-agrees} to inspect their quasi-translates by referring back to the quasi-action of $G$ on $\Gamma$, for which we have `small constants'. 

\medskip
\medskip

{

\centering

\textbf{For the rest of \S\ref{sec:quasi-act-planar}, we will fix all of the above notation.}

}

\medskip
\medskip

We summarise the current sitation below, for ease of reference:
\begin{itemize}
    \item $\Qp = (G, X, \Gamma, \vartheta, \varphi, \psi)$ is a quasi-planar tuple (see Definition~\ref{def:qp}).

    \item $\lambda \geq 1$ is such that the induced quasi-action of $G$ upon $\Gamma$ is a $\lambda$-quasi-action. 
    
    \item $Z \subset X$ is connected, quasi-transitively stabilised, and has uniform coboundary.

    \item $H \leq G$ is the setwise stabiliser of $Z$.

    \item $Y = B_X(Z;\lambda)$.

    \item $\Lambda = \varphi(Z)$, $\Pi = \varphi(Y)$

    \item $\Lambda$ and $\Pi$ have uniform coboundary in $\Gamma$.

    \item $\Lambda$ is a friendly-faced subgraph of both $\Gamma$ and $\Pi$ (see Definition~\ref{def:friendly-faced}).

    \item $\Lambda$ and $\Pi$ are almost 2-connected (see Definition~\ref{def:nearly-2-conn}). 

    \item $\vs(\Lambda) > 3\lambda^2 + 3$.

    \item $\mu  : \ytwo \onto \ptwo$ denotes the restriction of $\varphi$ to $Y$, which is a quasi-isometry. 

    \item $\nu : \Pi \to Y$ is a quasi-inverse to $\mu$, chosen such that the induced quasi-action $\mu_g$ of $H$ upon $\Pi$ satisfies 
    $
    \varphi_g(x) = \mu _g(x)
    $
    for all $x \in \Lambda$, $g \in H$. 
\end{itemize}

We also assume without loss of generality that $Z$ has more than one end. See also Figure~\ref{fig:XYZ} for a small cartoon. 

\begin{figure}
    \centering
    \tikzset{every picture/.style={line width=0.75pt}} 

\begin{tikzpicture}[x=0.75pt,y=0.75pt,yscale=-1,xscale=1]

\draw  [color={rgb, 255:red, 128; green, 128; blue, 128 }  ,draw opacity=1 ][fill={rgb, 255:red, 231; green, 231; blue, 231 }  ,fill opacity=1 ] (139.5,59.75) -- (214.33,59.75) -- (214.33,152) -- (139.5,152) -- cycle ;
\draw  [color={rgb, 255:red, 208; green, 2; blue, 27 }  ,draw opacity=1 ][fill={rgb, 255:red, 250; green, 227; blue, 227 }  ,fill opacity=1 ] (144.5,115.04) .. controls (144.5,96.93) and (159.18,82.25) .. (177.29,82.25) .. controls (195.4,82.25) and (210.08,96.93) .. (210.08,115.04) .. controls (210.08,133.15) and (195.4,147.83) .. (177.29,147.83) .. controls (159.18,147.83) and (144.5,133.15) .. (144.5,115.04) -- cycle ;
\draw  [color={rgb, 255:red, 65; green, 117; blue, 5 }  ,draw opacity=1 ][fill={rgb, 255:red, 223; green, 237; blue, 205 }  ,fill opacity=1 ] (153.88,121.48) .. controls (153.88,108.44) and (164.44,97.88) .. (177.48,97.88) .. controls (190.52,97.88) and (201.08,108.44) .. (201.08,121.48) .. controls (201.08,134.52) and (190.52,145.08) .. (177.48,145.08) .. controls (164.44,145.08) and (153.88,134.52) .. (153.88,121.48) -- cycle ;
\draw  [color={rgb, 255:red, 128; green, 128; blue, 128 }  ,draw opacity=1 ][fill={rgb, 255:red, 231; green, 231; blue, 231 }  ,fill opacity=1 ] (269.25,59.75) -- (344.08,59.75) -- (344.08,152) -- (269.25,152) -- cycle ;
\draw  [color={rgb, 255:red, 208; green, 2; blue, 27 }  ,draw opacity=1 ][fill={rgb, 255:red, 250; green, 227; blue, 227 }  ,fill opacity=1 ] (274.25,115.04) .. controls (274.25,96.93) and (288.93,82.25) .. (307.04,82.25) .. controls (325.15,82.25) and (339.83,96.93) .. (339.83,115.04) .. controls (339.83,133.15) and (325.15,147.83) .. (307.04,147.83) .. controls (288.93,147.83) and (274.25,133.15) .. (274.25,115.04) -- cycle ;
\draw  [color={rgb, 255:red, 65; green, 117; blue, 5 }  ,draw opacity=1 ][fill={rgb, 255:red, 223; green, 237; blue, 205 }  ,fill opacity=1 ] (283.63,121.48) .. controls (283.63,108.44) and (294.19,97.88) .. (307.23,97.88) .. controls (320.27,97.88) and (330.83,108.44) .. (330.83,121.48) .. controls (330.83,134.52) and (320.27,145.08) .. (307.23,145.08) .. controls (294.19,145.08) and (283.63,134.52) .. (283.63,121.48) -- cycle ;
\draw    (221.08,72.25) .. controls (230.33,64.39) and (251.56,64.87) .. (263.4,68.35) ;
\draw [shift={(266.08,69.25)}, rotate = 201.12] [fill={rgb, 255:red, 0; green, 0; blue, 0 }  ][line width=0.08]  [draw opacity=0] (5.36,-2.57) -- (0,0) -- (5.36,2.57) -- cycle    ;
\draw    (223.22,78.02) .. controls (234.24,78.84) and (254.68,79.05) .. (265.08,74.75) ;
\draw [shift={(220.08,77.75)}, rotate = 5.86] [fill={rgb, 255:red, 0; green, 0; blue, 0 }  ][line width=0.08]  [draw opacity=0] (5.36,-2.57) -- (0,0) -- (5.36,2.57) -- cycle    ;
\draw [color={rgb, 255:red, 208; green, 2; blue, 27 }  ,draw opacity=1 ]   (212.93,115) .. controls (225.54,107.01) and (254.77,107.64) .. (270.49,111.27) ;
\draw [shift={(273.33,112)}, rotate = 196.06] [fill={rgb, 255:red, 208; green, 2; blue, 27 }  ,fill opacity=1 ][line width=0.08]  [draw opacity=0] (5.36,-2.57) -- (0,0) -- (5.36,2.57) -- cycle    ;
\draw [color={rgb, 255:red, 208; green, 2; blue, 27 }  ,draw opacity=1 ]   (214.6,121.2) .. controls (229.05,122.08) and (257.63,122.42) .. (271.99,118) ;
\draw [shift={(211.58,121)}, rotate = 4.37] [fill={rgb, 255:red, 208; green, 2; blue, 27 }  ,fill opacity=1 ][line width=0.08]  [draw opacity=0] (5.36,-2.57) -- (0,0) -- (5.36,2.57) -- cycle    ;

\draw (141.5,63.15) node [anchor=north west][inner sep=0.75pt]    {$X$};
\draw (171.75,83.9) node [anchor=north west][inner sep=0.75pt]  [font=\footnotesize,color={rgb, 255:red, 208; green, 2; blue, 27 }  ,opacity=1 ]  {$Y$};
\draw (172,114.9) node [anchor=north west][inner sep=0.75pt]  [font=\footnotesize,color={rgb, 255:red, 65; green, 117; blue, 5 }  ,opacity=1 ]  {$Z$};
\draw (271.25,63.15) node [anchor=north west][inner sep=0.75pt]    {$\Gamma $};
\draw (301.5,83.9) node [anchor=north west][inner sep=0.75pt]  [font=\footnotesize,color={rgb, 255:red, 208; green, 2; blue, 27 }  ,opacity=1 ]  {$\Pi $};
\draw (301.75,114.9) node [anchor=north west][inner sep=0.75pt]  [font=\footnotesize,color={rgb, 255:red, 65; green, 117; blue, 5 }  ,opacity=1 ]  {$\Lambda $};
\draw (235,52.4) node [anchor=north west][inner sep=0.75pt]  [font=\scriptsize]  {$\varphi $};
\draw (238.75,80.4) node [anchor=north west][inner sep=0.75pt]  [font=\scriptsize]  {$\psi $};
\draw (235.91,95.65) node [anchor=north west][inner sep=0.75pt]  [font=\scriptsize,color={rgb, 255:red, 208; green, 2; blue, 27 }  ,opacity=1 ]  {$\mu $};
\draw (235.68,124.4) node [anchor=north west][inner sep=0.75pt]  [font=\scriptsize,color={rgb, 255:red, 208; green, 2; blue, 27 }  ,opacity=1 ]  {$\nu $};

\end{tikzpicture}
    \caption{}
    \label{fig:XYZ}
\end{figure}

\subsection{Quasi-translates of faces}

We now apply the results of the previous section and study the quasi-action of $H$ on the faces of $\ptwo$. 
We will need the following easy lemma, relating to finite planar graphs. We leave the proof as an exercise.

\begin{lemma}\label{lem:jordan-curve-finite}
    Let $\Gamma$ be a finite, connected planar graph. Suppose that $\Gamma$ contains: 
    \begin{enumerate}
        \item Two disjoint connected  subgraphs $\Lambda_1$, $\Lambda_2$.

        \item Three pairwise disjoint paths $\alpha_1$, $\alpha_2$, $\alpha_3$ connecting $\Lambda_1$ to $\Lambda_2$.  

        \item Three paths $\beta_1$, $\beta_2$, $\beta_3$ such that:
        \begin{enumerate}
            \item Each $\beta_i$ connects $\alpha_i$ to $\alpha_{i+1}$, but is disjoint from $\alpha_{i+2}$, where indices are taken modulo 3, and

            \item Each $\beta_i$ is disjoint from both $\Lambda_1$ and $\Lambda_2$. 
        \end{enumerate}
    \end{enumerate}
    Then the image of any embedding $\vartheta$ of $\Gamma$ in $\bbS^2$ contains a Jordan curve separating $\vartheta(\Lambda_1)$ from $\vartheta(\Lambda_2)$. In particular, no $x \in \Lambda_1$ and $y \in \Lambda_2$ can lie on a common facial subgraph of $\Gamma$ in any embedding.  
\end{lemma}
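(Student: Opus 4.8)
The plan is to discard everything inessential and reduce the statement to a Jordan-curve argument on a ``theta graph with three chords''.

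First I would pass to the subgraph $\Delta = \Lambda_1 \cup \Lambda_2 \cup \alpha_1 \cup \alpha_2 \cup \alpha_3 \cup \beta_1 \cup \beta_2 \cup \beta_3$ (a Jordan curve in an embedding of $\Delta$ is a fortiori one in the corresponding embedding of $\Gamma$), replace each $\Lambda_j$ by a minimal connected subgraph of it containing all the endpoints of the $\alpha_i$ lying in $\Lambda_j$ — necessarily a tree — and, after passing to sub-arcs of the $\alpha_i$ and $\beta_i$, arrange that each $\alpha_i$ meets $\Lambda_1$ only at its first vertex and $\Lambda_2$ only at its last, and that each $\beta_i$ meets $\alpha_i \cup \alpha_{i+1}$ in exactly its two endpoints, which are internal vertices of $\alpha_i$ and $\alpha_{i+1}$ lying outside $\Lambda_1 \cup \Lambda_2$. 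Given an embedding $\vartheta$ (identified with its image), the sets $\vartheta(\Lambda_1), \vartheta(\Lambda_2)$ are embedded finite trees, hence do not separate $\bbS^2$; collapsing each to a point gives a quotient $\pi : \bbS^2 \to \bbS^2$ under which $\Gamma$ maps to a graph $\Gamma^*$ with two vertices $a^* = \pi(\Lambda_1)$, $b^* = \pi(\Lambda_2)$, three internally disjoint arcs $\alpha_i^*$ from $a^*$ to $b^*$, and three arcs $\beta_i^*$ joining an internal point of $\alpha_i^*$ to an internal point of $\alpha_{i+1}^*$, each disjoint from $\alpha_{i+2}^*$ and from $\{a^*, b^*\}$. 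A Jordan curve in $\Gamma^*$ avoiding $\{a^*, b^*\}$ and separating $a^*$ from $b^*$ pulls back under $\pi$ to a Jordan curve in $\vartheta(\Gamma)$, disjoint from $\vartheta(\Lambda_1)\cup\vartheta(\Lambda_2)$, separating $\vartheta(\Lambda_1)$ from $\vartheta(\Lambda_2)$, so it is enough to find the former.

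The graph $\Theta = \alpha_1^* \cup \alpha_2^* \cup \alpha_3^*$ is a finite theta graph with no cut point, so by Proposition~\ref{prop:simple-face} every face of its embedding is bounded by a simple closed curve, and Euler's formula gives exactly three faces $F_1, F_2, F_3$, each an open disk bounded by one of the cycles $\alpha_i^* \cup \alpha_{i+1}^*$ — in particular every face has both $a^*$ and $b^*$ on its boundary. The arc $\beta_i^*$, being disjoint from $\alpha_{i+2}^*$ and $\{a^*,b^*\}$ with endpoints on $\alpha_i^*$ and $\alpha_{i+1}^*$, lies away from its endpoints in the face bounded by $\alpha_i^* \cup \alpha_{i+1}^*$, and it is a chord of that face, splitting it into two disks: an ``$a$-corner'', having $a^*$ on its boundary, and a ``$b$-corner''. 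Let $\gamma$ be the closed curve obtained by cyclically concatenating $\beta_1^*$, the arc of $\alpha_2^*$ between the feet of $\beta_1^*$ and $\beta_2^*$, $\beta_2^*$, the arc of $\alpha_3^*$ between the feet of $\beta_2^*$ and $\beta_3^*$, $\beta_3^*$, and the arc of $\alpha_1^*$ between the feet of $\beta_3^*$ and $\beta_1^*$; since the three $\beta_i^*$ lie in distinct faces and the three $\alpha$-arcs are disjoint, $\gamma$ is a Jordan curve missing $a^*$ and $b^*$. Let $\Omega^a$ be the union of the closed $a$-corners of $F_1, F_2, F_3$: near $a^*$ the three faces tile a disk-neighbourhood, and $\Omega^a$ is built from three closed disks glued tree-like along boundary arcs through $a^*$, so $\Omega^a$ is a closed disk with $a^* \in \operatorname{int}\Omega^a$, and a direct inspection of boundaries gives $\partial\Omega^a = \gamma$. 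Symmetrically the union $\Omega^b$ of the three $b$-corners is a closed disk with $b^* \in \operatorname{int}\Omega^b$ and $\partial\Omega^b = \gamma$, while $\Omega^a \cup \Omega^b = \bbS^2$ and $\Omega^a \cap \Omega^b = \gamma$. Hence $\gamma$ separates $a^*$ from $b^*$. Pulling back, $\vartheta(\Gamma)$ contains a Jordan curve separating $\vartheta(\Lambda_1)$ from $\vartheta(\Lambda_2)$; and if some $x \in \Lambda_1$, $y \in \Lambda_2$ lay on a common facial subgraph of $\Gamma$, the corresponding open face would be a connected subset of $\bbS^2 \setminus \gamma$ with both $\vartheta(x), \vartheta(y)$ in its closure, contradicting separation.

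I expect the theta-graph core to be the easy, essentially forced part; the main obstacle is the first reduction. As literally stated the hypotheses allow an $\alpha_i$ to re-enter $\Lambda_1$ or $\Lambda_2$, allow the $\beta_i$ to intersect one another, and allow a $\beta_i$ to meet $\alpha_i$ in more than one point, and one must check that passing to sub-arcs and slightly enlarging the $\Lambda_j$ restores the clean configuration while preserving each of the three defining incidences of the $\beta_i$ — in the applications in this paper the $\alpha_i$ are geodesics and the $\beta_i$ lie in prescribed regions, so this is automatic. The remaining plane-topology facts invoked (faces of an embedded finite graph without cut points are disks; a tree-like union of disks glued along boundary arcs is a disk; two disks glued along their common boundary circle cover $\bbS^2$ and that circle separates their interiors) are standard and routine to discharge.
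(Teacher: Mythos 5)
The paper gives no proof of this lemma (it is stated and the reader is sent to Figure~\ref{fig:simple-example}); the body of Section~\ref{sec:quasi-act-planar} applies it to configurations in which the $\alpha_i$ are (quasi-)geodesic segments and meet the $\Lambda_j$ only at their endpoints, and the $\beta_i$ are drawn far from the $\Lambda_j$. Your theta-graph core --- contract $\Lambda_1, \Lambda_2$ to $a^*, b^*$, observe that each $\beta_i^*$ becomes a chord of one of the three faces of $\Theta = \alpha_1^* \cup \alpha_2^* \cup \alpha_3^*$ separating $a^*$ from $b^*$ in that face, and concatenate the three chords with arcs of the $\alpha_i^*$ into a single Jordan curve bounding the union of the three $a$-corners --- is correct and is almost certainly the intended argument.

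The reduction, however, is a genuine gap, not merely a loose end, and you were right to single it out. Truncating $\alpha_i$ to its minimal $\Lambda_1$--$\Lambda_2$ sub-arc may discard the very vertex at which $\beta_i$ (or $\beta_{i-1}$) meets $\alpha_i$; and ``slightly enlarging $\Lambda_1$'' to absorb the discarded excursion fails too, since the excursion may contain the foot of $\beta_i$, which would then violate the hypothesis that $\beta_i$ avoids $\Lambda_1$. In fact the lemma as literally stated is \emph{false}: take $\Lambda_1$ a $4$-leaf star with centre $a_0$ and leaves $a_1, a_2, a_3, a_4$; $\Lambda_2$ a $3$-leaf star with centre $B$ and leaves $b_1, b_2, b_3$; $\alpha_1 = a_1 p a_4 q_1 b_1$ (re-entering $\Lambda_1$ at $a_4$), $\alpha_2 = a_2 q_2 b_2$, $\alpha_3 = a_3 q_3 b_3$; and $\beta_1 = p r_1 q_2$, $\beta_2 = q_2 r_2 q_3$, $\beta_3 = q_3 r_3 q_1$. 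All hypotheses hold, yet one can embed this graph in the plane (route $\beta_1$ over the top, $\beta_3$ through the bottom) so that the outer face has both $a_4 \in \Lambda_1$ and $b_1, b_2, B \in \Lambda_2$ on its boundary, so no Jordan curve in the image separates $\vartheta(\Lambda_1)$ from $\vartheta(\Lambda_2)$. The culprit is exactly that $\beta_1$ attaches to $\alpha_1$ on the excursion $a_1 p a_4$ rather than on the sub-arc $a_4 q_1 b_1$. The statement needs the extra hypothesis (implicit in the figure, and automatically satisfied by the paper's applications) that each $\alpha_i$ meets $\Lambda_1 \cup \Lambda_2$ only at its two endpoints; with that in place, passing to sub-arcs of the $\beta_i$ as you describe is unproblematic, and your theta-graph argument closes the proof.
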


See Figure~\ref{fig:simple-example} for an illustration of Lemma~\ref{lem:jordan-curve-finite}. 

\begin{figure}[h]
    \centering

\tikzset{every picture/.style={line width=0.75pt}} 

\begin{tikzpicture}[x=0.75pt,y=0.75pt,yscale=-1,xscale=1]

\draw  [color={rgb, 255:red, 155; green, 155; blue, 155 }  ,draw opacity=1 ][fill={rgb, 255:red, 228; green, 228; blue, 228 }  ,fill opacity=1 ] (331.8,64.9) .. controls (331.8,52.47) and (341.87,42.4) .. (354.3,42.4) .. controls (366.73,42.4) and (376.8,52.47) .. (376.8,64.9) .. controls (376.8,77.33) and (366.73,87.4) .. (354.3,87.4) .. controls (341.87,87.4) and (331.8,77.33) .. (331.8,64.9) -- cycle ;
\draw  [color={rgb, 255:red, 155; green, 155; blue, 155 }  ,draw opacity=1 ][fill={rgb, 255:red, 228; green, 228; blue, 228 }  ,fill opacity=1 ] (469,62.9) .. controls (469,50.47) and (479.07,40.4) .. (491.5,40.4) .. controls (503.93,40.4) and (514,50.47) .. (514,62.9) .. controls (514,75.33) and (503.93,85.4) .. (491.5,85.4) .. controls (479.07,85.4) and (469,75.33) .. (469,62.9) -- cycle ;
\draw [color={rgb, 255:red, 74; green, 144; blue, 226 }  ,draw opacity=1 ]   (357,45.8) .. controls (397,15.8) and (408.4,48.8) .. (442.6,49) .. controls (476.8,49.2) and (475.4,33.4) .. (490.2,44.6) ;
\draw [shift={(490.2,44.6)}, rotate = 37.12] [color={rgb, 255:red, 74; green, 144; blue, 226 }  ,draw opacity=1 ][fill={rgb, 255:red, 74; green, 144; blue, 226 }  ,fill opacity=1 ][line width=0.75]      (0, 0) circle [x radius= 2.34, y radius= 2.34]   ;
\draw [shift={(357,45.8)}, rotate = 323.13] [color={rgb, 255:red, 74; green, 144; blue, 226 }  ,draw opacity=1 ][fill={rgb, 255:red, 74; green, 144; blue, 226 }  ,fill opacity=1 ][line width=0.75]      (0, 0) circle [x radius= 2.34, y radius= 2.34]   ;
\draw [color={rgb, 255:red, 74; green, 144; blue, 226 }  ,draw opacity=1 ]   (371,61.8) .. controls (411,69) and (398.4,74.4) .. (432.6,74.6) .. controls (466.8,74.8) and (452.2,69.4) .. (474.2,66.6) ;
\draw [shift={(474.2,66.6)}, rotate = 352.75] [color={rgb, 255:red, 74; green, 144; blue, 226 }  ,draw opacity=1 ][fill={rgb, 255:red, 74; green, 144; blue, 226 }  ,fill opacity=1 ][line width=0.75]      (0, 0) circle [x radius= 2.34, y radius= 2.34]   ;
\draw [shift={(371,61.8)}, rotate = 10.2] [color={rgb, 255:red, 74; green, 144; blue, 226 }  ,draw opacity=1 ][fill={rgb, 255:red, 74; green, 144; blue, 226 }  ,fill opacity=1 ][line width=0.75]      (0, 0) circle [x radius= 2.34, y radius= 2.34]   ;
\draw [color={rgb, 255:red, 74; green, 144; blue, 226 }  ,draw opacity=1 ]   (358.6,84.2) .. controls (367.4,116.2) and (391.4,101.8) .. (426.2,95.4) .. controls (461,89) and (478.6,107.4) .. (489.8,83.4) ;
\draw [shift={(489.8,83.4)}, rotate = 295.02] [color={rgb, 255:red, 74; green, 144; blue, 226 }  ,draw opacity=1 ][fill={rgb, 255:red, 74; green, 144; blue, 226 }  ,fill opacity=1 ][line width=0.75]      (0, 0) circle [x radius= 2.34, y radius= 2.34]   ;
\draw [shift={(358.6,84.2)}, rotate = 74.62] [color={rgb, 255:red, 74; green, 144; blue, 226 }  ,draw opacity=1 ][fill={rgb, 255:red, 74; green, 144; blue, 226 }  ,fill opacity=1 ][line width=0.75]      (0, 0) circle [x radius= 2.34, y radius= 2.34]   ;
\draw [color={rgb, 255:red, 208; green, 2; blue, 27 }  ,draw opacity=1 ]   (427.6,47.4) .. controls (435.8,57) and (418.6,66.2) .. (425.8,74.6) ;
\draw [shift={(425.8,74.6)}, rotate = 49.4] [color={rgb, 255:red, 208; green, 2; blue, 27 }  ,draw opacity=1 ][fill={rgb, 255:red, 208; green, 2; blue, 27 }  ,fill opacity=1 ][line width=0.75]      (0, 0) circle [x radius= 1.34, y radius= 1.34]   ;
\draw [shift={(427.6,47.4)}, rotate = 49.5] [color={rgb, 255:red, 208; green, 2; blue, 27 }  ,draw opacity=1 ][fill={rgb, 255:red, 208; green, 2; blue, 27 }  ,fill opacity=1 ][line width=0.75]      (0, 0) circle [x radius= 1.34, y radius= 1.34]   ;
\draw [color={rgb, 255:red, 208; green, 2; blue, 27 }  ,draw opacity=1 ]   (445.2,73.8) .. controls (438.6,82.2) and (450.6,81) .. (447.8,95.4) ;
\draw [shift={(447.8,95.4)}, rotate = 101] [color={rgb, 255:red, 208; green, 2; blue, 27 }  ,draw opacity=1 ][fill={rgb, 255:red, 208; green, 2; blue, 27 }  ,fill opacity=1 ][line width=0.75]      (0, 0) circle [x radius= 1.34, y radius= 1.34]   ;
\draw [shift={(445.2,73.8)}, rotate = 128.16] [color={rgb, 255:red, 208; green, 2; blue, 27 }  ,draw opacity=1 ][fill={rgb, 255:red, 208; green, 2; blue, 27 }  ,fill opacity=1 ][line width=0.75]      (0, 0) circle [x radius= 1.34, y radius= 1.34]   ;
\draw [color={rgb, 255:red, 208; green, 2; blue, 27 }  ,draw opacity=1 ]   (404.6,100.2) .. controls (409,115) and (324.6,110.6) .. (318.6,92.2) .. controls (312.6,73.8) and (321.4,46.2) .. (345.4,25.4) .. controls (369.4,4.6) and (416.1,38.6) .. (415.4,42.2) ;
\draw [shift={(415.4,42.2)}, rotate = 101] [color={rgb, 255:red, 208; green, 2; blue, 27 }  ,draw opacity=1 ][fill={rgb, 255:red, 208; green, 2; blue, 27 }  ,fill opacity=1 ][line width=0.75]      (0, 0) circle [x radius= 1.34, y radius= 1.34]   ;
\draw [shift={(404.6,100.2)}, rotate = 73.44] [color={rgb, 255:red, 208; green, 2; blue, 27 }  ,draw opacity=1 ][fill={rgb, 255:red, 208; green, 2; blue, 27 }  ,fill opacity=1 ][line width=0.75]      (0, 0) circle [x radius= 1.34, y radius= 1.34]   ;

\draw (431.6,53.6) node [anchor=north west][inner sep=0.75pt]  [font=\scriptsize,color={rgb, 255:red, 208; green, 2; blue, 27 }  ,opacity=1 ]  {$\beta _{1}$};
\draw (324.4,82.4) node [anchor=north west][inner sep=0.75pt]  [font=\scriptsize,color={rgb, 255:red, 208; green, 2; blue, 27 }  ,opacity=1 ]  {$\beta _{3}$};
\draw (450.8,78.8) node [anchor=north west][inner sep=0.75pt]  [font=\scriptsize,color={rgb, 255:red, 208; green, 2; blue, 27 }  ,opacity=1 ]  {$\beta _{2}$};
\draw (382.4,37.6) node [anchor=north west][inner sep=0.75pt]  [font=\scriptsize,color={rgb, 255:red, 74; green, 144; blue, 226 }  ,opacity=1 ]  {$\alpha _{1}$};
\draw (385.2,69.2) node [anchor=north west][inner sep=0.75pt]  [font=\scriptsize,color={rgb, 255:red, 74; green, 144; blue, 226 }  ,opacity=1 ]  {$\alpha _{2}$};
\draw (445.2,98.8) node [anchor=north west][inner sep=0.75pt]  [font=\scriptsize,color={rgb, 255:red, 74; green, 144; blue, 226 }  ,opacity=1 ]  {$\alpha _{3}$};
\draw (339.2,55.4) node [anchor=north west][inner sep=0.75pt]  [color={rgb, 255:red, 128; green, 128; blue, 128 }  ,opacity=1 ]  {$\Lambda _{1}$};
\draw (485.6,53.8) node [anchor=north west][inner sep=0.75pt]  [color={rgb, 255:red, 128; green, 128; blue, 128 }  ,opacity=1 ]  {$\Lambda _{2}$};

\end{tikzpicture}

    \caption{}
    \label{fig:simple-example}
\end{figure}

We now have the following application of Menger's theorem, which motivates why we wanted $\vs(\pone)$ to be `large'.

\begin{lemma}\label{lem:disjoint-paths}
    The subgraph $\pone \subset \Gamma$ enjoys the following properties:
    \begin{enumerate}
        \item\label{itm:paths1} For every distinct pair of ends $\omega_1, \omega_2 \in \Omega (\Lambda)$, there exists three distinct bi-infinite paths $\alpha_1, \alpha_2, \alpha_3 : \R \to \pone$ such that each $\alpha_i$ connects $\omega_1$ to $\omega_2$.

        \item\label{itm:paths3}There exists $S > 0$ such that for all $s \geq S$ and all $x,y  \in \pone$ such that $\dist_{\pone}(x,y) > 2s$, there exists three distinct paths $\alpha_1, \alpha_2, \alpha_3 : [0,1] \to \pone$ such that each $\alpha_i$ begins in $B_{\pone}(x, s)$ and ends in $B_{\pone}(y, s)$.
    \end{enumerate}
    Moreover, in each case we have that the $\alpha_i$ satisfy the additional condition  that 
    \begin{equation}\tag{$\ast$}\label{eq:lowerbound}
        \dist_{\Gamma}(z, \alpha_i) > \lambda^2
    \end{equation}
    for every $j \neq i$, $z \in \alpha_j$. 
\end{lemma}

\begin{remark}
    It is very important to note that the lower bound given in (\ref{eq:lowerbound}) above pertains to how far apart these rays are \textbf{in the super-graph $\Gamma$}, not just in $\pone$ or $\ptwo$. 
\end{remark}

\begin{proof}[Proof of Lemma~\ref{lem:disjoint-paths}]
(\ref{itm:paths1}): Let $\omega_1, \omega_2 \in \Omega (\Lambda)$. Since $\vs(\pone) \geq 3\lambda^2 + 3$, we have by Menger's theorem (\ref{thm:menger}) that there exists $ N := 3\lambda^2 + 3$ pairwise disjoint bi-infinite paths paths $\beta_1, \ldots, \beta_N \subset \pone$, between $\omega_1$ and $\omega_2$. 
Since $\pone \subset \Gamma$ is planar with a fixed drawing $\vartheta$, we have that there is a fixed cyclic order in which these paths emerge from $\omega_1$. Assume without loss of generality that $\beta_i$ lies adjacent to $\beta_{i\pm 1}$ in this order, where indices are taken modulo $N$. 
Let 
$$
\alpha_1 = \beta_1, \ \ \alpha_2 = \beta_{\lambda^2 + 2}, \ \ \alpha_3 = \beta_{2\lambda^2 + 3}. 
$$
For every $i \neq j$, there are at least $\lambda^2$ distinct Jordan curves contained in $\overline \Gamma$ which separate $\alpha_i$ from $\alpha_j$. Moreover, these curves only intersect in $\{\omega_1, \omega_2\}$. Thus, we see through an application of the Jordan curve theorem that any path in $\Gamma$ from $\alpha_i$ to $\alpha_j$ must internally intersect at least $\lambda^2$ disjoint rays in $\Gamma$. Thus, we deduce that 
$\dist_{\Gamma}(z, \alpha_i) > \lambda^2$ for every $z \in \alpha_j$. See Figure~\ref{fig:disjoint-rays} for a cartoon. 
\begin{figure}[ht]
    \centering

\tikzset{every picture/.style={line width=0.75pt}} 

\begin{tikzpicture}[x=0.75pt,y=0.75pt,yscale=-1,xscale=1]

\draw [color={rgb, 255:red, 155; green, 155; blue, 155 }  ,draw opacity=0.6 ][line width=1.5]    (102.91,80.17) .. controls (73.34,52.07) and (43.42,23.64) .. (160.59,47.28) .. controls (277.77,70.92) and (270.56,33.23) .. (254.33,85.99) ;
\draw [color={rgb, 255:red, 155; green, 155; blue, 155 }  ,draw opacity=0.6 ][line width=1.5]    (102.91,80.17) .. controls (124.18,63.04) and (127.06,62.01) .. (168.16,70.92) .. controls (209.27,79.82) and (224.05,63.38) .. (254.33,85.99) ;
\draw [color={rgb, 255:red, 155; green, 155; blue, 155 }  ,draw opacity=0.6 ][line width=1.5]    (254.33,85.99) .. controls (249.65,104.83) and (273.08,118.2) .. (195.47,123.85) .. controls (117.86,129.51) and (86.25,132.8) .. (102.91,80.17) ;
\draw [color={rgb, 255:red, 155; green, 155; blue, 155 }  ,draw opacity=0.6 ][line width=1.5]    (254.33,85.99) .. controls (217.1,96.32) and (218.61,106.21) .. (177.58,97) .. controls (136.55,87.79) and (137.16,85.65) .. (102.91,80.17) ;
\draw [color={rgb, 255:red, 155; green, 155; blue, 155 }  ,draw opacity=0.6 ][line width=1.5]    (102.91,80.17) .. controls (58.56,79.14) and (3.07,30.11) .. (73.7,20.55) .. controls (144.33,11) and (176.46,25.35) .. (202.42,32.2) .. controls (228.38,39.05) and (287.51,14.04) .. (287.14,48.65) .. controls (286.78,83.25) and (268.04,81.88) .. (254.33,85.99) ;
\draw [color={rgb, 255:red, 155; green, 155; blue, 155 }  ,draw opacity=0.6 ][line width=1.5]    (254.33,85.99) .. controls (271.28,92.84) and (308.42,89.42) .. (309.14,111) .. controls (309.86,132.58) and (186.55,149.37) .. (132.11,141.83) .. controls (77.67,134.3) and (58.13,125.36) .. (63.97,114.77) .. controls (69.81,104.18) and (66.13,90.79) .. (102.91,80.17) ;
\draw [color={rgb, 255:red, 144; green, 19; blue, 254 }  ,draw opacity=1 ][line width=1.5]    (102.91,80.17) .. controls (156.63,65.43) and (201.7,86.68) .. (254.33,85.99) ;
\draw [shift={(254.33,85.99)}, rotate = 359.25] [color={rgb, 255:red, 144; green, 19; blue, 254 }  ,draw opacity=1 ][fill={rgb, 255:red, 144; green, 19; blue, 254 }  ,fill opacity=1 ][line width=1.5]      (0, 0) circle [x radius= 4.36, y radius= 4.36]   ;
\draw [shift={(102.91,80.17)}, rotate = 344.66] [color={rgb, 255:red, 144; green, 19; blue, 254 }  ,draw opacity=1 ][fill={rgb, 255:red, 144; green, 19; blue, 254 }  ,fill opacity=1 ][line width=1.5]      (0, 0) circle [x radius= 4.36, y radius= 4.36]   ;
\draw [color={rgb, 255:red, 144; green, 19; blue, 254 }  ,draw opacity=1 ][line width=1.5]    (102.91,80.17) .. controls (39.81,51.05) and (33.32,9.25) .. (149.78,37) .. controls (266.23,64.75) and (316.35,19.53) .. (254.33,85.99) ;
\draw [shift={(254.33,85.99)}, rotate = 133.02] [color={rgb, 255:red, 144; green, 19; blue, 254 }  ,draw opacity=1 ][fill={rgb, 255:red, 144; green, 19; blue, 254 }  ,fill opacity=1 ][line width=1.5]      (0, 0) circle [x radius= 4.36, y radius= 4.36]   ;
\draw [shift={(102.91,80.17)}, rotate = 204.78] [color={rgb, 255:red, 144; green, 19; blue, 254 }  ,draw opacity=1 ][fill={rgb, 255:red, 144; green, 19; blue, 254 }  ,fill opacity=1 ][line width=1.5]      (0, 0) circle [x radius= 4.36, y radius= 4.36]   ;
\draw [color={rgb, 255:red, 144; green, 19; blue, 254 }  ,draw opacity=1 ][line width=1.5]    (102.91,80.17) .. controls (70.1,102.44) and (74.42,100.72) .. (93.17,121.62) .. controls (111.92,142.52) and (190.52,131.56) .. (259.02,121.96) .. controls (327.53,112.37) and (247.57,92.41) .. (254.33,85.99) ;
\draw [shift={(254.33,85.99)}, rotate = 316.46] [color={rgb, 255:red, 144; green, 19; blue, 254 }  ,draw opacity=1 ][fill={rgb, 255:red, 144; green, 19; blue, 254 }  ,fill opacity=1 ][line width=1.5]      (0, 0) circle [x radius= 4.36, y radius= 4.36]   ;
\draw [shift={(102.91,80.17)}, rotate = 145.83] [color={rgb, 255:red, 144; green, 19; blue, 254 }  ,draw opacity=1 ][fill={rgb, 255:red, 144; green, 19; blue, 254 }  ,fill opacity=1 ][line width=1.5]      (0, 0) circle [x radius= 4.36, y radius= 4.36]   ;
\draw [line width=1.5]    (102.91,80.17) ;
\draw [shift={(102.91,80.17)}, rotate = 45] [color={rgb, 255:red, 0; green, 0; blue, 0 }  ][line width=1.5]    (-7.99,0) -- (7.99,0)(0,7.99) -- (0,-7.99)   ;
\draw [shift={(102.91,80.17)}, rotate = 0] [color={rgb, 255:red, 0; green, 0; blue, 0 }  ][fill={rgb, 255:red, 0; green, 0; blue, 0 }  ][line width=1.5]      (0, 0) circle [x radius= 4.79, y radius= 4.79]   ;
\draw [line width=1.5]    (254.33,85.99) ;
\draw [shift={(254.33,85.99)}, rotate = 45] [color={rgb, 255:red, 0; green, 0; blue, 0 }  ][line width=1.5]    (-7.99,0) -- (7.99,0)(0,7.99) -- (0,-7.99)   ;
\draw [shift={(254.33,85.99)}, rotate = 0] [color={rgb, 255:red, 0; green, 0; blue, 0 }  ][fill={rgb, 255:red, 0; green, 0; blue, 0 }  ][line width=1.5]      (0, 0) circle [x radius= 4.79, y radius= 4.79]   ;
\draw [color={rgb, 255:red, 155; green, 155; blue, 155 }  ,draw opacity=1 ]   (321.67,81) -- (298.43,81.66) ;
\draw [shift={(296.43,81.72)}, rotate = 358.37] [color={rgb, 255:red, 155; green, 155; blue, 155 }  ,draw opacity=1 ][line width=0.75]    (10.93,-3.29) .. controls (6.95,-1.4) and (3.31,-0.3) .. (0,0) .. controls (3.31,0.3) and (6.95,1.4) .. (10.93,3.29)   ;

\draw (102.54,86.52) node [anchor=north west][inner sep=0.75pt]    {$\omega _{1}$};
\draw (234.19,97.36) node [anchor=north west][inner sep=0.75pt]    {$\omega _{2}$};
\draw (161.57,43.57) node [anchor=north west][inner sep=0.75pt]  [font=\footnotesize,color={rgb, 255:red, 155; green, 155; blue, 155 }  ,opacity=1 ]  {$\vdots $};
\draw (164.1,96.93) node [anchor=north west][inner sep=0.75pt]  [font=\footnotesize,color={rgb, 255:red, 155; green, 155; blue, 155 }  ,opacity=1 ]  {$\vdots $};
\draw (285.04,65.55) node [anchor=north west][inner sep=0.75pt]  [font=\footnotesize,color={rgb, 255:red, 155; green, 155; blue, 155 }  ,opacity=1 ]  {$\vdots $};
\draw (153.2,25.66) node [anchor=north west][inner sep=0.75pt]  [color={rgb, 255:red, 144; green, 19; blue, 254 }  ,opacity=1 ]  {$\alpha _{1}$};
\draw (186.34,85.21) node [anchor=north west][inner sep=0.75pt]  [color={rgb, 255:red, 144; green, 19; blue, 254 }  ,opacity=1 ]  {$\alpha _{2}$};
\draw (257.33,98.42) node [anchor=north west][inner sep=0.75pt]  [color={rgb, 255:red, 144; green, 19; blue, 254 }  ,opacity=1 ]  {$\alpha _{3}$};
\draw (56.39,70.2) node [anchor=north west][inner sep=0.75pt]  [font=\footnotesize,color={rgb, 255:red, 155; green, 155; blue, 155 }  ,opacity=1 ]  {$\vdots $};
\draw (325.32,70.83) node [anchor=north west][inner sep=0.75pt]  [color={rgb, 255:red, 155; green, 155; blue, 155 }  ,opacity=1 ] [align=left] {$\displaystyle \lambda ^{2}$ disjoint paths};

\end{tikzpicture}

    \caption{Any path through $\Gamma$ between $\alpha_i$ and $\alpha_j$ must pass through $\lambda^2$ distinct $\beta_k$ paths different from the $\alpha_i$ and $\alpha_j$. }
    \label{fig:disjoint-rays}
\end{figure}
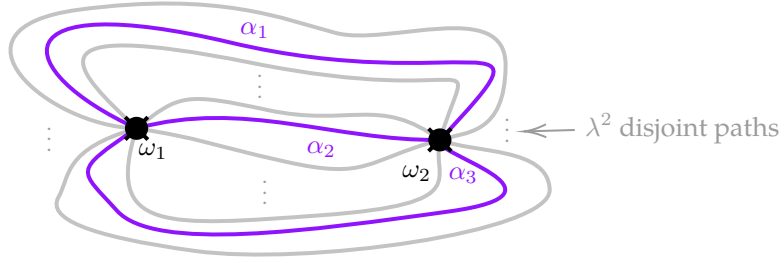

(\ref{itm:paths3}): Let $S \geq 0$ be sufficiently large so that $B_{\pone}(z, s)$ separates ends in $\pone$ for every $s \geq S$, $z \in \pone$. Since $\pone$ admits a cobounded quasi-action, such an $S$ certainly exists. Fix $s \geq S$. Let $x,y \in \Lambda$ such that $\dist_\Lambda (x,y) > 2s$. We have that both $B_1 := B_{\pone}(x, s)$ and $B_2 := B_{\pone}(y, s)$ separate ends in $\Lambda$. We have that $B_1$ and $B_2$ are disjoint. Let $\omega_1, \omega_2 \in \Omega (\Lambda)$ be distinct ends which lie in distinct components of both $\Lambda \setminus B_1$ and $\Lambda \setminus B_2$. By (\ref{itm:paths1}) above we have that there exists $\alpha_1$, $\alpha_2$, $\alpha_3$ in $\Lambda$ connecting $\omega_1$ to $\omega_2$ which sit disjoint from each others $\lambda^2$-neighbourhood in $\Gamma$. All three paths must intersect both $B_1$ and $B_2$, and we are done. 
\end{proof}

\begin{lemma}\label{lem:shared-face-preserved}
    There exists a constant $r > 0$ such that the following holds. Given $f \in \facepaths (\Pi)$, $x, y \in f$, and $g \in H$, there exist $f' \in \facepaths (\Pi)$ such that 
    $
    \mu_g(x), \mu_g(y) \in B_\Pi(f';r)$. 
\end{lemma}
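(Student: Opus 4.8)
The plan is to argue by contradiction, after first disposing of small faces by a crude quasi-isometry estimate and then reducing the case of a large face of $\Pi$ to a face of $\Lambda$, where Lemma~\ref{lem:qa-agrees} allows one to replace the quasi-action $\mu_g$ by the ambient quasi-action $\varphi_g$ of $\Gamma$, whose constants do not depend on $Z$. For the easy case I would first note that, since $\Pi$ is almost $2$-connected, its $2$-connected core lies at bounded Hausdorff distance from $\Pi$ and every vertex of that core lies on a facial subgraph of the core, which is the simple body of a facial subgraph of $\Pi$ (Remark~\ref{rmk:simple-body}); hence there is $D_1>0$ such that every vertex of $\Pi$ lies within $D_1$ of some $f'\in\facepaths\Pi$. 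If $\diam_\Pi(f)\le N_0$ for a threshold $N_0$ to be fixed, then $\dist_\Pi(x,y)\le N_0$, so $\mu_g(x)$ and $\mu_g(y)$ lie at uniformly bounded $\Pi$-distance $D_0$ (the quasi-isometry constants of $\mu_g$ being uniform in $g$), and any $f'$ within $D_1$ of $\mu_g(x)$ works with $r\ge D_0+D_1$. So from now on I would assume $\diam_\Pi(f)>N_0$.

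Next I would reduce to $\Lambda$. By Lemma~\ref{lem:friendly-faces-chain}, $\Lambda$ is friendly-faced in $\Pi$; combining this with the fact (as in Lemma~\ref{lem:nice-faces}) that $\Pi$ lies in a bounded neighbourhood of $\Lambda$ whose ``extra pieces'' are boundedly small, a facial subgraph of $\Pi$ of large diameter is $\Gamma$-Hausdorff close to the facial subgraph of $\Lambda$ bounding the unique $\Lambda$-face that contains it (the converse direction of Lemma~\ref{lem:approx-faces}): there is $M>0$ and $f^\Lambda\in\facepaths\Lambda$ with $\dHaus[\Gamma](f,f^\Lambda)<M$. Pick $x_0,y_0\in f^\Lambda$ with $\dist_\Gamma(x,x_0),\dist_\Gamma(y,y_0)<M$; since $\Lambda$ and $\Pi$ are quasi-isometrically embedded in $\Gamma$ (Proposition~\ref{prop:uniform-boundary-qie}), it suffices to find $f'\in\facepaths\Pi$ with $\mu_g(x_0),\mu_g(y_0)\in B_\Pi(f';r')$ for suitable $r'$, and by Lemma~\ref{lem:qa-agrees} we have $\mu_g(x_0)=\varphi_g(x_0)$, $\mu_g(y_0)=\varphi_g(y_0)$. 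If $\dist_\Lambda(x_0,y_0)$ is bounded we are back in the easy case, so I would also assume it exceeds the threshold $2s_0$ appearing in Lemma~\ref{lem:disjoint-paths}(2).

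Now suppose for contradiction that $\varphi_g(x_0)$ and $\varphi_g(y_0)$ do not lie within $r'$ of a common facial subgraph of $\Pi$. Applying Proposition~\ref{prop:2-conn-jordan-curve} to the planar graph $\Pi$ (which carries a good drawing by Remark~\ref{rmk:good-drawing-subgraph}) yields a simple combinatorial loop $\ell\subset\Pi$ whose image separates $\vartheta(\varphi_g(x_0))$ from $\vartheta(\varphi_g(y_0))$ in $\bbS^2$, with $\{\varphi_g(x_0),\varphi_g(y_0)\}$ disjoint from $B_\Pi(\ell;r')$. Its vertex set is a finite vertex-cut of $\Gamma$ separating these points; transporting this cut through $\varphi_{g^{-1}}$ via Lemma~\ref{lem:move-cuts} (applied to the induced quasi-action on $\Gamma$) gives a finite $S\subset V\Gamma$ in a bounded neighbourhood of $\Lambda$ separating two points within $\lambda$ of $x_0$ and $y_0$, with $\dist_\Gamma(\{x_0,y_0\},S)$ at least a fixed fraction of $r'$; replacing $S$ by a genuine simple loop $\ell^{\ast}\subset\Gamma$ in its neighbourhood (again via Proposition~\ref{prop:2-conn-jordan-curve}, now for $\Gamma$, or directly) one obtains a simple combinatorial loop $\ell^{\ast}\subset\Gamma$, lying in a bounded neighbourhood of $\Lambda$, separating $\vartheta(x_0)$ from $\vartheta(y_0)$ in $\bbS^2$, with $\dist_\Gamma(\{x_0,y_0\},\ell^{\ast})>R$ for $R$ as large as desired.

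Finally I would derive the contradiction. Since $x_0,y_0$ lie on a common face $f^\Lambda$ of $\Lambda$, there is a topological arc in the closed face-disk from $\vartheta(x_0)$ to $\vartheta(y_0)$ meeting $\overline\Lambda$ only at its endpoints; as $\ell^{\ast}$ separates these points it must cross this arc, so $\ell^{\ast}$ enters the open face-disk $f^\Lambda$, and being confined to a bounded neighbourhood of $\Lambda$ it runs within a bounded distance of $f^\Lambda$ along an arc of positive length. On the other hand, Lemma~\ref{lem:disjoint-paths}(2) applied to $x_0,y_0$ (or part~(1), between two ends of $\Lambda$ bordering $f^\Lambda$, when $f^\Lambda$ is infinite) gives three paths $\alpha_1,\alpha_2,\alpha_3$ in $\Lambda$ joining a bounded neighbourhood of $x_0$ to one of $y_0$, pairwise at $\Gamma$-distance more than $\lambda^2$; since $R\gg s_0$, $\ell^{\ast}$ separates those neighbourhoods and hence meets each $\alpha_i$ at three vertices that are pairwise more than $\lambda^2$ apart. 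A Jordan-curve argument in the spirit of Lemma~\ref{lem:jordan-curve-finite}, together with the bounded-overlap statements of Lemmas~\ref{lem:two-faces-small-intersection} and~\ref{lem:three-faces-small-intersection} to exclude the degenerate configurations where the loop slips between two of the $\alpha_i$ near $f^\Lambda$, then forces $\ell^{\ast}$ to pass within a bounded distance of $x_0$ or $y_0$, contradicting $\dist_\Gamma(\{x_0,y_0\},\ell^{\ast})>R$ once $r$ is chosen large enough relative to $\lambda$, $M$, $N_0$, $D_1$, $s_0$ and the uniform constants of the cited lemmas. The hard part — and the whole reason for the detour through $Y$, $\Pi$ and Lemma~\ref{lem:qa-agrees} — is carrying the separating loop back and forth without inflating the quasi-isometry constants, so that ``$\lambda^2$-separated'' keeps its meaning; a secondary difficulty is the planar bookkeeping in the last step, where one must separate the finite- and infinite-face cases and pass to the $2$-connected core of $\Lambda$ to ensure facial subgraphs are simple closed curves (Proposition~\ref{prop:simple-face}).
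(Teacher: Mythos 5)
Your overall plan and the paper's proof share the two key ideas: (i) work with points in $\Lambda$ so that Lemma~\ref{lem:qa-agrees} lets you replace $\mu_g$ by the ambient quasi-action $\varphi_g$ of $\Gamma$, whose constants do not depend on the part $Z$; and (ii) exploit the three $\lambda^2$-separated paths furnished by Lemma~\ref{lem:disjoint-paths}.  However, you run the argument in the opposite direction from the paper and, in doing so, introduce two gaps that I do not see how to close.

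The paper sets $x'=\mu_g(x)$, $y'=\mu_g(y)$, picks nearby $a,b\in\Lambda$, and \emph{assumes for contradiction} that $a,b$ do not lie near a common facial subgraph of $\Lambda$.  Proposition~\ref{prop:2-conn-jordan-curve} then gives a simple loop $\ell\subset\Lambda$ separating them, and the entire figure $\Theta=\alpha_1\cup\alpha_2\cup\alpha_3\cup\beta_1\cup\beta_2\cup\beta_3$ is assembled \emph{inside $\Lambda$}.  Because $\Theta\subset\Lambda$, the translate $\mu_{g^{-1}}(\Theta)=\varphi_{g^{-1}}(\Theta)$ is computed with the ambient constants $\lambda$, so the $\lambda^2$-separation of the $\alpha_i$ from each other and from the opposite $\beta_j$ survives.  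Lemma~\ref{lem:jordan-curve-finite} then directly forbids $x$ and $y$ from lying on a common facial subgraph of $\Pi$ in \emph{any} embedding, giving the contradiction.  Note that the otherwise-case (a common $f_0\in\facepaths\Lambda$ near $a,b$) is the easy case handled by Lemma~\ref{lem:approx-faces}; the paper does not separate out small faces of $\Pi$, and it does not invoke Lemmas~\ref{lem:two-faces-small-intersection} or \ref{lem:three-faces-small-intersection} here.

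In your version, the separating object you build at the image side is a loop $\ell\subset\Pi$, not a subset of $\Lambda$, so Lemma~\ref{lem:qa-agrees} gives you no control over $\varphi_{g^{-1}}(\ell)$.  You work around this by transporting only its vertex set via Lemma~\ref{lem:move-cuts}, but that lemma preserves \emph{graph}-separation in $\Gamma$, not plane-separation in $\bbS^2$.  Your subsequent upgrade of the transported cut to a genuine Jordan curve $\ell^\ast$ appeals to Proposition~\ref{prop:2-conn-jordan-curve} ``for $\Gamma$'', but the hypothesis of that proposition is precisely that $x_0$ and $y_0$ do \emph{not} lie near a common facial subgraph of $\Gamma$ — which may well be false, since $x_0,y_0$ lie on the common $\Lambda$-face $f^\Lambda$ and the faces of $\Gamma$ are not assumed bounded at this stage of the paper (\ref{itm:gb1} has not yet been established).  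So the passage from the transported graph-cut to a plane-separating loop is unjustified.  The second and more serious gap is the claimed final contradiction.  You assert that a Jordan-curve argument forces $\ell^\ast$ to pass within a bounded distance of $x_0$ or $y_0$.  I do not see why.  Knowing that $\ell^\ast$ separates $\vartheta(x_0)$ from $\vartheta(y_0)$, stays in a bounded $\Gamma$-neighbourhood of $\Lambda$, enters the face-disk of $f^\Lambda$ near $f^\Lambda$, and crosses each of the three pairwise-far $\alpha_i$, does not obviously localise $\ell^\ast$ near either endpoint; Lemma~\ref{lem:jordan-curve-finite} asserts the existence of a separating Jordan curve but says nothing about where it must pass, and the cited intersection Lemmas~\ref{lem:two-faces-small-intersection}, \ref{lem:three-faces-small-intersection} constrain overlaps of face-neighbourhoods, not positions of a given loop.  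The paper sidesteps both difficulties by constructing the whole figure $\Theta$ inside $\Lambda$, where the translation is both exact (it is $\varphi_{g^{-1}}$) and metrically controlled, and by letting Lemma~\ref{lem:jordan-curve-finite} deliver the impossibility of a common face directly, rather than attempting to track the image of a plane-separating loop under a quasi-isometry.
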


\begin{proof}
Fix $g \in H$. To ease notation, write $x' = \mu_g(x)$, $y' = \mu_g(y)$. We will need some constants. Recall that $\lambda \geq 1$ has been fixed so that the quasi-action of $G$ upon $\Gamma$ is a $\lambda$-quasi-action. Fix $\eta \geq 1$ such that the quasi-isometric embeddings $\Lambda \into \Pi$, $\Pi  \into \Gamma$, and $\Lambda  \into \Gamma$ are all $(\eta, \eta)$-quasi-isometric embeddings, and also assume without loss of generality that $\dHaus[\Pi](\Lambda, \Pi) \leq \eta$. 

Let $a,b  \in \Lambda$ be such that $\dist_\Pi(x',a) \leq \eta$ and $\dist_\Pi(y',b) \leq \eta$. 
Apply Lemma~\ref{lem:disjoint-paths}, and obtain three paths $\alpha_1$, $\alpha_2$, $\alpha_3$ in $\Lambda$ such that
\begin{enumerate}
    \item Each $\alpha_i$ has one endpoint in $B_\Lambda(a;S)$ and one in $B_\Lambda(b;S)$.

    \item\label{itm:distitm} We have that 
    $
    \dist_{\Gamma}(z, \alpha_i) > \lambda^2
    $ 
    for every $j \neq i$, $z \in \alpha_j$, 
\end{enumerate}
where $S \geq 0$ is some fixed constant depending only on the graphs and quasi-isometries at play. 
Note that property (\ref{itm:distitm}) relates to the path metric of $\Gamma$, not $\Lambda$ or $\Pi$. To ease notation, let 
$$
A_a = B_\Lambda(a;S), \  A_b =  B_\Lambda(b;S), 
$$
so each $\alpha_j$ begins in $A_a$ and ends in $A_b$. Choose some large $R > 0$ such that 
$$
B_\Gamma(A_a; \lambda^2 + \eta + 1)\cap \Lambda  \subset B_\Lambda(a; R), \ \ \ \ B_\Gamma(A_b; \lambda^2 + \eta + 1)\cap \Lambda  \subset B_\Lambda(b; R). 
$$
Clearly such a uniform $R$ exists, since $\Lambda$ is quasi-isometrically embedded into $\Gamma$.  We assume without loss of generality that 
$
\dist_\Lambda(a,b) > 2R, 
$
lest $x'$, $y'$ lie a bounded distance apart in $\Pi$, and the statement is vacuously true. 

Suppose there exists some facial subgraph $f_0 \in \facepaths (\Lambda)$ such that 
$$
\{a,b\} \subset B_\Lambda(f_0; R).
$$
Then by Lemma~\ref{lem:approx-faces} there exists a $f_1 \in \facepaths (\Pi)$ such that
$$
\{x', y'\} \subset B_\Pi(f_1 ; R + M + \eta),
$$
where $M > 0$ is some fixed constant. 
With this in mind, let $r = R + M + \eta$, and assume that there is no such $f_0 \in \facepaths (\Lambda)$. We will show that $x$ and $y$ cannot lie on a common facial subgraph, and thus deduce a contradiction.

We have by Proposition~\ref{prop:2-conn-jordan-curve} that there is a simple loop $\ell$ in $\Lambda$ such that 
\begin{enumerate}
        \item $\vartheta(a)$, $\vartheta(b)$ lie in distinct components of $\bbS^2 \setminus \vartheta(\ell)$, and

        \item\label{itm:distss} $\{a,b\} \cap B_\Lambda(\ell;R) = \emptyset$.
\end{enumerate}
Let $\beta_i$ be a segment of $\ell$ connecting $\alpha_i$ to $\alpha_{i+1}$, which is otherwise disjoint from all the $\alpha_j$. Note that $\beta_i$ sits outside of the $\lambda^2$-neighbourhood of $\alpha_{i+2}$ in $\Gamma$, by an easy application of the Jordan curve theorem, since any path between these two curves must intersect either $\alpha_i$ or $\alpha_{i+1}$, or a bounded neighbourhood of $a$, $b$. Since $\ell$ is very far away from $a$ and $b$, and the $\alpha_j$ are pairwise far apart, the claim follows. Let
$$
\Theta = \beta_1 \cup \beta_2 \cup \beta_3 \cup \alpha_1 \cup \alpha_2 \cup \alpha_3. 
$$
See Figure~\ref{fig:delta} for a cartoon of this construction.

\begin{figure}
    \centering

\tikzset{every picture/.style={line width=0.75pt}} 

\begin{tikzpicture}[x=0.75pt,y=0.75pt,yscale=-1,xscale=1]

\draw  [color={rgb, 255:red, 65; green, 117; blue, 5 }  ,draw opacity=1 ][fill={rgb, 255:red, 65; green, 117; blue, 5 }  ,fill opacity=0.23 ] (378.2,149.09) .. controls (378.2,126.45) and (396.56,108.1) .. (419.2,108.1) .. controls (441.84,108.1) and (460.2,126.45) .. (460.2,149.09) .. controls (460.2,171.74) and (441.84,190.09) .. (419.2,190.09) .. controls (396.56,190.09) and (378.2,171.74) .. (378.2,149.09) -- cycle ;
\draw  [color={rgb, 255:red, 65; green, 117; blue, 5 }  ,draw opacity=1 ][fill={rgb, 255:red, 65; green, 117; blue, 5 }  ,fill opacity=0.23 ] (125.4,151.89) .. controls (125.4,129.25) and (143.76,110.9) .. (166.4,110.9) .. controls (189.04,110.9) and (207.4,129.25) .. (207.4,151.89) .. controls (207.4,174.54) and (189.04,192.89) .. (166.4,192.89) .. controls (143.76,192.89) and (125.4,174.54) .. (125.4,151.89) -- cycle ;
\draw [color={rgb, 255:red, 74; green, 144; blue, 226 }  ,draw opacity=1 ]   (171.72,117.09) .. controls (244.61,62.43) and (265.38,122.56) .. (327.7,122.92) .. controls (390.02,123.29) and (387.46,94.5) .. (414.43,114.9) ;
\draw [shift={(414.43,114.9)}, rotate = 37.12] [color={rgb, 255:red, 74; green, 144; blue, 226 }  ,draw opacity=1 ][fill={rgb, 255:red, 74; green, 144; blue, 226 }  ,fill opacity=1 ][line width=0.75]      (0, 0) circle [x radius= 2.34, y radius= 2.34]   ;
\draw [shift={(171.72,117.09)}, rotate = 323.13] [color={rgb, 255:red, 74; green, 144; blue, 226 }  ,draw opacity=1 ][fill={rgb, 255:red, 74; green, 144; blue, 226 }  ,fill opacity=1 ][line width=0.75]      (0, 0) circle [x radius= 2.34, y radius= 2.34]   ;
\draw [color={rgb, 255:red, 74; green, 144; blue, 226 }  ,draw opacity=1 ]   (197.23,146.25) .. controls (270.12,159.37) and (247.16,169.21) .. (309.48,169.57) .. controls (371.79,169.93) and (345.19,160.09) .. (385.28,154.99) ;
\draw [shift={(385.28,154.99)}, rotate = 352.75] [color={rgb, 255:red, 74; green, 144; blue, 226 }  ,draw opacity=1 ][fill={rgb, 255:red, 74; green, 144; blue, 226 }  ,fill opacity=1 ][line width=0.75]      (0, 0) circle [x radius= 2.34, y radius= 2.34]   ;
\draw [shift={(197.23,146.25)}, rotate = 10.2] [color={rgb, 255:red, 74; green, 144; blue, 226 }  ,draw opacity=1 ][fill={rgb, 255:red, 74; green, 144; blue, 226 }  ,fill opacity=1 ][line width=0.75]      (0, 0) circle [x radius= 2.34, y radius= 2.34]   ;
\draw [color={rgb, 255:red, 74; green, 144; blue, 226 }  ,draw opacity=1 ]   (174.64,187.06) .. controls (190.67,245.37) and (234.4,219.13) .. (297.81,207.47) .. controls (361.23,195.81) and (393.3,229.34) .. (413.7,185.6) ;
\draw [shift={(413.7,185.6)}, rotate = 295.02] [color={rgb, 255:red, 74; green, 144; blue, 226 }  ,draw opacity=1 ][fill={rgb, 255:red, 74; green, 144; blue, 226 }  ,fill opacity=1 ][line width=0.75]      (0, 0) circle [x radius= 2.34, y radius= 2.34]   ;
\draw [shift={(174.64,187.06)}, rotate = 74.62] [color={rgb, 255:red, 74; green, 144; blue, 226 }  ,draw opacity=1 ][fill={rgb, 255:red, 74; green, 144; blue, 226 }  ,fill opacity=1 ][line width=0.75]      (0, 0) circle [x radius= 2.34, y radius= 2.34]   ;
\draw [color={rgb, 255:red, 208; green, 2; blue, 27 }  ,draw opacity=1 ]   (300.37,120.01) .. controls (315.31,137.5) and (283.97,154.26) .. (297.09,169.57) ;
\draw [shift={(297.09,169.57)}, rotate = 49.4] [color={rgb, 255:red, 208; green, 2; blue, 27 }  ,draw opacity=1 ][fill={rgb, 255:red, 208; green, 2; blue, 27 }  ,fill opacity=1 ][line width=0.75]      (0, 0) circle [x radius= 1.34, y radius= 1.34]   ;
\draw [shift={(300.37,120.01)}, rotate = 49.5] [color={rgb, 255:red, 208; green, 2; blue, 27 }  ,draw opacity=1 ][fill={rgb, 255:red, 208; green, 2; blue, 27 }  ,fill opacity=1 ][line width=0.75]      (0, 0) circle [x radius= 1.34, y radius= 1.34]   ;
\draw [color={rgb, 255:red, 208; green, 2; blue, 27 }  ,draw opacity=1 ]   (306.2,169.8) .. controls (312.2,185.8) and (295,185.4) .. (301.4,206.2) ;
\draw [shift={(301.4,206.2)}, rotate = 72.9] [color={rgb, 255:red, 208; green, 2; blue, 27 }  ,draw opacity=1 ][fill={rgb, 255:red, 208; green, 2; blue, 27 }  ,fill opacity=1 ][line width=0.75]      (0, 0) circle [x radius= 1.34, y radius= 1.34]   ;
\draw [shift={(306.2,169.8)}, rotate = 69.44] [color={rgb, 255:red, 208; green, 2; blue, 27 }  ,draw opacity=1 ][fill={rgb, 255:red, 208; green, 2; blue, 27 }  ,fill opacity=1 ][line width=0.75]      (0, 0) circle [x radius= 1.34, y radius= 1.34]   ;
\draw [color={rgb, 255:red, 208; green, 2; blue, 27 }  ,draw opacity=1 ]   (258.46,216.22) .. controls (252.93,232.81) and (247,261) .. (167.4,260.2) .. controls (87.8,259.4) and (92.6,233.6) .. (79.8,215) .. controls (67,196.4) and (20.47,139.3) .. (64.2,101.4) .. controls (107.93,63.5) and (79.8,59.4) .. (129,50.2) .. controls (178.2,41) and (270.6,63.8) .. (278.13,110.53) ;
\draw [shift={(278.13,110.53)}, rotate = 80.84] [color={rgb, 255:red, 208; green, 2; blue, 27 }  ,draw opacity=1 ][fill={rgb, 255:red, 208; green, 2; blue, 27 }  ,fill opacity=1 ][line width=0.75]      (0, 0) circle [x radius= 1.34, y radius= 1.34]   ;
\draw [shift={(258.46,216.22)}, rotate = 108.43] [color={rgb, 255:red, 208; green, 2; blue, 27 }  ,draw opacity=1 ][fill={rgb, 255:red, 208; green, 2; blue, 27 }  ,fill opacity=1 ][line width=0.75]      (0, 0) circle [x radius= 1.34, y radius= 1.34]   ;
\draw    (166.4,151.89) ;
\draw [shift={(166.4,151.89)}, rotate = 0] [color={rgb, 255:red, 0; green, 0; blue, 0 }  ][fill={rgb, 255:red, 0; green, 0; blue, 0 }  ][line width=0.75]      (0, 0) circle [x radius= 3.35, y radius= 3.35]   ;
\draw    (419.2,149.09) ;
\draw [shift={(419.2,149.09)}, rotate = 0] [color={rgb, 255:red, 0; green, 0; blue, 0 }  ][fill={rgb, 255:red, 0; green, 0; blue, 0 }  ][line width=0.75]      (0, 0) circle [x radius= 3.35, y radius= 3.35]   ;
\draw  [color={rgb, 255:red, 155; green, 155; blue, 155 }  ,draw opacity=1 ][fill={rgb, 255:red, 155; green, 155; blue, 155 }  ,fill opacity=0.18 ] (82.44,151.89) .. controls (82.44,105.52) and (120.03,67.93) .. (166.4,67.93) .. controls (212.77,67.93) and (250.36,105.52) .. (250.36,151.89) .. controls (250.36,198.27) and (212.77,235.86) .. (166.4,235.86) .. controls (120.03,235.86) and (82.44,198.27) .. (82.44,151.89) -- cycle ;
\draw  [color={rgb, 255:red, 155; green, 155; blue, 155 }  ,draw opacity=1 ][fill={rgb, 255:red, 155; green, 155; blue, 155 }  ,fill opacity=0.18 ] (335.24,149.09) .. controls (335.24,102.72) and (372.83,65.13) .. (419.2,65.13) .. controls (465.57,65.13) and (503.16,102.72) .. (503.16,149.09) .. controls (503.16,195.47) and (465.57,233.06) .. (419.2,233.06) .. controls (372.83,233.06) and (335.24,195.47) .. (335.24,149.09) -- cycle ;

\draw (309.4,133.91) node [anchor=north west][inner sep=0.75pt]  [font=\scriptsize,color={rgb, 255:red, 208; green, 2; blue, 27 }  ,opacity=1 ]  {$\beta _{1}$};
\draw (52.06,153.59) node [anchor=north west][inner sep=0.75pt]  [font=\scriptsize,color={rgb, 255:red, 208; green, 2; blue, 27 }  ,opacity=1 ]  {$\beta _{3}$};
\draw (307.98,185.43) node [anchor=north west][inner sep=0.75pt]  [font=\scriptsize,color={rgb, 255:red, 208; green, 2; blue, 27 }  ,opacity=1 ]  {$\beta _{2}$};
\draw (257.36,106.75) node [anchor=north west][inner sep=0.75pt]  [font=\scriptsize,color={rgb, 255:red, 74; green, 144; blue, 226 }  ,opacity=1 ]  {$\alpha _{1}$};
\draw (272.46,169.93) node [anchor=north west][inner sep=0.75pt]  [font=\scriptsize,color={rgb, 255:red, 74; green, 144; blue, 226 }  ,opacity=1 ]  {$\alpha _{2}$};
\draw (328.99,208.27) node [anchor=north west][inner sep=0.75pt]  [font=\scriptsize,color={rgb, 255:red, 74; green, 144; blue, 226 }  ,opacity=1 ]  {$\alpha _{3}$};
\draw (141.53,167.25) node [anchor=north west][inner sep=0.75pt]  [font=\footnotesize,color={rgb, 255:red, 65; green, 117; blue, 5 }  ,opacity=1 ]  {$B_{\Lambda }( a;S)$};
\draw (399.53,161.25) node [anchor=north west][inner sep=0.75pt]  [font=\footnotesize,color={rgb, 255:red, 65; green, 117; blue, 5 }  ,opacity=1 ]  {$B_{\Lambda }( b;S)$};
\draw (150.8,144.2) node [anchor=north west][inner sep=0.75pt]    {$a$};
\draw (425.6,139.8) node [anchor=north west][inner sep=0.75pt]    {$b$};
\draw (131.13,210.85) node [anchor=north west][inner sep=0.75pt]  [font=\footnotesize,color={rgb, 255:red, 128; green, 128; blue, 128 }  ,opacity=1 ]  {$B_{\Lambda }( a;R)$};
\draw (409.93,205.65) node [anchor=north west][inner sep=0.75pt]  [font=\footnotesize,color={rgb, 255:red, 128; green, 128; blue, 128 }  ,opacity=1 ]  {$B_{\Lambda }( b;R)$};

\end{tikzpicture}

    \caption{Construction of $\Theta$.}
    \label{fig:delta}
\end{figure}
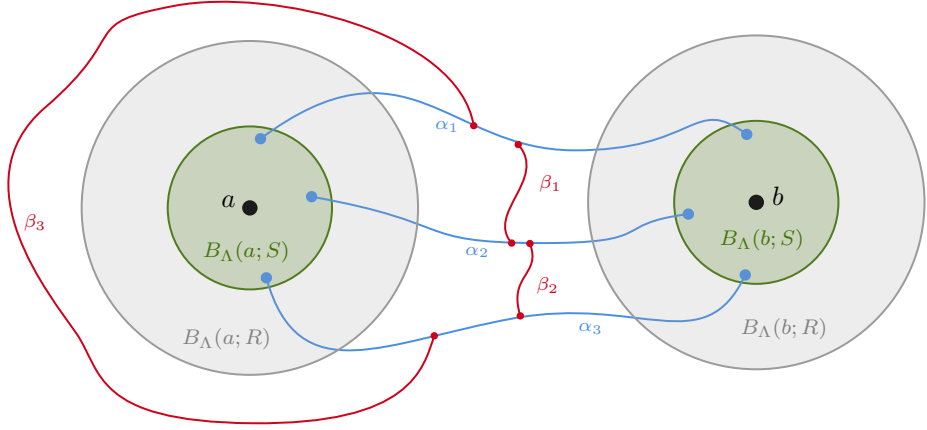

We now apply $\mu_{g^{-1}}$ to $\Theta$. In particular, we claim that $\vartheta(\mu_{g^{-1}}(\Theta))$ contains a Jordan curve which separates $\vartheta(x)$ from $\vartheta(y)$ in $\bbS^2$.  
Since $\Theta \subset \Lambda$, we have by Lemma~\ref{lem:qa-agrees} that 
$$
\mu_{g^{-1}}(\Theta) = \varphi_{g^{-1}}(\Theta). 
$$
For $i = 1,2,3$, write $\alpha_i' = \mu_{g^{-1}}(\alpha_i)$, $\beta_i' = \mu_{g^{-1}}(\beta_i)$. Note that the $\alpha_i'$ are pairwise disjoint, and $\beta_i'$ is disjoint from $\alpha_{i+2}'$. 

It is clear that if $R$ above is chosen to be sufficiently large then in the $\Pi$-metric we have that $x$ and $y$ are strictly closer each of the $\alpha_i'$ than they are to any of the $\beta_j'$.
In particular, if we draw a geodesic in $\Pi$ from $x$ or $y$ to some $\alpha_i'$ then this geodesic will not intersect any of the $\beta_j'$. 
We now apply Lemma~\ref{lem:jordan-curve-finite} and deduce that $x$ and $y$ cannot lie on a common facial subgraph of $\Pi$.
\end{proof}

\begin{lemma}\label{lem:translate-big-faces}
    There exists $C,D \geq 0$ such that the following holds. For every $f \in \facepaths (\Pi)$  such that $\diam_{\Pi}(f) \geq D$ and every $g \in H$, there exists $f' \in \facepaths (\Pi)$ such that
    $$
    \dHaus[\Pi](f', \mu_g(f)) \leq C.
    $$
\end{lemma}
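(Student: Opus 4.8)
The plan is to upgrade Lemma~\ref{lem:shared-face-preserved}, which controls \emph{pairs} of points on a facial subgraph, to a statement controlling an entire facial subgraph, by feeding its output into the ``small intersection'' Lemmas~\ref{lem:two-faces-small-intersection} and~\ref{lem:three-faces-small-intersection}. Since $\Pi$ is a locally finite, almost $2$-connected planar graph (with the good drawing $\vartheta|_{\overline\Pi}$) which is quasi-isometric, via $\mu$, to the graph $Y$ on which $H$ acts quasi-transitively, both of those lemmas apply verbatim with ``$\Gamma$'' replaced by $\Pi$ and ``$\varphi$'' by $\mu$. Throughout, let $r$ be the constant from Lemma~\ref{lem:shared-face-preserved}, let $m(\cdot)$, $n(\cdot)$ be the functions from Lemmas~\ref{lem:two-faces-small-intersection} and~\ref{lem:three-faces-small-intersection}, and fix $\lambda' \geq 1$ so that every $\mu_g$ is a $(\lambda',\lambda')$-quasi-isometry and $\mu_{g^{-1}}$ is a $\lambda'$-quasi-inverse to $\mu_g$; all these quantities depend only on the data fixed in this section.

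First I would establish a ``forward'' inclusion: there are $D_0>0$ and $C_0>0$ such that for every $f\in\facepaths\Pi$ with $\diam_\Pi(f)\geq D_0$ and every $g\in H$ there is a face $f'\in\facepaths\Pi$ with $\mu_g(f)\subseteq B_\Pi(f';C_0)$. Pick a constant $L>\lambda'(\max(m(r),n(r))+\lambda')$, set $D_0=L+2$, choose vertices $a,b$ on $f$ with $\dist_\Pi(a,b)>L$ (possible since $\diam_\Pi(f)$ is large), and let $f':=f'_{a,b}$ be a face with $\mu_g(a),\mu_g(b)\in B_\Pi(f';r)$ from Lemma~\ref{lem:shared-face-preserved}. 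For any $c\in f$ with $\dist_\Pi(c,a)>L$ and $\dist_\Pi(c,b)>L$, apply Lemma~\ref{lem:shared-face-preserved} to the pairs $(b,c)$ and $(c,a)$ to obtain faces $f'_{b,c}$ and $f'_{c,a}$. I claim at least two of $f'_{a,b},f'_{b,c},f'_{c,a}$ coincide: if they were pairwise distinct, Lemma~\ref{lem:three-faces-small-intersection} applied to this triple together with the points $\mu_g(b),\mu_g(c),\mu_g(a)$ (which lie in the required pairwise intersections) would give $\diam_\Pi\{\mu_g(a),\mu_g(b),\mu_g(c)\}\leq n(r)$, contradicting $\dist_\Pi(\mu_g(a),\mu_g(b))\geq\tfrac1{\lambda'}L-\lambda'>n(r)$; and if $f'_{b,c}=f'_{c,a}=:h\neq f'$, then $\mu_g(a),\mu_g(b)$ both lie in $B_\Pi(f';r)\cap B_\Pi(h;r)$, which has diameter $\leq m(r)$ by Lemma~\ref{lem:two-faces-small-intersection}, again contradicting the lower bound. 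Hence $f'_{b,c}=f'$ or $f'_{c,a}=f'$, so $\mu_g(c)\in B_\Pi(f';r)$. For the remaining $c\in f$, i.e. those within $L$ of $a$ or of $b$, coarse Lipschitzness of $\mu_g$ gives $\mu_g(c)\in B_\Pi(f';\lambda'L+\lambda'+r)$. This proves the forward inclusion with $C_0=\lambda'L+\lambda'+r$.

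Next I would run the forward inclusion for $g^{-1}$ on the face $f'$. Choose $D$ large enough that $\diam_\Pi(f)\geq D$ forces $\diam_\Pi(\mu_g(f))\geq\tfrac1{\lambda'}D-\lambda'$, hence $\diam_\Pi(f')\geq\tfrac1{\lambda'}D-\lambda'-2C_0\geq D_0$, so the forward inclusion yields $f''\in\facepaths\Pi$ with $\mu_{g^{-1}}(f')\subseteq B_\Pi(f'';C_0)$. Since $\dist_\Pi(\mu_{g^{-1}}\circ\mu_g(x),x)\leq\lambda'$ for all $x$, the set $\mu_{g^{-1}}(\mu_g(f))$ lies in $B_\Pi(f;\lambda')$ and also, by $\mu_g(f)\subseteq B_\Pi(f';C_0)$, coarse Lipschitzness of $\mu_{g^{-1}}$, and $\mu_{g^{-1}}(f')\subseteq B_\Pi(f'';C_0)$, in a fixed neighbourhood of $f''$; as $\diam_\Pi(\mu_{g^{-1}}(\mu_g(f)))$ is large when $D$ is large, Lemma~\ref{lem:two-faces-small-intersection} forces $f''=f$. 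Thus $\mu_{g^{-1}}(f')\subseteq B_\Pi(f;C_0)$, and applying $\mu_g$ and using $\dist_\Pi(\mu_g\circ\mu_{g^{-1}}(x),x)\leq\lambda'$ gives $f'\subseteq B_\Pi(\mu_g(f);\lambda'C_0+2\lambda')$. Combining both inclusions yields $\dHaus[\Pi](f',\mu_g(f))\leq C$ with $C=\lambda'C_0+2\lambda'$, and $C$, $D$ depend only on the data fixed above.

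I expect the main obstacle to be the mutual consistency of the thresholds — in particular guaranteeing that the face $f'$ produced by the forward step for $g$ is itself large enough ($\diam_\Pi(f')\geq D_0$) to re-run the argument with $g^{-1}$, which is the reason $D$ must be taken substantially larger than $D_0$ (roughly $D\gtrsim\lambda'(D_0+C_0)$, plus enough to make the final application of Lemma~\ref{lem:two-faces-small-intersection} conclusive). A secondary technical point is checking that Lemmas~\ref{lem:two-faces-small-intersection} and~\ref{lem:three-faces-small-intersection} genuinely apply to $\Pi$ rather than $\Gamma$, which rests on the standing facts recorded earlier in this section: $\Pi$ is locally finite, almost $2$-connected, planar with good drawing $\vartheta|_{\overline\Pi}$, and quasi-isometric (via $\mu$) to the quasi-transitive graph $Y$.
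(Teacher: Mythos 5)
Your proposal is correct and takes essentially the same route as the paper: both proofs first establish the one-sided containment $\mu_g(f)\subseteq B_\Pi(f';C_0)$ by combining Lemma~\ref{lem:shared-face-preserved} with the two- and three-face small-intersection lemmas in a trichotomy, and then run the same argument with $g^{-1}$ and invoke Lemma~\ref{lem:two-faces-small-intersection} once more to force the reverse containment and hence bounded Hausdorff distance. Your write-up is if anything slightly more careful than the paper's on one point: you explicitly check that $\diam_\Pi(f')\geq D_0$ before re-applying the forward inclusion to $g^{-1}$, a threshold compatibility the paper glosses over.
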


\begin{proof}
    Let $f \in \facepaths (\Pi)$, $g \in H$. We first prove that there exists some $f' \in \facepaths (\Pi)$ such that 
    $$
     \mu_g(f) \subseteq B_\Pi(f';C'),
    $$
    for some uniform constant $C' > 0$. We may assume without loss of generality that $\diam_\Pi(f)$ is bounded below by some constant, lest this claim is vacuously true. We will specify this lower bound later. 
    
    Using Lemma~\ref{lem:two-faces-small-intersection}, let $m = m(r) > 0$ be such that if the $r$-neighbourhoods of any two distinct faces intersects at two points $a$, $b$, then $\dist_\Pi(a,b) < m$. Similarly, let $n = n(r) > 0$ be as in Lemma~\ref{lem:three-faces-small-intersection}, so if the $r$-neighbourhoods of three distinct faces all pairwise intersect, then the union of these intersections has diameter at most $n$. 
    Let $M = \max \{n, m\}$, and fix $\eta \geq 1$ so that the induced quasi-action of $H$ upon $\Pi$ is a $\eta$-quasi-action. 
    
    We assume without loss of generality that $\diam_\Pi(f) \geq \eta(3M + \eta)$ so that 
    $$
    \diam_\Pi(\mu_g(f)) \geq 3M. 
    $$
    Fix $x \in \mu_g(f)$, and let $y \in \mu_g(f)$ be such that $\dist_\Pi(x,y) \geq M$. By Lemma~\ref{lem:shared-face-preserved}, there exists $f_1 \in \facepaths (\Pi)$ such that both $x$ and $y$ are contained in the $r$-neighbourhood of $f_1$. We claim that $\mu_g(f)$ is contained in the $(r+M)$-neighbourhood of $f_1$. Suppose to the contrary that there exists some $z \in \mu_g(f)$ such that $z$ is not contained in the $(r+M)$-neighbourhood of $f_1$. 
    Applying Lemma~\ref{lem:shared-face-preserved} again, we see that there exists $f_2, f_3 \in \facepaths (\Pi)$ such that 
    $$
    x,z \in B_\Pi(f_2;r), \ \ y,z \in B_\Pi(f_3;r). 
    $$
    Clearly, both $f_2$ and $f_3$ must be distinct from $f_1$, since $f_1$ is far away from $z$. Now, either $f_2$ and $f_3$ are equal or they are distinct. If they are equal then we contradict Lemma~\ref{lem:two-faces-small-intersection} since $\dist_\Pi(x,y)$ is large. If they are distinct then we contradict Lemma~\ref{lem:three-faces-small-intersection} since $\diam_\Pi(\{x,y,z\})$ is large. In any case, we deduce that $\mu_g(f)$ is contained in the $(r+M)$-neighbourhood of $f_1$, and so our earlier claim is proven by choosing $C' = r + M$. 

    We now prove that the Hausdorff distance between $f_1$ and $\mu_g(f)$ must be uniformly bounded, provided $\diam_\Pi(f)$ is sufficiently large. By the above claim, there exists $f_0 \in \facepaths (\Pi)$ such that 
    $$
    \mu_h(f_1) \subset B_\Pi(f_0; C'). 
    $$
    In particular, $f$ is contained in the $r'$-neighbourhood of $f_0$, where $r' = C' + \eta C'+ 2\eta$. Let $m' = m(r') > 0$ be as in Lemma~\ref{lem:two-faces-small-intersection}. If $\diam_\Pi(f) > m'$ then this forces $f_0 = f$. It follows quickly from this observation that $f_1$ is contained in a bounded neighbourhood of $\mu_g(f)$, and this proves the Lemma. 
\end{proof}

\subsection{Good behaviour}

By Remark~\ref{rmk:good-drawing-subgraph}, the closure of $\Pi$ in the Freudenthal compactification of $\Gamma$ is homeomorphic to that very compactification of $\Pi$. Thus, $\vartheta$ restricts to a good drawing of $\Pi$. Let $\vartheta' : \overline \Pi \into \bbS^2$ denote this restriction. Let $\Pp$ denote the quasi-planar tuple
$$
    \Pp = (H, \ytwo, \ptwo, \vartheta', \mu, \nu).
$$
Note that Lemma~\ref{lem:translate-big-faces} immediately implies the following.

\begin{lemma}\label{lem:gb1gb2}
    The quasi-planar tuple $\Pp$ defined above satisfies \ref{itm:gb1}.
\end{lemma}

We now address the fact that $\Pi$ may not be 2-connected. This turns out to be an easy fix since $\Pi$ is already almost 2-connected.

\begin{lemma}\label{lem:wbqp-2conn}
    Let $\Qp = (G, X, \Gamma, \vartheta, \varphi, \psi)$ be a quasi-planar tuple which satisfies \ref{itm:gb1}. Suppose that $\Gamma$ is almost 2-connected. Then there exists a \textbf{well-behaved} quasi-planar tuple 
    $$
    \Qp' = (G, X, \Gamma', \vartheta', \varphi', \psi').
    $$
    We may take $\Gamma'$ to be the 2-connected core of $\Gamma$ and $\vartheta'$ to be the restriction of $\vartheta$. 
\end{lemma}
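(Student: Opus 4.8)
The plan is to take $\Gamma'$ to be the 2-connected core of $\Gamma$, $\vartheta'$ the restriction of $\vartheta$ (which is a good drawing of $\Gamma'$ by Remark~\ref{rmk:good-drawing-subgraph}, since the inclusion $\Gamma' \into \Gamma$ is a quasi-isometry and hence $\incut{\Gamma'} < \infty$), and to build $\varphi'$, $\psi'$ by composing $\varphi$, $\psi$ with the quasi-isometries coming from the inclusion $\Gamma' \into \Gamma$. Concretely, let $\iota : \Gamma' \into \Gamma$ denote the inclusion, which is a quasi-isometry (coarsely surjective by definition of almost 2-connected, and isometrically embedded as a maximal 2-connected subgraph). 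Fix a continuous quasi-inverse $\rho : \Gamma \to \Gamma'$ to $\iota$ using Proposition~\ref{prop:cts-inverse}. Then set $\varphi' = \rho \circ \varphi : X \onto \Gamma'$ and $\psi' = \iota \circ \psi = \psi|_{\text{image}} : \Gamma' \to X$; after replacing $\varphi'$ by a continuous surjective model via Proposition~\ref{prop:qi-wlog} if necessary (and noting $\Gamma'$ is bounded valence since $\Gamma$ is and $\Gamma'$ is an induced subgraph), one checks $\varphi'$ is a continuous surjective quasi-isometry with continuous quasi-inverse $\psi'$, so that $\Qp' = (G, X, \Gamma', \vartheta', \varphi', \psi')$ is a quasi-planar tuple. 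Property \ref{itm:gb2} holds by construction.

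It remains to verify \ref{itm:gb1} for $\Qp'$. The key point is that the induced quasi-actions of $G$ on $\Gamma$ and on $\Gamma'$ are conjugate via the quasi-isometry $\iota$ (up to bounded error), and that facial subgraphs of $\Gamma'$ and $\Gamma$ are boundedly Hausdorff-close to one another in the relevant sense. First I would invoke Remark~\ref{rmk:simple-body}: every facial subgraph $f$ of $\Gamma$ with $\diam_\Gamma(f)$ sufficiently large decomposes as a simple body $f_0 \in \facepaths\Gamma'$ together with boundedly small adjoined cacti, so $\dHaus[\Gamma](f, f_0)$ is uniformly bounded; conversely every $f_0 \in \facepaths\Gamma'$ of large diameter is the simple body of a unique face of $\Gamma$ and hence again $\dHaus[\Gamma](f_0, f)$ is uniformly bounded. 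Since $\iota$ and $\rho$ are quasi-isometries, Hausdorff distances are distorted only by bounded multiplicative and additive constants, so this gives a constant $M_1 > 0$ and a threshold $n_1$ such that for every $f' \in \facepaths\Gamma'$ with $\diam_{\Gamma'}(f') > n_1$ there is $f \in \facepaths\Gamma$ with $\dHaus[\Gamma](\iota(f'), f) < M_1$, and vice versa.

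Now given $f' \in \facepaths\Gamma'$ with $\diam_{\Gamma'}(f')$ large and $g \in G$, I would chase the following: $\varphi'_g(f') = \rho \circ \varphi_g \circ \iota(f')$ up to bounded error by definition of the induced quasi-action and the relation $\varphi'_g = \varphi' \circ g \circ \psi' \simeq \rho \circ (\varphi \circ g \circ \psi) \circ \iota = \rho \circ \varphi_g \circ \iota$. Choose $f \in \facepaths\Gamma$ with $\dHaus[\Gamma](\iota(f'), f) < M_1$; then $\diam_\Gamma(f)$ is large, so by \ref{itm:gb1} for $\Qp$ there is $f_1 \in \facepaths\Gamma$ with $\dHaus[\Gamma](f_1, \varphi_g(f)) < m$. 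Since $\varphi_g$ is a $(\lambda,\lambda)$-quasi-isometry, $\dHaus[\Gamma](\varphi_g(\iota(f')), \varphi_g(f))$ is bounded, so $\dHaus[\Gamma](f_1, \varphi_g(\iota(f')))$ is bounded; $\diam_\Gamma(f_1)$ is then large, so it has a simple body $f_1' \in \facepaths\Gamma'$ with $\dHaus[\Gamma](\iota(f_1'), f_1)$ bounded. Applying $\rho$ (a quasi-isometry) and using $\rho\circ\iota \simeq \mathrm{id}_{\Gamma'}$ translates all these bounded Hausdorff-distance estimates from $\Gamma$ back into $\Gamma'$, yielding $\dHaus[\Gamma'](f_1', \varphi'_g(f')) < m'$ for a uniform $m'$. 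This establishes \ref{itm:gb1} for $\Qp'$, completing the proof. I expect the main obstacle to be purely bookkeeping: carefully tracking how the finitely many bounded error terms (quasi-isometry constants of $\iota$, $\rho$, $\varphi_g$, the cobounded quasi-action, and the simple-body/adjoined-cacti bounds from Remark~\ref{rmk:simple-body}) combine, and ensuring the diameter thresholds are chosen in the right order so that each application of \ref{itm:gb1} or the simple-body decomposition is legitimate; there is no conceptual difficulty beyond this.
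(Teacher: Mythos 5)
Your proof is correct and follows the same approach as the paper's (which is quite terse, essentially saying ``it is easy to verify'' for \ref{itm:gb1}); you have supplied the bookkeeping that the paper leaves implicit. Two tiny notational slips that do not affect correctness: your $\psi' = \iota \circ \psi$ should read $\psi' = \psi \circ \iota$ (i.e.\ the restriction of $\psi$ to $\Gamma'$, which is what you clearly mean), and the claim that every large facial subgraph of $\Gamma'$ arises as the simple body of a \emph{unique} large face of $\Gamma$ deserves a word of justification (the adjoined cacti subdivide each face $U_0$ of $\Gamma_0$ into boundedly small pieces plus one ``big'' face, so there is indeed exactly one large face of $\Gamma$ inside each large face of $\Gamma_0$), but this is implicit in Remark~\ref{rmk:simple-body} and is how the paper reasons too.
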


\begin{proof}
 Let $\Gamma_0$ be the 2-connected core of $\Gamma$. 
 Recall (Remark~\ref{rmk:simple-body}) that every facial subgraph of $\Gamma$ decomposes as a simple body (which is a facial subgraph of $\Gamma_0$) plus some adjoined cacti of bounded diameter. 
 
 Recall that the inclusion $\iota : \Gamma_0 \into \Gamma$ is a quasi-isometry. 
 Let $\rho : \Gamma \to \Gamma_0$ be some choice of quasi-inverse to the inclusion $\iota$, obtained by collapsing the small `branches' attached to $\Gamma_0$ to their cut vertices. Let $\varphi'  = \rho \circ \varphi$, and let $\psi' = \psi|_{\Gamma_0}$, which is certainly a quasi-inverse of $\varphi'$. Note that the map $\vartheta$ restricts to a good drawing $\vartheta'$ of $\Gamma_0$. Let
 $
 \Qp' = (G, X, \Gamma_0, \vartheta', \varphi' , \psi')
 $. 
 Trivially, $\Qp'$ satisfies \ref{itm:gb2} since $\Gamma_0$ is 2-connected. 
 It is also easy to verify that $\Qp'$ satisfies \ref{itm:gb1}, since the induced quasi-action of $G$ on $\Gamma_0$ differs from that of $G$ upon on $\Gamma$ only by a bounded amount, and the (large enough) faces of $\Gamma_0$ lie a bounded Hausdorff distance from faces of $\Gamma$. 
\end{proof}

Combining Lemmas~\ref{lem:gb1gb2} and \ref{lem:wbqp-2conn}, we immediately deduce Theorem~\ref{thm:2-conn-to-wb} which we restate below for the convenience of the reader.  

\treedecomp*

This can be interpreted as a coarse version of the statement that every connected, locally finite, quasi-transitive planar graph admits a canonical tree decomposition with bounded adhesion, such that every torso is a 3-connected planar graph, and thus admits a unique drawing in $\bbS^2$ by Whitney's planar embedding theorem (\ref{thm:whitney}).

\section{Piecing it all together}\label{sec:accessibility}

In this section we combine all of the above and deduce the main results of this paper. 

\subsection{Proof of Theorem~\ref{thm:acc-intro}}

Here we will piece together the proof of our headline result. We begin by considering the baby-case where there are no bad loops (see Definition~\ref{def:bad} for the definition of a bad loop).

\begin{lemma}\label{lem:no-bad-loops}
    Let $X$ be a connected, locally finite, quasi-transitive graph, and suppose $X$ is quasi-isometric to a locally finite planar graph with no bad loops. Then $X$ is accessible. 
\end{lemma}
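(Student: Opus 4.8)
\textbf{Proof plan for Lemma~\ref{lem:no-bad-loops}.}

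The plan is to reduce this statement to Corollary~\ref{cor:csc-acc}, which says that a quasi-transitive, coarsely CHomP graph is accessible. Since coarse simple connectivity is a strictly stronger property than being coarsely CHomP (simply connected complexes are CHomP), it suffices to show that $X$ is coarsely simply connected. By hypothesis there is a connected, locally finite planar graph $\Gamma$ with no bad loops such that $X$ and $\Gamma$ are quasi-isometric. By Propositions~\ref{prop:qi-wlog} and \ref{prop:cts-inverse} we may assume $\Gamma$ is bounded valence and that the quasi-isometry $\varphi : X \to \Gamma$ and its quasi-inverse $\psi$ are continuous.

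The first step is to observe that $\Gamma$ is coarsely simply connected. To apply Lemma~\ref{lem:loop-dichotomy} we must verify that every $f \in \finfaces\Gamma$ has uniformly bounded diameter, and here is where the absence of bad loops is used a second time: if some sequence of finite facial subgraphs had diameter tending to infinity, then the boundary of such a face, viewed as a Jordan curve, would separate the sphere into two disks at least one of which is unbounded in $\Gamma$, and a careful application of quasi-transitivity (translating a huge face near a fixed basepoint, as in Proposition~\ref{prop:bounded-faces}) together with the planarity structure would force the existence of a bad loop. Actually, the cleanest route is: since $X$ is quasi-transitive and quasi-isometric to $\Gamma$, the graph $\Gamma$ admits a cobounded quasi-action by $G$; if $\Gamma$ has uniformly bounded finite faces we are done, and if not, translating a large face near any point contradicts local finiteness unless that face bounds an unbounded complementary region, and the boundary of a suitably chosen subloop of that face is then a bad loop, contrary to hypothesis. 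Hence $\finfaces\Gamma$ has uniformly bounded diameter, so Lemma~\ref{lem:loop-dichotomy} gives $\varepsilon > 0$ with the property that every loop in $\Gamma$ is either bad or $\varepsilon$-fillable; since $\Gamma$ has no bad loops, $\Gamma$ is $\varepsilon$-coarsely simply connected.

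The second step is to transport coarse simple connectivity across the quasi-isometry. This is exactly Proposition~\ref{prop:coarselysc-qi} applied in the appropriate direction: if $\Gamma$ is coarsely simply connected, then so is $X$. (More precisely, $\varphi \circ \ell$ being null-homotopic in $K_\varepsilon(\Gamma)$ for every loop $\ell$ in $X$ — which holds because $\Gamma$ is $\varepsilon$-coarsely simply connected — forces $\ell$ to be null-homotopic in $K_\eta(X)$ for the $\eta$ produced by that proposition.) Thus $X$ is coarsely simply connected, hence coarsely CHomP, and Corollary~\ref{cor:csc-acc} yields that $X$ is accessible.

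The main obstacle is the first step: ruling out arbitrarily large finite faces in $\Gamma$ using only the ``no bad loops'' hypothesis together with quasi-transitivity of $X$. One must be careful that the cobounded quasi-action of $G$ on $\Gamma$ genuinely lets us move a hypothetical large face close to a fixed vertex, and then that a large face bounding a complementary region must contain a subloop that is a bad loop — this requires a small planar-topology argument of the flavour already used throughout Section~\ref{sec:quasi-act-planar} (choosing the subloop so that two distinct ends of $\Gamma$, which exist since $X$ is multi-ended — or, if $X$ is one-ended, $\Gamma$ has uniformly small faces for a different reason, handled by Theorem~\ref{thm:one-ended-intro} or directly — land on opposite sides). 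Everything after that is a routine invocation of results already established.
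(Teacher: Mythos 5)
Your overall architecture (bound the finite faces, apply Lemma~\ref{lem:loop-dichotomy} to get $\Gamma$ coarsely simply connected, transfer to $X$ via Proposition~\ref{prop:coarselysc-qi}, invoke Corollary~\ref{cor:csc-acc}) matches the paper's final two steps, but the first step contains a genuine gap, and it is the hard part.

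The claim that arbitrarily large finite faces would produce a bad loop is false. If $f \in \finfaces\Gamma$ borders a face $U \in \facedisks\Gamma$, then $U$ is by definition a component of $\bbS^2 - \vartheta(\overline\Gamma)$; it contains no vertices and no ends of $\Gamma$. Hence the cycle $f$ separates $\bbS^2$ into a region with no ends and a region with all of them, so $f$ is never a bad loop, and the same is true for any cycle close to $f$ (it can enclose at most a bounded region of $\Gamma$ beyond the face). In other words, the no-bad-loops hypothesis carries no information whatsoever about the sizes of finite faces. You also cannot fall back on ``translate a huge face near a fixed basepoint, as in Proposition~\ref{prop:bounded-faces}'': that proposition has \ref{itm:gb1} as a hypothesis, and without \ref{itm:gb1} a quasi-translate $\varphi_g(f)$ need not be near any face of $\Gamma$ at all. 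Establishing \ref{itm:gb1} is exactly what the whole of Section~\ref{sec:quasi-act-planar} does.

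The paper closes this gap by first applying Theorem~\ref{thm:2-conn-to-wb} together with Proposition~\ref{prop:tree-decomp-acc}: this produces a canonical tree decomposition whose parts are either one-ended (hence accessible for free) or sit inside a \emph{well-behaved} quasi-planar tuple, and it is only after this reduction that Proposition~\ref{prop:bounded-faces} applies to bound the finite faces. So the missing ingredient in your proposal is precisely this reduction to a well-behaved tuple, which is a substantial structural result and cannot be replaced by the elementary planar-topology argument you sketched.
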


\begin{proof}
    Using Propositions~\ref{prop:qi-wlog},~\ref{prop:cts-inverse}, we may assume without loss of generality that $\Gamma$ has bounded-degree, and that the quasi-isometry $\varphi : X \to \Gamma$ is continuous and onto with continuous quasi-inverse $\psi$. This may involve passing to a subgraph of $\Gamma$, but this subgraph will also contain no bad loops. In particular, we may assume $X$ and $\Gamma$ form part of a quasi-planar tuple $\Qp$. 
    
    Using Theorem~\ref{thm:2-conn-to-wb} and Proposition~\ref{prop:tree-decomp-acc}, we may also assume without loss of generality that $\Qp$ is well-behaved. With this added assumption, by Proposition~\ref{prop:bounded-faces} we have that the finite facial subgraphs of $\Gamma$ are uniformly bounded in length. Finally, by Lemma~\ref{lem:no-bad-loops-small-faces-acc}, we conclude that $X$ is accessible. 
\end{proof}

We now proceed with dealing with the case where bad loops are present. The goal is to reduce to the case where there are no bad loops left. Roughly speaking, we will find `all' of the bad loops (or at least a representative collection), and `cut along them first'. The following lemma, which essentially follows from cut-cycle duality in planar graphs, will be useful in this plan.

\begin{lemma}\label{lem:bad-loop-to-cut}
    Let $\Gamma$ be a connected, locally finite planar graph with a good drawing $\vartheta$, and let $\omega_1, \omega_2 \in \Omega (\Gamma)$. Suppose there exists a bad loop $\ell \subset \Gamma$ such that $\vartheta(\ell)$ separates $\vartheta(\omega_1)$, and $\vartheta(\omega_2)$. 
    Then there is some $b \in \br_{\facepaths}(\Gamma)$ which separates $\omega_1$ from $\omega_2$, where $\facepaths = \inffaces (\Gamma)$. 
\end{lemma}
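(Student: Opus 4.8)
The plan is to cut $\Gamma$ along the Jordan curve $J := \vartheta(\ell)$ and let $b$ be the vertex set drawn on one side. Since $\ell$ is a simple combinatorial loop and $\vartheta$ is an embedding, $J$ is a simple closed curve, so by the Jordan curve theorem $\bbS^2 - J = U_1 \sqcup U_2$ with $U_1$, $U_2$ open and connected; relabelling if necessary, assume $\vartheta(\omega_1) \in U_1$ and $\vartheta(\omega_2) \in U_2$. Set $b = \{v \in V\Gamma : \vartheta(v) \in U_1\}$, so that every vertex of $\ell$ and every vertex drawn in $U_2$ lies in $b^\ast$. Two elementary facts about the embedding $\vartheta$ of the $1$-complex $\Gamma$ will be used repeatedly: images of distinct edges meet only at images of common endpoints, and a vertex of $\Gamma$ is mapped into $J$ if and only if it is a vertex of $\ell$ (so the set of such vertices is finite). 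To see $b \in \br\Gamma$, take $e = uv \in \delta b$ with $u \in b$, $v \in b^\ast$. Then $e$ is not an edge of $\ell$ (else $\vartheta(u) \in J$), and $u \notin V\ell$; hence $\vartheta(e) \cap J$ is contained in $\{\vartheta(v)\}$, and if $v \notin V\ell$ too then $\vartheta(e)$ would be an arc disjoint from $J$ lying entirely in $U_1$, forcing $v \in b$, a contradiction. So every edge of $\delta b$ is incident to a vertex of $\ell$, and since $\Gamma$ is locally finite and $V\ell$ is finite, $\delta b$ is finite.

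Next I would check that $b$ separates $\omega_1$ from $\omega_2$. The set $\vartheta^{-1}(U_i)$ is open in $\overline\Gamma$ and contains $\omega_i$, so it contains a basic neighbourhood $V_{K_i}(\omega_i)$ for some compact $K_i \subset \Gamma$, and every vertex of $\Gamma$ lying in $V_{K_i}(\omega_i)$ is mapped into $U_i$. Thus the vertices of $V_{K_1}(\omega_1)$ lie in $b$ and those of $V_{K_2}(\omega_2)$ lie in $b^\ast$. Now let $p$ be any bi-infinite path between $\omega_1$ and $\omega_2$. A tail of $p$ approaching $\omega_1$ eventually lies in $V_{K_1}(\omega_1)$, hence has all its vertices in $b$, and a tail approaching $\omega_2$ has all its vertices in $b^\ast$; so somewhere along $p$ there is an edge whose two endpoints lie in $b$ and $b^\ast$ respectively, that is, an edge of $\delta b$. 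Hence $\delta b$ separates $\omega_1$ and $\omega_2$.

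Finally I would verify the $\per$-finiteness, where $\per = \inffaces\Gamma$. Let $f = \facepaths(U) \in \inffaces\Gamma$ for some face $U \in \facedisks\Gamma$. Since $U$ is a component of $\bbS^2 - \vartheta(\overline\Gamma)$ and $\vartheta(\ell) = J \subseteq \vartheta(\overline\Gamma)$, we have $U \subseteq \bbS^2 - J = U_1 \sqcup U_2$, and as $U$ is connected, $U \subseteq U_1$ or $U \subseteq U_2$. Consequently $\partial U \subseteq \overline{U_i} = U_i \cup J$, so every vertex of $f$ is mapped into $U_i \cup J$; those mapped into $J$ form a subset of the finite set $V\ell$, while those mapped into $U_1$ lie in $b$ and those mapped into $U_2$ lie in $b^\ast$. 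Hence if $U \subseteq U_1$ then $b^\ast \cap f \subseteq V\ell$ is finite, and if $U \subseteq U_2$ then $b \cap f = \emptyset$. Either way the $\per$-finiteness condition holds, so $b \in \br_\per\Gamma$, and together with the previous paragraph this proves the lemma.

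I expect the only delicate point to be the pointset-topological bookkeeping in the separation step: making precise the interaction between the Freudenthal compactification $\overline\Gamma$, the neighbourhood basis $\{V_K(\omega_i)\}$ at an end, and the extended drawing $\vartheta : \overline\Gamma \into \bbS^2$, together with the (completely routine but easy to fumble) verification that under an embedding the images of edges and vertices of $\Gamma$ interact only at incidences. None of this is deep, and everything beyond it — the Jordan curve theorem and the finiteness counts — is immediate.
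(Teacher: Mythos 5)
Your proof is correct and follows essentially the same route as the paper's: cut along the Jordan curve $J=\vartheta(\ell)$, take $b$ to be the vertices drawn on one side, use that edges of $\Gamma$ can meet $J$ only at vertices of $\ell$ to bound $\delta b$, and observe that each infinite face lies on one side of $J$ to get $\per$-finiteness. The one place you go beyond the paper is in explicitly verifying that $\delta b$ separates $\omega_1$ from $\omega_2$ via the neighbourhood basis $V_K(\omega)$; the paper leaves this step implicit, so your additional care there is reasonable rather than redundant.
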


\begin{proof}
    Assume without loss of generality that $\ell$ is simple. 
    Let $U_1$, $U_2$ be the two connected components of $\bbS^2 \setminus \vartheta(\ell)$. For each $f \in \facepaths$, we have that $\vartheta(f)$ is contained in the closure of exactly one of the $U_i$. Let 
    $$
    b = \vartheta^{-1}(U_1)\cap V(\Gamma).
    $$
    The coboundary $\delta b$ contains only edges which intersect $\ell$, which is finite, and so $b \in \br (\Gamma)$. 
    Then, for every $f \in \facepaths$ we have that either $f\cap V(\Gamma)  \subset b^\ast$, or $f\cap b^\ast \subset \ell$. Since $\ell$ contains finitely many vertices, it follows that $b \in \br_{\facepaths} (\Gamma)$. 
\end{proof}

Thus, bad loops correspond to $\facepaths$-elliptic cuts (in the sense of Definition~\ref{def:elliptic}). We saw in Lemma~\ref{lem:peripheral-system-gives-relacc} that if we can construct a suitable peripheral system $\per$ of $X$ which approximates $\facepaths$, then this would yield control over these elliptic cuts. The good-behaviour of our quasi-actions which we have worked to ensure gives exactly what we need to construct such a peripheral system. This is the content of the next lemma.

\begin{lemma}\label{lem:well-behaved-per-system}
    Let $\Qp = (G, X, \Gamma , \vartheta, \varphi, \psi)$ be a well-behaved quasi-planar tuple. Write $\mathcal F = \inffaces(\Gamma)$. Then there exists a thin, $G$-invariant peripheral system $\per$ of $X$ such that $\psi$ is peripheral preserving with respect to $\mathcal F$ and $\per$.
\end{lemma}

\begin{proof}
    Given $f \in \facepaths$, $g \in G$, define $g(f)$ as the unique $f' \in \facepaths$ such that $\dHaus[\Gamma](\varphi_g(f), f')$ is finite. This induces a well-defined action $G \actson \facepaths$. Given $f \in \facepaths$, denote by $G_f$ the stabiliser of $f$ with respect to this action. Note that since $\facepaths$ is thin (in the sense of Definition~\ref{def:per}), it is immediate that $G$ acts on $\facepaths$ with finitely many orbits. 
Let $f_1, \ldots, f_n \in \facepaths$ be orbit representatives. 
Let $R >0$ be some large constant which we will choose shortly.  
For every $f_i$, let 
$$
Z_{f_i} = G_{f_i} \psi({f_i}). 
$$
For each $i = 1, \ldots,  n$ let $T_i$ be a transversal of $G_{f_i}$ containing the identity. If $f = t (f_i)$ for some non-trivial $t \in t_i$, then define 
$
Z_f = t Z_{f_i}
$. 
It is immediate from \ref{itm:gb1} that the Hausdorff distance between $Z_f$ and $\psi(f)$ is uniformly bounded above for all $f \in \facepaths$, say $\dHaus(Z_f, \psi(f)) < R$ for some fixed $R > 0$. For each $f \in \facepaths$, let 
$$
Y_f = B_X(Z_f;R). 
$$
We have for every $f \in \facepaths$ that $\psi(f) \subset Y_f$ and every component of $Y_f$ intersects $\psi(f)$.
Let 
$$
\per = \{Y_f : f \in \facepaths\}.
$$
We have that $\per$ is a $G$-invariant peripheral system, and $\psi$ is peripheral preserving. Since $\facepaths$ is thin, it is easy to see that $\per$ is thin too. 
\end{proof}

Having now constructed our peripheral system $\per$, we have gained control over the bad loops by Lemma~\ref{lem:peripheral-system-gives-relacc}. This is now enough for us to deduce accessibility in the well-behaved case. 

\begin{lemma}\label{lem:wb-acc}
    Let $\Qp = (G, X, \Gamma , \vartheta, \varphi, \psi)$ be a well-behaved quasi-planar tuple. Then $X$ is accessible. 
\end{lemma}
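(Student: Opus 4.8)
The plan is to produce a peripheral system on $X$ modelled on the infinite faces of $\Gamma$, show $X$ is coarsely CHomP relative to it, extract a canonical tree decomposition from the resulting family of $\per$-finite cuts, and then check that the pieces of that decomposition have no bad loops, hence are accessible.

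\textbf{Transporting relative planarity.} Put $\facepaths=\inffaces\Gamma$. By \ref{itm:gb1} and Proposition~\ref{prop:bounded-faces} every $f\in\finfaces\Gamma$ has uniformly bounded diameter, and since $\Gamma$ is $2$-connected by \ref{itm:gb2}, Theorem~\ref{thm:planar-CHomP} gives that $\Gamma$ is coarsely CHomP relative to $\facepaths$. Property \ref{itm:gb1} forces the induced quasi-action of $G$ on $\Gamma$ to permute the elements of $\facepaths$ up to bounded Hausdorff distance (an infinite face is sent close to a face, which must then be infinite). Hence, for a large enough $\rho>0$, the collection
$$\per:=\{\, g\cdot B_X(\psi(f);\rho) : g\in G,\ f\in\facepaths \,\}$$
is a $G$-invariant, locally finite system of infinite subgraphs of $X$; taking $\varphi,\psi$ as the quasi-isometry pair and matching $B_X(\psi(f);\rho)$ with a bounded tubular neighbourhood of $f$ in $\Gamma$, the compatibility hypotheses of Section~\ref{sec:chomp-qi} are satisfied, so Theorem~\ref{thm:rel-chomp-qi} shows $X$ is coarsely CHomP relative to $\per$.

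\textbf{The tree decomposition.} By Corollary~\ref{cor:coarse-rel-acc}, $\br_\per X$ admits a $G$-invariant, nested, $G$-finite generating set, and by Proposition~\ref{prop:tight-trick-genset} we may enlarge it to such a set $\E$ consisting of tight elements, which generates a subring $S\subseteq\br X$ containing $\br_\per X$. Let $T=T(\E)$ and apply Theorem~\ref{thm:vertex-subgraph} to obtain a $G$-canonical tree decomposition $(T,\cV)$ of $X$ with bounded adhesion, connected parts, and $T/G$ compact; replacing each part by a bounded neighbourhood (Proposition~\ref{prop:replace-parts-with-nbhd}) and, if necessary, refining once more along the finitely many small cuts, we may moreover assume $\es(X_v)\geq 2$ for each part $X_v$. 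By Proposition~\ref{prop:tree-decomp-acc} each $X_v$ is quasi-transitively stabilised with uniform coboundary, so by Proposition~\ref{prop:tree-decomp-acc}~\ref{itm:tree-4} it suffices to show every $X_v$ is accessible.

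\textbf{Accessibility of the parts.} Fix $v\in VT$; we may assume $X_v$ has more than one end. By Propositions~\ref{prop:uniform-boundary-qie} and \ref{prop:restriction-cts} the subgraph $\Gamma_v:=\varphi(X_v)\subseteq\Gamma$ is quasi-isometric to $X_v$, has uniform coboundary in $\Gamma$, and inherits a good drawing from $\vartheta$ (Remark~\ref{rmk:good-drawing-subgraph}); since $\es(X_v)\geq 2$ we get $\vs(\Gamma_v)>1$, so $\Gamma_v$ is almost $2$-connected (Lemma~\ref{lem:2-conn-subgraph}) and hence friendly-faced in $\Gamma$ by Theorem~\ref{thm:friendly-faces}, so every infinite face of $\Gamma_v$ lies within bounded Hausdorff distance of an infinite face of $\Gamma$, i.e.\ of an element of $\per$. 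I claim $\Gamma_v$ has no bad loops. If $\ell\subseteq\Gamma_v$ were a bad loop separating $\xi_1,\xi_2\in\Omega\Gamma_v$, then Lemma~\ref{lem:bad-loop-to-cut} yields $b\in\br_{\inffaces\Gamma_v}\Gamma_v$ separating $\xi_1,\xi_2$; transporting $b$ through the quasi-isometries (Lemma~\ref{lem:cut-through-qi}, Proposition~\ref{prop:cut-through-qi}) and using the face-matching above produces $b''\in\br_\per X$ separating the corresponding ends $\omega_1,\omega_2$ of $X_v$, viewed in $\Omega X$ via Proposition~\ref{prop:end-injective}. But $b''$ lies in the subring generated by $\E$, so Proposition~\ref{prop:sep-ends} gives $b'\in\E$ separating $\omega_1,\omega_2$; as $\E$ is the oriented edge set of $T$ and $\omega_1,\omega_2$ both lie in $\Omega X_v$, this contradicts the partition $\Omega X\cong\Omega T\sqcup\bigsqcup_u\Omega X[V_u]$ of Proposition~\ref{prop:tree-decomp-acc}~\ref{itm:tree-3}. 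Hence $\Gamma_v$ has no bad loops, and Lemma~\ref{lem:no-bad-loops} shows $X_v$ is accessible. Therefore $X$ is accessible.

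\textbf{Main obstacle.} The delicate step is the last one: one must ensure that passing to a part of the tree decomposition does not create uncontrolled infinite faces, so that a bad loop in $\Gamma_v$ genuinely yields a $\per$-finite cut of $X$ — this is precisely what the friendly-faced machinery of Sections~\ref{sec:friendly-faces}, together with the preliminary refinement guaranteeing $\Gamma_v$ is almost $2$-connected, is designed to control. One also has to be careful in the transport step that $\per$ is literally $G$-invariant and that the hypotheses of Section~\ref{sec:chomp-qi} — in particular the surjectivity-on-connected-components condition — hold for an appropriate choice of the thickening radius $\rho$.
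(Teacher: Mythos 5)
Your overall plan matches the paper's: build a peripheral system $\per$ on $X$ from the infinite faces of $\Gamma$, transport the relative CHomP property, extract a nested $G$-finite set $\E$ generating a ring containing $\br_\per X$, build the tree decomposition $(T,\cV)$ on $T(\E)$, and show each part $\Gamma_v = \varphi(X_v)$ has no bad loops. However, there are two genuine gaps in the execution.

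First, the construction $\per = \{g\cdot B_X(\psi(f);\rho) : g\in G,\ f\in\facepaths\}$ does not yield the bijection $a : \inffaces\Gamma \to \per$ required by the hypotheses of Section~\ref{sec:chomp-qi}: you get one candidate subgraph for each pair $(g,f)$, and distinct pairs $(g,f)$, $(g',f')$ with $\varphi_g(f)$ and $\varphi_{g'}(f')$ near the same face $f''\in\facepaths$ will typically produce \emph{distinct} subgraphs of $X$, all lying near $\psi(f'')$. You acknowledge this as an obstacle, but it must actually be resolved; one needs an equivariant way of assigning exactly one subgraph of $X$ to each infinite face, for instance by fixing a transversal of the $G$-orbits on $\facepaths$ and of each face-stabiliser, and only then thickening uniformly.

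Second, and more seriously, the no-bad-loops step is both over-engineered and incomplete. You apply Lemma~\ref{lem:bad-loop-to-cut} to $\Gamma_v$ with $\inffaces\Gamma_v$ and then try to lift the resulting cut to a $\per$-finite cut of $X$ via the friendly-faced machinery. This introduces extra hypotheses you have to manufacture ($\es(X_v)\geq 2$, almost-2-connectedness of $\Gamma_v$, the application of Theorem~\ref{thm:friendly-faces}, which actually produces a \emph{thickening} of $\varphi(X_v)$ rather than $\Gamma_v$ itself). But even granting these, the friendly-faced property tells you that each face of $\Gamma_v$ is close to \emph{some} face of $\Gamma$; it does not tell you that every element of $\per$ meeting $X_v$ in an infinite set is controlled by an infinite face of $\Gamma_v$, which is what you would need to conclude $b''\in\br_\per X$. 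The missed simplification is that a bad loop $\ell\subset\Gamma_v$ is already a bad loop in $\Gamma$: the ends $\xi_1,\xi_2\in\Omega\Gamma_v$ inject into $\Omega\Gamma$ (Proposition~\ref{prop:end-injective}), and the Jordan curve $\vartheta(\ell)\subset\bbS^2$ separates the exact same two points $\vartheta(\xi_1),\vartheta(\xi_2)$ regardless of which subgraph $\ell$ is taken from. Applying Lemma~\ref{lem:bad-loop-to-cut} directly to $\Gamma$ (with peripheral system $\inffaces\Gamma$) gives $b\in\br_{\inffaces\Gamma}\Gamma$, and Proposition~\ref{prop:cut-through-qi} then pushes it straight to $\br_\per X$; the friendly-faced detour, and the extra refinement of the tree decomposition, are unnecessary.
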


\begin{proof}
    By Lemma~\ref{lem:well-behaved-per-system}, we have that there exists a thin, $G$-invariant peripheral system $\per$ of $X$ such that $\psi$ is peripheral preserving with respect to $\mathcal F$ and $\per$, where $\facepaths = \inffaces(\Gamma)$. By Lemma~\ref{lem:peripheral-system-gives-relacc}, there exists a nested, $G$-invariant, $G$-finite generating $\mathcal E$ of $\br_\per(\Gamma)$. 
    Let $T = T(\E)$ be the structure tree of $\E$. 
    We now apply Theorem~\ref{thm:vertex-subgraph} and obtain a $G$-canonical tree decomposition $(T, \cV)$ of $X$, say $\cV = (V_u)_{u \in V(T)}$, such that each part $X_u := X[V_u]$ is connected and quasi-transitively stabilised, and $T/G$ is compact. For each $v \in V(T)$, let $\Gamma_v = \varphi(X_v)$. 
    
    \begin{claim}\label{lem:gammav-nobadloops}
        For every $v \in V(T)$, the graph $\Gamma_v$ contains no bad loops. 
    \end{claim}
    
    \begin{proof}
        Suppose $\Gamma_v$ contains a bad loop, say, separating $\vartheta(\omega_1)$ and $\vartheta(\omega_2)$, where $\omega_1, \omega_2 \in \Omega (\Gamma_v)$. By Proposition~\ref{prop:end-injective} we have that the inclusion $X_v \into X$ induces an injection $\Omega (X_v) \into \Omega (X)$. Since quasi-isometries induces bijections on the sets of ends, we have that the inclusion $\Gamma_v \into \Gamma$ also induces an injection $\Omega (\Gamma_v) \into \Omega (\Gamma)$. 
        Then, since the drawing of $\Gamma_v$ is just the restriction of the drawing of $\Gamma$, we find that there is a bad loop in $\Gamma$ which separates this same pair of ends in $\bbS^2$. 
        
        By Lemma~\ref{lem:bad-loop-to-cut} we find some $b \in \br_{\facepaths}(\Gamma)$  separating $\omega_1$ and $\omega_2$. By Lemma~\ref{lem:cut-through-qi} we find some $b' \in \br _{\per}( X)$ which separates $\psi(\omega_1)$ and $\psi(\omega_2)$. Indeed, it is easy to see that this $b'$ is $\per$-elliptic, since the quasi-isometry $\psi$ is peripheral preserving and $b'$ lies boundedly close to $\psi(b)$. By Proposition~\ref{prop:sep-ends} we have that $\psi(\omega_1)$ and $\psi(\omega_2)$ are separated by some $b'' \in \E$. But, by construction, then they cannot both lie in $X_v$. It follows that $\omega_1$ and $\omega_2$ cannot both be ends of $\Gamma_v$, which is a contradiction.
    \end{proof}

    Combining Lemmas~\ref{lem:gammav-nobadloops} and \ref{lem:no-bad-loops}, we immediately see that each $X_v$ is accessible. Finally, using Theorem~\ref{prop:tree-decomp-acc}, we deduce that $X$ is accessible. 
\end{proof}

We now deduce our main theorem. 

\acc*

\begin{proof}
    Assume without loss of generality that $X$ is infinite-ended. 
    Using Propositions~\ref{prop:qi-wlog},~\ref{prop:cts-inverse}, we may assume without loss of generality that $\Gamma$ has bounded-degree, and that the quasi-isometry $\varphi : X \to \Gamma$ is continuous and onto with continuous quasi-inverse $\psi$. In other words, $X$ and $\Gamma$ form part of a quasi-planar tuple. 
    
    By Theorem~\ref{thm:2-conn-to-wb}, we have that $X$ admits a canonical tree decomposition $(T, \cV)$ with bounded adhesion, where each part is a connected, locally finite quasi-transitive graph forming part of a well-behaved quasi-planar tuple. By  Lemma~\ref{lem:wb-acc} we see that every part is accessible. By Theorem~\ref{prop:tree-decomp-acc} we deduce that $X$ itself is accessible. 
\end{proof}

\subsection{Deducing Corollaries~\ref{thm:graph-intro} and \ref{thm:tfae-intro}}

We now deduce our main applications, beginning with the following structural characterisation of transitive graphs which are quasi-isometric to planar graphs. 

\graph*

\begin{proof}
    Suppose that such a canonical tree decomposition $(T, \cV)$ exists, then it follows from Proposition~\ref{prop:finitelymany-cuts} that $T/G$ is finite, and thus the separators of this tree decomposition are of uniformly bounded diameter.
    For each $u \in V(T)$, fix a complete Riemannian plane $R_u$ such that the torso $X_u := X\llbracket V_u\rrbracket$ is quasi-isometric to $R_u$. Fix a triangulation of $R_u$, and let $\Gamma_u$ be the 1-skeleton of this triangulation. Let $f_u : X_u \to \Gamma_u$ be some choice of quasi-isometry. Given an edge $vu \in E(T)$, pick some vertex $x \in V_u \cap V_v$, and glue $f_u(x)$ to $f_v(x)$. Repeat this for every edge, until we have arranged all of the $\Gamma_u$ into a `tree of planar graphs' $\Gamma$. We have that $\Gamma$ is planar. Indeed, this follows from the same argument that planarity of Cayley graphs is preserved by free products \cite[Prop.~1]{arzhantseva2004cayley}. Moreover, since each separator in the given tree decomposition has bounded diameter, it is clear that $X$ is quasi-isometric to $\Gamma$. A detailed construction of this quasi-isometry is presented in \cite{esperet2024minor}.

    Conversely, if $X$ is quasi-isometric to some planar graph then by Theorem~\ref{thm:acc-intro} it is accessible. By Theorems~\ref{thm:access-char-br} and ~\ref{thm:vertex-subgraph} we find that there is a canonical connected tree decomposition $(T, \cV)$ with bounded adhesion where each part has most one end, and $T/G$ is compact (see also \cite[Thm.~6.4]{hamann2022stallings}). Each of the one-ended parts is a quasi-transitive subgraph of $\Gamma$, which is quasi-isometrically embedded. By Theorem~\ref{thm:one-ended-intro} we deduce that they are quasi-isometric to complete Riemannian planes. 
\end{proof}

\begin{remark}
    With some work, it is possible to upgrade the complete Riemannian planes in Theorem~\ref{thm:one-ended-intro} and Corollary~\ref{thm:graph-intro} to be either the Euclidean or hyperbolic planes. In particular, every quasi-transitive graph which is quasi-isometric to a planar graph is actually quasi-isometric to a planar Cayley graph. See \cite{macmanus2024note} for details.
\end{remark}

We now consider the specific case of finitely generated groups. The following deep rigidity result will be needed. 

\begin{theorem}[{\cites{mess1988seifert, tukia1988homeomorphic, gabai1992convergence, casson1994convergence}}]\label{thm:mess}
    Let $G$ be a finitely generated group. Suppose $G$ is quasi-isometric to a complete Riemannian plane. Then $G$ is virtual surface group.
\end{theorem}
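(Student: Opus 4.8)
The plan is to combine Mess's geometric analysis of planes \cite{mess1988seifert} with the convergence-group characterisation of virtually Fuchsian groups due to Tukia, Gabai and Casson--Jungreis. Passing to a Cayley graph, we may assume that $G$ is itself quasi-isometric to a complete Riemannian plane $P$; since $P$ is a simply connected, one-ended topological surface, the group $G$ is one-ended and coarsely simply connected, and it carries a proper, cobounded quasi-action on $P$.

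The first and most substantial step (this is Mess's contribution) is a rigidity dichotomy: a complete Riemannian plane which is quasi-isometric to a finitely generated group is quasi-isometric either to the Euclidean plane $\R^2$ or to the hyperbolic plane $\HH^2$. The idea is to use the quasi-action of $G$ on $P$ to extract enough large-scale control --- on isoperimetric behaviour, on divergence of geodesics, and on the structure of the ideal boundary --- to pin down an asymptotic model of $P$. This is where I expect the genuine difficulty to lie: a priori $P$ need be neither non-positively curved nor Gromov hyperbolic, and one must rule out exotic complete metrics on $\R^2$ that quasi-transitivity would otherwise permit.

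In the Euclidean case, $G$ is quasi-isometric to $\R^2$ and hence has quadratic growth; by Gromov's polynomial growth theorem $G$ is virtually nilpotent, and the only virtually nilpotent groups of quadratic growth are virtually $\Z^2$. Thus $G$ is virtually $\pi_1(T^2)$, a surface group.

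In the hyperbolic case, $P$ is quasi-isometric to $\HH^2$, so its Gromov boundary is a circle $S^1$, and the cobounded, proper quasi-action of $G$ on $P$ induces a genuine action of $G$ on $\partial P = S^1$. Properness and coboundedness of the quasi-action translate into the dynamical statement that this is a \emph{uniform convergence action} on $S^1$. By the convergence-group theorem of Tukia \cite{tukia1988homeomorphic}, Gabai \cite{gabai1992convergence} and Casson--Jungreis \cite{casson1994convergence}, $G$ is then virtually Fuchsian; a torsion-free finite-index subgroup acts cocompactly on $\HH^2$ and so is the fundamental group of a closed hyperbolic surface, and after passing to a further index-two subgroup to arrange orientability we conclude that $G$ is a virtual surface group, as required.
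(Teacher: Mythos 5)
The paper does not prove this statement; it records it as a known theorem with citations to Mess, Tukia, Gabai, and Casson--Jungreis, so there is no internal proof to compare your attempt against. Your sketch is, however, a faithful account of the standard route: Mess's rigidity result reduces to the dichotomy ``the plane is quasi-isometric to $\R^2$ or to $\HH^2$''; the Euclidean case is dispatched by Gromov's polynomial growth theorem together with the Bass--Guivarc'h formula (quadratic growth plus virtual nilpotence forces virtual $\Z^2$); and the hyperbolic case invokes the convergence-group theorem to obtain a cocompact Fuchsian group up to finite index, from which a torsion-free finite-index subgroup is a closed surface group. One small slip: you invoke ``quasi-transitivity'' when appealing to the quasi-action of $G$ on $P$, but the hypothesis supplies only a cobounded proper quasi-action, not a quasi-transitive one; coboundedness is what you actually use and is all that is available. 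You are also right that the genuine content is Mess's dichotomy --- establishing that no exotic large-scale geometry on the plane can arise when a cobounded quasi-action is present is where the analytic work (isoperimetric estimates, divergence, boundary structure) lives, and the remaining two cases are comparatively formal applications of Gromov's theorem and of the convergence-group theorem.
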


In the above theorem, by a \textit{surface group} we mean the fundamental group of a closed orientable surface of positive genus. By a \textit{virtual} surface group, we mean a group which contains a surface group as a subgroup of finite index.
We assume the reader is familiar with the rudiments of graphs of groups. For an introduction, we recommend Serre's monograph \cite{serre2002trees}. 

\begin{proposition}\label{thm:gsd-decom}
    Let $G$ be a finitely generated group. Suppose that $G$ is quasi-isometric to a planar graph. Then $G$ splits as the fundamental group of a finite graph of groups $\mathcal G$ where 
    \begin{enumerate}
        \item The edge groups of $\mathcal G$ are finite and, 

        \item Each vertex group is either finite or a virtual surface group. 
    \end{enumerate}
\end{proposition}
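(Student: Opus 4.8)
The idea is to combine the accessibility theorem (Theorem~\ref{thm:acc-intro}) with the structural corollary (Corollary~\ref{thm:graph-intro}) and Mess's theorem (Theorem~\ref{thm:mess}). Let $X$ be a Cayley graph of $G$ with respect to some finite generating set. By hypothesis $X$ is quasi-isometric to a planar graph, and $X$ is connected, locally finite, and quasi-transitive (indeed, $G$ acts on $X$ freely and cocompactly by isometries). First I would invoke Theorem~\ref{thm:acc-intro} to conclude that $X$ is accessible, and hence, by the Thomassen--Woess characterisation (Theorem~\ref{thm:access-char-br}, or directly the original definition of accessibility for groups), that $G$ is accessible as a group. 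This means $G$ splits as the fundamental group of a finite graph of groups $\mathcal G$ with finite edge groups and vertex groups each having at most one end.

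The next step is to identify the one-ended vertex groups. Since $G$ is accessible, by Theorems~\ref{thm:access-char-br} and~\ref{thm:vertex-subgraph-tight} there is a $G$-canonical tree decomposition $(T, \cV)$ of $X$ with bounded adhesion and tight separators, where $T/G$ is compact and each torso has at most one end. By Remark~\ref{rmk:torso-qi-parts} each torso $X\llbracket V_u \rrbracket$ quasi-isometrically embeds into $X$, and hence (via Proposition~\ref{prop:qi-wlog}) is quasi-isometric to a subgraph of the planar graph $\Gamma$, so is itself quasi-isometric to a planar graph. Standard Bass--Serre theory identifies the stabilisers $\Stab(u) \le G$ with the vertex groups of the induced graph-of-groups decomposition $\mathcal G$: the edge groups are the (finite) adhesion sets, and each vertex group $G_u := \Stab(u)$ acts cocompactly on the corresponding torso, hence is quasi-isometric to it. In particular each $G_u$ is finitely generated and quasi-isometric to a planar graph, and $G_u$ has at most one end.

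For a vertex group $G_u$ with at most one end: if $G_u$ is finite (zero ends) we are done; the two-ended case does not occur since a two-ended group is virtually $\Z$, which is quasi-isometric to $\R$ and hence to a planar graph, but such groups arise only when they are themselves edge-group-like — more carefully, a two-ended vertex group would contradict the reducedness of the decomposition, or one simply absorbs it, since $\Z$ is a (degenerate) virtual surface group anyway if we allow the annulus, though the cleanest route is: if $G_u$ has exactly one end, apply Theorem~\ref{thm:one-ended-intro} to its Cayley graph (which is quasi-transitive, locally finite, one-ended, and quasi-isometric to a planar graph) to conclude it is quasi-isometric to a complete Riemannian plane, and then Theorem~\ref{thm:mess} gives that $G_u$ is a virtual surface group. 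If $G_u$ is two-ended it is virtually $\Z$, which we may regard as virtually the fundamental group of an annulus, or simply note it splits further over a finite subgroup and refine the decomposition; in any case the statement as phrased (``finite or a virtual surface group'') is satisfied once we observe $\Z$ is quasi-isometric to $\R$ rather than a plane — so to be safe I would handle the two-ended case by refining $\mathcal G$ so that no vertex group is two-ended, which is always possible by Stallings' theorem.

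\textbf{Main obstacle.} The technical heart is already done in Theorems~\ref{thm:acc-intro} and~\ref{thm:one-ended-intro}; the remaining work is bookkeeping. The one genuine subtlety is ensuring that the graph-of-groups decomposition coming from the canonical tree decomposition of $X$ is the \emph{same} data as a Bass--Serre splitting of $G$ with the claimed properties — i.e. that vertex groups act cocompactly on torsos (so the quasi-isometry hypotheses of Theorems~\ref{thm:one-ended-intro} and~\ref{thm:mess} genuinely apply to each $G_u$ as an abstract group), and that edge groups are finite. This follows from Proposition~\ref{prop:tree-decomp-acc}\ref{itm:tree-1} together with the bounded-adhesion property of the decomposition, but it must be stated carefully. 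A secondary, minor point is disposing of the two-ended vertex groups, handled by one more application of Stallings' theorem to pass to a decomposition with only zero- or one-ended vertex groups.
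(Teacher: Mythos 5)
Your proof is essentially correct and follows the same route as the paper: accessibility via Theorem~\ref{thm:acc-intro} and Thomassen--Woess, then identifying one-ended vertex groups as virtual surface groups via Theorem~\ref{thm:one-ended-intro} and Theorem~\ref{thm:mess}. The paper's write-up is shorter: rather than passing through the canonical tree decomposition machinery (Theorems~\ref{thm:vertex-subgraph-tight}, Remark~\ref{rmk:torso-qi-parts}) to show the vertex groups are quasi-isometric to planar graphs, it simply notes the standard Bass--Serre fact that vertex groups of a finite graph-of-groups decomposition with finite edge groups and finitely generated vertex groups are quasi-isometrically embedded in $G$, and then applies Proposition~\ref{prop:qi-wlog}. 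Your longer route via torsos is fine but not needed.

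One point in your write-up is muddled and should be cleaned up: the detour on two-ended vertex groups. By the definition of accessibility recorded in the paper (after Wall), an accessible group splits with vertex groups having \emph{at most one end}, i.e.\ zero or one ends; the two-ended case is already excluded because two-ended groups split further over a finite subgroup. Your paragraph acknowledges this but then circles back to worrying about it, including a false start about $\Z$ being a ``degenerate virtual surface group'' (it is not, and it is not quasi-isometric to a plane). The correct disposal is exactly the one-liner you eventually land on --- the standard accessibility decomposition never produces a two-ended vertex group --- and the rest of that paragraph should be cut.
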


\begin{proof}
    We know that $G$ is accessible by Theorem~\ref{thm:acc-intro} and \cite[Thm.~1.1]{thomassen1993vertex}. Thus, $G$ splits as the fundamental group of a finite graph of groups with finite edge groups and vertex groups with at most one end. Vertex groups of such a splitting are easily seen to be quasi-isometrically embedded. By Proposition~\ref{prop:qi-wlog}, each vertex group is thus quasi-isometric to a subgraph of $\Gamma$, and thus quasi-isometric to a planar graph. By Theorems~\ref{thm:one-ended-intro} and \ref{thm:mess}, we deduce that the one-ended vertex groups are virtual surface groups. 
\end{proof}

\group*

\begin{proof}
    As mentioned in the introduction, we need only show that (1) implies (4). 
    Note that virtual surface groups are residually finite and virtually torsion-free. It is a classical fact that if $\mathcal G$ is a finite graph of groups with finite edge groups and residually finite, virtually torsion-free vertex groups, then $\pi_1(\mathcal G)$ is residually finite and virtually torsion-free.
    
    Now, let $G$ be a finitely generated group which is quasi-isometric to a planar graph. Then by Proposition~\ref{thm:gsd-decom} we have that $G$ splits as a finite graph of groups with finite edge groups and where each vertex group is either finite or a virtual surface group. Thus $G$ is virtually torsion-free and residually finite. So $G$ contains a finite index subgroup which is torsion-free, which is necessarily a free product of free and surface groups. 
\end{proof}


\bibliography{references}

\end{document}